\providecommand{\U}[1]{\protect\rule{.1in}{.1in}}
\newtheorem{X}{X}[section]
\newtheorem{conjecture}[X]{Conjecture}
\newtheorem{corollary}[X]{Corollary}
\newtheorem{E}[X]{}
\newtheorem{lemma}[X]{Lemma}
\newtheorem{proposition}[X]{Proposition}
\newtheorem{theorem}[X]{Theorem}
\newtheorem{mlemma}[X]{Main Lemma}
\newtheorem{definition}[X]{Definition}
\newtheorem{example}[X]{Example}
\newtheorem{plain}[X]{}
\newtheorem{aside}[X]{Aside}
\newtheorem{summary}[X]{Summary}
\newtheorem*{note}{Notes}
\theoremstyle{nonumberplain}
\newtheorem{proof}{Proof}
\titleformat*{\section}{\LARGE\bfseries}
\titleformat*{\subsection}{\Large\itshape}
\titleformat*{\subsubsection}{\large\scshape}%
\titleformat*{\paragraph}{\bfseries\slshape}
\setlist{noitemsep}
\renewcommand{\theenumi}{{\alph{enumi}}}
\newcommand{\eb}[1]{{\itshape\bfseries#1}}
\renewcommand{\emph}{\eb}
\newcommand{\da}{\dashrightarrow}
\newcommand{\bcomment}{}
\newcommand{\bfootnotesize}{\begin{footnotesize}}\newcommand\efootnotesize{\end{footnotesize}}
\newcommand{\bquote}{\begin{quote}}\newcommand\equote{\end{quote}}
\newcommand{\bsmall}{\begin{small}}\newcommand\esmall{\end{small}}
\newcommand{\btable}{\begin{table}}\newcommand{\etable}{\end{table}}
\newcommand{\bappendices}{\begin{appendices}}
\newcommand{\eappendices}{\end{appendices}}
\newcommand{\dstyle}{\displaystyle}
\newcommand{\edocument}{
\makeindex
\setcounter{tocdepth}{1}
\hypersetup{pdftitle={The $p$ Riemann Hypothesis},
pdfauthor={James S. Milne},
pdfkeywords={Riemann hypothesis}}
\begin{document}

\title{The Riemann Hypothesis over Finite Fields\\\vspace{0.3\baselineskip} {\Large From Weil to the Present Day}}
\author{James S. Milne}
\date{\today}
\maketitle

\begin{abstract}
The statement of the Riemann hypothesis makes sense for all global fields, not
just the rational numbers. For function fields, it has a natural restatement
in terms of the associated curve. Weil's work on the Riemann hypothesis for
curves over finite fields led him to state his famous \textquotedblleft Weil
conjectures\textquotedblright, which drove much of the progress in algebraic
and arithmetic geometry in the following decades.

In this article, I describe Weil's work and some of the ensuing progress: Weil 
cohomology (\'etale, crystalline); Grothendieck's standard conjectures; motives; 
Deligne's proof; Hasse-Weil zeta functions and Langlands functoriality.

 It is my contribution to the book \textquotedblleft The legacy of
Bernhard Riemann after one hundred and fifty years\textquotedblright,
edited by S.T. Yau et al.

\end{abstract}
\tableofcontents

\bigskip A global field $K$ of characteristic $p$ is a field finitely
generated and of transcendence degree $1$ over $\mathbb{F}{}_{p}$. The field
of constants $k$ of $K$ is the algebraic closure of $\mathbb{F}{}_{p}$ in $K$.
Let $x$ be an element of $K$ transcendental over $k$. Then%
\[
\zeta(K,s)\overset{\textup{{\tiny def}}}{=}\prod\nolimits_{\mathfrak{p}{}%
}\frac{1}{1-N\mathfrak{p}{}^{-s}}%
\]
where $\mathfrak{p}{}$ runs over the prime ideals of the integral closure
$\mathcal{O}{}_{K}$ of $k[x]$ in $K$ and $N\mathfrak{p}{}=\left\vert
\mathcal{O}{}_{K}/\mathfrak{p}{}\right\vert $; we also include factors for
each of the finitely many prime ideals $\mathfrak{p}{}$ in the integral
closure of $k[x^{-1}]$ not corresponding to an ideal of $\mathcal{O}{}_{K}$.
The Riemann hypothesis for $K$ states that the zeros of $\zeta(K,s)$ lie on
the line $\Re(s)=1/2.$

Let $C$ be the (unique) nonsingular projective over $k$ with function field is
$K$. Let $k_{n}$ denote the finite field of degree $n$ over $k$, and let
$N_{n}$ denote the number of points of $C$ rational over $k_{n}$:%
\[
N_{n}\overset{\textup{{\tiny def}}}{=}\left\vert C(k_{n})\right\vert .
\]
The zeta function of $C$ is the power series $Z(C,T)\in\mathbb{Q}{}[[T]]$ such that%

\[
\frac{d\log Z(C,T)}{dT}=\sum\nolimits_{n=1}^{\infty}N_{n}T^{n-1}.
\]
This can also be expressed as a product%
\[
Z(C,T)=\prod\nolimits_{P}\frac{1}{1-T^{\deg(P)}}%
\]
where $P$ runs over the closed points of $C$ (as a scheme). Each $P$
corresponds to a prime ideal $\mathfrak{p}{}$ in the preceding paragraph, and
$N\mathfrak{p}{}=q^{\deg(P)}$ where $q=|k|$. Therefore%
\[
\zeta(K,s)=Z(C,q^{-s})\text{.}%
\]

Let $g$ be the genus of $C$. Using the Riemann-Roch theorem, one finds that%
\[
Z(C,T)=\frac{P(C,T)}{(1-T)(1-qT)}%
\]
where
\[
P(C,T)=1+c_{1}T+\cdots+c_{2g}T^{2g}\in\mathbb{Z}{}[T];
\]
moreover $Z(C,T)$ satisfies the functional equation%
\[
Z(C,1/qT)=q^{1-g}\cdot T^{2-2g}\cdot Z(C,T).
\]
Thus $\zeta(K,s)$ is a rational function in $q^{-s}$ with simple poles at
$s=0$, $1$ and zeros at $a_{1},\ldots,a_{2g}$ where the $a_{i}$ are the
inverse roots of $P(C,T)$, i.e.,%
\[
P(C,T)=\prod\nolimits_{i=1}^{2g}(1-a_{i}T),
\]
and it satisfies a functional equation relating $\zeta(K,s)$ and
$\zeta(K,1-s)$. The Riemann hypothesis now asserts that%
\[
\left\vert a_{i}\right\vert =q^{\frac{1}{2}},\quad i=1,\ldots,2g.
\]
Note that%
\[
\renewcommand{\arraystretch}{1.5}\log Z(C,T)=\left\{
\begin{array}
[c]{l}%
\sum\nolimits_{1}^{\infty}N_{n}\frac{T^{n}}{n}\\
\log\frac{(1-a_{1}T)\cdots(1-a_{2g}T)}{(1-T)(1-qT)},
\end{array}
\right.
\]
and so%
\[
N_{n}=1+q^{n}-(a_{1}^{n}+\cdots+a_{2g}^{n}).
\]
Therefore, the Riemann hypothesis implies that%
\[
\left\vert N_{n}-q^{n}-1\right\vert \leq2g\cdot(q^{n})^{1/2};
\]
conversely, if this inequality holds for all $n$, then the Riemann hypothesis
holds for $C$.\footnote{The condition implies that the power series%
\[
\sum\nolimits_{i=1}^{2g}\frac{1}{1-a_{i}z}=\sum\nolimits_{n=0}^{\infty}\left(
\sum\nolimits_{i=1}^{2g}a_{i}^{n}\right)  z^{n}%
\]
converges for $|z|\leq q^{-1/2}$, and so $|a_{i}|\leq q^{1/2}$ for all $i$.
The functional equation shows that $q/a_{i}$ is also a root of $P(C,T)$, and
so $\left\vert a_{i}\right\vert =q^{1/2}$.}

The above is a brief summary of the results obtained by the German school of
number theorists (Artin, F.K. Schmidt, Deuring, Hasse, ....) by the mid-1930s.
In the 1930s Hasse gave two proofs of the Riemann hypothesis for curves of
genus $1$, the second of which uses the endomorphism ring of the elliptic
curve\footnote{F.K. Schmidt (1931) showed that every curve over a finite field
has a rational point. The choice of such a point on a curve of genus $1$ makes
it an elliptic curve.} in an essential way (see the sketch below). Deuring
recognized that for curves of higher genus it was necessary to replace the
endomorphism ring of the curve with its ring of correspondences in the sense
of Severi 1903, and he wrote two articles reformulating part of Severi's
theory in terms of \textquotedblleft double-fields\textquotedblright\ (in
particular, extending it to all characteristics). Weil was fully aware of
these ideas of Deuring and Hasse (Schappacher 2006). For a full account of
this early work, see the article by Oort and Schappacher in this volume.

\subsubsection{The proof of the Riemann hypothesis for elliptic curves}

For future reference, we sketch the proof of the Riemann hypothesis for
elliptic curves. Let $E$ be such a curve over a field $k$, and let $\alpha$ be
an endomorphism of $E$. For $\ell\neq\mathrm{char}(k)$, the $\mathbb{Z}%
{}_{\ell}$-module $T_{\ell}E\overset{\textup{{\tiny def}}}{=}\varprojlim
_{n}E_{\ell^{n}}(k^{\mathrm{al}})$ is free of rank $2$ and $\alpha$ acts on it
with determinant $\deg\alpha$. Over $\mathbb{C}{}$ this statement can be
proved by writing $E=\mathbb{C}{}/\Lambda$ and noting that $T_{\ell}%
E\simeq\mathbb{Z}{}_{\ell}\otimes_{\mathbb{Z}{}}\Lambda$. Over a field of
nonzero characteristic the proof is more difficult --- it will be proved in a
more general setting later (\ref{r15}, \ref{e9}).

Now let $\alpha$ be an endomorphism of $E$, and let $T^{2}+cT+d$ be its
characteristic polynomial on $T_{\ell}E$:%
\[
T^{2}+cT+d\overset{\textup{{\tiny def}}}{=}\det(T-\alpha|T_{\ell}E).
\]
Then,%
\begin{align*}
d  &  =\det(-\alpha|T_{\ell}E)=\deg(\alpha)\\
1+c+d  &  =\det(\id_{E}-\alpha|T_{\ell}E)=\deg(\id_{E}-\alpha),
\end{align*}
and so $c~$and $d$ are integers independent of $\ell$.

For an integer $n$, let%
\begin{equation}
T^{2}+c^{\prime}T+d^{\prime}=\det(T-n\alpha|T_{\ell}E)\text{.} \label{e2}%
\end{equation}
Then $c^{\prime}=nc$ and $d^{\prime}=n^{2}d$. On substituting $m$ for $T$ in
(\ref{e2}), we find that%
\[
m^{2}+cmn+dn^{2}=\det(m-n\alpha|T_{\ell}E).
\]
The right hand side is the degree of the map $m-n\alpha$, which is always
nonnegative, and so the discriminant $c^{2}-4d\leq0$, i.e., $c^{2}\leq4d$.

We apply these statements to the Frobenius map $\pi\colon(x_{0}\colon
x_{1}\colon\ldots)\mapsto(x_{0}^{q}\colon x_{1}^{q}\colon\ldots)$. The
homomorphism $\id_{E}-\pi\colon E\rightarrow E$ is \'{e}tale and its kernel on
$E(k^{\mathrm{al}})$ is $E(k)$, and so its degree is $|E(k)|$. Let
$f=T^{2}+cT+d$ be the characteristic polynomial of $\pi$. Then $d=\deg(\pi
)=q$, and $c^{2}\leq4d=4q$. From%
\[
\left\vert E(k)\right\vert =\deg(\id_{E}-\pi)=\det(\id_{E}-\pi|T_{\ell
}E)=f(1)=1+c+q,
\]
we see that%
\[
\left\vert |E(k)|-q-1\right\vert =|c|\leq2\,q^{1/2}\text{, }%
\]
as required.

\section{Weil's work in the 1940s and 1950s}

\subsection{The 1940 and 1941 announcements}

Weil announced the proof of the Riemann hypothesis for curves over finite
fields in a brief three-page note (Weil 1940), which begins with the following paragraph:

\begin{quote}
I shall summarize in this Note the solution of the main problems in the theory
of algebraic functions with a finite field of constants; we know that this
theory has been the subject of numerous works, especially, in recent years, by
Hasse and his students; as they have caught a glimpse of, the key to these
problems is the theory of correspondences; but the algebraic theory of
correspondences, due to Severi, is not sufficient, and it is necessary to
extend Hurwitz's transcendental theory to these functions.\footnote{Je vais
r\'{e}sumer dans cette Note la solution des principaux probl\`{e}mes de la
th\'{e}orie des fonctions alg\'{e}briques \`{a} corps de constantes fini; on
sait que celle-ci a fait l'objet de nombreux travaux, et plus
particuli\`{e}rement, dans les derni\'{e}res ann\'{e}es, de ceux de Hasse et
de ses \'{e}\`{l}\`{e}ves; comme ils l'ont entrevu, la th\'{e}orie des
correspondances donne la clef de ces probl\`{e}mes; mais la th\'{e}orie
alg\'{e}brique des correspondances, qui est due \`{a} Severi, n'y suffit
point, et il faut \'{e}tendre \`{a} ces fonctions la th\'{e}orie transcendante
de Hurwitz.}
\end{quote}

\noindent The \textquotedblleft main problems\textquotedblright\ were the
Riemann hypothesis for curves of arbitrary genus, and Artin's conjecture that
the (Artin) $L$-function attached to a nontrivial simple Galois representation
is entire.

\textquotedblleft Hurwitz's transcendental theory\textquotedblright\ refers to
the memoir Hurwitz 1887, which Weil had studied already as a student at the
Ecole Normale Sup\'{e}rieure. There Hurwitz gave the first proof\footnote{As
Oort pointed out to me, Chasles-Cayley-Brill 1864, 1866, 1873, 1874 discussed
this topic before Hurwitz did, but perhaps not \textquotedblleft in terms of
traces\textquotedblright.} of the formula expressing the number of coincident
points of a correspondence $X$ on a complex algebraic curve $C$ in terms of
traces. In modern terms,%
\[
(\text{number of coincidences)}=\Tr(X|H_{0}(C,\mathbb{Q}{}))-\Tr(X|H_{1}%
(C,\mathbb{Q}{}))+\Tr(X|H_{2}(C,\mathbb{Q})).
\]
This can be rewritten%
\begin{equation}
(X\cdot\Delta)=d_{2}(X)-\Tr(X|H_{1}(C))+d_{1}(X) \label{e3}%
\end{equation}
where $d_{1}(X)=(X\cdot C\times\mathrm{pt})$ and $d_{2}(X)=(X\cdot
\mathrm{pt}\times C)$.

Now consider a nonsingular projective curve $C_{0}$ over a field $k_{0}$ with
$q$ elements, and let $C$ be the curve over the algebraic closure $k$ of
$k_{0}$ obtained from $C$ by extension of scalars. For the graph of the
Frobenius endomorphism $\pi$ of $C$, the equality (\ref{e3}) becomes%
\[
(\Gamma_{\pi}\cdot\Delta)=1-``\Tr(\pi|H_{1}(C))\text{\textquotedblright}+q.
\]
As $(\Gamma_{\pi}\cdot\Delta)=|C_{0}(k_{0})|$, we see that everything comes
down to understanding algebraically (and in nonzero characteristic!), the
\textquotedblleft trace of a correspondence on the first homology group\ of
the curve\textquotedblright.

Let $g$ be the genus of $C_{0}$. In his 1940 note,\footnote{In fact, following
the German tradition (Artin, Hasse, Deuring,\ldots), Weil expressed himself in
this note in terms of functions fields instead of curves. Later, he insisted
on geometric language.} Weil considers the group $G$ of divisor classes on $C$
of degree $0$ and order prime to $p$, and assumes that $G$ is isomorphic to
$(\mathbb{Q}/\mathbb{Z}{}(\text{non-}p))^{2g}$. A correspondence\footnote{See
p.\pageref{correspondence}.} $X$ on $C$ defines an endomorphism of $G$ whose
trace Weil calls the trace $\Tr(X)$ of $X$. This is an element of ${}%
\prod\nolimits_{l\neq p}\mathbb{Z}{}_{l}$, which nowadays we prefer to define
as the trace of $X$ on $TG\overset{\textup{{\tiny def}}}{=}\varprojlim
_{(n,p)=1}G_{n}$.\footnote{In terms of the jacobian variety $J$ of $C$, which
wasn't available to Weil at the time, $TG=\prod\nolimits_{l\neq p}T_{l}J$, and
the positivity of $\Tr(X\circ X^{\prime})$ expresses the positivity of the
Rosati involution on the endomorphism algebra of $J$.}

After some preliminaries, including the definition in this special case of
what is now called the Weil pairing, Weil announces his \textquotedblleft
important lemma\textquotedblright: let $X$ be an $(m_{1},m_{2})$
correspondence on $C$, and let $X^{\prime}$ be the $(m_{2},m_{1}%
)$-correspondence obtained by reversing the factors;

\begin{quote}
if $m_{1}=g$, we have in general (i.e., under conditions which it is
unnecessary to make precise here) $2m_{2}=\Tr(X\circ X^{\prime})$.\footnote{si
$m_{1}=g$, on a en g\'{e}n\'{e}ral (c'est-\`{a}-dire \`{a} des conditions
qu'il est inutile de pr\'{e}ciser ici) $2m_{2}=\Tr(X\circ X^{\prime})$.}
\end{quote}

\noindent From this he deduces that, for all correspondences $X$, the trace
$\Tr(X\circ X^{\prime})$ is a rational integer $\geq0$ and that the number of
coincident points of $X$ is $m_{1}+m_{2}-\Tr(X)$. On applying these statements
to the graph of the Frobenius map, he obtains his main results.

At the time Weil wrote his note, he had little access to the mathematical
literature. His confidence in the statements in his note was based on his own
rather ad hoc heuristic calculations (\OE uvres, I, pp.548--550). As he later wrote:

\begin{quote}
In other circumstances, publication would have seemed very premature. But in
April 1940, who could be sure of a tomorrow? It seemed to me that my ideas
contained enough substance to merit not being in danger of being
lost.\footnote{\textquotedblleft En d'autres circonstances, une publication
m'aurait paru bien pr\'{e}matur\'{e}e. Mais, en avril 1940, pouvait-on se
croire assur\'{e} du lendemain? Il me sembla que mes id\'{e}es contenaient
assez de substance pour ne pas m\'{e}riter d'\^{e}tre en danger de se
perdre.\textquotedblright\ This was seven months after Germany had invaded
Poland, precipitating the Second World War. At the time, Weil was confined to
a French military prison as a result of his failure to report for duty. His
five-year prison sentence was suspended when he joined a combat unit. During
the collapse of France, his unit was evacuated to Britain. Later he was
repatriated to Vichy France, from where in January 1941 he managed to exit to
the United States. Because of his family background, he was in particular
danger during this period.} (\OE uvres I, p.550.)
\end{quote}

\noindent

In his note, Weil had replaced the jacobian variety with its points of finite
order. But he soon realized that proving his \textquotedblleft important
lemma\textquotedblright\ depended above all on intersection theory on the
jacobian variety.

\begin{quote}
In 1940 I had seen fit to replace the jacobian with its group of points of
finite order. But I began to realize that a considerable part of Italian
geometry was based entirely on intersection theory\ldots\ In particular, my
\textquotedblleft important lemma\textquotedblright\ of 1940 seemed to depend
primarily on intersection theory on the jacobian; so this is what I
needed.\footnote{En 1940 j'avais jug\'{e} opportun de substituer \`{a} la
jacobienne le groupe de ses points d'ordre fini. Mais je commen\c{c}ais \`{a}
apercevoir qu'une notable partie de la g\'{e}om\'{e}trie italienne reposait
exclusivement sur la th\'{e}orie des intersections. Les travaux de van der
Waerden, bien qu'ils fussent rest\'{e}s bien en de\c{c}\`{a} des besoins,
donnaient lieu d'esp\'{e}rer que le tout pourrait un jour se transposer en
caract\'{e}ristique $p$ sans modification substantielle. En particulier, mon
\textquotedblleft lemme important\textquotedblright\ de 1940 semblait
d\'{e}pendre avant tout de la th\'{e}orie des intersections sur la jacobienne;
c'est donc celle-ci qu'il me fallait.\ } (\OE uvres I, p.556.)
\end{quote}

\noindent In expanding the ideas in his note, he would construct the jacobian
variety of a curve and develop a comprehensive theory of abelian varieties
over arbitrary fields parallel to the transcendental theory over $\mathbb{C}%
{}$. But first he had to rewrite the foundations of algebraic geometry.

In the meantime, he had found a more elementary proof of the Riemann
hypothesis, which involved only geometry on the product of two curves. First
he realized that if he used the fixed point formula (\ref{e3}) to
\textit{define} the trace $\sigma(X)$ of a correspondence $X$ on $C$, i.e.,
\[
\sigma(X)\overset{\textup{{\tiny def}}}{=}d_{1}(X)+d_{2}(X)-(X\cdot\Delta),
\]
then it was possible to prove directly that $\sigma$ had many of the
properties expected of a trace on the ring of correspondence classes. Then, as
he writes (\OE uvres I, p.557):

\begin{quote}
At the same time I returned seriously to the study of the \textit{Trattato
}[Severi 1926]. The trace $\sigma$ does not appear as such; but there is much
talk of a \textquotedblleft difetto di equivalenza\textquotedblright\ whose
positivity is shown on page 254. I soon recognized it as my integer
$\sigma(X\circ X^{\prime})$ \ldots\ I could see that, to ensure the validity
of the Italian methods in characteristic $p$, all the foundations would have
to be redone, but the work of van der Waerden, together with that of the
topologists, allowed me to believe that it would not be beyond my
strength.\footnote{En m\^{e}me temps je m'\'{e}tais remis s\'{e}rieusement
\`{a} l'\'{e}tude du \textit{Trattato}. La trace $\sigma$ a n'y appara\^{\i}t
pas en tant que telle; mais il y est fort question d'un \textquotedblleft
difetto di equivalenza\textquotedblright\ dont la positivit\'{e} est
d\'{e}montr\'{e}e \`{a} la page 254. J'y reconnus bient\^{o}t mon entier
$\sigma(yy^{\prime})$\ldots\ Je voyais bien que, pour s'assurer de la
validit\'{e} des m\'{e}thodes italiennes en caract\'{e}ristique $p$, toutes
les fondations seraient \`{a} reprendre, mais les travaux de van der Waerden,
joints \`{a} ceux des topologues, donnaient \`{a} croire que ce ne serait pas
au dessus de mes forces.}
\end{quote}

\noindent Weil announced this proof in (Weil 1941). Before describing it, I
present a simplified modern version of the proof.

\begin{note}
\label{r37}Whereas the 1940 proof (implicitly) uses the jacobian $J$ of the
curve $C$, the 1941 proof involves only geometry on $C\times C$. Both use the
positivity of \textquotedblleft$\sigma(X\circ X^{\prime})$\textquotedblright%
\ but the first proof realizes this integer as a trace on the torsion group
$\Jac(C)($non-$p)$ whereas the second expresses it in terms of intersection
theory on $C\times C$.
\end{note}

\subsection{The geometric proof of the Riemann hypothesis for curves}

Let $V$ be a nonsingular projective surface over an algebraically closed field
$k$.

\subsubsection{Divisors}

A divisor on $V$ is a formal sum $D=\sum n_{i}C_{i}$ with $n_{i}\in
\mathbb{Z}{}$ and $C_{i}$ an irreducible curve on $V$. We say that $D$ is
positive, denoted $D\geq0$, if all the $n_{i}\geq0$. Every $f\in k(V)^{\times
}$ has an associated divisor $(f)$ of zeros and poles --- these are the
principal divisors. Two divisors $D$ and $D^{\prime}$ are said to be linearly
equivalent if%
\[
D^{\prime}=D+(f)\text{ some }f\in k(V)^{\times}.
\]

For a divisor $D$, let%
\[
L(D)=\{f\in k(V)\mid(f)+D\geq0\}.
\]
Then $L(C)$ is a finite-dimensional vector space over $k$, whose dimension we
denote by $l(D)$. The map $g\mapsto gf$ is an isomorphism $L(D)\rightarrow
L(D-(f))$, and so $l(D)$ depends only on the linear equivalence class of $D$.

\subsubsection{Elementary intersection theory\label{intersection}}

Because $V$ is nonsingular, a curve $C$ on $V$ has a local equation at every
closed point $P$ of $V$, i.e., there exists an $f$ such that%
\[
C=(f)+\text{components not passing through }P.
\]
If $C$ and $C^{\prime}$ are distinct irreducible curves on $V$, then their
intersection number at $P\in C\cap C^{\prime}$ is%
\[
(C\cdot C^{\prime})_{P}\overset{\textup{{\tiny def}}}{=}\dim_{k}(\mathcal{O}%
{}_{V,P}/(f,f^{\prime}))
\]
where $f$ and $f^{\prime}$ are local equations for $C$ and $C^{\prime}$ at
$P$, and their (global) intersection number is%
\[
(C\cdot C^{\prime})=\sum_{P\in C\cap C^{\prime}}(C\cdot C^{\prime})_{P}.
\]
This definition extends by linearity to pairs of divisors $D,D^{\prime}$
without common components. Now observe that $((f)\cdot C)=0$, because it
equals the degree of the divisor of $f|C$ on $C$, and so $(D\cdot D^{\prime})$
depends only on the linear equivalence classes of $D$ and $D^{\prime}$. This
allows us to define $(D\cdot D^{\prime})$ for all pairs $D,D^{\prime}$ by
replacing $D$ with a linearly equivalent divisor that intersects $D^{\prime}$
properly. In particular, $(D^{2})\overset{\textup{{\tiny def}}}{=}(D\cdot D)$
is defined. See Shafarevich 1994, IV, \S 1, for more details.

\subsubsection{The Riemann-Roch theorem}

Recall that the Riemann-Roch theorem for a curve $C$ states that, for all
divisors $D$ on $C$,%
\[
l(D)-l(K_{C}-D)=\deg(D)+1-g
\]
where $g$ is the genus of $C$ and $K_{C}$ is a canonical divisor (so $\deg
K_{C}=2g-2$ and $l(K_{C})=g$). Better, in terms of cohomology,%
\begin{align*}
\chi(\mathcal{O(}{}D))  &  =\deg(D)+\chi(\mathcal{O}{})\\
h^{1}(D)  &  =h^{0}(K_{C}-D)\text{.}%
\end{align*}

The Riemann-Roch theorem for a surface $V$ states that, for all divisors $D$
on $V$,%
\[
l(D)-\mathrm{\mathrm{sup}}(D)+l(K_{V}-D)=p_{a}+1+\frac{1}{2}(D\cdot D-K_{V})
\]
where $K_{V}$ is a canonical divisor and%
\begin{align*}
p_{a}  &  =\chi(\mathcal{O}{})-1\quad\text{(arithmetic genus),}\\
\mathrm{sup}(D)  &  =\text{superabundance of }D\text{ (}\geq0\text{, and
}=0\text{ for some divisors).}%
\end{align*}
Better, in terms of cohomology,%
\begin{align*}
\chi(\mathcal{O(}{}D))  &  =\chi(\mathcal{O}{}_{V})+\frac{1}{2}(D\cdot D-K)\\
h^{2}(D)  &  =h^{0}(K-D),
\end{align*}
and so%
\[
\mathrm{sup}(D)=h^{1}(D).
\]

We shall also need the adjunction formula: let $C$ be a curve on $V$; then%
\[
K_{C}=(K_{V}+C)\cdot C.
\]

\subsubsection{The Hodge index theorem}

Embed $V$ in $\mathbb{P}{}^{n}$. A hyperplane section of $V$ is a divisor of
the form $H=V\cap H^{\prime}$ with $H^{\prime}$ a hyperplane in $\mathbb{P}%
{}^{n}$ not containing $V$. Any two hyperplane sections are linearly
equivalent (obviously).

\begin{lemma}
\label{r0}For a divisor $D$ and hyperplane section $H$,%
\begin{equation}
l(D)>1\implies(D\cdot H)>0. \label{eq1}%
\end{equation}

\end{lemma}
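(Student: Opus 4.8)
The plan is to reduce to the case of a nonzero effective divisor and then invoke the elementary fact that a complete curve in $\mathbb{P}^{n}$ cannot avoid a hyperplane. Since $(D\cdot D^{\prime})$ depends only on the linear equivalence classes of its arguments, and since $l(D)>1$ in particular produces a nonzero $f\in L(D)$, I would first replace $D$ by the linearly equivalent effective divisor $D^{\prime}=D+(f)\geq 0$; this changes neither $(D\cdot H)$ nor $l(D)$. Moreover $D^{\prime}\neq 0$: otherwise $l(D)=l(D^{\prime})=l(0)=1$, because $L(0)$ consists of the regular functions on the projective variety $V$ and hence equals $k$, contradicting the hypothesis. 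Thus $D^{\prime}=\sum_{i}n_{i}C_{i}$ with all $n_{i}>0$, each $C_{i}$ an irreducible curve on $V$, and at least one term present.

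Next I would show $(C\cdot H)>0$ for each irreducible curve $C$ on $V$. All hyperplane sections of $V$ are linearly equivalent, so I may compute $(C\cdot H)$ using a hyperplane $H^{\prime}\subset\mathbb{P}^{n}$ not containing $C$; such an $H^{\prime}$ exists because $C$ is a proper subvariety of $\mathbb{P}^{n}$. Then $C$ and $V\cap H^{\prime}$ share no component, so $(C\cdot H)=\sum_{P\in C\cap H^{\prime}}(C\cdot H)_{P}$ is a sum of local multiplicities, each a positive integer. The set $C\cap H^{\prime}$ is nonempty: the complement $\mathbb{P}^{n}\setminus H^{\prime}$ is isomorphic to $\mathbb{A}^{n}$, which contains no complete curve, so $C$ is not contained in it. Hence $(C\cdot H)>0$.

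Putting these together, $(D\cdot H)=(D^{\prime}\cdot H)=\sum_{i}n_{i}\,(C_{i}\cdot H)>0$, since every summand is positive and the sum is nonempty.

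The only non-formal ingredient is the positivity $(C\cdot H)>0$, i.e.\ that an irreducible projective curve meets every hyperplane of the ambient space (equivalently, has strictly positive degree); everything else is bookkeeping with linear equivalence and the definition of the intersection pairing recalled above.
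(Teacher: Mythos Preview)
Your proof is correct and follows essentially the same approach as the paper: replace $D$ by a linearly equivalent nonzero effective divisor, then choose the hyperplane to meet it properly and invoke the fact that a projective curve must meet every hyperplane (the paper phrases this last step as ``nonempty by dimension theory'' rather than ``$\mathbb{A}^{n}$ contains no complete curve,'' but these are the same fact). Your version is slightly more explicit in justifying $D'\neq 0$ and in working component by component, but there is no genuine difference in strategy.
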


\begin{proof}
The hypothesis implies that there exists a $D_{1}>0$ linearly equivalent to
$D$. If the hyperplane $H^{\prime}$ is chosen not to contain a component of
$D_{1}$, then the hyperplane section $H=V\cap H^{\prime}$ intersects $D_{1}$
properly. Now $D_{1}\cap H=D_{1}\cap H^{\prime}$, which is nonempty by
dimension theory, and so $(D_{1}\cdot H)>0$.
\end{proof}

\begin{theorem}
[Hodge Index Theorem]\label{r1}For a divisor $D$ and hyperplane section $H$,%
\[
(D\cdot H)=0\implies(D\cdot D)\leq0.
\]

\end{theorem}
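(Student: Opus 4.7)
The plan is to argue by contradiction: assume $(D \cdot H) = 0$ and $(D \cdot D) > 0$. The first observation is that $l(nD) = 0$ for every nonzero integer $n$. Indeed, a nonzero effective divisor $E = \sum a_i C_i$ (with $a_i > 0$) satisfies $(E \cdot H) = \sum a_i (C_i \cdot H) > 0$, since each irreducible curve $C_i$ has positive projective degree. So if $nD$ were linearly equivalent to such an $E$, then $n(D \cdot H)$ would be positive, contradicting our hypothesis. (This is essentially the contrapositive of Lemma \ref{r0} sharpened by the positivity of $(C_i \cdot H)$.)

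Next, I apply the Riemann--Roch theorem on $V$ to the divisor $nD$:
\[
l(nD) - \mathrm{sup}(nD) + l(K_V - nD) = p_a + 1 + \tfrac{1}{2}\bigl(n^2 (D \cdot D) - n (D \cdot K_V)\bigr).
\]
Since $l(nD) = 0$ and $\mathrm{sup}(nD) \geq 0$, this forces
\[
l(K_V - nD) \geq p_a + 1 + \tfrac{n^2}{2}(D \cdot D) - \tfrac{n}{2}(D \cdot K_V),
\]
which tends to $\infty$ as $n \to \infty$, because $(D \cdot D) > 0$.

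The remaining---and main---task is to derive a contradiction by producing an upper bound on $l(K_V - nD)$ independent of $n$. Choose a smooth irreducible hyperplane section $H$ (by Bertini), and iterate the restriction sequence
\[
0 \to \mathcal{O}_V(M - H) \to \mathcal{O}_V(M) \to \mathcal{O}_H(M|_H) \to 0
\]
with $M = K_V - nD - jH$ for $j = 0, 1, \ldots, m-1$. Each step contributes the inequality $h^0(M) \leq h^0(M - H) + h^0(\mathcal{O}_H(M|_H))$, and the last term is bounded by $\max\bigl((K_V \cdot H) - j(H \cdot H) + 1,\, 0\bigr)$, via Riemann--Roch on the curve $H$; crucially, the degree $(M|_H) = (K_V - jH) \cdot H$ is independent of $n$ because $(D \cdot H) = 0$. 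For $m$ large enough that $K_V \cdot H - m(H \cdot H) < 0$, the residual term $h^0(K_V - nD - mH)$ vanishes, by the first paragraph applied to $K_V - nD - mH$ (whose intersection with $H$ is negative). Summing yields a uniform upper bound on $l(K_V - nD)$ depending only on $K_V$ and $H$, contradicting the growth established in the previous paragraph and completing the proof.
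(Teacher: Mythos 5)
Your proof is correct, but it takes a genuinely different route from the paper's. Both proofs start the same way: Riemann--Roch on $V$ shows that $l(mD)+l(K_V-mD)$ grows like $\tfrac{1}{2}(D\cdot D)\,m^2$, so if $l(mD)$ stays bounded (which, as you observe, follows from $(D\cdot H)=0$) then $l(K_V-mD)\to\infty$. Where the arguments diverge is in how they turn this growth into a contradiction. You bound $l(K_V-nD)$ uniformly by iterating the restriction sequence $0\to\mathcal{O}_V(M-H)\to\mathcal{O}_V(M)\to\mathcal{O}_H(M|_H)\to 0$ across a Bertini hyperplane curve $H$, using Riemann--Roch on $H$ and the key observation that $\deg(M|_H)$ is independent of $n$ precisely because $(D\cdot H)=0$. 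The paper avoids restriction to curves entirely: it notes that $l(mD)+l(K_V-mD)$ and $l(-mD)+l(K_V+mD)$ both blow up, so if $l(\pm mD)\le 1$ then $l(K_V-mD)$ and $l(K_V+mD)$ are both large, and the elementary inequality $l(A)>0\Rightarrow l(A+B)\ge l(B)$ forces $l(2K_V)\ge l(K_V+mD)$ to be unbounded---impossible since $2K_V$ is fixed. The paper's argument is slightly more elementary (no Bertini, no curve-level Riemann--Roch, no cohomology of line bundles) and is structured as proving the contrapositive $(D^2)>0\Rightarrow l(\pm mD)>1$ for some $m$; yours is a direct contradiction and brings in more machinery. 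One tiny gap worth noting: in your first paragraph, $l(nD)>0$ only forces $nD$ linearly equivalent to an \emph{effective} divisor, possibly the zero divisor; you should dispose of the case $nD\sim 0$ separately (it gives $n^2(D\cdot D)=0$, contradicting $(D\cdot D)>0$), and similarly when you invoke that paragraph for $K_V-nD-mH$.
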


\begin{proof}
We begin with a remark: suppose that $l(D)>0$, i.e., there exists an $f\neq0$
such that $(f)+D\geq0$; then, for a divisor $D^{\prime}$,%
\begin{equation}
l(D+D^{\prime})=l((D+(f))+D^{\prime})\geq l(D^{\prime}). \label{eq2}%
\end{equation}

We now prove the theorem. To prove the contrapositive, it suffices to show
that
\[
(D\cdot D)>0\implies l(mD)>1\text{ for some integer }m\text{,}%
\]
because then%
\[
(D\cdot H)=\frac{1}{m}(mD\cdot H)\neq0
\]
by (\ref{eq1}) above. Hence, suppose that $(D\cdot D)>0$. By the Riemann-Roch
theorem%
\[
l(mD)+l(K_{V}-mD)\geq\frac{(D\cdot D)}{2}m^{2}+\text{lower powers of }m.
\]
Therefore, for a fixed $m_{0}\geq1$, we can find an $m>0$ such that%
\begin{align*}
l(mD)+l(K_{V}-mD)  &  \geq m_{0}+1\\
l(-mD)+l(K_{V}+mD)  &  \geq m_{0}+1.
\end{align*}
If both $l(mD)\leq1$ and $l(-mD)\leq1$, then both $l(K_{V}-mD)\geq m_{0}$ and
$l(K_{V}+mD)\geq m_{0}$, and so%
\[
l(2K_{V})=l(K_{V}-mD+K_{V}+mD)\overset{\text{(\ref{eq2})}}{\geq}%
l(K_{V}+mD)\geq m_{0}\text{.}%
\]
As $m_{0}$ was arbitrary, this is impossible.
\end{proof}

Let $Q$ be a symmetric bilinear form on a finite-dimensional vector space $W$
over $\mathbb{Q}{}$ (or $\mathbb{R}{})$. There exists a basis for $W$ such
that $Q(x,x)=a_{1}x_{1}^{2}+\cdots+a_{n}x_{n}^{2}$. The number of $a_{i}>0$ is
called the \textit{index} (of positivity) of $Q$ --- it is independent of the
basis. There is the following (obvious) criterion: $Q$ has index $1$ if and
only if there exists an $x\in V$ such that $Q(x,x)>0$ and $Q(y,y)\leq0$ for
all $y\in\langle x\rangle^{\perp}$.

Now consider a surface $V$ as before, and let $\Pic(V)$ denote the group of
divisors on $V$ modulo linear equivalence. We have a symmetric bi-additive
intersection form%
\[
\Pic(V)\times\Pic(V)\rightarrow\mathbb{Z}{}.
\]
On tensoring with $\mathbb{Q}{}$ and quotienting by the kernels, we get a
nondegenerate intersection form%
\[
N(V)\times N(V)\rightarrow\mathbb{Q}{}.
\]

\begin{corollary}
\label{r2}The intersection form on $N(V)$ has index $1$.
\end{corollary}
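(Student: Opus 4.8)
The plan is to apply the linear-algebra criterion stated just above the corollary to the class $h\in N(V)$ of a hyperplane section $H$ of $V$ (with respect to some fixed embedding $V\hookrightarrow\mathbb{P}{}^{n}$). Concretely, writing $Q$ for the intersection form on $N(V)$, I must check two things: that $Q(h,h)=(H\cdot H)>0$, and that $Q(y,y)\leq0$ for every $y\in\langle h\rangle^{\perp}$. The second is essentially the content of the Hodge Index Theorem; the first is a quick consequence of Lemma \ref{r0}.

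First I would verify $(H\cdot H)>0$. Choose two distinct hyperplanes $H_{1}^{\prime},H_{2}^{\prime}$ of $\mathbb{P}{}^{n}$, neither containing $V$, and set $H_{i}=V\cap H_{i}^{\prime}$. These are linearly equivalent, say $H_{2}=H_{1}+(f)$ with $f\in k(V)^{\times}$; since $H_{1}\neq H_{2}$ as divisors, $f$ is non-constant, so $l(H)\geq l(H_{1})\geq2>1$. Lemma \ref{r0} then gives $(H\cdot H)=(H\cdot H_{1})>0$. In particular $h\neq0$ in $N(V)$ and $Q(h,h)>0$.

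Next, take any $y\in N(V)$ with $Q(y,h)=0$. After multiplying by a nonzero integer I may assume $y$ is the class of an honest divisor $D$ on $V$ with $(D\cdot H)=0$; Theorem \ref{r1} then yields $(D\cdot D)\leq0$, hence $Q(y,y)=(D\cdot D)\leq0$, the sign being unaffected by the scaling. Thus $h$ satisfies both conditions in the criterion, and therefore $Q$ has index $1$.

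I do not expect a genuine obstacle here: once the Hodge Index Theorem is in hand, the only extra inputs are that a hyperplane section has positive self-intersection (so the positive-index part is at least one-dimensional) and the elementary fact that a nondegenerate symmetric form on a $\mathbb{Q}{}$-space admitting an $x$ with $Q(x,x)>0$ and $Q$ negative semidefinite on $\langle x\rangle^{\perp}$ has index exactly $1$. The only care needed is in passing between integral divisor classes — where Lemmas \ref{r0} and Theorem \ref{r1} are phrased — and the $\mathbb{Q}{}$-vector space $N(V)$, which is harmless since the relevant inequalities are homogeneous in $D$.
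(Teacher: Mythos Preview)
Your proposal is correct and follows exactly the paper's approach: the paper's proof consists of the single sentence ``Apply the theorem and the criterion just stated,'' and you have simply unpacked this by taking $x=h$ the class of a hyperplane section, verifying $Q(h,h)>0$ via Lemma~\ref{r0}, and invoking Theorem~\ref{r1} for the orthogonal complement. The extra care you take in showing $l(H)>1$ and in passing between integral divisors and $N(V)$ is reasonable but not something the paper bothers to spell out.
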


\begin{proof}
Apply the theorem and the criterion just stated.
\end{proof}

\begin{corollary}
\label{r3}Let $D$ be a divisor on $V$ such that $(D^{2})>0$. If $(D\cdot
D^{\prime})=0$, then $(D^{\prime2})\leq0$.
\end{corollary}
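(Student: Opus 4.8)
The plan is to reduce the statement to the linear algebra of the intersection form on $N(V)$ and invoke Corollary \ref{r2}. Since $(D^{2})$, $(D^{\prime 2})$ and $(D\cdot D^{\prime})$ depend only on the classes $\bar D$ and $\bar D^{\prime}$ of $D$ and $D^{\prime}$ in $N(V)$, it suffices to prove the following: if $Q$ denotes the intersection form on $N(V)$, and $Q(\bar D,\bar D)>0$ while $Q(\bar D,\bar D^{\prime})=0$, then $Q(\bar D^{\prime},\bar D^{\prime})\leq 0$.

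I would argue by contradiction, assuming $Q(\bar D^{\prime},\bar D^{\prime})>0$. First one checks that $\bar D$ and $\bar D^{\prime}$ are linearly independent: a relation $\bar D^{\prime}=\lambda\bar D$ would give $0=Q(\bar D,\bar D^{\prime})=\lambda Q(\bar D,\bar D)$, hence $\lambda=0$ and $\bar D^{\prime}=0$, contradicting $Q(\bar D^{\prime},\bar D^{\prime})>0$. On the two-dimensional subspace spanned by $\bar D$ and $\bar D^{\prime}$ the form $Q$ is then diagonal in this basis (the two vectors are orthogonal) with both diagonal entries strictly positive, so $Q$ restricted to that subspace has index $2$. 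But the index of a symmetric form on a subspace never exceeds its index on the ambient space, so this contradicts Corollary \ref{r2}, which asserts that $Q$ has index $1$ on $N(V)$.

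Equivalently, and perhaps more in the spirit of the criterion preceding Corollary \ref{r2}, one can split $N(V)=\mathbb{Q}\bar D\oplus\langle\bar D\rangle^{\perp}$ (legitimate because $Q(\bar D,\bar D)\neq 0$); the line $\mathbb{Q}\bar D$ already carries a one-dimensional positive form, so by additivity of the index under orthogonal direct sums the restriction of $Q$ to $\langle\bar D\rangle^{\perp}$ has index $0$, i.e.\ is negative semidefinite, and $\bar D^{\prime}\in\langle\bar D\rangle^{\perp}$ then forces $(D^{\prime 2})=Q(\bar D^{\prime},\bar D^{\prime})\leq 0$. I do not anticipate a genuine obstacle here: the one point that must be kept straight is that index $1$ together with the fact that we have split off one positive direction forces the orthogonal complement to be negative semidefinite, not merely of index $\leq 1$; everything else is bookkeeping already packaged into Corollary \ref{r2} and the Hodge index theorem.
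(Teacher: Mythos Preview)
Your proposal is correct and is essentially the paper's argument spelled out in detail: the paper's one-line proof is ``The form is negative definite on $\langle D\rangle^{\perp}$,'' which is exactly your second formulation (on $N(V)$ the form is nondegenerate of index $1$, so the orthogonal complement of a positive vector is negative definite, hence in particular $Q(\bar D',\bar D')\leq 0$). Your first contradiction argument via a $2$-dimensional positive-definite subspace is a valid minor variant of the same idea.
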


\begin{proof}
The form is negative definite on $\langle D\rangle^{\perp}$.
\end{proof}

\subsubsection{The inequality of Castelnuovo-Severi}

Now take $V$ to be the product of two curves, $V=C_{1}\times C_{2}$. Identify
$C_{1}$ and $C_{2}$ with the curves $C_{1}\times\mathrm{pt}$ and
$\mathrm{pt\times C}_{2}$ on $V$, and note that%
\begin{align*}
C_{1}\cdot C_{1}  &  =0=C_{2}\cdot C_{2}\\
C_{1}\cdot C_{2}  &  =1=C_{2}\cdot C_{1}.
\end{align*}
Let $D$ be a divisor on $C_{1}\times C_{2}$ and set $d_{1}=D\cdot C_{1}$ and
$d_{2}=D\cdot C_{2}.$

\begin{theorem}
[Castelnuovo-Severi Inequality]\label{r4}Let $D$ be a divisor on $V$; then%
\begin{equation}
(D^{2})\leq2d_{1}d_{2}. \label{e23}%
\end{equation}

\end{theorem}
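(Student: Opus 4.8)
The plan is to deduce the inequality directly from the Hodge index theorem (in the form of Corollary \ref{r3}) by first ``straightening'' $D$ so that it becomes orthogonal to the two ruling classes $C_1$ and $C_2$. Concretely, I would introduce the corrected divisor
\[
D^{\prime}=D-d_{2}C_{1}-d_{1}C_{2},
\]
and, using the relations $(C_{1}^{2})=(C_{2}^{2})=0$, $(C_{1}\cdot C_{2})=1$ together with the definitions $d_{1}=D\cdot C_{1}$ and $d_{2}=D\cdot C_{2}$, check that
\[
(D^{\prime}\cdot C_{1})=d_{1}-d_{1}=0,\qquad(D^{\prime}\cdot C_{2})=d_{2}-d_{2}=0.
\]

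Next I would produce a divisor of positive self-intersection orthogonal to $D^{\prime}$, in order to apply Corollary \ref{r3}. The natural choice is $E=C_{1}+C_{2}$: the same relations give $(E^{2})=2(C_{1}\cdot C_{2})=2>0$, while $(D^{\prime}\cdot E)=(D^{\prime}\cdot C_{1})+(D^{\prime}\cdot C_{2})=0$. Hence Corollary \ref{r3}, applied with $E$ in the role of the divisor of positive square and $D^{\prime}$ in the role of the orthogonal divisor, yields $(D^{\prime 2})\leq0$.

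Finally I would expand the square. A direct computation with the intersection relations gives
\[
(D^{\prime 2})=(D^{2})-2d_{2}(D\cdot C_{1})-2d_{1}(D\cdot C_{2})+2d_{1}d_{2}(C_{1}\cdot C_{2})=(D^{2})-2d_{1}d_{2},
\]
so the bound $(D^{\prime 2})\leq0$ is precisely the asserted inequality $(D^{2})\leq2d_{1}d_{2}$.

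There is essentially no obstacle here; the only point deserving a word is the legitimacy of invoking Corollary \ref{r3} with $E=C_{1}+C_{2}$ in place of a genuine hyperplane section. But that corollary is already stated for an arbitrary divisor of strictly positive self-intersection, so no ampleness of $E$ is required (one uses only that $E$ and $D^{\prime}$ have nonzero classes in $N(V)$, which is automatic from $(E^{2})\neq0$). The entire content of the proof is thus the observation that correcting $D$ against the two rulings of $C_{1}\times C_{2}$ converts the Castelnuovo--Severi inequality into the negativity statement furnished by the Hodge index theorem.
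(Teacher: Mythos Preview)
Your proof is correct and follows essentially the same route as the paper: both subtract $d_{2}C_{1}+d_{1}C_{2}$ from $D$, note that the result is orthogonal to $C_{1}+C_{2}$ (whose self-intersection is $2>0$), apply the Hodge index theorem in the form of Corollary~\ref{r3}, and expand. Your remark that Corollary~\ref{r3} requires only $(E^{2})>0$ rather than ampleness of $E$ is well taken and makes the citation slightly more precise than the paper's bare reference to ``the Hodge index theorem.''
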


\begin{proof}
We have%
\begin{align*}
(C_{1}+C_{2})^{2}  &  =2>0\\
(D-d_{2}C_{1}-d_{1}C_{2})\cdot(C_{1}+C_{2})  &  =0.
\end{align*}
Therefore, by the Hodge index theorem,%
\[
(D-d_{2}C_{1}-d_{1}C_{2})^{2}\leq0.
\]
On expanding this out, we find that $D^{2}\leq2d_{1}d_{2}.$
\end{proof}

Define the equivalence defect (\textit{difetto di equivalenza}) of a divisor
$D$ by%
\[
\mathrm{def}(D)=2d_{1}d_{2}-(D^{2})\geq0.
\]

\begin{corollary}
\label{r00}Let $D$, $D^{\prime}$ be divisors on $V$; then%
\begin{equation}
\left\vert \left(  D\cdot D^{\prime}\right)  -d_{1}d_{2}^{\prime}-d_{2}%
d_{1}^{\prime}\right\vert \leq\left(  \mathrm{def}(D)\mathrm{def}(D^{\prime
})\right)  ^{1/2}. \label{e6}%
\end{equation}

\end{corollary}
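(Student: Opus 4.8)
The plan is to reduce the statement to the Hodge index theorem applied to a suitably chosen combination of $D$, $D'$, $C_1$, $C_2$, exactly as in the proof of Theorem~\ref{r4} but with both divisors in play. First I would record the bilinear version of the Castelnuovo–Severi setup: working in $N(V)\otimes\mathbb{R}$, consider the two-dimensional subspace $\Pi$ spanned by $C_1$ and $C_2$. On $\Pi$ the intersection form has matrix $\left(\begin{smallmatrix}0&1\\1&0\end{smallmatrix}\right)$, which is nondegenerate of index $1$; since by Corollary~\ref{r2} the whole form on $N(V)$ has index $1$, it must be negative semidefinite on the orthogonal complement $\Pi^{\perp}$.

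Next I would project $D$ and $D'$ into $\Pi^{\perp}$. Set $E=D-d_2C_1-d_1C_2$ and $E'=D'-d_2'C_1-d_1'C_2$; a direct check (using $C_1^2=C_2^2=0$, $C_1\cdot C_2=1$) shows $E\cdot C_1=E\cdot C_2=0$ and likewise for $E'$, so both lie in $\Pi^{\perp}$. Expanding, $(E^2)=(D^2)-2d_1d_2=-\mathrm{def}(D)\le 0$, similarly $(E'^2)=-\mathrm{def}(D')\le 0$, and $(E\cdot E')=(D\cdot D')-d_1d_2'-d_2d_1'$, which is the quantity inside the absolute value in \eqref{e6}.

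The remaining step is a Cauchy–Schwarz argument for the (negative semidefinite) form on $\Pi^{\perp}$: for any $\lambda\in\mathbb{R}$, $\bigl((E+\lambda E')^2\bigr)\le 0$, i.e. $(E^2)+2\lambda(E\cdot E')+\lambda^2(E'^2)\le 0$; since $(E'^2)\le 0$ this quadratic in $\lambda$ opens downward (or is linear/constant), and its nonpositivity for all $\lambda$ forces the discriminant condition $(E\cdot E')^2\le (E^2)(E'^2)=\mathrm{def}(D)\,\mathrm{def}(D')$. Taking square roots gives \eqref{e6}. The one case needing a word of care is when $(E'^2)=0$ (i.e. $\mathrm{def}(D')=0$): then negative semidefiniteness of the form forces $(E\cdot E')=0$ as well — otherwise choosing $\lambda$ of the appropriate sign and large magnitude makes $(E+\lambda E')^2$ positive — so the inequality holds with both sides zero. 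I expect this degenerate case to be the only subtlety; everything else is the bilinear shadow of the computation already carried out for Theorem~\ref{r4}.
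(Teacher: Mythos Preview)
Your proof is correct and is essentially the paper's argument in a slightly different packaging. The paper simply applies Theorem~\ref{r4} to $mD+nD'$ and expands $\mathrm{def}(mD+nD')\ge 0$ as a quadratic in $m,n$; since $\mathrm{def}(mD+nD')=-(mE+nE')^{2}$ with your $E,E'$, this is exactly your inequality $(E+\lambda E')^{2}\le 0$, and both routes finish with the same discriminant condition.
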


\begin{proof}
Let $m,n\in\mathbb{Z}{}$. On expanding out%
\[
\mathrm{def}(mD+nD^{\prime})\geq0,
\]
we find that%
\[
m^{2}\mathrm{def}(D)-2mn\left(  (D\cdot D^{\prime})-d_{1}d_{2}^{\prime}%
-d_{2}d_{1}^{\prime}\right)  +n^{2}\mathrm{def}(D^{\prime})\geq0.
\]
As this holds for all $m,n$, it implies (\ref{e6}).
\end{proof}

\begin{example}
\label{r01}Let $f$ be a nonconstant morphism $C_{1}\rightarrow C_{2}$, and let
$g_{i}$ denote the genus of $C_{i}$. The graph of $f$ is a divisor $\Gamma
_{f}$ on $C_{1}\times C_{2}$ with $d_{2}=1$ and $d_{1}$ equal to the degree of
$f$. Now%
\[
K_{\Gamma_{f}}=(K_{V}+\Gamma_{f})\cdot\Gamma_{f}\quad\quad\text{(adjunction
formula).}%
\]
On using that $K_{V}=K_{C_{1}}\times C_{2}+C_{1}\times K_{C_{2}}$, and taking
degrees, we find that%
\[
2g_{1}-2=(\Gamma_{f})^{2}+(2g_{1}-2)\cdot1+(2g_{2}-2)\deg(f).
\]
Hence%
\begin{equation}
\mathrm{def}(\Gamma_{f})=2g_{2}\deg(f). \label{e1}%
\end{equation}

\end{example}

\subsubsection{Proof of the Riemann hypothesis for curves}

Let $C_{0}$ be a projective nonsingular curve over a finite field $k_{0}$, and
let $C$ be the curve obtained by extension of scalars to the algebraic closure
$k$ of $k_{0}$. Let $\pi$ be the Frobenius endomorphism of $C$. Then (see
(\ref{e1})), $\mathrm{def}(\Delta)=2g$ and $\mathrm{def}(\Gamma_{\pi})=2gq$,
and so (see (\ref{e6})),
\[
\left\vert (\Delta\cdot\Gamma_{\pi})-q-1\right\vert \leq2gq^{1/2}.
\]
As%
\[
(\Delta\cdot\Gamma_{\pi})=\text{number of points on }C\text{ rational over
}k_{0}\text{,}%
\]
we obtain Riemann hypothesis for $C_{0}$.

\begin{aside}
\label{r02}Note that, except for the last few lines, the proof is purely
geometric and takes place over an algebraically closed field.\footnote{I once
presented this proof in a lecture. At the end, a listener at the back
triumphantly announced that I couldn't have proved the Riemann hypothesis
because I had only ever worked over an algebraically closed field.} This is
typical: study of the Riemann hypothesis over finite fields suggests questions
in algebraic geometry whose resolution proves the hypothesis.
\end{aside}

\subsubsection{Correspondences\label{correspondence}}

A divisor $D$ on a product $C_{1}\times C_{2}$ of curves is said to have
\textit{valence zero} if it is linearly equivalent to a sum of divisors of the
form $C_{1}\times\mathrm{pt}$ and $\mathrm{pt}\times C_{2}$. The group of
correspondences $\mathcal{C}(C_{1},C_{2}){}$ is the quotient of the group of
divisors on $C_{1}\times C_{2}$ by those of valence zero. When $C_{1}=C_{2}%
=C$, the composite of two divisors $D_{1}$ and $D_{2}$ is
\[
D_{1}\circ D_{2}\overset{\textup{{\tiny def}}}{=}p_{13\ast}(p_{12}^{\ast}%
D_{1}\cdot p_{23}^{\ast}D_{2})
\]
where the $p_{ij}$ are the projections $C\times C\times C\rightarrow C\times
C$; in general, it is only defined up to linear equivalence. When $D\circ E$
is defined, we have%
\begin{equation}
d_{1}(D\circ E)=d_{1}(D)d_{1}(E),\quad d_{2}(D\circ E)=d_{2}(D)d_{2}%
(E),\quad(D\cdot E)=(D\circ E^{\prime},\Delta) \label{e29}%
\end{equation}
where, as usual, $E^{\prime}$ is obtained from $E$ by reversing the factors.
Composition makes the group $\mathcal{C}(C,C)$ of correspondences on $C$ into
a ring $\mathcal{R}{}(C)$.

Following Weil (cf. (\ref{e3}), p.\pageref{e3}), we define the
\textquotedblleft trace\textquotedblright\ of a correspondence $D$ on $C$ by%
\[
\sigma(D)=d_{1}(D)+d_{2}(D)-(D\cdot\Delta).
\]
Applying (\ref{e29}), we find that%
\begin{align*}
\sigma(D\circ D^{\prime})  &  \overset{\textup{{\tiny def}}}{=}d_{1}(D\circ
D^{\prime})+d_{2}(D\circ D^{\prime})-((D\circ D^{\prime})\cdot\Delta)\\
&  =d_{1}(D)d_{2}(D)+d_{2}(D)d_{1}(D)-(D^{2})\\
&  =\operatorname*{def}(D).
\end{align*}
Thus Weil's inequality $\sigma(D\circ D^{\prime})\geq0$ is a restatement of
(\ref{e23}).

\subsection{Sur les courbes\ldots\ (Weil 1948a)}

In this short book, Weil provides the details for his 1941 proof. He does not
use the Riemann-Roch theorem for a surface, which was not available in nonzero
characteristic at the time.

Let $C$ be a nonsingular projective curve of genus $g$ over an algebraically
closed field $k$. We assume a theory of intersections on $C\times C$, for
example, that in Weil 1946 or, more simply, that sketched above
(p.\pageref{intersection}). We briefly sketch\label{Weil} Weil's proof that
$\sigma(D\circ D^{\prime})\geq0$. As we have just seen, this suffices to prove
the Riemann hypothesis.

Assume initially that $D$ is positive, that $d_{2}(D)=g\geq2$, and that
\[
D=D_{1}+\cdots+D_{g}%
\]
with the $D_{i}$ distinct. We can regard $D$ as a multivalued map $P\mapsto
D(P)=\{D_{1}(P),\ldots,D_{g}(P)\}$ from $C$ to $C$. Then%
\begin{align*}
D(k)  &  =\{(P,D_{i}(P))\mid P\in C(k),\quad1\leq i\leq g\}\\
D^{\prime}(k)  &  =\{(D_{i}(P),P)\mid P\in C(k),\quad1\leq i\leq g\}\\
\left(  D\circ D^{\prime}\right)  (k)  &  =\{(D_{i}(P),D_{j}(P))\mid P\in
C(k),\quad1\leq i,j\leq g\}\text{.}%
\end{align*}
The points with $i=j$ contribute a component $Y_{1}=d_{1}(D)\Delta$ to $D\circ
D^{\prime}$, and
\[
(Y_{1}\cdot\Delta)=d_{1}(D)(\Delta\cdot\Delta)=d_{1}(D)(2-2g).
\]
It remains to estimate $(Y_{2}\cdot\Delta)$ where $Y_{2}=D-Y_{1}$. Let $K_{C}$
denote a positive canonical divisor on $C$, and let $\left\{  \varphi
_{1},\ldots,\varphi_{g}\right\}  $ denote a basis for $L(K_{C})$. For $P\in
C(k)$, let%
\[
\Phi(P)=\det(\varphi_{i}(D_{j}(P))
\]
(Weil 1948a, II, n$^{\circ}$13, p.52). Then $\Phi$ is a rational function on
$C$, whose divisor we denote $(\Phi)=(\Phi)_{0}-(\Phi)_{\infty}$. The zeros of
$\Phi$ correspond to the points $(D_{i}(P),D_{j}(P))$ with $D_{i}(P)=D_{j}%
(P)$, $i\neq j$, and so%
\[
\deg(\Phi)_{0}=(Y_{2}\cdot\Delta).
\]
On the other hand the poles of $\Phi$ are at the points $P$ for which
$D_{j}(P)$ lies in the support of $K_{C}$, and so%
\[
\deg(\Phi)_{\infty}\leq\deg(K_{C})d_{1}(D)=(2g-2)d_{1}(D).
\]
Therefore,%
\[
(D\circ D^{\prime},\Delta)\leq d_{1}(D)(2-2g)+(2g-2)d_{1}(D)=0
\]
and so%
\[
\sigma(D\circ D^{\prime})\overset{\textup{{\tiny def}}}{=}d_{1}(D\circ
D^{\prime})+d_{2}(D\circ D^{\prime})-(D\circ D^{\prime},\Delta)\geq
2gd_{1}(D)\geq0.
\]

Let $D$ be a divisor on $C\times C$. Then $D$ becomes equivalent to a divisor
of the form considered in the last paragraph after we pass to a finite
generically Galois covering $V\rightarrow C\times C$ (this follows from Weil
1948a, Proposition 3, p.43). Elements of the Galois group of $k(V)$ over
$k(C\times C)$ act on the matrix $(\varphi_{i}(D_{j}(P))$ by permuting the
columns, and so they leave its determinant unchanged except possibly for a
sign. On replacing $\Phi$ with its square, we obtain a rational function on
$C\times C$, to which we can apply the above argument. This completes the sketch.

Weil gives a rigorous presentation of the argument just sketched\textit{ }in
II, pp.42--54, of his book 1948a. In fact, he proves the stronger result:
$\sigma(\xi\circ\xi^{\prime})>0$ for all nonzero $\xi\in\mathcal{R}{}(C)$
(ibid. Thm 10, p.54). In the earlier part of the book, Weil (re)proves the
Riemann-Roch theorem for curves and develops the theory of correspondences on
a curve based on the intersection theory developed in his
\textit{Foundations.} In the later part he applies the inequality to obtain
his results on the zeta function of $C$, but he defers the proof of his
results on Artin $L$-series to his second book.\footnote{Serre writes (email
July 2015):\bquote Weil always insisted that Artin's conjecture on the
holomorphy of non-abelian $L$-functions is on the same level of difficulty as
the Riemann hypothesis. In his book \textquotedblleft Courbes alg\'{e}briques
... \textquotedblright\ he mentions (on p.83) that the $L$-functions are
polynomials, but he relies on the second volume for the proof (based on the
$\ell$-adic representations: the positivity result alone is not enough). I
find interesting that, while there are several \textquotedblleft
elementary\textquotedblright\ proofs of the Riemann hypothesis for curves,
none of them gives that Artin's $L$-functions are polynomial. The only way to
prove it is via $\ell$-adic cohomology, or equivalently, $\ell$-adic
representations. Curiously, the situation is different over number fields,
since we know several non-trivial cases where Artin's conjecture is true
(thanks to Langlands theory), and no case where the Riemann hypothesis
is!\equote}

\subsubsection{Some history}

Let $C_{1}$ and $C_{2}$ be two nonsingular projective curves over an
algebraically closed field $k$. Severi, in his fundamental paper (1903),
defined a bi-additive form%
\[
\sigma(D,E)=d_{1}(D)d_{2}(E)+d_{2}(D)d_{1}(E)-(D\cdot E)
\]
on $\mathcal{C}(C_{1},C_{2})$ and conjectured that it is non-degenerate. Note
that%
\[
\sigma(D,D)=2d_{1}(D)d_{2}(D)-(D^{2})=\mathrm{def}(D)\text{.}%
\]
Castelnuovo (1906) proved the following theorem,

\begin{quote}
let $D$ be a divisor on $C_{1}\times C_{2}$; then $\sigma(D,D)\geq0$, with
equality if and only if $D$ has valence zero.\footnote{In fact, Castelnuovo
proved a more precise result, which Kani (1984) extended to characteristic
$p$, thereby obtaining another proof of the defect inequality in
characteristic $p$.}
\end{quote}

\noindent from which he was able to deduce Severi's conjecture. Of course,
this all takes place in characteristic zero.

As noted earlier, the Riemann Roch theorem for surfaces in characteristic $p$
was not available to Weil. The Italian proof of the complex Riemann-Roch
theorem rests on a certain lemma of Enriques and Severi. Zariski (1952)
extended this lemma to normal varieties of all dimensions in all
characteristics; in particular, he obtained a proof of the Riemann-Roch
theorem for normal surfaces in characteristic $p$.

Mattuck and Tate (1958) used the Riemann-Roch theorem in the case of a product
of two curves to obtain a simple proof of the Castelnuovo-Severi
inequality.\footnote{In their introduction, they refer to Weil's 1940 note and
write: \textquotedblleft the [Castelnuovo-Severi] inequality is really a
statement about the geometry on a very special type of surface --- the product
of two curves --- and it is natural to ask whether it does not follow from the
general theory of surfaces.\textquotedblright\ \noindent Apparently they had
forgotten that Weil had answered this question in 1941!}

In trying to understand the exact scope of the method of Mattuck and Tate,
Grothendieck (1958a) stumbled on the Hodge index
theorem.\footnote{\textquotedblleft En essayant de comprendre la port\'{e}e
exacte de leur m\'{e}thode, je suis tomb\'{e} sur l'\'{e}nonc\'{e} suivant,
connu en fait depuis 1937 (comme me l'a signal\'{e} J.P. Serre), mais
apparemment peu connu et utilis\'{e}.\textquotedblright\ The Hodge index
theorem was first proved by analytic methods in Hodge 1937, and by algebraic
methods in Segre 1937 and in Bronowski 1938.} In particular, he showed that
the Castenuovo-Severi-Weil inequality follows from a general statement, valid
for all surfaces, which itself is a simple consequence of the Riemann-Roch
theorem for surfaces.

The proof presented in the preceding subsection incorporates these simplifications.

\subsubsection{Variants}

Igusa (1949) gave another elaboration of the proof of the Riemann hypothesis
for curves sketched in Weil 1941. Following Castelnuovo, he first proves a
formula of Schubert, thereby giving the first rigorous proof of this formula
valid over an arbitrary field. His proof makes use of the general theory of
intersection multiplicities in Weil's \textit{Foundations}.

Intersection multiplicities in which one of the factors is a hypersurface can
be developed in an elementary fashion, using little more that the theory of
discrete valuations (cf. p.\pageref{intersection}). In his 1953 thesis, Weil's
student Frank Quigley \textquotedblleft arranged\textquotedblright\ Weil's
1941 proof so that it depends only on this elementary intersection theory
(Quigley 1953). As did Igusa, he first proved Schubert's formula.

Finally, in his thesis (Hamburg 1951), Hasse's student Roquette gave a proof
of the Riemann hypothesis for curves based on Deuring's theory of
correspondences for double-fields (published as Roquette 1953).

\subsubsection{Applications to exponential sums}

Davenport and Hasse (1935) showed that certain arithmetic functions can be
realized as the traces of Frobenius maps. Weil (1948c) went much further, and
showed that all exponential sums in one variable can be realized in this way.
From his results on the zeta functions of curves, he obtained new estimates
for these sums. Later developments, especially the construction of $\ell$-adic
and $p$-adic cohomologies, and Deligne's work on the zeta functions of
varieties over finite fields, have made this a fundamental tool in analytic
number theory.

\subsection{Foundations of Algebraic Geometry (Weil 1946)}

When Weil began the task of constructing foundations for his announcements he
was, by his own account, not an expert in algebraic geometry. During a
six-month stay in Rome, 1925-26, he had learnt something of the Italian school
of algebraic geometry, but mainly during this period he had studied linear
functionals with Vito Volterra.

In writing his Foundations, Weil's main inspiration was the work of van der
Waerden,\footnote{In the introduction, Weil writes that he \textquotedblleft
greatly profited from van der Waerden's well-known series of papers (published
in Math. Ann. between 1927 and 1938)\ldots; from Severi's sketchy but
suggestive treatment of the same subject, in his answer to van der Waerden's
criticism of the work of the Italian school; and from the topological theory
of intersections, as developed by Lefschetz and other contemporary
mathematicians.\textquotedblright} which gives a rigorous algebraic treatment
of projective varieties over fields of arbitrary characteristic and develops
intersection theory by global methods. However, Weil was unable to construct
the jacobian variety of a curve as a projective variety. This forced him to
introduce the notion of an abstract variety, defined by an atlas of charts,
and to develop his intersection theory by local methods. Without the Zariski
topology, his approach was clumsy. However, his \textquotedblleft abstract
varieties\textquotedblright\ liberated algebraic geometry from the study of
varieties embedding in an affine or projective space. In this respect, his
work represents a break with the past.

Weil completed his book in 1944. As Zariski wrote in a review (BAMS 1948):

\begin{quotation}
In the words of the author the main purpose of this book is \textquotedblleft
to present a detailed and connected treatment of the properties of
intersection multiplicities, which is to include all that is necessary and
sufficient to legitimize the use made of these multiplicities in classical
algebraic geometry, especially of the Italian school\textquotedblright. There
can be no doubt whatsoever that this purpose has been fully achieved by Weil.
After a long and careful preparation (Chaps. I--IV) he develops in two central
chapters (V and VI) an intersection theory which for completeness and
generality leaves little to be desired. It goes far beyond the previous
treatments of this foundational topic by Severi and van der Waerden and is
presented with that absolute rigor to which we are becoming accustomed in
algebraic geometry. In harmony with its title the book is entirely
self-contained and the subject matter is developed \textit{ab initio}.

It is a remarkable feature of the book that---with one exception (Chap. III)
---no use is made of the higher methods of modern algebra. The author has made
up his mind not to assume or use modern algebra \textquotedblleft beyond the
simplest facts about abstract fields and their extensions and the bare
rudiments of the theory of ideals.\textquotedblright\ \ldots The author
justifies his procedure by an argument of historical continuity, urging a
return \textquotedblleft to the palaces which are ours by
birthright.\textquotedblright\ But it is very unlikely that our predecessors
will recognize in Weil's book their own familiar edifice, however improved and
completed. If the traditional geometer were invited to choose between
\textquotedblleft makeshift constructions full of rings, ideals and
valuations\textquotedblright\footnote{Weil's description of Zariski's approach
to the foundations.} on one hand, and constructions full of fields, linearly
disjoint fields, regular extensions, independent extensions, generic
specializations, finite specializations and specializations of specializations
on the other, he most probably would decline the choice and say:
\textquotedblleft A plague on both your houses!\textquotedblright
\end{quotation}

For fifteen years, Weil's book provided a secure foundation for work in
algebraic geometry, but then was swept away by commutative algebra, sheaves,
cohomology, and schemes, and was largely forgotten.\footnote{When Langlands
decided to learn algebraic geometry in Berkeley in 1964-65, he read Weil's
\textit{Foundations\ldots} and Conforto's \textit{Abelsche Funktionen\ldots} A
year later, as a student at Harvard, I was able to attend Mumford's course on
algebraic geometry which used commutative algebra, sheaves, and schemes.} For
example, his intersection theory plays little role in Fulton 1984. It seems
that the approach of van der Waerden and Weil stayed too close to the Italian
original with its generic points, specializations, and so on; what algebraic
geometry needed was a complete renovation.

\subsection{Vari\'{e}t\'{e}s ab\'{e}liennes et \ldots\ (Weil 1948b)}

In this book and later work, Weil constructs the jacobian variety of a curve,
and develops a comprehensive theory of abelian varieties over arbitrary
fields, parallel to the transcendental theory over $\mathbb{C}{}$. Although
inspired by his work on the Riemann hypothesis, this work goes far beyond what
is needed to justify his 1940 note. Weil's book opened the door to the
arithmetic study of abelian varieties. In the twenty years following its
publication, almost all of the important results on elliptic curves were
generalized to abelian varieties. Before describing two of Weil's most
important accomplishments in his book, I list some of these developments.

\begin{plain}
\label{r40}There were improvements to the theory of abelian varieties by Weil
and others; see (\ref{r35}) below.
\end{plain}

\begin{plain}
\label{r41}Deuring's theory of complex multiplication for elliptic curves was
extended to abelian varieties of arbitrary dimension by Shimura, Taniyama, and
Weil (see their talks at the Symposium on Algebraic Number Theory, Tokyo \&
Nikko, 1955).
\end{plain}

\begin{plain}
\label{r42}For a projective variety $V$, one obtains a height function by
choosing a projective embedding of $V$. In 1958 N\'{e}ron conjectured that for
an abelian variety there is a \textit{canonical} height function characterized
by having a certain quadratic property. The existence of such a height
function was proved independently by N\'{e}ron and Tate in the early 1960s.
\end{plain}

\begin{plain}
\label{r43}Tate studied the Galois cohomology of abelian varieties, extending
earlier results of Cassels for elliptic curves. This made it possible, for
example, to give a simple natural proof of the Mordell-Weil theorem for
abelian varieties over global fields.
\end{plain}

\begin{plain}
\label{r44}N\'{e}ron (1964) developed a theory of minimal models of abelian
varieties over local and global fields, extending Kodaira's theory for
elliptic curves over function fields in one variable over $\mathbb{C}{}$.
\end{plain}

\begin{plain}
\label{r45}These developments made it possible to state the conjecture of
Birch and Swinnerton-Dyer for abelian varieties over global fields (Tate
1966a) . The case of a jacobian variety over a global function field inspired
the conjecture of Artin-Tate concerning the special values of the zeta
function of a surface over a finite field (ibid. Conjecture C).
\end{plain}

\begin{plain}
\label{r46}Tate (1964) conjectured that, for abelian varieties $A$, $B$ over a
field $k$ finitely generated over the prime field, the map%
\[
\mathbb{Z}{}_{\ell}\otimes\Hom(A,B)\rightarrow\Hom(T_{\ell}A,T_{\ell
}B)^{\Gal(k^{\mathrm{sep}}/k)}%
\]
is an isomorphism. Mumford explained to Tate that, for elliptic curves over a
finite field, this follows from the results of Deuring (1941). In one of his
most beautiful results, Tate proved the statement for all abelian varieties
over finite fields (Tate 1966b). At a key point in the proof, he needed to
divide a polarization by $l^{n}$; for this he was able to appeal to
\textquotedblleft the proposition on the last page of Weil
1948b\textquotedblright.
\end{plain}

\begin{plain}
\label{r47}(Weil-Tate-Honda theory) Fix a power $q$ of a prime $p$. An element
$\pi$ algebraic over $\mathbb{Q}{}$ is called a \textit{Weil number} if it is
integral and $\left\vert \rho(\pi)\right\vert =q^{1/2}$ for all embeddings
$\rho\colon\mathbb{Q}{}[\pi]\rightarrow\mathbb{C}{}$. Two Weil numbers $\pi$
and $\pi^{\prime}$ are conjugate if there exists an isomorphism $\mathbb{Q}%
{}[\pi]\rightarrow\mathbb{Q}{}[\pi^{\prime}]$ sending $\pi$ to $\pi^{\prime}$.
Weil attached a Weil number to each simple abelian variety over $\mathbb{F}%
{}_{q}$, whose conjugacy class depends only on the isogeny class of the
variety, and Tate showed that the map from isogeny classes of simple abelian
varieties over $\mathbb{F}{}_{q}$ to conjugacy classes of Weil numbers is
injective. Using the theory of complex multiplication, Honda (1968) shows that
the map is also surjective. In this way, one obtains a classification of the
isogeny classes of simple abelian varieties over $\mathbb{F}{}_{q}$. Tate
determined the endomorphism algebra of simple abelian variety $A$ over
$\mathbb{F}{}_{q}$ in terms of its Weil number $\pi$: it is a central division
algebra over $\mathbb{Q}{}[\pi]$ which splits at no real prime of
$\mathbb{Q}{}[\pi]$, splits at every finite prime not lying over $p$, and at a
prime $v$ above $p$ has invariant%
\begin{equation}
\inv_{v}(\End^{0}(A))\equiv\frac{\ord_{v}(\pi)}{\ord_{v}(q)}[\mathbb{Q}{}%
[\pi]_{v}\colon\mathbb{Q}{}_{p}]\quad(\text{mod }1); \label{e26}%
\end{equation}
moreover,%
\[
2\dim(A)=[\End^{0}(A)\colon\mathbb{Q}{}[\pi]]^{1/2}\cdot\lbrack\mathbb{Q}%
{}[\pi]\colon\mathbb{Q}{}].
\]
Here $\End^{0}(A)=\End(A)\otimes\mathbb{Q}{}$. See Tate 1968.
\end{plain}

\subsubsection{Iwasawa theory foretold}

Weil was very interested in the analogy between number fields and function
fields and, in particular, in \textquotedblleft extending\textquotedblright%
\ results from function fields to number fields. In 1942 he wrote:

\begin{quote}
Our proof for the Riemann hypothesis depended upon the extension of the
function-fields by roots of unity, i.e., by constants; the way in which the
Galois group of such extensions operates on the classes of divisors in the
original field and its extensions gives a linear operator, the characteristic
roots (i.e., the eigenvalues) of which are the roots of the zeta-function. On
a number field, the nearest we can get to this is by the adjunction of $l^{n}%
$-th roots of unity, $l$ being fixed; the Galois group of this infinite
extension is cyclic, and defines a linear operator on the projective limit of
the (absolute) class groups of those successive finite extensions; this should
have something to do with the roots of the zeta-function of the field. (Letter
to Artin, \OE uvres I, p.298, 1942.)
\end{quote}

\subsection{Construction of the jacobian variety of a curve}

Let $C$ be a nonsingular projective curve over a field $k$, which for
simplicity we take to be algebraically closed. The jacobian variety $J$ of $C$
should be such that $J(k)$ is the group $\Jac(C)$ of linear equivalence
classes of divisors on $C$ of degree zero. Thus, the problem Weil faced was
that of realizing the abstract group $\Jac(C)$ as a projective variety in some
natural way --- over $\mathbb{C}{}$ the theta functions provide a projective
embedding of $\Jac(C)$. He was not able to do this, but as he writes
(\OE uvres I, p.556):

\begin{quote}
In the spring of 1941, I was living in Princeton\ldots\ I often worked in
Chevalley's office in Fine Hall; of course he was aware of my attempts to
\textquotedblleft define\textquotedblright\ the jacobian, i.e., to construct
algebraically a projective embedding. One day, coming into his office, I
surprised him by telling him that there was no need; for the jacobian
everything comes down to its local properties, and a piece of the jacobian,
joined to the group property (the addition of divisor classes) suffices amply
for that. The idea came to me on the way to Fine Hall. It was both the concept
of an \textquotedblleft abstract variety\textquotedblright\ which had just
taken shape, and the construction of the jacobian as an algebraic
group.\footnote{En ce printemps de 1941, je vivais \`{a} Princeton\ldots\ Je
travaillais souvent dans le bureau de Chevalley \`{a} Fine Hall; bien entendu
il \'{e}tait au courant de mes tentatives pour \textquotedblleft
d\'{e}finir\textquotedblright\ la jacobienne, c'est-\`{a}-dire pour en
construire alg\'{e}briquement un plongement projectif. Un jour, entrant chez
lui, je le surpris en lui disant qu'il n'en \'{e}tait nul besoin; sur la
jacobienne tout se ram\`{e}ne \`{a} des propri\'{e}t\'{e}s locales, et un
morceau de jacobienne, joint \`{a} la propri\'{e}t\'{e} de groupe (l'addition
des classes de diviseurs) y suffit amplement. L'id\'{e}e m'en \'{e}tait venue
sur le chemin de Fine Hall. C'\'{e}tait \`{a} la fois la notion de
"vari\'{e}t\'{e} abstraite" qui venait de prendre forme, et la construction de
la jacobienne en tant que groupe alg\'{e}brique, telle qu'elle figure dans
[1948b].}
\end{quote}

In order to explain Weil's idea, we need the notion of a \textit{birational
group} over $k$. This is a nonsingular variety $V$ together with a rational
map $m\colon V\times V\da V$ such that

\begin{enumerate}
\item $m$ is associative (that is, $(ab)c=a(bc)$ whenever both terms are defined);

\item the rational maps $(a,b)\mapsto(a,ab)$ and $(a,b)\mapsto(b,ab)$ from
$V\times V$ to $V\times V$ are both birational.
\end{enumerate}

\begin{theorem}
\label{r5}Let $(V,m)$ be a birational group $V$ over $k$. Then there exists a
group variety $G$ over $k$ and a birational map $f\colon V\da G$ such that
$f(ab)=f(a)f(b)$ whenever $ab$ is defined; the pair $(G,f)$ is unique up to a
unique isomorphism. (Weil 1948b, n$^{\circ}$33, Thm 15; Weil 1955).
\end{theorem}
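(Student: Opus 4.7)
The plan is to construct $G$ by gluing together rational ``translates'' of a fixed dense open of $V$, and then to show that the partial multiplication extends to a regular group law on the glued variety. First I would develop the calculus of translations: condition (2) says that $(a,b)\mapsto(a,ab)$ is birational on $V\times V$, so for generic $a\in V$ the left translation $L_{a}\colon V\da V$ given by $x\mapsto m(a,x)$ is a birational self-map of $V$, with inverse obtained by solving $ay=x$ generically (so there is a rational $\lambda\colon V\times V\da V$ with $m(a,\lambda(a,c))=c$); symmetrically one obtains birational right translations $R_{b}$. Associativity of $m$ then yields the composition identity $L_{a}\circ L_{b}=L_{m(a,b)}$ as rational maps on the dense opens where both sides are defined.

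Next I would fix a dense open $U\subset V$ on which $m|_{U\times U}$ is a morphism and on which both maps of condition (2) are biregular onto their images. For generic $a\in V$ let $U_{a}$ be the open subset on which $L_{a}$ is defined and an isomorphism onto its image; I view $U_{a}$ as a chart in the candidate $G$. The transition from $U_{a}$ to $U_{b}$ is $L_{b}^{-1}\circ L_{a}$, which is biregular on an open overlap by the previous step, and the cocycle condition on triple overlaps reduces via associativity to the identity $(L_{c}^{-1}L_{b})\circ(L_{b}^{-1}L_{a})=L_{c}^{-1}L_{a}$. Gluing these charts produces a variety $G$ into which $V$ embeds as a dense open via any single chart, yielding the birational map $f\colon V\da G$.

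Finally I would promote the multiplication to a morphism $G\times G\to G$: given $g,h\in G$, choose a generic $c$ such that suitable translates of $g$ and $h$ land in $U$, multiply there, and translate back; associativity shows the result is independent of the choice of $c$, and density of valid choices implies the product is defined everywhere, with the rational map $\lambda$ furnishing a regular inversion. For uniqueness, if $(G',f')$ is another such pair then $f'\circ f^{-1}\colon G\da G'$ is a birational map between group varieties respecting the generic group law; by the standard rigidity theorem for group varieties it extends uniquely to a regular isomorphism. The main obstacle I expect is the bookkeeping needed to verify the cocycle conditions on triple overlaps and the separatedness of $G$, and — more subtly — to show that multiplication extends to \emph{all} of $G\times G$, not merely to a dense open; this last point is the true content of Weil's ``normalization of a birational group law'' and requires showing that every pair $(g,h)$ really can be brought simultaneously into $U$ by a single rational translation, so that the local definition of $gh$ covers the whole of $G\times G$.
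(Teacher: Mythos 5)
The paper states Theorem \ref{r5} without proof (it is cited from Weil 1948b and Weil 1955), so I am comparing your outline against the classical construction rather than an in-paper argument. You have the right heuristic --- Weil's proof does glue $V$ to translates of itself and then extends the partial law --- but several of the things you defer as ``bookkeeping'' are in fact the substance of the theorem. Your chart family $\{U_a\}$ is indexed by generic $a\in V$, hence infinite; a variety is a \emph{finite} gluing, so you must prove that finitely many translates already cover $G$ (Weil does this, possibly after a finite extension of $k$), and you must then establish separatedness, which is not automatic for a gluing along open subsets (the line with doubled origin is a gluing of exactly this shape).

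More seriously, extending $m$ to a morphism on all of $G\times G$ is not just a matter of choosing a generic $c$ with $gc^{-1},\,ch\in U$ and invoking associativity. For the local definitions $(g',h')\mapsto(g'c^{-1})(ch')$ to patch into a morphism, and for a valid $c$ to exist for \emph{every} pair $(g,h)$, one must first replace the birational law by a \emph{strict} one: shrink $V$ so that the domains of $m$ and of the two maps in condition (b) of the definition are as large as possible and mutually compatible. This preliminary normalization --- carried out in Weil 1955, and in modern form in Artin 1964 and Bosch--L\"utkebohmert--Raynaud 1990, Ch.\ 5 --- is absent from your sketch, and without it neither the ``density of valid $c$'' claim nor the coherence of the local patches is established; this is precisely the step you flag at the end as ``the true content,'' but flagging it does not supply it. Your uniqueness argument, by contrast, is sound: a birational map of group varieties that is generically multiplicative extends to an isomorphism of algebraic groups by Weil's extension theorem for rational maps into group varieties.
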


Let $C^{(g)}$ denote the symmetric product of $g$ copies of $C$, i.e., the
quotient of $C^{g}$ by the action of the symmetric group $S_{g}$. It is a
smooth variety of dimension $g$ over $k$. The set $C^{(g)}(k)$ consists of the
unordered $g$-tuples of closed points on $C$, which we can regard as positive
divisors of degree $g$ on $C$.

Fix a $P\in C(k)$. Let $D$ be a positive divisor of degree $g$ on $C$.
According to the Riemann-Roch theorem%
\[
l(D)=1+l(K_{C}-D)\geq1,
\]
and one can show that equality holds on a dense open subset of $C^{(g)}$.
Similarly, if $D$ is a positive divisor of degree $2g$ on $C$, then
$l(D-gP)\geq1$ and equality holds on a nonempty open subset $U^{\prime}$ of
$C^{(2g)}$. Let $U$ be the inverse image of $U^{\prime}$ under the obvious map
$C^{(g)}\times C^{(g)}\rightarrow C^{(2g)}$. Then $U$ is a dense open subset
of $C^{(g)}\times C^{(g)}$ with the property that $l(D+D^{\prime}-gP)=1$ for
all $(D,D^{\prime})\in U(k)$.

Now let $(D,D^{\prime})\in U(k)$. Because $l(D+D^{\prime}-gP)>0$, there exists
a positive divisor $D^{\prime\prime}$ on $C$ linearly equivalent to
$D+D^{\prime}-gP$, and because $l(D+D^{\prime}-gP)=1$, the divisor
$D^{\prime\prime}$ is unique. Therefore, there is a well-defined law of
composition%
\[
(D,D^{\prime})\mapsto D^{\prime\prime}\colon U\times U\rightarrow C^{(g)}(k).
\]

\begin{theorem}
\label{r6}There exists a unique rational map
\[
m\colon C^{(g)}\times C^{(g)}\da C^{(g)}%
\]
whose domain of definition contains the subset $U$ and which is such that, for
all fields $K$ containing $k$ and all $(D,D^{\prime})$ in $U(K)$,
$m(D,D^{\prime})\sim D+D^{\prime}-gP$; moreover $m$ makes $C^{(g)}$ into a
birational group.
\end{theorem}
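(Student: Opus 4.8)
The plan is to build $m$ as a rational map directly out of the addition of divisor classes, using the Riemann–Roch estimates already recorded, and then to verify the birational-group axioms by exhibiting enough open sets on which the relevant maps are defined and invertible. First I would upgrade the set-theoretic law of composition $(D,D') \mapsto D''$ on $U(k)$ to an honest rational map. The point is that $D''$ is cut out, inside $C^{(g)}$, by a linear condition that varies algebraically with $(D,D')$: the sheaf $\mathcal{O}(D+D'-gP)$ has a one-dimensional space of sections over the locus $U$, and its unique effective member is $D''$. Concretely, one works over $C^{(g)} \times C^{(g)}$ with the universal pair of divisors, forms the relative version of the linear system $|D+D'-gP|$, and observes that on $U$ this relative linear system is a line bundle whose associated section gives a morphism $U \to C^{(g)}$. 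Since $U$ is dense and $C^{(g)}$ is separated (indeed projective), this morphism extends uniquely to a rational map $m\colon C^{(g)}\times C^{(g)} \dashrightarrow C^{(g)}$, and the uniqueness clause is automatic because two rational maps to a separated variety agreeing on a dense open set coincide. The "for all $K \supseteq k$" clause follows because the construction is geometric: base-changing the universal family to $K$ recovers the same recipe, so $m(D,D') \sim D + D' - gP$ holds on $U(K)$.

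Next I would check associativity of $m$ as a rational map. The identity $(ab)c = a(bc)$ on linear equivalence classes is formal: $(D_1 + D_2 - gP) + D_3 - gP \sim D_1 + (D_2 + D_3 - gP) - gP$ modulo the choice of effective representative. So it suffices to find a dense open subset of $C^{(g)} \times C^{(g)} \times C^{(g)}$ on which both $m(m(\cdot,\cdot),\cdot)$ and $m(\cdot, m(\cdot,\cdot))$ are defined; on such a set the two sides represent the same degree-$g$ divisor class with $l = 1$, hence are equal. Producing this open set is a matter of intersecting the domains of the several instances of $m$ and using again that $l(E - gP) = 1$ generically — the same Riemann–Roch input, applied on $C^{(3g)}$ pulled back appropriately.

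The main obstacle is axiom (2): showing that $(D, D') \mapsto (D, m(D,D'))$ and $(D,D') \mapsto (D', m(D,D'))$ are birational self-maps of $C^{(g)} \times C^{(g)}$. Here is where the fixed base point $P$ and the Riemann–Roch bookkeeping really earn their keep. The inverse of the first map should send $(D, E) \mapsto (D, E')$ where $E'$ is the unique effective divisor of degree $g$ with $E' \sim E - D + gP$; one shows that $l(E - D + gP) = 1$ on a dense open set by the same argument as before (it is again a Riemann–Roch count for a degree-$g$ class), giving a rational map in the reverse direction, and the composite with $m$ is the identity wherever everything is defined because $m(D, E') \sim D + (E - D + gP) - gP = E$ and the effective representative is unique. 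The symmetric statement for the second projection is identical with the roles of $D$ and $D'$ swapped, using that $C^{(g)}$-divisors add commutatively. Granting these, Theorem \ref{r5} applies to $(C^{(g)}, m)$ and produces the group variety — the jacobian — but that is the next step, not part of this statement. Throughout, the only genuine inputs are the Riemann–Roch dimension count $l(D) = 1 + l(K_C - D)$ for positive $D$ of degree $g$ together with its consequence that $l = 1$ on a dense open locus, and the separatedness of $C^{(g)}$; everything else is the formal algebra of divisor classes.
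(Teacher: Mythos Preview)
Your proposal is correct in outline. The paper does not actually prove this theorem; it simply remarks that ``This can be proved, according to taste, by using generic points (Weil 1948b) or functors (Milne 1986).'' Your argument --- building $m$ from the universal family and the relative linear system $|D+D'-gP|$, then verifying the birational-group axioms by producing rational inverses --- is essentially the functorial approach of the second reference; Weil's original proof works instead with generic points in the sense of his \textit{Foundations}.

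Two small comments. First, a morphism defined on the dense open $U$ already \emph{is} a rational map on $C^{(g)}\times C^{(g)}$; there is nothing to ``extend,'' and uniqueness follows at once from density and separatedness. Second, your handling of axiom (b) can be streamlined: since $E=m(D,D')\sim D+D'-gP$, one has $E-D+gP\sim D'$, so the inverse of $(D,D')\mapsto (D,E)$ simply recovers $D'$ as the unique effective divisor in its class, and this is defined on the open locus where $l(D')=1$ --- precisely the dense open subset of $C^{(g)}$ already noted in the paper.
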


This can be proved, according to taste, by using generic points (Weil 1948b)
or functors (Milne 1986). On combining the two theorems, we obtain a group
variety $J$ over $k$ birationally equivalent to $C^{(g)}$ with its partial
group structure. This is the jacobian variety.

\subsubsection{Notes}

\begin{plain}
\label{r35}Weil (1948b Thm 16, et seqq.) proved that the jacobian variety is
complete, a notion that he himself had introduced. Chow (1954) gave a direct
construction of the jacobian variety as a projective variety over the same
base field as the curve. Weil (1950) announced the existence of two abelian
varieties attached to a complete normal algebraic variety, namely, a Picard
variety and an Albanese variety. For a curve, both varieties equal the
jacobian variety, but in general they are distinct dual abelian varieties.
This led to a series of papers on these topics (Matsusaka, Chow, Chevalley,
Nishi, Cartier, \ldots) culminating in Grothendieck's general construction of
the Picard scheme (see Kleiman 2005).
\end{plain}

\begin{plain}
\label{r36}Weil's construction of an algebraic group from a birational group
has proved useful in other contexts, for example, in the construction of the
N\'{e}ron model of abelian variety (N\'{e}ron 1964; Artin 1964, 1986; Bosch et
al. 1990, Chapter 5).
\end{plain}

\subsection{The endomorphism algebra of an abelian variety}

The exposition in this subsection includes improvements explained by Weil in
his course at the University of Chicago, 1954-55, and other articles, which
were incorporated in Lang 1959. In particular, we use that an abelian variety
admits a projective embedding, and we use the following consequence of the
theorem of the cube: Let $f,g,h$ be regular maps from a variety $V$ to an
abelian variety $A$, and let $D$ be a divisor on $A$; then%
\begin{equation}
(f+g+h)^{\ast}D-(f+g)^{\ast}D-(g+h)^{\ast}D-(f+h)^{\ast}D+f^{\ast}D+g^{\ast
}D+h^{\ast}D\sim0. \label{e24}%
\end{equation}
We shall also need to use the intersection theory for divisors on smooth
projective varieties. As noted earlier, this is quite elementary.

\subsubsection{Review of the theory over $\mathbb{C}{}$}

Let $A$ be an abelian variety of dimension $g$ over $\mathbb{C}{}$. Then
$A(\mathbb{C}{})\simeq T/\Lambda$ where $T$ is the tangent space to $A$ at
$0$, and $\Lambda=H_{1}(A,\mathbb{Z}{})$. The endomorphism ring $\End(A)$ of
$A$ acts faithfully on $\Lambda$, and so it is a free $\mathbb{Z}{}$-module of
rank $\leq4g$. We define the characteristic polynomial $P_{\alpha}(T)$ of an
endomorphism $\alpha$ of $A$ to be its characteristic polynomial
$\det(T-\alpha\mid\Lambda)$ on $\Lambda$. It is the unique polynomial in
$\mathbb{\mathbb{Z}{}}{}[T]$ such that%
\[
P_{\alpha}(m)=\deg(m-\alpha)
\]
for all $m\in\mathbb{Z}{}$. It can also be described as the characteristic
polynomial of $\alpha$ acting on $A(\mathbb{C}{})_{\mathrm{tors}}\simeq\left(
\mathbb{Q}{}\otimes\Lambda\right)  /\Lambda$.\footnote{Choose an isomorphism
$(\mathbb{Q}{}\otimes\Lambda/\Lambda)\rightarrow(\mathbb{Q}{}/\mathbb{Z}%
{})^{2g}$, and note that $\End(\mathbb{Q}{}/\mathbb{Z}{})\simeq\mathbb{\hat
{Z}}{}$. The action of $\alpha$ on $\mathbb{Q}{}\otimes\Lambda/\Lambda$
defines an element of $M_{2g}(\mathbb{\hat{Z})}{}$, whose characteristic
polynomial is $P_{\alpha}(T).$}

Choose a Riemann form for $A$. Let $H$ be the associated positive-definite
hermitian form on the complex vector space $T$ and let $\alpha\mapsto
\alpha^{\dagger}$ be the associated Rosati involution on $\End^{0}%
(A)\overset{\textup{{\tiny def}}}{=}\End(A)\otimes\mathbb{Q}{}$. Then%
\[
H(\alpha x,y)=H(x,\alpha^{\dagger}y),\quad\text{all }x,y\in T\text{, }%
\alpha\in\End^{0}(A),
\]
and so $^{\dagger}$ is a positive involution on $\End^{0}(A)$, i.e., the trace
pairing
\[
(\alpha,\beta)\mapsto\Tr(\alpha\circ\beta^{\dagger})\colon\End^{0}%
(A)\times\End^{0}(A)\rightarrow\mathbb{Q}{}%
\]
is positive definite (see, for example, Rosen 1986).

Remarkably, Weil was able to extend these statements to abelian varieties over
arbitrary base fields.

\subsubsection{The characteristic polynomial of an endomorphism}

\begin{proposition}
\label{r15}For all integers $n\geq1$, the map $n_{A}\colon A\rightarrow A$ has
degree $n^{2g}$. Therefore, for $\ell\neq\mathrm{char}(k)$, the $\mathbb{Z}%
{}_{\ell}$-module $T_{\ell}A\overset{\textup{{\tiny def}}}{=}\varprojlim
_{n}A_{\ell^{n}}(k^{\mathrm{al}})$ is free of rank $2g$.
\end{proposition}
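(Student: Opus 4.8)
The plan is to extract everything from the theorem-of-the-cube relation (\ref{e24}). Since $A$ is projective it carries an ample divisor $D_{0}$, and by replacing it with $D = D_{0} + (-1)_{A}^{\ast}D_{0}$ we may arrange that $D$ is ample and symmetric, i.e. $(-1)_{A}^{\ast}D \sim D$. Applying (\ref{e24}) to the three maps $f = n_{A}$, $g = \id_{A}$, $h = (-1)_{A}$ from $A$ to itself and simplifying --- using $f+g+h = n_{A}$, $f+g = (n+1)_{A}$, $g+h = 0_{A}$, $f+h = (n-1)_{A}$, that the pullback $0_{A}^{\ast}D$ along a constant map is linearly trivial, and $(-1)_{A}^{\ast}D \sim D$ --- leaves the recursion
\[
(n+1)_{A}^{\ast}D \sim 2\,n_{A}^{\ast}D - (n-1)_{A}^{\ast}D + 2D .
\]
Since $0_{A}^{\ast}D \sim 0$ and $1_{A}^{\ast}D = D$, an immediate induction gives $n_{A}^{\ast}D \sim n^{2}D$ for all $n \geq 0$.

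The next step is to see that $n_{A}$ is finite. If it were not, some fibre would contain an irreducible curve $B$; but $D$ pulls back to $0$ on a point, so $n_{A}^{\ast}D \cdot B = 0$, contradicting the ampleness of $n^{2}D \sim n_{A}^{\ast}D$ (an ample divisor meets every curve positively). Thus $n_{A}$ is a finite morphism between smooth projective varieties of the common dimension $g$, hence surjective, with a well-defined degree $d = \deg(n_{A})$; the projection formula then gives
\[
d \cdot (D^{g}) = (n_{A}^{\ast}D)^{g} = (n^{2}D)^{g} = n^{2g}(D^{g}),
\]
and $(D^{g}) > 0$ since $D$ is ample, so $d = n^{2g}$. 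This proves the first assertion.

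For the second, fix $\ell \neq \mathrm{char}(k)$. As $\ell$ is invertible in $k$, the differential of $\ell_{A}$ at $0$ is multiplication by $\ell$ on the tangent space, hence an isomorphism, so $\ell_{A}$ --- and therefore each $(\ell^{n})_{A}$, being a composite of copies of $\ell_{A}$ --- is a separable isogeny. Hence $|A_{\ell^{n}}(k^{\mathrm{al}})| = \deg((\ell^{n})_{A}) = \ell^{2gn}$ by the first part, so $A_{\ell^{n}}(k^{\mathrm{al}})$ is a finite abelian group of order $\ell^{2gn}$ killed by $\ell^{n}$ whose $\ell$-torsion subgroup $A_{\ell}(k^{\mathrm{al}})$ has order $\ell^{2g}$; the structure theorem for finite abelian groups then forces $A_{\ell^{n}}(k^{\mathrm{al}}) \cong (\mathbb{Z}/\ell^{n}\mathbb{Z})^{2g}$. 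Since $\ell_{A}$ is surjective on $k^{\mathrm{al}}$-points, the transition maps in $\varprojlim_{n} A_{\ell^{n}}(k^{\mathrm{al}})$ are surjective, and passing to the limit exhibits $T_{\ell}A$ as a free $\mathbb{Z}_{\ell}$-module of rank $2g$.

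I expect the only delicate point to be the finiteness of $n_{A}$: when $\mathrm{char}(k) = p$ divides $n$ the differential of $n_{A}$ at $0$ is not invertible, so one genuinely needs the geometric argument via ampleness rather than an infinitesimal one. Everything else is bookkeeping once (\ref{e24}) --- itself a consequence of the theorem of the cube, recalled just above --- is granted.
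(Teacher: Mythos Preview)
Your proof is correct and follows essentially the same route as the paper: symmetrize an ample divisor, use the theorem-of-the-cube relation (\ref{e24}) inductively to get $n_{A}^{\ast}D\sim n^{2}D$, and read off the degree from the self-intersection $(D^{g})$. You supply two pieces of detail that the paper suppresses: the explicit recursion giving $n_{A}^{\ast}D\sim n^{2}D$, and the finiteness of $n_{A}$ via the ampleness-versus-contracted-curve argument (the paper simply passes from the intersection computation to ``therefore $n_{A}$ has degree $n^{2g}$''); you also spell out the passage from $\deg(\ell^{n}_{A})=\ell^{2gn}$ to $T_{\ell}A\simeq\mathbb{Z}_{\ell}^{2g}$, which the paper leaves to the word ``Therefore''.
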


\begin{proof}
Let $D$ be an ample divisor on $A$ (e.g., a hyperplane section under some
projective embedding); then $D^{g}$ is a positive zero-cycle, and $(D^{g}%
)\neq0$. After possibly replacing $D$ with $D+(-1)_{A}^{\ast}D$, we may
suppose that $D$ is symmetric, i.e., $D\sim(-1)_{A}^{\ast}D$. An induction
argument using (\ref{e24}) shows that $n_{A}^{\ast}D\sim n^{2}D$, and so%
\[
(n_{A}^{\ast}D^{g})=(n_{A}^{\ast}D\cdot\ldots\cdot n_{A}^{\ast}D)\sim
(n^{2}D\cdot\ldots\cdot n^{2}D)\sim n^{2g}(D^{g})\text{.}%
\]
Therefore $n_{A}^{\ast}$ has degree $n^{2g}$.
\end{proof}

A map $f\colon W\rightarrow Q$ on a vector space $W$ over a field $Q$ is said
to be \textit{polynomial (of degree} $d$) if, for every finite linearly
independent set $\{e_{1},...,e_{n}\}$ of elements of $V$,
\[
f(a_{1}e_{1}+\cdots+a_{n}e_{n})=P(a_{1},\ldots,a_{n}),\quad x_{i}\in Q,
\]
for some $P\in Q[X_{1},\ldots,X_{n}]$ (of degree $d$). To show that a map $f$
is polynomial, it suffices to check that, for all $v,w\in W$, the map
$x\mapsto f(xv+w)\colon Q\rightarrow Q$ is a polynomial in $x$.

\begin{lemma}
\label{r8}Let $A$ be an abelian variety of dimension $g$. The map
$\alpha\mapsto\deg(\alpha)\colon\End^{0}(A)\rightarrow\mathbb{Q}$ is a
polynomial function of degree $2g$ on $\End^{0}(A){.}$
\end{lemma}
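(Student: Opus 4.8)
The plan is to fix a symmetric ample divisor $D$ on $A$, translate $\deg(\alpha)$ into the self-intersection number $(\alpha^{\ast}D)^{g}$ up to the fixed nonzero constant $(D^{g})$, and then exploit the fact that the theorem of the cube forces $\alpha\mapsto\alpha^{\ast}D$ to be a quadratic map; the polynomial behaviour of $\deg$ drops out of the multilinearity of the intersection product. First I would extend $\deg$ to $\End^{0}(A)$: multiplicativity of the degree together with Proposition \ref{r15} gives $\deg(n\alpha)=n^{2g}\deg(\alpha)$ for $\alpha\in\End(A)$ and $n\in\mathbb{Z}$, so $\deg(\alpha/n):=n^{-2g}\deg(\alpha)$ is a well-defined extension to $\End^{0}(A)$.

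Choose $D$ ample on $A$ and, after replacing it by $D+(-1)_{A}^{\ast}D$ as in the proof of \ref{r15}, symmetric; then $(D^{g})>0$. Pullback of divisor classes gives a map $\gamma\mapsto\gamma^{\ast}D$ from $\End(A)$ into $\Pic(A)$, which I extend to a map $\End^{0}(A)\to\Pic(A)\otimes\mathbb{Q}$ by $(\alpha/n)^{\ast}D:=n^{-2}\alpha^{\ast}D$; this is legitimate because $n_{A}^{\ast}D\sim n^{2}D$ (recorded in the proof of \ref{r15}), and it satisfies $(c\gamma)^{\ast}D=c^{2}\gamma^{\ast}D$ for $c\in\mathbb{Q}$. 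The argument then rests on two facts. The first is the identity
\[
\deg(\gamma)\cdot(D^{g})=\bigl((\gamma^{\ast}D)^{g}\bigr),\qquad\gamma\in\End^{0}(A),
\]
with the intersection product extended $\mathbb{Q}$-multilinearly to $\Pic(A)\otimes\mathbb{Q}$: for $\gamma$ an isogeny this is the projection formula for the finite map $\gamma$ of degree $\deg\gamma$, and for $\gamma$ not an isogeny both sides vanish, since $\gamma$ then factors through a proper abelian subvariety and the $g$-fold self-intersection of a class pulled back from a variety of dimension $<g$ is zero. The second is that $\gamma\mapsto\gamma^{\ast}D$ is a quadratic map on the $\mathbb{Q}$-vector space $\End^{0}(A)$: applying the consequence (\ref{e24}) of the theorem of the cube with $(f,g,h)=(\gamma,\delta,\varepsilon)$ and using $0_{A}^{\ast}D\sim0$ shows that the third mixed difference of $\gamma\mapsto\gamma^{\ast}D$ vanishes, and combined with the homogeneity $(c\gamma)^{\ast}D=c^{2}\gamma^{\ast}D$ this means that $(\gamma,\delta)\mapsto\tfrac12\bigl((\gamma+\delta)^{\ast}D-\gamma^{\ast}D-\delta^{\ast}D\bigr)$ is $\mathbb{Q}$-bilinear.

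Now I would finish. Let $\alpha_{1},\dots,\alpha_{n}\in\End^{0}(A)$ be linearly independent. By the quadratic property just described,
\[
\Bigl(\sum_{i}a_{i}\alpha_{i}\Bigr)^{\ast}D=\sum_{i,j}a_{i}a_{j}\,E_{ij}\qquad\text{in }\Pic(A)\otimes\mathbb{Q}
\]
for suitable classes $E_{ij}\in\Pic(A)\otimes\mathbb{Q}$; this is a homogeneous quadratic polynomial in the $a_{i}$ with values in $\Pic(A)\otimes\mathbb{Q}$. By the first fact,
\[
\deg\Bigl(\sum_{i}a_{i}\alpha_{i}\Bigr)=(D^{g})^{-1}\Bigl(\sum_{i,j}a_{i}a_{j}\,E_{ij}\Bigr)^{g},
\]
which on expanding by the multilinearity of the intersection product is a polynomial in $a_{1},\dots,a_{n}$ of degree $\le 2g$. (Alternatively, one may reduce to the one-variable case $x\mapsto\deg(x\alpha+\beta)$ by the criterion stated just above the lemma.) Hence $\deg$ is a polynomial function on $\End^{0}(A)$ of degree $\le 2g$; and the degree is exactly $2g$ because along the line through $\id_{A}$ the polynomial $x\mapsto\deg(x\cdot\id_{A})$ takes the value $n^{2g}$ at every $n\in\mathbb{Z}$ by Proposition \ref{r15}, hence equals $x^{2g}$.

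The step I expect to be the main obstacle is the first fact: arranging the pullbacks $\gamma^{\ast}D$ so that the relevant intersection numbers genuinely depend only on divisor classes, and checking $\deg(\gamma)(D^{g})=(\gamma^{\ast}D)^{g}$ uniformly in $\gamma\in\End^{0}(A)$, the degenerate non-isogeny case included. Once that is in hand, everything else is formal: the theorem of the cube supplies the quadratic law, and multilinearity of the intersection product turns it into a polynomial of degree $2g$.
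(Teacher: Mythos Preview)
Your argument is correct and follows essentially the same route as the paper: fix a symmetric ample $D$, convert $\deg(\gamma)$ into $((\gamma^{\ast}D)^{g})/(D^{g})$, use the theorem of the cube (\ref{e24}) to control how $\gamma^{\ast}D$ depends on $\gamma$, and read off that the $g$-fold self-intersection is polynomial of degree $2g$. The paper compresses all of this into the single line ``a direct calculation using (\ref{e24}) shows that $\deg(n\beta+\alpha)(D^{g})=(n(n-1))^{g}(D^{g})+\text{lower order terms}$,'' whereas you make explicit the intermediate step that $\gamma\mapsto\gamma^{\ast}D$ is a \emph{quadratic} map on $\End^{0}(A)$ with values in $\Pic(A)\otimes\mathbb{Q}$; this is the cleaner way to see why the degree comes out as exactly $2g$ rather than merely $\leq 2g$, and it handles arbitrary $\mathbb{Q}$-linear combinations at once instead of only the one-variable pencils $n\beta+\alpha$.
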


\begin{proof}
Note that $\deg(n\beta)=\deg(n_{A})\deg(\beta)=n^{2g}\deg(\beta)$, and so it
suffices to prove that $\deg(n\beta+\alpha)$ (for $n\in\mathbb{Z}{}$ and
$\beta,\alpha\in\End(A)$) is polynomial in $n$ of degree $\leq2g$. Let $D$ be
a symmetric ample divisor on $A$. A direct calculation using (\ref{e24}) shows
shows that%
\begin{equation}
\deg(n\beta+\alpha)(D^{g})=(n(n-1))^{g}(D^{g})+\text{terms of lower degree in
}n. \label{e32}%
\end{equation}
As $(D^{g})\neq0$ this completes the proof.
\end{proof}

\begin{theorem}
\label{r9}Let $\alpha\in\End(A)$. There is a unique monic polynomial
$P_{\alpha}(T)\in\mathbb{Z}{}[T]$ of degree $2g$ such that $P_{\alpha}%
(n)=\deg(n_{A}-\alpha)$ for all integers $n$.
\end{theorem}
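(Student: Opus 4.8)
The statement is almost entirely about \emph{integrality}: the existence of a monic $P_{\alpha}\in\mathbb{Q}[T]$ of degree $2g$ with $P_{\alpha}(n)=\deg(n_{A}-\alpha)$ for all $n$ is immediate from Lemma \ref{r8}. Indeed, applying (\ref{e32}) with $\beta$ replaced by $1_{A}$ and $\alpha$ by $-\alpha$ shows that $n\mapsto\deg(n_{A}-\alpha)$ coincides on $\mathbb{Z}$ with a monic polynomial in $n$ of degree $2g$; this polynomial $P_{\alpha}$ is unique because two polynomials agreeing at infinitely many integers are equal. So the whole problem is to show $P_{\alpha}\in\mathbb{Z}[T]$, and this does not follow formally: a monic polynomial taking integer values on all of $\mathbb{Z}$ need not have integer coefficients (e.g. $T^{3}-\tfrac{5}{2}T^{2}+\tfrac{3}{2}T=T(T-1)(T-2)+\tfrac{1}{2}T(T-1)$).

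My plan is to identify $P_{\alpha}$ with an $\ell$-adic characteristic polynomial. Fix a prime $\ell\neq p$, where $p=\mathrm{char}(k)$. By Proposition \ref{r15} the Tate module $T_{\ell}A$ is free of rank $2g$ over $\mathbb{Z}_{\ell}$, and $\alpha$ acts on it $\mathbb{Z}_{\ell}$-linearly; set $P_{\alpha,\ell}(T)=\det(T\cdot\mathrm{id}-\alpha\mid T_{\ell}A)\in\mathbb{Z}_{\ell}[T]$, which is monic of degree $2g$. I would then prove that $P_{\alpha,\ell}(n)=\deg(n_{A}-\alpha)$ for every $n\in\mathbb{Z}$. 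Granting this, $P_{\alpha,\ell}$ and $P_{\alpha}$ are monic of degree $2g$ and agree at all integers, hence $P_{\alpha,\ell}=P_{\alpha}$ and $P_{\alpha}\in\mathbb{Z}_{\ell}[T]$; letting $\ell$ run over all primes $\neq p$ gives $P_{\alpha}\in\mathbb{Q}[T]\cap\bigcap_{\ell\neq p}\mathbb{Z}_{\ell}[T]=\mathbb{Z}[1/p][T]$.

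The key step is the comparison $\det(\beta\mid T_{\ell}A)=\deg(\beta)$ for $\beta\in\End(A)$ (apply it to $\beta=n_{A}-\alpha$). If $\beta$ is not an isogeny, its kernel contains a positive-dimensional abelian subvariety $B$, so $T_{\ell}B\subseteq T_{\ell}A$ lies in the kernel of $\beta$ on $T_{\ell}A$ and both sides vanish. If $\beta$ is an isogeny, then because $\ell\neq p$ the map $\beta\colon A[\ell^{m}](k^{\mathrm{al}})\to A[\ell^{m}](k^{\mathrm{al}})$ eventually has kernel of constant order equal to the $\ell$-primary part of $\#\ker(\beta)=\deg(\beta)$; passing to the inverse limit, $T_{\ell}\beta$ is injective with cokernel of that order, so $v_{\ell}(\det(\beta\mid T_{\ell}A))=v_{\ell}(\deg\beta)$. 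To upgrade this congruence of $\ell$-adic valuations to an equality of values one observes that $\beta\mapsto\deg(\beta)$ and $\beta\mapsto\det(\beta\mid V_{\ell}A)$ (with $V_{\ell}A=\mathbb{Q}_{\ell}\otimes_{\mathbb{Z}_{\ell}}T_{\ell}A$) are both multiplicative polynomial functions of degree $2g$ on $\End^{0}(A)$, respectively on $\End^{0}(A)\otimes\mathbb{Q}_{\ell}$, that agree $\ell$-adically on the Zariski-dense set $\End(A)$; comparing the two polynomials forces $\deg(\beta)=\det(\beta\mid V_{\ell}A)$ identically (and en route shows $P_{\alpha,\ell}$ is independent of $\ell$). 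I expect this reconciliation of the geometric degree with the linear-algebraic determinant on the Tate module to be the main obstacle.

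Finally, the above yields $P_{\alpha}\in\mathbb{Z}[1/p][T]$ but not yet $P_{\alpha}\in\mathbb{Z}_{(p)}[T]$ when $p>0$. To close this gap I would use that $\alpha$ lies in the order $\End(A)$ of the finite $\mathbb{Q}$-algebra $\End^{0}(A)$, hence is integral over $\mathbb{Z}$: combined with the decomposition of $A$ up to isogeny into simple factors, $P_{\alpha}$ becomes a product of powers of the reduced characteristic polynomials of $\alpha$ on those factors, each of which is monic with coefficients that are algebraic integers in $\mathbb{Q}$, i.e. in $\mathbb{Z}$; alternatively one reads the coefficients of $P_{\alpha}$ off the action of $\alpha$ on the Dieudonn\'e module of $A[p^{\infty}]$. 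Either route gives $P_{\alpha}\in\mathbb{Z}_{(p)}[T]$, and therefore $P_{\alpha}\in\mathbb{Z}[T]$.
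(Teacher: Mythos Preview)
The paper's proof of this theorem is two sentences: uniqueness is immediate, and for existence one takes $\beta=1_A$ in (\ref{e32}). That reference only yields a monic polynomial of degree $2g$ with \emph{rational} coefficients; the paper does not justify integrality at this point. So you are right that integrality is the real content, and your proposal is more careful here than the paper's own argument.

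Your route---identifying $P_\alpha$ with $\det(T-\alpha\mid T_\ell A)$---is exactly what the paper does afterwards as Proposition~\ref{e9}, so the approaches agree in spirit; you are simply importing that comparison earlier. Two comments on execution. First, the step from $|\det(\beta\mid T_\ell A)|_\ell=|\deg\beta|_\ell$ to the identity $\det=\deg$ is looser than you suggest: two polynomial maps agreeing in $\ell$-adic absolute value on a dense set need not coincide. The paper bridges this with Weil's lemma (a monic $P\in\mathbb{Q}_\ell[T]$ is determined by the numbers $\left|\prod_i F(a_i)\right|_\ell$ as $F$ ranges over $\mathbb{Z}[T]$), applied directly to $P_\alpha$ and $P_{\alpha,\ell}$ rather than to the functions $\deg$ and $\det$. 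Second, your treatment of the prime $p$ is unnecessarily elaborate. You already observe that $\alpha$, lying in the order $\End(A)$, is integral over $\mathbb{Z}$; hence every eigenvalue of $\alpha$ on $V_\ell A$ is an algebraic integer, and the coefficients of $P_\alpha=P_{\alpha,\ell}\in\mathbb{Q}[T]$, being symmetric functions of these eigenvalues, are rational algebraic integers---so they lie in $\mathbb{Z}$ outright. This disposes of all primes simultaneously; neither Dieudonn\'e modules nor the decomposition into simple isogeny factors is needed. (The finite generation of $\End(A)$ over $\mathbb{Z}$ is Theorem~\ref{r11}, placed after \ref{r9} in the paper but logically depending only on Lemma~\ref{r8}, so there is no circularity.)
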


\begin{proof}
If $P_{1}$ and $P_{2}$ both have this property, then $P_{1}-P_{2}$ has
infinitely many roots, and so is zero. For the existence, take $\beta=1_{A}$
in (\ref{e32}).
\end{proof}

We call $P_{\alpha}$ the \textit{characteristic polynomial} of $\alpha$ and we
define the \textit{trace }$\Tr(\alpha)$ of $\alpha$ by the equation
\[
P_{\alpha}(T)=T^{2g}-\Tr(\alpha)T^{2g-1}+\cdots+\deg(\alpha).
\]

\subsubsection{The endomorphism ring of an abelian variety}

Let $A$ and $B$ be abelian varieties over $k$, and let $\ell$ be a prime
$\neq\mathrm{char}(k)$. The family of $\ell$-power torsion points in
$A(k^{\mathrm{al}})$ is dense, and so the map%
\[
\Hom(A,B)\rightarrow\Hom_{\mathbb{Z}{}_{\ell}}(T_{\ell}A,T_{\ell}B)
\]
is injective. Unfortunately, this doesn't show that $\Hom(A,B)$ is finitely
generated over $\mathbb{Z}{}$.

\begin{lemma}
\label{r10}Let $\alpha\in\Hom(A,B)$; if $\alpha$ is divisible by $\ell^{n}$ in
$\Hom(T_{\ell}A,T_{\ell}B)$, then it is divisible by $\ell^{n}$ in $\Hom(A,B)$.
\end{lemma}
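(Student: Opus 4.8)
The plan is to reduce the statement to a finiteness fact about $\Hom(A,B)$ modulo $\ell$ together with the snake lemma applied to multiplication by $\ell^{n}$. Write $\alpha = \ell^{n}\beta$ with $\beta \in \Hom(T_{\ell}A, T_{\ell}B)$; I must produce a genuine homomorphism $\gamma\colon A \to B$ with $\alpha = \ell^{n}\gamma$. Since $\Hom(A,B) \hookrightarrow \Hom(T_{\ell}A, T_{\ell}B)$ is injective (as noted just before the lemma), such a $\gamma$ is unique if it exists, and it suffices to treat the case $n=1$ and iterate.

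The key step is to study the factorization of $\alpha$ through the multiplication-by-$\ell$ isogeny. Consider $A_{\ell} \overset{\textup{def}}{=} \ker(\ell_{A}) \subset A(k^{\mathrm{al}})$, a finite group scheme (or, for the étale part, a finite group) of order dividing $\ell^{2g}$. The hypothesis that $\beta = \alpha/\ell^{n}$ lands in $\Hom(T_{\ell}A, T_{\ell}B)$ forces, on $\ell$-power torsion points, that $\alpha(A_{\ell}) = 0$: indeed for $P \in A_{\ell^{n+1}}(k^{\mathrm{al}})$ we have $\ell^{n}P \in A_{\ell}$, and $\alpha(\ell^{n}P) = \ell^{n}\beta(\ell^{n}P)$... more directly, $\alpha = \ell^{n}\beta$ on $T_{\ell}A$ means $\alpha$ kills the kernel of $\ell^{n}_{A}$ on $T_{\ell}A$, hence kills $A_{\ell^{n}}(k^{\mathrm{al}})$, in particular $A_{\ell}(k^{\mathrm{al}})$. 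Then I invoke the rigidity/universal property of quotients by finite subgroup schemes: since $\ell_{A}\colon A \to A$ is an isogeny with kernel $A_{\ell}$, and $\alpha\colon A \to B$ vanishes on $A_{\ell}$, the morphism $\alpha$ factors as $\alpha = \gamma \circ \ell_{A}$ for a unique $\gamma \in \Hom(A,B)$. (One may need to pass through the separable/inseparable decomposition of $\ell_{A}$ in characteristic $\ell$; but here $\ell \neq \mathrm{char}(k)$, so $\ell_{A}$ is étale and this is the clean statement that $B$ is the quotient of... rather, that $A/A_{\ell} \cong A$ via $\ell_{A}$ and morphisms out of a quotient are morphisms vanishing on the kernel.) Finally $\gamma \circ \ell_{A} = \alpha = \ell_{A} \circ \gamma$ by commutativity of $\ell_{A}$ with any homomorphism, so $\alpha = \ell_{A}\circ\gamma = \ell \gamma$ in $\Hom(A,B)$, which is what we want for $n=1$; iterating gives divisibility by $\ell^{n}$.

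The main obstacle is justifying the factorization $\alpha = \gamma\circ\ell_{A}$ rigorously: it requires knowing that $\ell_{A}\colon A\to A$ exhibits the target as the quotient of the source by the finite étale subgroup $A_{\ell}$, so that a homomorphism killing $A_{\ell}$ descends. This is standard (quotients of abelian varieties by finite subgroups exist and have the expected universal property — the relevant construction is essentially Weil's, via the theory of the preceding subsections, or can be cited from the theory of abelian varieties being developed here), but it is the one nonformal input; everything else is bookkeeping with torsion points and the injectivity $\Hom(A,B)\hookrightarrow\Hom(T_{\ell}A,T_{\ell}B)$. An alternative that avoids quotients: one checks directly that $\alpha$ composed with any local parameter behaves like $\ell$ times something, but the quotient argument is cleaner and is the route I would take.
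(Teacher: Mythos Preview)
Your proposal is correct and follows essentially the same route as the paper's proof: the exact sequence $0\to A_{\ell^{n}}\to A\xrightarrow{\ell^{n}}A\to 0$ exhibits the target as the quotient by $A_{\ell^{n}}$, the hypothesis forces $\alpha$ to vanish on $A_{\ell^{n}}$ (since $T_{\ell}A/\ell^{n}T_{\ell}A\simeq A_{\ell^{n}}$ and $\alpha$ maps $T_{\ell}A$ into $\ell^{n}T_{\ell}B$), and so $\alpha$ factors through $\ell^{n}_{A}$. The paper does this in one line for $\ell^{n}$ directly rather than reducing to $n=1$ and iterating, and your phrase ``kills the kernel of $\ell^{n}_{A}$ on $T_{\ell}A$'' is slightly off (that kernel is zero, since $T_{\ell}A$ is $\mathbb{Z}_{\ell}$-free); the point is that $\alpha$ induces the zero map on $T_{\ell}A/\ell^{n}T_{\ell}A\simeq A_{\ell^{n}}$.
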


\begin{proof}
For each $n$, there is an exact sequence%
\[
0\rightarrow A_{\ell^{n}}\rightarrow A\overset{\ell^{n}}{\longrightarrow
}A\rightarrow0.
\]
The hypothesis implies that $\alpha$ is zero on $A_{\ell^{n}}$, and so it
factors through the quotient map $A\overset{\ell^{n}}{\longrightarrow}A$.
\end{proof}

\begin{theorem}
\label{r11}The natural map
\begin{equation}
{\mathbb{Z}_{\ell}}\otimes\Hom(A,B){\rightarrow}\Hom(T_{\ell}A,T_{\ell}B)
\label{e7}%
\end{equation}
is injective (with torsion-free cokernel). Hence $\Hom(A,B)$ is a free
${\mathbb{Z}}$-module of finite rank $\leq4\dim(A)\dim(B)$.
\end{theorem}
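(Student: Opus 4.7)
The plan is to combine $\ell$-adic approximations with Lemma \ref{r10}, deferring the finite generation of $\Hom(A,B)$---the main obstacle---to a separate polarization argument. For injectivity of (\ref{e7}), suppose $x = \sum_{i=1}^{n} c_i \otimes \alpha_i$ satisfies $\sum_i c_i T_\ell \alpha_i = 0$. For each $k \geq 1$ pick $m_{i,k} \in \mathbb{Z}$ with $c_i \equiv m_{i,k} \pmod{\ell^k}$, and write $c_i = m_{i,k} + \ell^k u_{i,k}$ with $u_{i,k} \in \mathbb{Z}_\ell$. Then $T_\ell\bigl(\sum_i m_{i,k}\alpha_i\bigr)$ agrees with $\sum_i c_i T_\ell \alpha_i = 0$ modulo $\ell^k$ in $\Hom(T_\ell A, T_\ell B)$, so it lies in $\ell^k \Hom(T_\ell A, T_\ell B)$, and Lemma \ref{r10} produces $\beta_k \in \Hom(A,B)$ with $\sum_i m_{i,k}\alpha_i = \ell^k \beta_k$. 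Substituting back,
\[
x \;=\; \ell^k\Bigl(1 \otimes \beta_k + \sum_i u_{i,k} \otimes \alpha_i\Bigr) \;\in\; \ell^k\bigl(\mathbb{Z}_\ell \otimes \Hom(A,B)\bigr),
\]
so $x \in \bigcap_k \ell^k(\mathbb{Z}_\ell \otimes \Hom(A,B))$. Once $\Hom(A,B)$ is known to be finitely generated over $\mathbb{Z}$, $\mathbb{Z}_\ell \otimes \Hom(A,B)$ is a finitely generated $\mathbb{Z}_\ell$-module and hence $\ell$-adically separated, forcing $x = 0$.

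For the torsion-free cokernel I would argue analogously, and this step does not require finite generation. If $\ell y = \sum_i c_i T_\ell \alpha_i$ for some $y \in \Hom(T_\ell A, T_\ell B)$, approximating $c_i \equiv m_i \pmod{\ell^k}$ with $k \geq 1$ gives $T_\ell\bigl(\sum_i m_i \alpha_i\bigr) \in \ell \Hom(T_\ell A, T_\ell B)$; Lemma \ref{r10} then yields $\beta \in \Hom(A,B)$ with $\sum_i m_i \alpha_i = \ell \beta$, and unwinding the approximation shows $y = T_\ell \beta + \ell^{k-1} z$ for some $z$ in the image of (\ref{e7}), so $y$ itself is in the image. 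The rank bound $\leq 4\dim(A)\dim(B)$ then follows immediately, since injectivity of (\ref{e7}) embeds $\mathbb{Z}_\ell \otimes \Hom(A,B)$ into the free $\mathbb{Z}_\ell$-module $\Hom(T_\ell A, T_\ell B)$ of that rank, and freeness of $\Hom(A,B)$ is just torsion-freeness together with finite generation.

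The central obstacle is thus the finite generation of $\Hom(A,B)$, which is not extractable from Lemma \ref{r10} alone. The standard route is via polarizations of $A$ and $B$: apply Lemma \ref{r8} to $\End^0(A\times B)$ to obtain a degree polynomial, and use the positivity of the Rosati involution (to be developed in the sequel) to equip $\Hom(A,B) \otimes \mathbb{R}$ with a positive-definite quadratic form. A Minkowski-style discreteness argument---finitely many lattice points lie in any bounded region---then delivers finite generation, completing the proof.
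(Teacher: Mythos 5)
Your approximation arguments for injectivity (given finite generation) and for the torsion-free cokernel are sound, and you have correctly identified that finite generation of $\Hom(A,B)$ is the crux. The trouble is the route you propose to that crux. First, the positivity of the Rosati involution (Theorem \ref{r12}) is developed in the paper \emph{after} this theorem, and its statement already presupposes that $\End^{0}(A)$ is a finite-dimensional $\mathbb{Q}$-algebra with a well-defined trace pairing $\Tr_{E/\mathbb{Q}}$; invoking it here is circular. Second, and more fundamentally, a Minkowski-style argument (``finitely many lattice points in a bounded region implies finite generation'') only delivers the conclusion when the ambient real vector space is \emph{already known} to be finite-dimensional. Positivity of a quadratic form on $\Hom(A,B)\otimes\mathbb{R}$ does not by itself forbid $\Hom(A,B)$ from being a discrete subgroup of infinite rank. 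So the sketch ends up assuming the very dimension bound that needs proving.

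The paper avoids this by a bootstrap that you should compare with your own. It first reduces to the essential case $A=B$ with $A$ simple; there, $\deg$ is a strictly positive integer on nonzero endomorphisms, so no Rosati involution is needed at all. It then fixes a \emph{finite} $\mathbb{Z}$-linearly independent set $e_{1},\dots,e_{m}$ in $\End(A)$ and works inside the finite-dimensional space $\mathbb{Q}M$ they span. Lemma \ref{r8} makes $\deg$ a polynomial, hence continuous, on $\mathbb{Q}M$, so $\{\deg<1\}$ is an open neighbourhood of $0$ in $\mathbb{Q}M$ meeting $\End(A)$ only in $0$; thus $\mathbb{Q}M\cap\End(A)$ is discrete in the finite-dimensional space $\mathbb{Q}M$ and therefore finitely generated, giving a bounded denominator $N$ with $N(\mathbb{Q}M\cap\End(A))\subset M$. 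This is exactly the control your argument lacks: when you produce $\beta_{k}\in\Hom(A,B)$ with $\sum m_{i,k}\alpha_{i}=\ell^{k}\beta_{k}$, you have no handle on where $\beta_{k}$ lives, so you are forced to conclude only $x\in\bigcap_{k}\ell^{k}(\mathbb{Z}_{\ell}\otimes\Hom(A,B))$ and then punt. With the bounded denominator one instead gets $N m_{i,k}/\ell^{k}\in\mathbb{Z}$, hence $\ord_{\ell}(c_{i})\ge k-\ord_{\ell}(N)$ for all $k$, forcing $c_{i}=0$; this shows the $T_{\ell}(e_{i})$ are $\mathbb{Z}_{\ell}$-independent, which \emph{then} bounds the rank by $4\dim A\dim B$ and yields finite generation. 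In short: the paper proves discreteness on finite-dimensional slices $\mathbb{Q}M$ first, and extracts the global dimension bound from the $\ell$-adic injectivity, rather than the other way around.
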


\begin{proof}
The essential case is that with $A=B$ and $A$ simple. Let $e_{1},\ldots,e_{m}$
be elements of $\End(A)$ linearly independent over $\mathbb{Z}{}$; we have to
show that $T_{\ell}(e_{1}),\ldots,T_{\ell}(e_{m})$ are linearly independent
over $\mathbb{Z}{}_{\ell}$.

Let $M$ (resp. $\mathbb{Q}{}M$) denote the $\mathbb{Z}{}$-module (resp.
$\mathbb{Q}{}$-vector space) generated in $\End^{0}(A)$ by the $e_{i}$.
Because $A$ is simple, every nonzero endomorphism $\alpha$ of $A$ is an
isogeny, and so $\deg(\alpha)$ is an integer $>0$. The map $\deg
\colon{\mathbb{Q}}M\rightarrow\mathbb{Q}$ is continuous for the real topology
because it is a polynomial function (\ref{r8}), and so $U\overset
{\textup{{\tiny def}}}{=}\{\alpha\in\mathbb{Q}{}M\mid\deg(\alpha)<1\}$ is an
open neighbourhood of $0$. As
\[
\left(  \mathbb{Q}M\cap\End(A)\right)  \cap U\subset\End(A)\cap U=0,
\]
we see that ${\mathbb{Q}}M\cap\End(A)$ is discrete in ${\mathbb{Q}}M$, which
implies that it is finitely generated as a ${\mathbb{Z}}$-module. Hence there
exists an integer $N>0$ such that
\begin{equation}
N(\mathbb{Q}{}M\cap\End(A))\subset M. \label{e8}%
\end{equation}

Suppose that there exist $a_{i}\in\mathbb{Z}_{\ell}$, not all zero, such that
$\sum a_{i}T_{\ell}(e_{i})=0$. For a fixed $m\in\mathbb{N}{}$, we can find
integers $n_{i}$ sufficiently close to the $a_{i}$ that the sum $\sum
n_{i}e_{i}$ is divisible by $\ell^{m}$ in $\End(T_{\ell}A)$, and hence in
$\End(A)$. Therefore%
\[
\tstyle\sum(n_{i}/\ell^{m})e_{i}\in\mathbb{Q}{}M\cap\End(A)\text{,}%
\]
and so $N\sum(n_{i}/\ell^{m})e_{i}\in M$, i.e., $n_{i}N/\ell^{m}\in
\mathbb{Z}{}$ and $\ord_{\ell}(n_{i})+\ord_{\ell}(N)\geq m$ for all $i.$ But
if $n_{i}$ is close to $a_{i}$, then $\ord_{\ell}(n_{i})=\ord_{\ell}(a_{i})$,
and so $\ord_{\ell}(a_{i})+\ord_{\ell}(N)\geq m$. As $m$ was arbitrary, we
have a contradiction.
\end{proof}

\subsubsection{The $\ell$-adic representation}

\begin{proposition}
\label{e9} For all $\ell\neq\mathrm{char}(k),P_{\alpha}(T)$ is the
characteristic polynomial of $\alpha$ acting on $T_{\ell}A$; in particular,
$\det(\alpha\mid T_{\ell}A)=\deg(\alpha)$.
\end{proposition}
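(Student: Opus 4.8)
The plan is to prove the polynomial identity $P_\alpha(T) = \det(T - \alpha \mid T_\ell A)$ in $\mathbb{Z}_\ell[T]$ by showing that the two sides agree on enough integer values. Write $Q_\alpha(T) = \det(T - \alpha \mid T_\ell A) \in \mathbb{Z}_\ell[T]$; this is monic of degree $2g$ since $T_\ell A$ is free of rank $2g$ by \ref{r15}. By \ref{r9}, $P_\alpha$ is the unique monic polynomial in $\mathbb{Z}[T]$ of degree $2g$ with $P_\alpha(n) = \deg(n_A - \alpha)$ for all $n \in \mathbb{Z}$. So it suffices to prove that for every integer $n$,
\[
\deg(n_A - \alpha) = \det(n - \alpha \mid T_\ell A) = Q_\alpha(n).
\]
Both $P_\alpha - Q_\alpha$ are polynomials of degree $< 2g$ (the leading terms cancel), so agreement at infinitely many $n$ forces $P_\alpha = Q_\alpha$, which in particular gives $\det(\alpha \mid T_\ell A) = (-1)^{2g} Q_\alpha(0) = P_\alpha(0) = \deg(\alpha)$.

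The heart of the matter is therefore the single identity: for an isogeny $\beta$ of $A$ (taking $\beta = n_A - \alpha$ when this is nonzero; if $n_A - \alpha$ fails to be an isogeny it is because it has a positive-dimensional kernel, forcing $\deg$ to be interpreted as $0$ and the corresponding determinant to vanish as well, so that case can be handled separately),
\[
\deg(\beta) = \det(\beta \mid T_\ell A) = \bigl| \ker(\beta)(k^{\mathrm{al}})[\ell^\infty] \bigr| \cdot (\text{prime-to-}\ell\text{ part}).
\]
The cleanest route is: for $\beta$ an isogeny of degree prime to $\ell$, $\beta$ restricts to an automorphism of each $A_{\ell^n}(k^{\mathrm{al}})$, hence an automorphism of $T_\ell A$, and $\#\ker(\beta) = \deg(\beta)$ is prime to $\ell$; one checks $\ell^{\mathrm{ord}_\ell(\det(\beta \mid T_\ell A))}$ equals the $\ell$-part of $\#\ker\beta$ (which is $1$). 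More substantively, writing $V_\ell A = \mathbb{Q}_\ell \otimes T_\ell A$ and using that $\beta$ acts on $T_\ell A$ with elementary divisors $\ell^{a_1}, \dots, \ell^{a_{2g}}$, one gets $\#(T_\ell A / \beta T_\ell A) = \ell^{a_1 + \cdots + a_{2g}} = |\det(\beta \mid T_\ell A)|_\ell^{-1}$, and this quotient is canonically identified with $\ker(\beta)(k^{\mathrm{al}})[\ell^\infty]$ via the snake lemma applied to multiplication by $\beta$ on $0 \to T_\ell A \to V_\ell A \to V_\ell A / T_\ell A \to 0$ together with $V_\ell A / T_\ell A \cong A(k^{\mathrm{al}})[\ell^\infty]$. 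Hence the $\ell$-part of $\deg(\beta)$ equals $|\det(\beta \mid T_\ell A)|_\ell^{-1}$.

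To upgrade this from "the $\ell$-parts agree" to "the full degrees agree" one runs the argument for every prime $\ell \neq \mathrm{char}(k)$ simultaneously: for each such $\ell$, $\mathrm{ord}_\ell(\deg\beta) = \mathrm{ord}_\ell(\det(\beta \mid T_\ell A))$, and since $P_\alpha(n) = \deg(n_A - \alpha)$ is visibly independent of $\ell$ while $\det(n - \alpha \mid T_\ell A)$ is an integer whose $\ell$-adic valuation matches for all $\ell \neq p$, the two integers $\deg(n_A-\alpha)$ and $\det(n-\alpha\mid T_\ell A)$ can differ only by a power of $p = \mathrm{char}(k)$; but comparing with a second prime $\ell'$ pins down the $\ell$-valuation of $\det(n-\alpha\mid T_\ell A)$ itself and forces equality. (When $\mathrm{char}(k) = 0$ this is immediate since every prime is available.) The main obstacle I anticipate is the bookkeeping in the snake-lemma identification of $T_\ell A / \beta T_\ell A$ with the $\ell$-primary part of $\ker\beta$, and the care needed when $\ell \mid \deg\beta$ so that $\beta$ is not invertible on $T_\ell A$ — this is exactly where Lemma \ref{r10} and the structure theory of finitely generated $\mathbb{Z}_\ell$-modules do the work, reducing everything to computing $|\det|_\ell$ via elementary divisors.
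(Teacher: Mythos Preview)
Your snake-lemma computation correctly establishes the key input
\[
\left\vert \deg(\beta)\right\vert_\ell = \left\vert \det(\beta\mid T_\ell A)\right\vert_\ell
\]
for isogenies $\beta$; this is exactly equation (\ref{e30}) in the paper. The gap is in your ``upgrade'' paragraph. You assert that $\det(n-\alpha\mid T_\ell A)$ is an integer and propose to compare it across primes, but a priori this determinant lies only in $\mathbb{Z}_\ell$; for a second prime $\ell'$ the analogous quantity $\det(n-\alpha\mid T_{\ell'}A)$ lives in the unrelated ring $\mathbb{Z}_{\ell'}$, and the claim that these all come from a common rational integer is precisely the statement to be proved. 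What your argument actually delivers is only $|P_\alpha(n)|_\ell = |Q_\alpha(n)|_\ell$ for all $n\in\mathbb{Z}$, where $Q_\alpha(T)=\det(T-\alpha\mid T_\ell A)$. This is genuinely too weak: for example $P(T)=T^2+1$ and $Q(T)=T^2+5$ satisfy $|P(n)|_2=|Q(n)|_2$ for every $n\in\mathbb{Z}$ (check $n$ even and $n$ odd separately), yet $P\neq Q$ in $\mathbb{Z}_2[T]$.

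The paper closes the gap without ever leaving the single prime $\ell$. It applies (\ref{e30}) not just to $\beta=n_A-\alpha$ but to $\beta=F(\alpha)$ for \emph{every} $F\in\mathbb{Z}[T]$, obtaining
\[
\Bigl\vert \prod_i F(a_i)\Bigr\vert_\ell = \Bigl\vert \prod_i F(b_i)\Bigr\vert_\ell
\]
where the $a_i$ are the roots of $P_\alpha$ and the $b_i$ are the eigenvalues of $\alpha$ on $T_\ell A$. It then invokes a lemma of Weil (1948b, n$^\circ$68, Lemme~12): a polynomial in $\mathbb{Q}_\ell[T]$ is uniquely determined by the collection of values $\bigl\vert\prod_i F(a_i)\bigr\vert_\ell$ as $F$ ranges over all of $\mathbb{Z}[T]$. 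In the counterexample above, the nonlinear choice $F(T)=T^2+4T+1$ separates the two polynomials (the products are $16$ and $96$, of $2$-adic valuations $4$ and $5$), illustrating why restricting to linear $F=T-n$, as your approach implicitly does, discards information that is needed.
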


\begin{proof}
For an endomorphism $\alpha$ of $A$,
\begin{equation}
\left\vert \det(T_{\ell}\alpha)\right\vert _{\ell}=\left\vert \deg
(\alpha)\right\vert _{\ell}. \label{e30}%
\end{equation}
To continue, we shall need an elementary lemma (Weil 1948b, n$^{\circ}$ 68,
Lemme 12):

\bquote A polynomial $P(T)\in\mathbb{Q}{}_{\ell}[T]$, with roots $a_{1}%
,\ldots,a_{d}$ in $\mathbb{Q}{}_{\ell}^{\mathrm{al}}$, is uniquely determined
by the numbers $\left\vert \prod\nolimits_{i=1}^{d}F(a_{i})\right\vert _{\ell
}$ as $F$ runs through the elements of $\mathbb{Z}{}[T].$\equote

\noindent For the polynomial $P_{\alpha}(T)$,
\[
\prod\nolimits_{i=1}^{d}F(a_{i})=\pm\deg(F(\alpha)),
\]
and for the characteristic polynomial $P_{\alpha,\ell}(T)$ of $\alpha$ on
$T_{\ell}A$,%
\[
\left\vert \prod\nolimits_{i=1}^{d}F(a_{i})\right\vert _{\ell}=\left\vert
\det(F(T_{\ell}\alpha))\right\vert _{\ell}.
\]
Therefore, (\ref{e30}) shows that $P_{\alpha}(T)$ and $P_{\alpha,\ell}(T)$
coincide as elements of $\mathbb{Q}{}_{\ell}[T]$.
\end{proof}

\subsubsection{Positivity}

An ample divisor $D$ on $A$ defines an isogeny $\varphi_{D}\colon A\rightarrow
A^{\vee}$ from $A$ to the dual abelian variety $A^{\vee}\overset
{\textup{{\tiny def}}}{=}\Pic^{0}(A)$, namely, $a\mapsto\lbrack D_{a}-D]$
where $D_{a}$ is the translate of $D$ by $a$. A polarization of $A$ is an
isogeny $\lambda\colon A\rightarrow A^{\vee}$ that becomes of the form
$\varphi_{D}$ over $k^{\mathrm{al}}$. As $\lambda$ is an isogeny, it has an
inverse in $\Hom^{0}(A^{\vee},A)$. The Rosati involution on $\End^{0}(A)$
corresponding to $\lambda$ is%
\[
\alpha\mapsto\alpha^{\dagger}\overset{\textup{{\tiny def}}}{=}\lambda
^{-1}\circ\alpha^{\vee}\circ\lambda.
\]
It has the following properties:%
\[
(\alpha+\beta)^{\dagger}=\alpha^{\dagger}+\beta^{\dagger},\quad(\alpha
\beta)^{\dagger}=\beta^{\dagger}\alpha^{\dagger},\quad a^{\dagger}=a\text{ for
all }a\in\mathbb{Q}{}.
\]

\begin{theorem}
\label{r12}The Rosati involution on $E\overset{\textup{{\tiny def}}}%
{=}\End^{0}(A)$ is positive, i.e., the pairing%
\[
\alpha,\beta\mapsto\Tr_{E/\mathbb{Q}{}}(\alpha\circ\beta^{\dagger})\colon
E\times E\rightarrow\mathbb{Q}{}%
\]
is positive definite.
\end{theorem}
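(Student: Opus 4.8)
The plan is to reduce the positivity of the Rosati form to a statement about intersection numbers of the form $(\varphi_D(\alpha)\cdot$ something$)$ being expressible as a positive multiple of a degree, and then apply Proposition~\ref{e9} together with the Riemann--Roch / intersection-theoretic input already set up. More precisely, write $\lambda=\varphi_D$ for an ample symmetric $D$ (we may assume $\lambda$ arises from an ample divisor over $k$ after a harmless reduction, since the trace form is insensitive to extension of the base field). For $\alpha\in E=\End^0(A)$ I would like to show $\Tr_{E/\mathbb{Q}}(\alpha\circ\alpha^{\dagger})>0$ when $\alpha\neq 0$. The key identity to establish is that, up to a fixed positive rational constant depending only on $(D^g)$ and $g$,
\[
\Tr_{E/\mathbb{Q}}(\alpha\circ\alpha^{\dagger})=c\cdot\bigl(\alpha^{\ast}D\cdot D^{g-1}\bigr)
\]
for a suitable normalization, and then to observe that $\alpha^{\ast}D$ is itself ample (as $\alpha$ is an isogeny on the relevant simple factor, or zero), so the right-hand side is a product of ample classes, hence strictly positive by the projection formula and the fact that $(H^g)>0$ for an ample $H$.

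The main steps, in order. First, reduce to $A$ simple and $\alpha$ an isogeny: $E$ is a product of matrix algebras over division algebras, the Rosati involution respects the decomposition up to the usual adjustments, and the trace form on a product is the orthogonal sum of the trace forms on the factors, so positivity on each simple factor suffices; on a simple factor every nonzero $\alpha$ is an isogeny. Second, relate the Rosati adjoint to divisor pullback: for the polarization $\lambda=\varphi_D$ one has the classical formula $\varphi_{\alpha^{\ast}D}=\alpha^{\vee}\circ\varphi_D\circ\alpha$, which combined with $\alpha^{\dagger}=\lambda^{-1}\alpha^{\vee}\lambda$ gives $\lambda\circ(\alpha^{\dagger}\alpha)=\varphi_{\alpha^{\ast}D}$, i.e. $\alpha^{\dagger}\alpha=\lambda^{-1}\circ\varphi_{\alpha^{\ast}D}$. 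Third, express the trace as an intersection number: using Proposition~\ref{e9}, $\Tr(\beta)$ is (up to sign and the coefficient convention) read off from $\det(T-\beta\mid T_\ell A)$, and I would invoke the standard computation that for $\beta=\lambda^{-1}\circ\varphi_{E}$ with $E$ a symmetric divisor, $\Tr_{E/\mathbb{Q}}(\beta)$ equals $2g\,(D^{g-1}\cdot E)/(D^g)$ — this is the ``degree of $\varphi$ relative to $\lambda$'' identity, provable by reducing to the case $E=nD$ where both sides are visibly $2gn$ and then using bi-additivity of both sides in $E$ (the left side is $\mathbb{Q}$-linear in $\beta$ hence in $E$; the right side is linear in $E$ by bilinearity of intersection). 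Fourth, apply this with $E=\alpha^{\ast}D$: since $D$ is ample and $\alpha$ is an isogeny (finite surjective), $\alpha^{\ast}D$ is ample, so $(D^{g-1}\cdot\alpha^{\ast}D)>0$ because it is a top intersection of ample classes on the $g$-dimensional $A$ (formally: $\alpha^{\ast}D$ ample $\Rightarrow$ some multiple is very ample, and intersecting $g$ ample divisors on a projective variety of dimension $g$ gives a positive number, which is the elementary positivity input the paper has already granted us via ``intersection theory for divisors on smooth projective varieties'').

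The hard part will be Step three — pinning down the precise constant relating $\Tr_{E/\mathbb{Q}}(\lambda^{-1}\circ\varphi_E)$ to the intersection number $(D^{g-1}\cdot E)$, and in particular justifying that the trace form in question is indeed a \emph{linear} functional of $E$ so that it suffices to check it on $E=nD$. This requires knowing that $\varphi\colon \mathrm{NS}(A)\otimes\mathbb{Q}\to \Hom^0(A,A^\vee)$, $E\mapsto\varphi_E$, is additive (which follows from the theorem of the cube / equation~(\ref{e24})) and that composing with $\lambda^{-1}$ and taking $\Tr_{E/\mathbb{Q}}$ preserves additivity (clear, since $\Tr$ is $\mathbb{Q}$-linear on $\End^0(A)$). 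Once additivity in $E$ is in hand, evaluating at $E=nD$ gives $\lambda^{-1}\circ\varphi_{nD}=n\cdot\mathrm{id}$, whose trace is $2gn$, forcing the constant; everything else is assembling the pieces. One subtlety I would flag explicitly: the reduction to $D$ ample \emph{over $k$} rather than merely over $k^{\mathrm{al}}$ — but since the whole statement is about a $\mathbb{Q}$-bilinear form on $\End^0(A)$ and $\End^0(A)\hookrightarrow\End^0(A_{k^{\mathrm{al}}})$ compatibly with Rosati and with $\Tr_{E/\mathbb{Q}}$ (up to the block structure), positivity over $k^{\mathrm{al}}$ implies positivity over $k$, so I may freely extend scalars.
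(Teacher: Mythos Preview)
Your proposal is correct and follows essentially the same route as the paper: both establish the identity $\Tr(\alpha\circ\alpha^{\dagger})=\dfrac{2g}{(D^{g})}\,(D^{g-1}\cdot\alpha^{\ast}D)$ and then argue that the right-hand side is positive because it is an intersection of ample (or effective) classes. The paper simply cites Lang 1959 for the trace identity and gives the hyperplane-section argument for positivity directly, whereas you sketch a proof of the identity via additivity in $E$ and calibration at $E=nD$, and you make the reduction to $\alpha$ an isogeny explicit by passing to simple factors; these are elaborations rather than a different strategy.
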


\begin{proof}
We have to show that $\Tr(\alpha\circ\alpha^{\dagger})>0$ for all $\alpha
\neq0$. Let $D$ be the ample divisor corresponding to the polarization used in
the definition of $\dagger$. A direct calculation shows that
\[
\mathrm{Tr}(\alpha\circ\alpha^{\dagger})=\frac{2g}{(D^{g})}(D^{g-1}\cdot
\alpha^{-1}(D))
\]
(Lang 1959, V, \S 3, Thm 1). I claim that $(D^{g-1}\cdot\alpha^{-1}(D))>0$. We
may suppose that $D$ is a hyperplane section of $A$ relative to some
projective embedding. There exist hyperplane sections $H_{1},\ldots,H_{g-1}$
such that $H_{1}\cap\ldots\cap H_{g-1}\cap\alpha^{-1}D$ has dimension zero. By
dimension theory, the intersection is nonempty, and so
\[
(H_{1}\cdot\ldots\cdot H_{g-1}\cdot\alpha^{-1}D)>0.
\]
As $D\sim H_{i}$ for all $i$, we have $(D^{g-1}\cdot\alpha^{-1}D)=(H_{1}%
\cdot\ldots\cdot H_{g-1}\cdot\alpha^{-1}D)>0$.
\end{proof}

\label{semisimple}Recall that the \textit{radical} $\mathrm{rad}(R)$ of a ring
$R$ is the intersection of the maximal left ideals in $R$. It is a two-sided
ideal, and it is nilpotent if $R$ is artinian. An algebra $R$ of finite
dimension over a field $Q$ of characteristic zero is semisimple if and only if
$\mathrm{rad}(R)=0$.

\begin{corollary}
\label{r13}The $\mathbb{Q}{}$-algebra $\End^{0}(A)$ is semisimple.
\end{corollary}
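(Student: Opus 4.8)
The plan is to show that $\mathrm{rad}(E) = 0$ for $E = \End^0(A)$, which by the characterization recalled just above the statement is equivalent to semisimplicity (since $E$ is finite-dimensional over $\mathbb{Q}$, a field of characteristic zero). The key input is the positivity of the Rosati involution (Theorem~\ref{r12}): the pairing $(\alpha,\beta)\mapsto \Tr_{E/\mathbb{Q}}(\alpha\circ\beta^{\dagger})$ is positive definite on $E$.

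First I would suppose for contradiction that the radical $\mathfrak{r} = \mathrm{rad}(E)$ is nonzero. Since $E$ is finite-dimensional over $\mathbb{Q}$, it is artinian, so $\mathfrak{r}$ is a nilpotent two-sided ideal; pick a nonzero $\alpha \in \mathfrak{r}$. The idea is that $\alpha\circ\alpha^{\dagger}$ then also lies in $\mathfrak{r}$ (as $\mathfrak{r}$ is a two-sided ideal, $\alpha^{\dagger}\in E$ arbitrary is fine — actually one does not even need $\mathfrak{r}$ to be preserved by $\dagger$, just that it absorbs multiplication on the left by $\alpha$ and on the right by anything). Set $\beta = \alpha\circ\alpha^{\dagger} \in \mathfrak{r}$. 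Then $\beta$ is nilpotent, say $\beta^n = 0$, so all powers $\beta, \beta^2, \dots$ are nilpotent endomorphisms of a finite-dimensional $\mathbb{Q}$-vector space (e.g. acting on $E$ by left multiplication, or on $\mathbb{Q}\otimes T_\ell A$ via Proposition~\ref{e9}), hence have trace zero: $\Tr_{E/\mathbb{Q}}(\beta^m) = 0$ for all $m \geq 1$.

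On the other hand, I would note that $\beta = \alpha\circ\alpha^{\dagger}$ is fixed by $\dagger$ (since $(\alpha\circ\alpha^{\dagger})^{\dagger} = \alpha^{\dagger\dagger}\circ\alpha^{\dagger} = \alpha\circ\alpha^{\dagger}$, using $\dagger^2 = \mathrm{id}$). Applying positivity of the Rosati form to the element $\beta$ — or more directly, observe that $\Tr(\beta\circ\beta^{\dagger}) = \Tr(\beta^2)$, and since $\beta \neq 0$ (as $\alpha \neq 0$ and the trace form is nondegenerate — indeed $\Tr(\alpha\circ\alpha^{\dagger}) > 0$ forces $\alpha\circ\alpha^{\dagger}\neq 0$), Theorem~\ref{r12} gives $\Tr(\beta^2) = \Tr(\beta\circ\beta^{\dagger}) > 0$. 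This contradicts $\Tr(\beta^m) = 0$ for all $m$, in particular for $m = 2$. Therefore $\mathfrak{r} = 0$ and $E$ is semisimple.

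The main obstacle — really the only delicate point — is making sure the trace appearing in the positivity statement (Theorem~\ref{r12}), namely $\Tr_{E/\mathbb{Q}}$ the reduced/ring-theoretic trace on $E$, vanishes on nilpotent elements, and that $\alpha \neq 0 \Rightarrow \alpha\circ\alpha^\dagger \neq 0$; the latter is immediate from positive-definiteness ($\Tr(\alpha\circ\alpha^\dagger) > 0$), and the former follows because a nilpotent element of a finite-dimensional algebra acts nilpotently in the regular representation, hence has zero trace. One should also confirm that $\alpha \in \mathrm{rad}(E)$ implies $\alpha\circ\alpha^{\dagger}\in\mathrm{rad}(E)$: this holds because $\mathrm{rad}(E)$ is a left ideal, so it is closed under left multiplication by $\alpha$, applied to $\alpha^{\dagger}\in E$. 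With these routine points in hand the contradiction is immediate.
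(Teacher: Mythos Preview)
Your proof is correct and follows essentially the same approach as the paper: take a nonzero $\alpha$ in the radical, form the symmetric element $\beta=\alpha\alpha^{\dagger}$, and use positivity of the Rosati involution to contradict nilpotence of $\beta$. The only cosmetic difference is that the paper iterates the step ``$\beta$ symmetric and nonzero $\Rightarrow\beta^{2}=\beta\beta^{\dagger}\neq0$'' to conclude $\beta^{2^{n}}\neq0$ for all $n$, whereas you reach the contradiction in one shot via $\Tr(\beta^{2})=\Tr(\beta\beta^{\dagger})>0$ versus $\Tr(\beta^{2})=0$ for nilpotent $\beta$.
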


\begin{proof}
If not, the radical of $\End^{0}(A)$ contains a nonzero element $\alpha$. As
$\beta\overset{\textup{{\tiny def}}}{=}\alpha\alpha^{\dagger}$ has nonzero
trace, it is a nonzero element of the radical. It is symmetric, and so
$\beta^{2}=\beta\beta^{\dagger}\neq0$, $\beta^{4}=(\beta^{2})^{2}\neq0$,
\ldots. Therefore $\beta$ is not nilpotent, which is a contradiction.
\end{proof}

\begin{corollary}
\label{r39}Let $\alpha$ be an endomorphism of $A$ such that $\alpha\circ
\alpha^{\dagger}=q_{A}$, $q\in\mathbb{Z}$. For every homomorphism $\rho
\colon\mathbb{Q}{}[\alpha]\rightarrow\mathbb{C}{}$,
\[
\rho(\alpha^{\dagger})=\overline{\rho(\alpha)}\text{ and }\left\vert
\rho\alpha\right\vert =q^{1/2}.
\]

\end{corollary}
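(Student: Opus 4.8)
The plan is to exploit the positivity of the Rosati involution (Theorem \ref{r12}) together with the fact that $\mathbb{Q}{}[\alpha]$ is a commutative semisimple $\mathbb{Q}{}$-algebra stable under $\dagger$. First I would observe that since $\alpha\circ\alpha^{\dagger}=q_{A}$ is central, $\alpha$ and $\alpha^{\dagger}$ commute, and $\alpha^{\dagger}=q\alpha^{-1}$ lies in $\mathbb{Q}{}[\alpha]$; hence $\dagger$ restricts to an involution of the commutative $\mathbb{Q}{}$-algebra $B\overset{\textup{{\tiny def}}}{=}\mathbb{Q}{}[\alpha]$. By Corollary \ref{r13} (or directly, since $B$ carries the positive-definite trace form from Theorem \ref{r12}, hence has zero radical) $B$ is a product of number fields, and on each factor $\dagger$ is either the identity or a complex conjugation. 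The key point is then that on $B$ the trace pairing $(x,y)\mapsto\Tr_{B/\mathbb{Q}{}}(xy^{\dagger})$ is positive definite: this follows from Theorem \ref{r12} because $\Tr_{E/\mathbb{Q}{}}$ restricted to $B$ is a positive multiple of $\Tr_{B/\mathbb{Q}{}}$ (the reduced trace argument), so positivity is inherited.

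Next I would analyze a homomorphism $\rho\colon B\rightarrow\mathbb{C}{}$. Such a $\rho$ factors through one of the number-field factors $F$ of $B$, i.e. it is an embedding $F\hookrightarrow\mathbb{C}{}$. I claim the involution $\dagger$ on $F$ cannot be the identity unless $F$ is totally real — and moreover that $\rho(x^{\dagger})=\overline{\rho(x)}$ for every embedding. Indeed, if $\dagger$ were the identity on $F$ but $F$ had a complex place, pick $x\in F$ with $\rho(x)$ non-real; then $x=x^{\dagger}$ forces $\Tr_{F/\mathbb{Q}{}}(x^{2})$ to have a negative contribution from the complex place while positivity demands $\Tr_{F/\mathbb{Q}{}}(x\cdot x^{\dagger})=\Tr_{F/\mathbb{Q}{}}(x^{2})>0$ for all nonzero $x$ — and one can choose $x$ making this fail (an element that is purely imaginary at the chosen complex place and small elsewhere). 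So at every complex place of $F$, $\dagger$ acts as the nontrivial automorphism, i.e. $\rho(x^{\dagger})=\overline{\rho(x)}$; at real places, $x^{\dagger}=x$ already gives $\rho(x^{\dagger})=\rho(x)=\overline{\rho(x)}$. This establishes the first assertion $\rho(\alpha^{\dagger})=\overline{\rho(\alpha)}$.

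Finally, applying $\rho$ to the relation $\alpha\alpha^{\dagger}=q$ and using $\rho(\alpha^{\dagger})=\overline{\rho(\alpha)}$ gives $\rho(\alpha)\overline{\rho(\alpha)}=q$, i.e. $|\rho(\alpha)|^{2}=q$, hence $|\rho(\alpha)|=q^{1/2}$ (note $q>0$, since $q=\deg$-type quantity: $q_A = \alpha\alpha^\dagger$ has $\Tr>0$ by Theorem \ref{r12}, forcing $q>0$).

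The main obstacle I anticipate is pinning down precisely why $\dagger$ must act as complex conjugation at every archimedean place — that is, transferring the positivity of the trace form on the (possibly noncommutative) algebra $E$ down to a clean statement about the commutative subalgebra $B$ and its archimedean completions. The cleanest route is: $\Tr_{E/\mathbb{Q}{}}|_{B}$ is a positive rational multiple of $\Tr_{B/\mathbb{Q}{}}$ (as $E$ is free over $B$ and $\dagger$-equivariant), so $(x,y)\mapsto\Tr_{B/\mathbb{Q}{}}(xy^{\dagger})$ is positive definite; then decompose $B\otimes_{\mathbb{Q}{}}\mathbb{R}{}\cong\prod\mathbb{R}{}\times\prod\mathbb{C}{}$ compatibly with $\dagger$, and on each $\mathbb{C}{}$-factor the only involution making $z\mapsto\Tr(z\cdot z^{\dagger})=z\cdot z^{\dagger}$ (the real trace) positive is $z^{\dagger}=\bar{z}$. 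Everything else is routine.
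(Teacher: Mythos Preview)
Your proposal is correct and matches the paper's approach: the paper also notes that $\mathbb{Q}[\alpha]$ is $\dagger$-stable and semisimple, passes to $\mathbb{R}\otimes\mathbb{Q}[\alpha]\cong\prod\mathbb{R}\times\prod\mathbb{C}$, and uses positivity of the extended trace form to force $\dagger$ to act as the identity on each $\mathbb{R}$-factor and as complex conjugation on each $\mathbb{C}$-factor --- precisely your ``cleanest route'' in the final paragraph. Your middle field-by-field argument has a gap in the case where $\dagger|_F$ is a nontrivial automorphism (you never verify that $\rho\circ\dagger=\bar{\rho}$ at \emph{every} complex place of $F$), but the $\otimes\,\mathbb{R}$ argument you close with handles this and is exactly what the paper does.
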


\begin{proof}
As $\mathbb{Q}{}[\alpha]$ is stable under $^{\dagger}$, it is semisimple, and
hence a product of fields. Therefore $\mathbb{R}{}\otimes\mathbb{Q}{}[\alpha]$
is a product of copies of $\mathbb{R}{}$ and $\mathbb{C}{}$. The involution
$^{\dagger}$ on $\mathbb{Q}{}[\alpha]$ extends by continuity to an involution
of $\mathbb{R}{}\otimes\mathbb{Q}{}[\alpha]$ having the property that
$\Tr(\beta\circ\beta^{\dagger})\geq0$ for all $\beta\in\mathbb{R}{}%
\otimes\mathbb{Q}{}[\alpha]$ with inequality holding on a dense subset. The
only such involution preserves the each factor and acts as the identity on the
real factors and as complex conjugation on the complex factors. This proves
the first statement, and the second follows:%
\[
q=\rho(q_{A})=\rho(\alpha\circ\alpha^{\dagger})=\rho(\alpha)\cdot
\overline{\rho(\alpha)}=\left\vert \rho(\alpha)^{2}\right\vert \text{.}%
\]

\end{proof}

\begin{theorem}
\label{r14}Let $A$ be an abelian variety over $k=\mathbb{F}{}_{q}$, and let
$\pi\in\End(A)$ be the Frobenius endomorphism. Let $^{\dagger}$ be a Rosati
involution on $\End^{0}(A)$. For every homomorphism $\rho\colon\mathbb{Q}%
[\pi]\rightarrow\mathbb{C}$,
\[
\rho(\alpha^{\dagger})=\overline{\rho(\pi)}\text{ and }\left\vert \rho
\pi\right\vert =q^{1/2}.
\]

\end{theorem}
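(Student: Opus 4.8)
The plan is to deduce Theorem~\ref{r14} from Corollary~\ref{r39} applied to $\alpha=\pi$. For that it suffices to prove that
\[
\pi\circ\pi^{\dagger}=q_{A}\qquad\text{in }\End^{0}(A).
\]
Since $\pi$ is an isogeny it is invertible in $\End^{0}(A)$, and this identity also gives $\pi^{\dagger}=q\pi^{-1}\in\mathbb{Q}{}[\pi]$, so that $\mathbb{Q}{}[\pi]$ is stable under $^{\dagger}$ and the two assertions of the theorem make sense. To begin, fix the polarization: the Rosati involution $^{\dagger}$ is attached to a polarization $\lambda\colon A\rightarrow A^{\vee}$ of $A$ \emph{over $\mathbb{F}{}_{q}$}, and, using that $A$ has the rational point $0$, I would write $\lambda=\varphi_{D}$ for a divisor $D$ on $A$ defined over $\mathbb{F}{}_{q}$, where $\varphi_{D}(a)=[t_{a}^{\ast}D-D]$ ($t_{a}$ the translation by $a$), as in the definition of a polarization.

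The geometric heart is the identity $\pi^{\ast}D\sim qD$. Here I would use that, as a morphism of $\mathbb{F}{}_{q}$-schemes, $\pi$ is the absolute $q$-power Frobenius of the scheme $A$ (the relative and absolute Frobenius agree because $\mathbb{F}{}_{q}$ is perfect, $x^{q}=x$). Pullback of \emph{any} line bundle along the absolute Frobenius raises it to the $q$-th tensor power, so $\pi^{\ast}\mathcal{O}{}_{A}(D)\cong\mathcal{O}{}_{A}(D)^{\otimes q}$, i.e.\ $\pi^{\ast}D\sim qD$. Concretely one sees this by choosing an $\mathbb{F}{}_{q}$-rational projective embedding $A\hookrightarrow\mathbb{P}{}^{n}$: because the embedding is defined over $\mathbb{F}{}_{q}$, $\pi$ is the restriction of the coordinatewise map $\Phi\colon(x_{0}\colon\cdots\colon x_{n})\mapsto(x_{0}^{q}\colon\cdots\colon x_{n}^{q})$, and $\Phi^{\ast}\mathcal{O}{}_{\mathbb{P}{}^{n}}(1)\cong\mathcal{O}{}_{\mathbb{P}{}^{n}}(q)$.

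Now transport this through $\lambda$. Because $\pi$ is a homomorphism of abelian varieties and $t_{\pi(a)}\circ\pi=\pi\circ t_{a}$, a short computation with translates gives $\pi^{\vee}\circ\varphi_{D}\circ\pi=\varphi_{\pi^{\ast}D}$ (here $\pi^{\vee}$ acts on $\Pic^{0}$ as $\pi^{\ast}$). Combining with $\pi^{\ast}D\sim qD$ and biadditivity of $D\mapsto\varphi_{D}$,
\[
\pi^{\vee}\circ\lambda\circ\pi=\varphi_{\pi^{\ast}D}=\varphi_{qD}=q\lambda,
\]
whence $\pi^{\dagger}\circ\pi=\lambda^{-1}\circ\pi^{\vee}\circ\lambda\circ\pi=\lambda^{-1}\circ(q\lambda)=q_{A}$, and likewise $\pi\circ\pi^{\dagger}=q_{A}$. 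Corollary~\ref{r39} applied to $\pi$ then yields $\rho(\pi^{\dagger})=\overline{\rho(\pi)}$ and $\left\vert\rho\pi\right\vert=q^{1/2}$ for every homomorphism $\rho\colon\mathbb{Q}{}[\pi]\rightarrow\mathbb{C}{}$.

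I expect the crux to be the identity $\pi^{\ast}D\sim qD$: it is the step encoding ``Frobenius has weight $q$ on divisors'', and it genuinely uses that $A$ (hence $D$, hence $\lambda$) is defined over $\mathbb{F}{}_{q}$ --- over $\overline{\mathbb{F}}{}_{q}$ the naive analogue already fails on $\Pic^{0}$, and for a polarization not defined over the base field one would instead pick up an automorphism in $\pi\circ\pi^{\dagger}$. Once this and the functoriality $\pi^{\vee}\circ\varphi_{D}\circ\pi=\varphi_{\pi^{\ast}D}$ are in place, the rest is formal, the real positivity input being Theorem~\ref{r12} packaged as Corollary~\ref{r39}. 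An alternative to the last two paragraphs would be to compute on the Tate module, using that the Weil pairing $T_{\ell}A\times T_{\ell}A^{\vee}\rightarrow\mathbb{Z}{}_{\ell}(1)$ is Galois-equivariant, that Frobenius acts on $\mathbb{Z}{}_{\ell}(1)$ by $q$, and the injectivity of $\End^{0}(A)\hookrightarrow\End(T_{\ell}A)\otimes\mathbb{Q}{}_{\ell}$ from (\ref{r11})--(\ref{e9}); but that requires properties of the Weil pairing not developed in the text, whereas the route above uses only elementary facts about Frobenius and translations.
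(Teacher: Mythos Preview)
Your proof is correct, and your reduction to Corollary~\ref{r39} via $\pi\circ\pi^{\dagger}=q_{A}$ is exactly the paper's strategy. The difference is in how that identity is established. The paper mentions that ``this can be proved by a direct calculation'' --- which is precisely what you do --- but then opts instead for the Weil pairing: from $e^{\lambda}(\pi x,\pi y)=\pi\, e^{\lambda}(x,y)=q\,e^{\lambda}(x,y)$ and the adjunction property $e^{\lambda}(\alpha x,y)=e^{\lambda}(x,\alpha^{\dagger}y)$ one gets $\pi^{\dagger}\circ\pi=q_{A}$ on $T_{\ell}A$, hence on $A$. So, amusingly, you and the paper each describe both routes and choose opposite ones: you carry out the direct computation and relegate the Weil pairing to a parenthetical alternative, while the paper does the reverse.

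Your route is more hands-on and avoids invoking the Weil pairing's formal properties, but one step deserves a sharper justification: writing $\lambda=\varphi_{D}$ with $D$ defined over $\mathbb{F}_{q}$ needs more than the rational point $0$. The rational point kills the Brauer obstruction for descending a Galois-invariant line bundle from $A_{\bar{k}}$ to $A$, but first you must lift the Galois-invariant class in $NS(A_{\bar{k}})$ determined by $\lambda$ to a Galois-invariant line bundle; the obstruction to that lies in $H^{1}(\Gal,A^{\vee}(\bar{k}))$, which vanishes by Lang's theorem. Once that is said, your argument is complete. The paper's Weil-pairing argument sidesteps this descent issue entirely, at the cost of importing the pairing; your argument trades that import for an appeal to Lang's theorem.
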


\begin{proof}
This will follow from (\ref{r39}) once we show that $\pi\circ\pi^{\dagger
}=q_{A}$. This can be proved by a direct calculation, but it is more
instructive to use the Weil pairing. Let $\lambda$ be the polarization
defining $^{\dagger}$. The Weil pairing is a nondegenerate skew-symmetric
pairing%
\[
e^{\lambda}\colon T_{l}A\times T_{l}A\rightarrow T_{l}\mathbb{G}_{m}%
\]
with the property that%
\[
e^{\lambda}(\alpha x,y)=e^{\lambda}(x,\alpha^{\dagger}y)\quad\quad
\text{(}x,y\in T_{l}A,\quad\alpha\in\End(A)).
\]
Now%
\[
e^{\lambda}(x,(\pi^{\dagger}\circ\pi)y)=e^{\lambda}(\pi x,\pi y)=\pi
e^{\lambda}(x,y)=qe^{\lambda}(x,y)=e^{\lambda}(x,q_{A}y).
\]
Therefore $\pi^{\dagger}\circ\pi$ and $q_{A}$ agree as endomorphisms of
$T_{l}A$, and hence as endomorphisms of $A$.
\end{proof}

The Riemann hypothesis for a curve follows from applying (\ref{r14}) to the
jacobian variety of $C$.

\begin{summary}
\label{r33}Let $A$ be an abelian variety of dimension $g$ over $k=\mathbb{F}%
{}_{q}$, and let $P(T)$ be the characteristic polynomial of the Frobenius
endomorphism $\pi$. Then $P(T)$ is a monic polynomial of degree $2g$ with
integer coefficients, and its roots $a_{1},\ldots,a_{2g}$ have absolute value
$q^{1/2}$. For all $\ell\neq p$,%
\[
P(T)=\det(T-\pi\mid V_{\ell}A),\quad V_{\ell}A\overset{\textup{{\tiny def}}%
}{=}T_{\ell}A\otimes\mathbb{Q}{}_{\ell}.
\]
For all $n\geq1$,%
\begin{equation}
\prod\nolimits_{i=1}^{2g}(1-a_{i}^{n})=|A(k_{n})|\quad\text{where }%
[k_{n}\colon k]=n, \label{e25}%
\end{equation}
and this condition determines $P$. We have%
\[
P(T)=q^{g}\cdot T^{2g}\cdot P(1/qT).
\]

\end{summary}

\begin{note}
\label{r52}Essentially everything in this subsection is in Weil 1948b, but, as
noted at the start, some of the proofs have been simplified by using Weil's
later work. In 1948 Weil didn't know that abelian varieties are projective,
and he proved (\ref{r12}) first for jacobians, where the Rosati involution is
obvious. The more direct proof of (\ref{r12}) given above is from Lang 1957.
\end{note}

\subsection{The Weil conjectures (Weil 1949)}

Weil studied equations of the form%
\begin{equation}
a_{0}X_{0}^{m_{0}}+\cdots+a_{r}X_{r}^{m_{r}}=b \label{e12}%
\end{equation}
over a finite field $k$, and obtained an expression in terms of Gauss sums for
the number of solutions in $k$. Using a relation, due to Davenport and Hasse
(1935), between Gauss sums in a finite field and in its extensions, he was
able to obtain a simple expression for the formal power series
(\textquotedblleft generating function\textquotedblright)%
\[
\sum_{1}^{\infty}\left(  \#\text{ solutions of (\ref{e12}) in }k_{n}\right)
T^{n}.
\]
In the homogeneous case%
\begin{equation}
a_{0}X_{0}^{m}+\cdots+a_{r}X_{r}^{m}=0\text{,} \label{e31}%
\end{equation}
he found that%
\[
\sum_{1}^{\infty}N_{n}T^{n-1}=\frac{d}{dT}\log\left(  \frac{1}{(1-T)\cdots
(1-q^{r-1}T)}\right)  +(-1)^{r}\frac{d}{dT}\log P(T)
\]
where $P(T)$ is a polynomial of degree $A$ equal to the number of solutions in
rational numbers $\alpha_{i}$ of the system $n\alpha_{i}\equiv1$, $\sum
\alpha_{i}\equiv0$ (mod $1$), $0<\alpha_{i}<1$. Dolbeault was able to tell
Weil that the Betti numbers of a hypersurface defined by an equation of the
form (\ref{e31}) over $\mathbb{C}{}$ are the coefficients of the polynomial%
\[
1+T^{2}+\cdots+T^{2r-2}+AT^{r}\text{.}%
\]
As he wrote (1949, p.409):

\begin{quote}
This, and other examples which we cannot discuss here, seems to lend some
support to the following conjectural statements, which are known to be true
for curves, but which I have not so far been able to prove for varieties of
higher dimension.\footnote{Weil's offhand announcement of the conjectures
misled one reviewer into stating that \textquotedblleft the purpose of this
paper is to give an exposition of known results concerning the equation
$a_{0}X_{0}^{m_{0}}+\cdots+a_{r}X_{r}^{m_{r}}=b$\textquotedblright\ (MR
29393).}
\end{quote}

\noindent\textsc{Weil Conjectures.}\label{r48} Let $V$ be a nonsingular
projective variety of dimension $d$ defined over a finite field $k$ with $q$
elements. Let $N_{n}$ denote the number points of $V$ rational over the
extension of $k$ of degree $n$. Define the zeta function of $C$ to be the
power series $Z(V,T)\in\mathbb{Q}{}[[T]]$ such that%

\begin{equation}
\frac{d\log Z(V,T)}{dT}=\sum\nolimits_{1}^{\infty}N_{n}T^{n-1}. \label{e13}%
\end{equation}
Then:

\renewcommand{\theenumi}{W{\arabic{enumi}}}

\begin{enumerate}
\item (rationality) $Z(V,T)$ is a rational function in $T$;

\item (functional equation) $Z(V,T)$ satisfies the functional equation%
\begin{equation}
Z(V,1/q^{d}T)=\pm q^{d\chi/2}\cdot T^{\chi}\cdot Z(V,T) \label{e14}%
\end{equation}
with $\chi$ equal to the Euler-Poincar\'{e} characteristic of $V$
(intersection number of the diagonal with itself);

\item (integrality) we have
\begin{equation}
Z(V,T)=\frac{P_{1}(T)\cdots P_{2d-1}(T)}{(1-T)P_{3}(T)\cdots(1-q^{d}T)}
\label{e27}%
\end{equation}
with $P_{r}(T)\in\mathbb{Z}[T]$ for all $r;$

\item (Riemann hypothesis) write $P_{r}(T)=\prod\nolimits_{1}^{B_{r}}%
(1-\alpha_{ri}T)$; then%
\[
\left\vert \alpha_{ri}\right\vert =q^{r/2}%
\]
for all $r$, $i$;

\item (Betti numbers) call the degrees $B_{r}$ of the polynomials $P_{r}$ the
\textit{Betti numbers} of $V$; if $V$ is obtained by reduction modulo a prime
ideal $\mathfrak{p}{}$ in a number field $K$ from a nonsingular projective
variety $\tilde{V}$ over $K$, then the Betti numbers of $V$ are equal to the
Betti numbers of $\tilde{V}$ (i.e., of the complex manifold $\tilde
{V}(\mathbb{C}{})$).
\end{enumerate}

\renewcommand{\theenumi}{{\alph{enumi}}}

Weil's considerations led him to the conclusion, startling at the time, that
the \textquotedblleft Betti numbers\textquotedblright\ of an algebraic variety
have a purely algebraic meaning. For a curve $C$, the Betti numbers are $1$,
$2g$, $1$ where $g$ is the smallest natural number for which the Riemann
inequality%
\[
l(D)\geq\deg(D)-g+1
\]
holds, but for dimension $>1$?

No cohomology groups appear in Weil's article and no cohomology theory is
conjectured to exist,\footnote{Weil may have been aware that there cannot
exist a good cohomology theory with $\mathbb{Q}{}$-coefficients; see
(\ref{r51}). Recall also that Weil was careful to distinguish conjecture from
speculation.} but Weil was certainly aware that cohomology provides a
heuristic explanation of (W1--W3). For example, let $\varphi$ be a finite map
of degree $\delta$ from a nonsingular complete variety $V$ to itself over
$\mathbb{C}{}$. For each $n$, let $N_{n}$ be the number of solutions, supposed
finite, of the equation $P=\varphi^{n}(P)$ or, better, the intersection number
$(\Delta\cdot\Gamma_{\varphi^{n}})$. Then standard arguments show that the
power series $Z(T)$ defined by (\ref{e13}) satisfies the functional equation%
\begin{equation}
Z(1/\delta T)=\pm(\delta^{1/2}T)^{\chi}\cdot Z(T) \label{e15}%
\end{equation}
where $\chi$ is the Euler-Poincar\'{e} characteristic of $V$ (Weil, \OE uvres
I, p.568). For a variety $V$ of dimension $d$, the Frobenius map has degree
$q^{d}$, and so (\ref{e15}) suggests (\ref{e14}).

\begin{example}
\label{r49}Let $A$ be an abelian variety of dimension $g$ over a field $k$
with $q$ elements, and let $a_{1},\ldots,a_{2g}$ be the roots of the
characteristic polynomial of the Frobenius map. Let $P_{r}(T)=\prod
(1-a_{i,r}T)$ where the $a_{i,r}$ run through the products%
\[
a_{i_{1}}\cdots a_{i_{r}},\quad0<i_{1}<\cdots<i_{r}\leq2g.
\]
It follows from (\ref{e25}) that%
\[
Z(A,T)=\frac{P_{1}(T)\cdots P_{2g-1}(T)}{(1-T)P_{3}(T)\cdots(1-q^{g}%
T)}\text{.}%
\]
The $r$th Betti number of an abelian variety of dimension $g$ over
$\mathbb{C}{}$ is $\left(  \!%
\begin{smallmatrix}
g\\
r
\end{smallmatrix}
\!\right)  $, which equals $\deg(P_{r})$, and so the Weil conjectures hold for
$A$.
\end{example}

\begin{aside}
\label{r50}To say that $Z(V,T)$ is rational means that there exist elements
$c_{1},\ldots,c_{m}\in\mathbb{Q}$,${}$ not all zero, and an integer $n_{0}$
such that, for all $n\geq n_{0}$,%
\[
c_{1}N_{n}+c_{2}N_{n+1}+\cdots+c_{m}N_{n+m-1}=0.
\]
In particular, the set of $N_{n}$ can be computed inductively from a finite subset.
\end{aside}

\section{Weil cohomology}

After Weil stated his conjectures, the conventional wisdom eventually became
that, in order to prove the rationality of the zeta functions, it was
necessary to define a good \textquotedblleft Weil \ cohomology
theory\textquotedblright\ for algebraic varieties. In 1959, Dwork\label{Dwork}
surprised everyone by finding an elementary proof, depending only on $p$-adic
analysis, of the rationality of the zeta function of an arbitrary variety over
a finite field (Dwork 1960).\footnote{\textquotedblleft His method consists in
assuming that the variety is a hypersurface in affine space (every variety is
birationally isomorphic to such a hypersurface, and an easy unscrewing lets us
pass from there to the general case); in this case he does a computation with
\textquotedblleft Gauss sums\textquotedblright\ analogous to that of Weil for
an equation $\sum a_{i}x_{i}^{n_{i}}=b$. Of course, Weil himself had tried to
extend his method and had got nowhere\ldots\textquotedblright\ Serre,
\textit{Grothendieck-Serre Correspondence} p.102. Dwork expressed $Z(V,T)$ as
a quotient of two $p$-adically entire functions, and showed that every such
function with nonzero radius of convergence is rational.} Nevertheless, a
complete understanding of the zeta function requires a cohomology theory,
whose interest anyway transcends zeta functions, and so the search continued.

After listing the axioms for a Weil cohomology theory, we explain how the
existence of such a theory implies the first three Weil conjectures. Then we
describe the standard Weil cohomologies.

For a smooth projective variety $V$, we write $C_{\mathrm{rat}}^{r}(V)$ for
the $\mathbb{Q}{}$-vector space of algebraic cycles of codimension $r$ (with
$\mathbb{Q}{}$-coefficients) modulo rational equivalence.

\subsection{The axioms for a Weil cohomology theory}

We fix an algebraically closed \textquotedblleft base\textquotedblright\ field
$k$ and a \textquotedblleft coefficient\textquotedblright\ field $Q$ of
characteristic zero. A \textit{Weil cohomology theory} is a contravariant
functor $V\rightsquigarrow H^{\ast}(V)$ from the category of nonsingular
projective varieties over $k$ to the category of finite-dimensional, graded,
anti-commutative $Q$-algebras carrying disjoint unions to direct sums and
admitting a Poincar\'{e} duality, a K\"{u}nneth formula, and a cycle map in
the following sense.

\begin{description}
\item[Poincar\'{e} duality] Let $V$ be connected of dimension $d$.

\begin{enumerate}
\item The $Q$-vector spaces $H^{r}(V)$ are zero except for $0\leq r\leq2d$,
and $H^{2d}(V)$ has dimension $1$.

\item Let $Q(-1)=H^{2}(\mathbb{P}^{1})$, and let $Q(1)$ denote its dual. For a
$Q$-vector space $V$ and integer $m$, we let $V(m)$ equal $V\otimes
_{Q}Q(1)^{\otimes m}$ or $V\otimes_{Q}Q(-1)^{\otimes-m}$ according as $m$ is
positive or negative. For each $V$, there is given a natural isomorphism
$\eta_{V}\colon H^{2d}(V)(d)\rightarrow Q$.

\item The pairings
\begin{equation}
H^{r}(V)\times H^{2d-r}(V)(d)\rightarrow H^{2d}(V)(d)\simeq Q \label{e19}%
\end{equation}
induced by the product structure on $H^{\ast}(V)$ are non-degenerate.
\end{enumerate}

\item[K\"{u}nneth formula] Let $p,q\colon V\times W\rightrightarrows V,W$ be
the projection maps. Then the map
\[
x\otimes y\mapsto p^{\ast}x\cdot q^{\ast}y:H^{\ast}(V)\otimes H^{\ast
}(W)\rightarrow H^{\ast}(V\times W)
\]
is an isomorphism of graded $Q$-algebras$.$
\end{description}

\begin{description}
\item[Cycle map] There are given group homomorphisms
\[
cl_{V}:C_{\text{\textrm{rat}}}^{r}(V)\rightarrow H^{2r}(V)(r)
\]
satisfying the following conditions:

\begin{enumerate}
\item (functoriality) for every regular map $\phi\colon V\rightarrow W$,%
\begin{equation}
\left\{
\begin{array}
[c]{rcl}%
\phi^{\ast}\circ cl_{W} & = & cl_{V}\circ\phi^{\ast}\\
\phi_{\ast}\circ cl_{V} & = & cl_{W}\circ\phi_{\ast}%
\end{array}
\right.  \label{e20}%
\end{equation}

\item (multiplicativity) for every $Y\in C_{\mathrm{rat}}^{r}(V)$ and $Z\in
C_{\mathrm{rat}}^{s}(W),$
\[
cl_{V\times W}(Y\times Z)=cl_{V}(Y)\otimes cl_{W}(Z).
\]

\item (normalization) If $P$ is a point, so that $C_{\text{\textrm{rat}}%
}^{\ast}(P)\simeq\mathbb{Q}$ and $H^{\ast}(P)\simeq Q$, then $cl_{P}$ is the
natural inclusion map.
\end{enumerate}
\end{description}

\smallskip\noindent Let $\phi\colon V\rightarrow W$ be a regular map of
nonsingular projective varieties over $k$, and let $\phi^{\ast}$ be the map
$H^{\ast}(\phi)\colon H^{\ast}(W)\rightarrow H^{\ast}(V)$. Because the pairing
(\ref{e19}) is nondegenerate, there is a unique linear map
\[
\phi_{\ast}\colon H^{\ast}(V)\rightarrow H^{\ast-2c}(W)(-c),\quad c=\dim
V-\dim W
\]
such that the projection formula
\[
\eta_{W}(\phi_{\ast}(x)\cdot y)=\eta_{V}(x\cdot\phi^{\ast}y)
\]
holds for all $x\in H^{r}(V)$, $y\in H^{2\dim V-r}(W)(\dim V)$. This explains
the maps on the left of the equals signs in (\ref{e20}), and the maps on the
right refer to the standard operations on the groups of algebraic cycles
modulo rational equivalence (Fulton 1984, Chapter I).

\subsubsection{The Lefschetz trace formula}

Let $H^{\ast}$ be a Weil cohomology over the base field $k$, and let $V$ be
nonsingular projective variety over $k$. We use the K\"{u}nneth formula to
identify $H^{\ast}(V\times V)$ with $H^{\ast}(V)\otimes H^{\ast}(V)$.

Let $\phi\colon V\rightarrow V$ be a regular map. We shall need an expression
for the class of the graph $\Gamma_{\phi}$ of $\phi$ in
\[
H^{2d}(V\times V)(d)\simeq\bigoplus\nolimits_{r=0}^{2d}H^{r}(V)\otimes
H^{2d-r}(V)(d).
\]
Let $(e_{i}^{r})_{i}$ be a basis for $H^{r}(V)$ and let $(f_{i}^{2d-r})_{i}$
be the dual basis in $H^{2d-r}(V)(d)$; we choose $e^{0}=1$, so that $\eta
_{V}(f^{2d})=1$. Then%
\[
cl_{V\times V}(\Gamma_{\phi})=\sum\nolimits_{r,i}\phi^{\ast}(e_{i}^{r})\otimes
f_{i}^{2d-r}.
\]

\begin{theorem}
[Lefschetz trace formula]\label{r23} Let $\phi\colon V\rightarrow V$ be a
regular map such that $\Gamma_{\phi}\cdot\Delta$ is defined. Then
\[
(\Gamma_{\phi}\cdot\Delta)=\sum_{r=0}^{2d}(-1)^{r}\Tr(\phi\mid H^{r}(V))
\]

\end{theorem}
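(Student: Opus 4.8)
The plan is to compute the self-intersection $(\Gamma_\phi \cdot \Delta)$ entirely inside $H^{2d}(V\times V)(d)$ using the cycle map, the K\"unneth identification, and Poincar\'e duality, and to show that each K\"unneth component contributes $(-1)^r \Tr(\phi \mid H^r(V))$. First I would record the class of the diagonal. Since $\Delta = \Gamma_{\id_V}$, the formula for the graph quoted just above the statement gives
\[
cl_{V\times V}(\Delta) = \sum\nolimits_{r,i} e_i^r \otimes f_i^{2d-r},
\]
where $(e_i^r)_i$ is a basis of $H^r(V)$ and $(f_i^{2d-r})_i$ the dual basis of $H^{2d-r}(V)(d)$ under the pairing (\ref{e19}); and
\[
cl_{V\times V}(\Gamma_\phi) = \sum\nolimits_{s,j} \phi^\ast(e_j^s) \otimes f_j^{2d-s}.
\]

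Next I would use that the intersection number of two complementary-dimensional cycles is read off by multiplying their classes in the top cohomology group and applying $\eta_{V\times V}$: that is, $(\Gamma_\phi\cdot\Delta) = \eta_{V\times V}\bigl(cl_{V\times V}(\Gamma_\phi)\cdot cl_{V\times V}(\Delta)\bigr)$, which is legitimate because the cycle map is multiplicative and compatible with pushforward to a point (the normalization axiom), so degrees of zero-cycles go to the canonical generator of $H^{0}(\mathrm{pt})\simeq Q$. Expanding the product and using that $H^\ast(V\times V)\simeq H^\ast(V)\otimes H^\ast(V)$ as graded algebras, the cross terms pair $H^r(V)$ against $H^{2d-s}(V)$ with $r+s \neq 2d$ in the first factor — these land outside $H^{2d}(V)$ and vanish — so only the terms with matching degrees survive. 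For those, one gets a sum over $r$ of
\[
\eta_{V\times V}\!\left(\sum\nolimits_{i,j}\bigl(\phi^\ast(e_j^{2d-r})\cdot e_i^r\bigr)\otimes\bigl(f_j^{r}\cdot f_i^{2d-r}\bigr)\right),
\]
and here the anti-commutativity of the cup product in $H^\ast(V)$ introduces a sign $(-1)^{r(2d-r)} = (-1)^r$ when one reorders to evaluate each factor against $\eta_V$. Evaluating, $\eta_V(e_i^r\cdot f_j^{2d-r}) = \delta_{ij}$ expresses duality, and the remaining factor $\sum_i \eta_V(\phi^\ast(e_i^{2d-r})\cdot f_?^{r})$ collapses to the trace of the matrix of $\phi^\ast$ on $H^r(V)$ in the basis $(e_i^r)$. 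Summing over $r$ gives $\sum_r (-1)^r \Tr(\phi\mid H^r(V))$.

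The main obstacle — and the only place genuine care is needed — is bookkeeping the signs: one must track the Koszul sign from the anti-commutative algebra structure when swapping tensor factors and when reordering products of odd-degree classes, and confirm that it is exactly $(-1)^r$ and not $(-1)^{2d-r}$ or something with the wrong parity in $d$. A clean way to organize this is to split $H^\ast(V)$ into even and odd parts and check the two cases separately, or to use the super-trace formalism: the class of $\Delta$ is the K\"unneth dual of the identity, the class of $\Gamma_\phi$ is the K\"unneth dual of $\phi^\ast$, and their intersection pairing computes the super-trace of $\phi^\ast$, which by definition is $\sum_r (-1)^r \Tr(\phi\mid H^r(V))$. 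Everything else — that the cycle map is a ring homomorphism into $H^\ast(V\times V)$, that it is compatible with the projections used to identify $(\Gamma_\phi\cdot\Delta)$ with an evaluation of $\eta_{V\times V}$, and that the dual basis pairing realizes Poincar\'e duality — is immediate from the axioms for a Weil cohomology theory stated above.
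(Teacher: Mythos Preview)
Your approach is essentially the same as the paper's: express $cl_{V\times V}(\Gamma_\phi)$ and $cl_{V\times V}(\Delta)$ via the K\"unneth bases, multiply, track the Koszul sign $(-1)^{r(2d-r)}=(-1)^r$, and recognize the trace. The paper streamlines the computation by first rewriting $cl_{V\times V}(\Delta)=\sum_{r,i}(-1)^r f_i^{2d-r}\otimes e_i^r$ (swapping the tensor factors before multiplying), so that after the product each tensor slot is directly an $e\cdot f$ pairing and collapses to $\delta_{ij}f^{2d}$ without the detour through products like $f_j^r\cdot f_i^{2d-r}$; this also makes it transparent that what appears is $\Tr(\phi^\ast\mid H^r(V))$ rather than $\Tr(\phi^\ast\mid H^{2d-r}(V))$ (harmless under the sum, but your phrasing with $\phi^\ast(e_j^{2d-r})$ and ``trace on $H^r(V)$'' conflates the two).
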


\begin{proof}
We the above notations, we have%
\begin{align*}
cl_{V\times V}(\Gamma_{\phi})  &  =\sum_{r,i}\phi^{\ast}(e_{i}^{r})\otimes
f_{i}^{2d-r}\quad\text{and}\\
cl_{V\times V}(\Delta)  &  =\sum_{r,i}e_{i}^{r}\otimes f_{i}^{2d-r}=\sum
_{r,i}(-1)^{r(2d-r)}f_{i}^{2d-r}\otimes e_{i}^{r}=\sum_{r,i}(-1)^{r}%
f_{i}^{2d-r}\otimes e_{i}^{r}.
\end{align*}
Thus%
\begin{align*}
cl_{V\times V}(\Gamma_{\phi}\cdot\Delta)  &  =\sum_{r,i}(-1)^{r}\phi^{\ast
}(e_{i}^{r})f_{i}^{2d-r}\otimes f^{2d}\\
&  =\sum_{r=0}^{2d}(-1)^{r}\Tr(\phi^{\ast})(f^{2d}\otimes f^{2d})
\end{align*}
because $\phi^{\ast}(e_{i}^{r})f_{j}^{2d-r}$ is the coefficient of $e_{j}^{r}$
when $\phi^{\ast}(e_{i}^{r})$ is expressed in terms of the basis $(e_{j}^{r}%
)$. On applying $\eta_{V\times V}$ to both sides, we obtain the required formula.
\end{proof}

This is Lefschetz's original 1924 proof (see Steenrod 1957, p.27).

\subsubsection{There is no Weil cohomology theory with coefficients in
$\mathbb{Q}{}_{p}$ or $\mathbb{R}{}$.}

\begin{plain}
\label{r51}Recall (\ref{r47}) that for a simple abelian variety $A$ over a
finite field $k$, $E\overset{\textup{{\tiny def}}}{=}\End^{0}(A)$ is a
division algebra with centre $F\overset{\textup{{\tiny def}}}{=}\mathbb{Q}%
{}[\pi]$, and $2\dim(A)=[E\colon F]^{1/2}\cdot\lbrack F\colon\mathbb{Q}{}]$.
Let $Q$ be a field of characteristic zero. In order for $E$ to act on a
$Q$-vector space of dimension $2\dim(A)$, the field $Q$ must split $E$, i.e.,
$Q\otimes_{\mathbb{Q}{}}E$ must be a matrix algebra over $Q\otimes
_{\mathbb{Q}{}}F$. The endomorphism algebra of a supersingular elliptic curve
over a finite field containing $\mathbb{F}{}_{p^{2}}$ is a division quaternion
algebra over $\mathbb{Q}{}$ that is nonsplit exactly at $p$ and the real prime
(Hasse 1936). Therefore, there cannot be a Weil cohomology with coefficients
in $\mathbb{Q}{}_{p}$ or $\mathbb{R}$ (hence not in $\mathbb{Q}{}$
either).\footnote{It was Serre who explained this to Grothendieck, sometime in
the 1950s.} Tate's formula (\ref{e26}) doesn't forbid there being a Weil
cohomology with coeffients in $\mathbb{Q}{}_{\ell}$, $\ell\neq p$, or in the
field of fractions of the Witt vectors over $k$. That Weil cohomology theories
exist over these fields (see below) is an example of Yhprum's law in
mathematics: everything that can work, will work.
\end{plain}

\subsection{Proof of the Weil conjectures W1--W3}

Let $V_{0}$ be a nonsingular projective variety of dimension $d$ over
$k_{0}=\mathbb{F}{}_{q}$, and let $V$ be the variety obtained by extension of
scalars to the algebraic closure $k$ of $k_{0}$. Let $\pi\colon V\rightarrow
V$ be the Frobenius map (relative to $V_{0}/k_{0}$). We assume that there
exists a Weil cohomology theory over $k$ with coefficients in $Q$.

\begin{proposition}
\label{r24} The zeta function%
\begin{equation}
Z(V,T)=\frac{P_{1}(T)\cdots P_{2d-1}(T)}{(1-T)P_{2}(T)\cdots(1-q^{d}T)}
\label{e21}%
\end{equation}
with $P_{r}(V,T)=\det(1-\pi T\mid H^{r}(V,Q)).$
\end{proposition}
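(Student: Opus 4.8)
The plan is to feed the iterates of the Frobenius map into the Lefschetz trace formula (\ref{r23}), convert the resulting identities between point-counts and traces into an identity of power series via the standard ``$\exp$ of traces'' formula, and then read off the two extreme factors $P_0$ and $P_{2d}$ from the axioms.

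First I would check that for every $n\geq 1$ the intersection $\Gamma_{\pi^n}\cdot\Delta$ is defined and equals $N_n$. A point $x\in V(k)$ is fixed by $\pi^n$ precisely when its coordinates satisfy $x_i^{q^n}=x_i$, i.e.\ $x\in V_0(\mathbb{F}_{q^n})$, so the set-theoretic intersection $\Gamma_{\pi^n}\cap\Delta$ has exactly $N_n$ points. Each occurs with multiplicity $1$: since the differential of $\pi$ (hence of $\pi^n$) vanishes, $\id-d(\pi^n)=\id$ is invertible on every tangent space, so the graph meets the diagonal transversally. Applying (\ref{r23}) to $\phi=\pi^n$ therefore gives
\[
N_n=\sum_{r=0}^{2d}(-1)^r\Tr(\pi^n\mid H^r(V,Q)),\qquad n\geq 1.
\]

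Next I would invoke the elementary identity, valid for an endomorphism $\psi$ of a finite-dimensional vector space $W$ over a field of characteristic zero,
\[
\exp\Bigl(\sum_{n\geq 1}\Tr(\psi^n\mid W)\,\frac{T^n}{n}\Bigr)=\frac{1}{\det(1-\psi T\mid W)},
\]
proved by base-changing to an algebraic closure, triangularizing $\psi$, and summing $\sum_{n\geq1} a^nT^n/n=-\log(1-aT)$ over the eigenvalues $a$. Integrating the defining relation (\ref{e13}) gives $\log Z(V,T)=\sum_{n\geq1}N_nT^n/n$; substituting the displayed trace formula and using the identity termwise in $r$ yields
\[
\log Z(V,T)=\sum_{r=0}^{2d}(-1)^{r+1}\log\det(1-\pi T\mid H^r(V,Q)),
\]
hence $Z(V,T)=\prod_{r=0}^{2d}P_r(T)^{(-1)^{r+1}}$ with $P_r(T)=\det(1-\pi T\mid H^r(V,Q))$. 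Since the $P_r$ lie in $Q[T]$, this already establishes the rationality statement W1.

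It remains to identify $P_0$ and $P_{2d}$. As $V$ is connected, $H^0(V,Q)\simeq Q$ and $\pi^\ast$ acts as the identity on it, so $P_0(T)=1-T$. For the top group, choose a closed point $P\in V(k)$; by functoriality of the cycle map $\pi^\ast cl_V(P)=cl_V(\pi^\ast P)$, and since $\pi$ is purely inseparable of degree $q^d$ the cycle $\pi^\ast P=\pi^{-1}(P)$ is $q^d$ times a (reduced) point. Because $H^{2d}(V,Q)(d)$ is one-dimensional and $\eta_V\circ cl_V$ is the degree map on zero-cycles, every point has the same nonzero class there, so $\pi^\ast$ acts on $H^{2d}(V,Q)(d)$ — hence on $H^{2d}(V,Q)$, the Tate twist carrying no Frobenius action in this formalism — as multiplication by $q^d$, giving $P_{2d}(T)=1-q^dT$. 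Substituting these two factors into $Z(V,T)=\prod_r P_r(T)^{(-1)^{r+1}}$ produces exactly (\ref{e21}). I expect the only genuinely delicate points to be the transversality of $\Gamma_{\pi^n}\cdot\Delta$ (hence its identification with $N_n$) and the extraction of the $\pi$-action on $H^0$ and $H^{2d}$ from the Poincar\'e-duality and cycle-map axioms; the power-series manipulation and the $(-1)^r$ bookkeeping are routine.
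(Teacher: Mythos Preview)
Your proposal is correct and follows essentially the same approach as the paper: apply the Lefschetz trace formula (\ref{r23}) to the iterates $\pi^n$, note that the fixed points have multiplicity $1$, and convert via the identity $\log\det(1-\alpha T)=-\sum_{n\geq1}\Tr(\alpha^n)T^n/n$. You are more thorough than the paper in two respects: you spell out the transversality argument (the paper merely asserts ``the fixed points of $\pi^n$ have multiplicity $1$''), and you explicitly identify $P_0(T)=1-T$ and $P_{2d}(T)=1-q^dT$, whereas the paper's proof of (\ref{r24}) omits this step and only computes the $\pi$-action on $H^{2d}$ later, in (\ref{e22}) within the proof of (\ref{r28}).
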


\begin{proof}
Recall that%
\[
\log Z(V,T)\overset{\textup{{\tiny def}}}{=}\sum\nolimits_{n>0}N_{n}%
\frac{T^{n}}{n}\text{.}%
\]
The fixed points of $\pi^{n}$ have multiplicity $1$, and so%
\[
N_{n}=(\Gamma_{\pi^{n}}\cdot\Delta)\overset{\text{(\ref{r23})}}{=}\sum
_{r=0}^{2d}(-1)^{r}\Tr(\pi^{n}\mid H^{r}(V,Q)).
\]
The following elementary statement completes the proof: Let $\alpha$ be an
endomorphism of a finite-dimensional vector space $V$; then
\begin{equation}
\log\left(  \det(1-\alpha T\mid V)\right)  =-\sum_{n\geq1}\Tr(\alpha^{n}\mid
V)\frac{T^{n}}{n}. \label{e40}%
\end{equation}

\end{proof}

\begin{proposition}
\label{r29}The zeta function $Z(V,T)\in\mathbb{Q}{}[T]$.
\end{proposition}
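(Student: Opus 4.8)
The plan is to leverage Proposition \ref{r24}, which already presents $Z(V,T)$ as a rational function with coefficients in the coefficient field $Q$ of the Weil cohomology, and then to descend from $Q$ to $\mathbb{Q}$. The only other input needed is that $Z(V,T)$, regarded as a formal power series, has coefficients in $\mathbb{Q}$: from the defining identity $\frac{d\log Z(V,T)}{dT}=\sum_{n\ge 1}N_nT^{n-1}$ with $N_n\in\mathbb{Z}$ one gets $Z(V,T)=\exp\!\bigl(\sum_{n\ge 1}N_n\tfrac{T^n}{n}\bigr)\in 1+T\mathbb{Q}[[T]]$. So the statement reduces to a purely formal descent lemma: \emph{a formal power series with coefficients in a field $k$ which represents an element of $K(T)$ for some extension field $K\supseteq k$ already represents an element of $k(T)$.}

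I would prove this lemma by linear algebra. Write $f=\sum_n a_nT^n\in k[[T]]$ and suppose $gf=h$ with $g=\sum_{i=0}^s b_iT^i\in K[T]$ nonzero, $h\in K[T]$, and $N:=\max(\deg h,s)$. Comparing coefficients of $T^n$ for $n>N$ gives $\sum_{i=0}^s b_ia_{n-i}=0$ for all such $n$, so $(b_0,\dots,b_s)$ is a nonzero $K$-point of the homogeneous linear system in the unknowns $x_0,\dots,x_s$ whose coefficient matrix $\bigl(a_{n-i}\bigr)_{n>N,\ 0\le i\le s}$ has entries in $k$. Since the rank of a matrix over $k$ is unchanged by extension of scalars (only finitely many columns occur, so a maximal set of independent rows is finite), this system already has a nonzero solution $(b_0',\dots,b_s')\in k^{s+1}$; then $g':=\sum_i b_i'T^i\in k[T]\setminus\{0\}$ and $g'f$ has vanishing coefficients in all degrees $>N$, hence $g'f\in k[T]$ and $f=(g'f)/g'\in k(T)$.

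Applying the lemma with $k=\mathbb{Q}$, $K=Q$ and $f=Z(V,T)$ — which lies in $Q(T)$ by Proposition \ref{r24} and in $\mathbb{Q}[[T]]$ by the remark above — yields $Z(V,T)\in\mathbb{Q}(T)$, which is the assertion. An essentially equivalent route is via Kronecker's rationality criterion: $f=\sum a_nT^n$ is a rational function iff for some fixed $N$ the Hankel determinants $\det\bigl(a_{i+j+m}\bigr)_{0\le i,j\le N}$ vanish for all large $m$; since this is a condition on the $a_n\in k$ alone, it is insensitive to the ground field.

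The substance of the proof lies entirely in Proposition \ref{r24} (hence in the Lefschetz trace formula); the descent lemma, though indispensable, is routine, and is the only place one does anything at all. I would stress one point about what this does \emph{not} give: the argument establishes rationality over $\mathbb{Q}$ without showing that the individual factors $P_r(T)=\det(1-\pi T\mid H^r(V,Q))$ lie in $\mathbb{Q}[T]$. A priori those factors (for $0<r<2d$) depend on the chosen Weil cohomology and need not have rational coefficients; it is only their alternating product that is pinned down, and the descent lemma is exactly the device that turns ``rational over $Q$'' into ``rational over $\mathbb{Q}$''. Proving $P_r\in\mathbb{Z}[T]$ (the integrality conjecture W3) requires genuinely more — $\ell$-independence, or Deligne's later results — and is not needed here.
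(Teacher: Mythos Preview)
Your proof is correct and is essentially the paper's argument. The paper invokes exactly the Kronecker--Hankel criterion you mention at the end: rationality of a power series $\sum a_iT^i$ is equivalent to the vanishing of certain Hankel determinants in the $a_i$, a condition insensitive to enlarging the coefficient field; since $Z(V,T)$ is rational over $Q$ by Proposition~\ref{r24} and has coefficients in $\mathbb{Q}$, it is rational over $\mathbb{Q}$. Your linear-algebra descent lemma is just this criterion unpacked (the Hankel determinants detect precisely when the linear system $\sum_i b_i a_{n-i}=0$ has a nontrivial solution), so the two presentations are the same argument in slightly different clothing.
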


\begin{proof}
We know that $Z(V,T)\in Q[T]$. To proceed, we shall need the following
elementary criterion:\bquote Let $f(T)=\sum_{i\geq0}a_{i}T^{i}\in Q[[T]]$;
then $f(T)\in Q[T]$ if and only if there exist integers $m$ and $n_{0}$ such
that the Hankel determinant%
\[
\left\vert \!\!%
\begin{array}
[c]{llll}%
a_{n} & a_{n+1} & \cdots & a_{n+m-1}\\
a_{n+1} & a_{n+2} & \cdots & a_{n+m}\\
a_{n+2} & a_{n+3} & \cdots & a_{n+m+1}\\
\quad\vdots & \quad\vdots &  & \quad\vdots\\
a_{n+m-1} & a_{n+m} & \cdots & a_{n+2m-2}%
\end{array}
\!\!\right\vert
\]
is $0$ for all $n\geq n_{0}$ (this is a restatement of (\ref{r50}); it is an
exercise in Chap. 4 of Bourbaki's \textit{Alg\`{e}bre}).\equote The power
series $Z(V,T)$ satisfies this criterion in $Q[[T]]$, and hence also in
$\mathbb{Q}{}[[T]]$.
\end{proof}

As in the case of curves, the zeta function can be written%
\[
Z(V,T)=\prod_{v\in|V|}\frac{1}{1-T^{\deg(v)}}%
\]
where $v$ runs over the set $|V|$ of closed points of $V$ (as a scheme).
Hence
\[
Z(V,T)=1+a_{1}T+a_{2}T^{2}+\cdots
\]
with the $a_{i}\in\mathbb{Z}{}$. When we write
\[
Z(V,T)=\frac{P(T)}{R(T)},\quad P,R\in\mathbb{Q}{}[T],\quad\gcd(P,R)=1,
\]
we can normalize $P$ and $R$ so that%
\begin{align*}
P(T)  &  =1+b_{1}T+\cdots\quad\in\mathbb{Q}{}[T]\\
R(T)  &  =1+c_{1}T+\cdots\quad\in\mathbb{Q}{}[T].
\end{align*}

\begin{proposition}
\label{r30}Let $P,R$ be as above. Then $P$ and $R$ are uniquely determined by
$V$, and they have coefficients in $\mathbb{Z}{}$.
\end{proposition}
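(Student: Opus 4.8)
The plan is to leverage the same Hankel-determinant criterion used in the proof of Proposition~\ref{r29}, now over $\mathbb{Q}{}$ rather than over $Q$, together with the integrality of the coefficients $a_i$ of $Z(V,T)$. Since $Z(V,T)=1+a_1T+a_2T^2+\cdots$ with $a_i\in\mathbb{Z}{}$, the power series lies in $\mathbb{Q}{}[[T]]$, and we have already shown it lies in $\mathbb{Q}{}(T)$; so the first task is just to pin down the normalized numerator and denominator and then to control denominators of their coefficients. Uniqueness of $P,R$ is immediate: if $Z=P/R=P'/R'$ are two representations with $\gcd(P,R)=\gcd(P',R')=1$ and both normalized to have constant term $1$, then $PR'=P'R$, and unique factorization in $\mathbb{Q}{}[T]$ forces $P=P'$, $R=R'$. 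So the only real content is integrality.

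For integrality I would argue as follows. Write $R(T)=1+c_1T+\cdots+c_eT^e$ and $P(T)=1+b_1T+\cdots+b_fT^f$ with $c_i,b_i\in\mathbb{Q}{}$. From $Z(V,T)R(T)=P(T)$ and $Z(V,T)=\sum a_iT^i$ with $a_i\in\mathbb{Z}{}$, comparing coefficients in degrees $>f$ gives the linear recursion
\[
a_n+c_1a_{n-1}+\cdots+c_ea_{n-e}=0,\qquad n>f,
\]
so the $a_i$ satisfy a recursion whose characteristic polynomial is the reciprocal of $R$. The roots of $R$ are therefore inverses of the ``eigenvalues'' occurring among the $a_i$; but from Proposition~\ref{r24} we know $Z(V,T)=\prod P_r(T)^{(-1)^{r+1}}$ with $P_r(T)=\det(1-\pi T\mid H^r(V,Q))$, so every such root is the inverse of an algebraic number — indeed, since $a_n=\sum_r(-1)^r\Tr(\pi^n\mid H^r)$ and the $a_n$ are rational integers, these algebraic numbers are roots of monic integer polynomials (the characteristic polynomials of $\pi$ on the $H^r$, which have integer coefficients because the $a_n$ do — this is the standard Newton's-identity / Fatou-lemma argument). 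Concretely: a power series in $1+T\mathbb{Z}{}[[T]]$ that represents a rational function $P/R$ in lowest terms with $P,R\in 1+T\mathbb{Q}{}[T]$ automatically has $P,R\in 1+T\mathbb{Z}{}[T]$ — this is Fatou's lemma, which I would either cite or prove in two lines via Gauss's lemma after clearing denominators. That is the crux of the argument.

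The step I expect to be the main obstacle is the clean deduction of Fatou's lemma in this normalized setting, i.e.\ showing that the coprime factorization over $\mathbb{Q}{}$ of an integral power series already has integral numerator and denominator. The slick route: pick the common denominator $N$ of all $b_i,c_i$, so $NP,NR\in\mathbb{Z}{}[T]$, and apply Gauss's lemma — but one must argue that the content cancellation in $Z\cdot(NR)=NP$ forces the leading/constant normalization to survive integrally. The cleanest version runs through the recursion above: since $R(0)=1$, the recursion $a_n=-(c_1a_{n-1}+\cdots+c_ea_{n-e})$ for $n>f$ together with $a_i\in\mathbb{Z}{}$ and minimality of $e$ (from $\gcd(P,R)=1$) forces $c_i\in\mathbb{Z}{}$ by a descent on $\ell$-adic valuations for each prime $\ell$ — if some $c_i$ had a pole at $\ell$, one derives unboundedness of $\ell$-adic valuations of the $a_n$, contradicting $a_n\in\mathbb{Z}{}$. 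Once $R\in\mathbb{Z}{}[T]$, we get $P=ZR\in\mathbb{Z}{}[[T]]\cap\mathbb{Q}{}[T]=\mathbb{Z}{}[T]$ trivially. I would present the valuation-descent lemma compactly, as it is the one genuinely non-formal input.
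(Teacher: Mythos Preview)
Your approach is correct in outline --- the integrality of $P$ and $R$ is indeed a case of Fatou's lemma, and once $R\in\mathbb{Z}[T]$ your observation that $P=ZR\in\mathbb{Z}[[T]]\cap\mathbb{Q}[T]=\mathbb{Z}[T]$ finishes cleanly. But the middle of your argument wobbles: you try to argue that the inverse roots of $R$ are algebraic integers by relating them to eigenvalues of $\pi$ on the $H^r$ and asserting those characteristic polynomials have integer coefficients ``because the $a_n$ do''. That is circular --- the integrality of $\det(1-\pi T\mid H^r)$ is not something you have in hand, and it is close to the content of the proposition itself. You rightly abandon that line and fall back on Fatou, but your sketch of Fatou via the recursion (``unboundedness of $\ell$-adic valuations of the $a_n$'') is vague; making that descent rigorous requires genuinely using $\gcd(P,R)=1$, and you have not said how.

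The paper's proof is the same idea but executed more directly, and you would do well to adopt it. Rather than invoking Fatou as a black box, argue as follows: write $R(T)=\prod(1-\gamma_iT)$; if some $c_i\notin\mathbb{Z}$ then some $\gamma_i$ is not an algebraic integer, so $\ord_\ell(\gamma_i)<0$ for some prime $\ell$, i.e.\ the root $\beta=\gamma_i^{-1}$ of $R$ satisfies $|\beta|_\ell<1$. Since $a_n\in\mathbb{Z}\subset\mathbb{Z}_\ell$, the series $Z(V,\beta)=\sum a_n\beta^n$ converges $\ell$-adically --- but $\beta$ is a pole of $P/R$ (here $\gcd(P,R)=1$ is used: $P(\beta)\neq0$), contradiction. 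For $P$, the paper applies the same argument to $Z(V,T)^{-1}=R/P$, which also lies in $\mathbb{Z}[[T]]$ by the Euler product; your route via $P=ZR$ is an equally good (arguably simpler) alternative for that half.
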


\begin{proof}
The uniqueness follows from unique factorization in $\mathbb{Q}{}[T]$. If some
coefficient of $R$ is not an integer, then $\beta^{-1}$ is not an algebraic
integer for some root $\beta$ of $R$. Hence $\ord_{l}(\beta)>0$ for some prime
$l$, and $Z(V,\beta)=1+a_{1}\beta+a_{2}\beta^{2}+\cdots$ converges
$l$-adically. This contradicts the fact that $\beta$ is a pole of $Z(V,T)$. We
have shown that $R(T)$ has coefficients in $\mathbb{Z}{}$. As $Z(V,T)^{-1}%
\in\mathbb{Z}{}[[T]]$, the same argument applies to $P(T)$.
\end{proof}

\begin{proposition}
\label{r28} We have
\[
Z(V,1/q^{d}T)=\pm q^{d\chi/2}\cdot T^{\chi}\cdot Z(V,T)
\]
where $\chi=(\Delta\cdot\Delta)$.
\end{proposition}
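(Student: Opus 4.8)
The plan is to read the functional equation off Poincaré duality, exactly as one does for the topological zeta function of a self-map. From Proposition~\ref{r24} we have
\[
Z(V,T)=\prod_{r=0}^{2d}P_r(T)^{(-1)^{r+1}},\qquad P_r(T)=\det(1-\pi T\mid H^r(V)),
\]
where $P_0(T)=1-T$ and $P_{2d}(T)=1-q^dT$; in particular $\pi$ acts as the identity on $H^0(V)$ and as $q^d$ on $H^{2d}(V)$. Since $\pi$ acts on $Q(1)$, the dual of $H^2(\mathbb{P}^1)$, as multiplication by $q^{-1}$, it acts trivially on $H^{2d}(V)(d)\simeq Q$, so the cup-product pairing of~(\ref{e19}),
\[
H^r(V)\times H^{2d-r}(V)(d)\longrightarrow H^{2d}(V)(d)\simeq Q,
\]
is a perfect, $\pi$-equivariant pairing into the trivial $\pi$-module $Q$.

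First I would extract the eigenvalue symmetry. The pairing identifies $\pi$ on $H^{2d-r}(V)(d)$ with the transpose-inverse of $\pi$ on $H^r(V)$; untwisting by $Q(d)$, if $\alpha_{r,1},\dots,\alpha_{r,B_r}$ are the inverse roots of $P_r(T)$ then the inverse roots of $P_{2d-r}(T)$ are $q^d/\alpha_{r,1},\dots,q^d/\alpha_{r,B_r}$. Hence $B_r=B_{2d-r}$, and writing $\delta_r=\det(\pi\mid H^r(V))=\prod_i\alpha_{r,i}$ we get $\delta_r\,\delta_{2d-r}=q^{dB_r}$ for every $r$, so in particular $\delta_d^2=q^{dB_d}$. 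Comparing the two product expressions for $P_{2d-r}$ then yields, for each $r$,
\[
P_r\!\left(\tfrac{1}{q^dT}\right)=(-1)^{B_r}\,\delta_r\,(q^dT)^{-B_r}\,P_{2d-r}(T).
\]

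Next I would substitute this into $Z(V,1/(q^dT))=\prod_r P_r(1/(q^dT))^{(-1)^{r+1}}$ and collect. Reindexing $r\mapsto 2d-r$ (and using that $2d$ is even, so the sign $(-1)^{r+1}$ is preserved) turns $\prod_r P_{2d-r}(T)^{(-1)^{r+1}}$ back into $Z(V,T)$; the factors $(q^dT)^{-B_r}$ combine to $(q^dT)^{\chi}$ with $\chi=\sum_r(-1)^rB_r$; and the scalars combine to $\pm\prod_r\delta_r^{(-1)^{r+1}}$. Pairing $r$ with $2d-r$ for $r<d$ via $\delta_r\delta_{2d-r}=q^{dB_r}$, and treating the middle term with $\delta_d=\pm q^{dB_d/2}$, this last product simplifies to $\pm q^{-d\chi/2}$, once one notes $\chi=2\sum_{r<d}(-1)^rB_r+(-1)^dB_d$. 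Assembling the pieces gives $Z(V,1/(q^dT))=\pm q^{d\chi/2}T^{\chi}Z(V,T)$. Finally, $\chi=(\Delta\cdot\Delta)$ is precisely the Lefschetz trace formula~(\ref{r23}) applied to $\phi=\mathrm{id}_V$, since $\Tr(\mathrm{id}\mid H^r(V))=B_r$.

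The one genuinely delicate point is the middle cohomology $H^d(V)$: it is the origin of the sign $\pm$, because $\delta_d$ is determined only up to sign by $\delta_d^2=q^{dB_d}$, and one must bookkeep the Tate twists carefully so that $\pi$ really does act trivially on $H^{2d}(V)(d)$ — otherwise the exponents of $q$ come out wrong. Everything else is the routine rearrangement of a finite product.
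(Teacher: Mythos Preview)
Your proof is correct and follows essentially the same route as the paper's: derive the eigenvalue symmetry $\alpha\leftrightarrow q^d/\alpha$ between $H^r$ and $H^{2d-r}$ from Poincar\'e duality and the action of $\pi$ on top cohomology, turn this into the functional equation by the standard product manipulation, and identify $\sum_r(-1)^rB_r$ with $(\Delta\cdot\Delta)$ via the Lefschetz trace formula for $\phi=\id$. The paper compresses your explicit calculation into the phrase ``an easy calculation,'' so you have simply filled in what it left to the reader.

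One small point worth noting: you read off ``$\pi$ acts as $q^d$ on $H^{2d}(V)$'' from the displayed form of Proposition~\ref{r24}, whereas the paper actually \emph{proves} this here, via the cycle class map and $\pi^*P=q^dP$ for a closed point. Your framing in terms of $\pi$ acting on $Q(1)$ as $q^{-1}$ (so trivially on $H^{2d}(V)(d)$) is the natural \'etale-cohomology convention; the paper's convention treats the Tate twist as a pure $Q$-vector-space operation and has $\pi$ acting as $q^d$ on $H^{2d}(V)(d)$. Both lead to the same untwisted symmetry, so this is only a difference of bookkeeping, not of substance.
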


\begin{proof}
The Frobenius map $\pi$ has degree $q^{d}$. Therefore, for any closed point
$P$ of $V$, $\pi^{\ast}P=q^{d}P$, and%
\begin{equation}
\pi^{\ast}cl_{V}(P)=cl_{V}(\pi^{\ast}P)=cl_{V}(q^{d}P)=q^{d}cl_{V}(P).
\label{e22}%
\end{equation}
Thus $\pi$ acts as multiplication by $q^{d}$ on $H^{2d}(V)(d)$. From this, and
Poincar\'{e} duality, it follows that if $\alpha_{1},\ldots,\alpha_{s}$ are
the eigenvalues of $\pi$ acting on $H^{r}(V)$, then $q^{d}/\alpha_{1}%
,\ldots,q^{d}/\alpha_{s}$ are the eigenvalues of $\pi$ acting on $H^{2d-r}%
(V)$. An easy calculation now shows that the required formula holds with
$\chi$ replaced by
\[
\sum(-1)^{r}\dim H^{r}(V),
\]
but the Lefschetz trace formula (\ref{r23}) with $\phi=\id$ shows that this
sum equals $(\Delta\cdot\Delta)$.
\end{proof}

\begin{plain}
\label{r26}In the expression (\ref{e21}) for $Z(V,T),$ we have not shown that
the polynomials $P_{r}$ have coefficients in $\mathbb{Q}{}$ nor that they are
independent of the Weil cohomology theory. However, (\ref{r30}) shows that
this is true for the numerator and denominator after we have removed any
common factors. If the $P_{r}(T)$ are relatively prime in pairs, then there
can be no cancellation, and $P_{r}(T)=1+\sum_{i}a_{r,i}T^{i}$ has coefficients
in $\mathbb{Z}{}$ and is independent of the Weil cohomology (because this is
true of the irreducible factors of the numerator and denominator). If
$\left\vert \iota(\alpha)\right\vert =q^{r/2}$ for every eigenvalue $\alpha$
of $\pi$ acting on $H^{r}(V,Q)$ and embedding $\iota$ of $Q(\alpha)$ into
$\mathbb{C}{}$, then the $P_{r}(T)$ are relatively prime in pairs, and so the
Weil conjectures W1--W4 are true for $V$.
\end{plain}

\subsection{Application to rings of correspondences}

We show that the (mere) existence of a Weil cohomology theory implies that the
$\mathbb{Q}{}$-algebra of correspondences for numerical equivalence on an
algebraic variety is a semisimple $\mathbb{Q}{}$-algebra of finite dimension.

\begin{plain}
\label{r75}Let $V$ be a connected nonsingular projective variety of dimension
$d$. An \textit{algebraic }$r$\textit{-cycle} on $V$ is a formal sum $Z=\sum
n_{i}Z_{i}$ with $n_{i}\in\mathbb{Z}{}$ and $Z_{i}$ an irreducible closed
subvariety of codimension $r$; such cycles form a group $C^{r}(V)$.
\end{plain}

\begin{plain}
\label{r76}Let $\sim$ be an adequate equivalence relation on the family of
groups $C^{r}(V)$. Then $C_{\sim}^{\ast}(V)\overset{\textup{{\tiny def}}}%
{=}\bigoplus_{r\geq0}C^{r}/\sim$ becomes a graded ring under intersection
product; moreover, push-forwards and pull-backs of algebraic cycles with
respect to regular maps are well-defined. Let $A_{\sim}^{r}(V)=C_{\sim}%
^{r}(V)\otimes\mathbb{Q}$. There is a bilinear map%
\[
A_{\sim}^{\dim(V_{1})+r}(V_{1}\times V_{2})\times A_{\sim}^{\dim(V_{2}%
)+s}(V_{2}\times V_{3})\rightarrow A_{\sim}^{\dim(V_{1})+r+s}(V_{1}\times
V_{3})
\]
sending $(f,g)$ to%
\[
g\circ f\overset{\textup{{\tiny def}}}{=}(p_{13})_{\ast}(p_{12}^{\ast}f\cdot
p_{23}^{\ast}g)
\]
where $p_{ij}$ is the projection $V_{1}\times V_{2}\times V_{3}\rightarrow
V_{i}\times V_{j}$. This is associative in an obvious sense. In particular,
$A_{\sim}^{\dim(V)}(V\times V)$ is a $\mathbb{Q}{}$-algebra.
\end{plain}

\begin{plain}
\label{r77}Two algebraic $r$-cycles $f,g$ are \textit{numerically equivalent}
if $(f\cdot h)=(g\cdot h)$ for all $(d-r)$-cycles $h$ for which the
intersection products are defined. This is an adequate equivalence relation,
and so we get a $\mathbb{Q}{}$-algebra $A_{\mathrm{num}}^{d}(V\times V)$.
\end{plain}

Let $H$ be a Weil cohomology theory with coefficient field $Q$, and let
$A_{H}^{r}(V)$ (resp. $A_{H}^{r}(V,Q)$) denote the $\mathbb{Q}{}$-subspace
(resp. $Q$-subspace) of $H^{2r}(V)(r)$ spanned by the algebraic classes.

\begin{plain}
\label{r71}The $\mathbb{Q}{}$-vector space $A_{\mathrm{num}}^{r}(V)$ is
finite-dimensional. To see this, let $f_{1},\ldots,f_{s}$ be elements of
$A_{\mathrm{H}}^{d-r}(V)$ spanning the $Q$-subspace $A_{H}^{d-r}(V,Q)$ of
$H^{2d-2r}(V)(d-r)$; then the kernel of the map%
\[
x\mapsto(x\cdot f_{1},\ldots,x\cdot f_{s})\colon A_{H}^{r}(V)\rightarrow
\mathbb{Q}{}^{s}%
\]
consists of the elements of $A_{H}^{r}(V)$ numerically equivalent to zero, and
so its image is $A_{\mathrm{num}}^{r}(V)$.
\end{plain}

\begin{plain}
\label{r72}Let $A_{\mathrm{num}}^{r}(V,Q)$ denote the quotient of $A_{H}%
^{r}(V,Q)$ by the left kernel of the pairing
\[
A_{H}^{r}(V,Q)\times A_{H}^{d-r}(V,Q)\rightarrow A_{H}^{d}(V,Q)\simeq
Q\text{.}%
\]
Then $A_{H}^{r}(V)\rightarrow A_{\mathrm{num}}^{r}(V,Q)$ factors through
$A_{\mathrm{num}}^{r}(V)$, and I claim that the map
\[
a\otimes f\mapsto af\colon Q\otimes A_{\mathrm{num}}^{r}(V)\rightarrow
A_{\mathrm{num}}^{r}(V,Q)
\]
is an isomorphism. As $A_{\mathrm{num}}^{r}(V,Q)$ is spanned by the image of
$A_{\mathrm{num}}^{r}(V)$, the map is obviously surjective. Let $e_{1}%
,\ldots,e_{m}$ be a $\mathbb{Q}{}$-basis for $A_{\mathrm{num}}^{r}(V)$, and
let $f_{1},\ldots,f_{m}$ be the dual basis in $A_{\mathrm{num}}^{d-r}(V)$. If
$\sum_{i=1}^{m}a_{i}\otimes e_{i}$ ($a_{i}\in Q$) becomes zero in
$A_{\mathrm{num}}^{r}(V,Q)$, then $a_{j}=(\sum a_{i}e_{i})\cdot f_{j}=0$ for
all $j$. Thus the map is injective.
\end{plain}

\begin{theorem}
\label{r74}The $\mathbb{Q}{}$-algebra $A_{\mathrm{num}}^{\dim(V)}(V\times V)$
is finite-dimensional and semisimple.
\end{theorem}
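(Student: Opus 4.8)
We are given that a Weil cohomology $H^{\ast}$ exists over $k$, with coefficient field $Q$ of characteristic zero; the plan is to use it, via the Lefschetz trace formula (\ref{r23}), in the same spirit in which the positivity of the Rosati involution was used for $\End^{0}(A)$: exhibit on the algebra in question a nondegenerate trace pairing whose radical automatically lies in the nilpotents. Write $n=\dim V$ and $R=A_{\mathrm{num}}^{n}(V\times V)$. By (\ref{r71}), applied to the $2n$-fold $V\times V$ in codimension $n$, $R$ is finite-dimensional over $\mathbb{Q}$; hence $\rad(R)$ is a nilpotent two-sided ideal and it is enough to prove $\rad(R)=0$. The tautology to keep in mind is that $(f,g)\mapsto(f\cdot g)$ is a nondegenerate symmetric $\mathbb{Q}$-bilinear form on $R$ --- that is exactly the definition of numerical equivalence --- so everything comes down to showing this form annihilates $\rad(R)$.

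To feed cohomology in, I would work with $R_{\mathrm{hom}}=A_{\mathrm{hom}}^{n}(V\times V)$, so that $R=R_{\mathrm{hom}}/\mathcal{N}$ with $\mathcal{N}=\{f:(f\cdot g)=0\text{ for all }g\in R_{\mathrm{hom}}\}$. The cycle class map is an injective ring homomorphism $u\colon R_{\mathrm{hom}}\hookrightarrow S:=\bigoplus_{r}\End_{Q}(H^{r}(V))$ (via its Künneth components, a codimension-$n$ correspondence on the $2n$-fold $V\times V$ acts on each $H^{r}(V)$); put $\bar{B}=Q\cdot u(R_{\mathrm{hom}})\subseteq S$, a finite-dimensional $Q$-subalgebra, so $\rad(\bar{B})$ is nilpotent. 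Reading $\mathrm{cl}_{V\times V}(f\cdot g)=\mathrm{cl}(f)\cup\mathrm{cl}(g)$ off in Künneth components just as in the proof of (\ref{r23}) gives $(f\cdot g)=\langle u(f),u({}^{t}g)\rangle$, where ${}^{t}g$ is the transpose correspondence (obtained by interchanging the two factors) and $\langle x,y\rangle:=\sum_{r}(-1)^{r}\Tr(x_{r}y_{r})$ is the $Q$-bilinear, symmetric, associative graded trace form on $\bar{B}$. Consequently, if $u(f)\in\rad(\bar{B})$ then, for every $g$, the product $u(f)\,u({}^{t}g)$ lies in the ideal $\rad(\bar{B})$ and hence is nilpotent in $S$, so each of its graded pieces has trace zero; thus $(f\cdot g)=0$ for all $g$, i.e. $f\in\mathcal{N}$. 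Therefore $\mathcal{N}\supseteq u^{-1}(\rad\bar{B})$, and $R$ is a quotient of $C:=u(R_{\mathrm{hom}})/\bigl(u(R_{\mathrm{hom}})\cap\rad\bar{B}\bigr)$, which injects into --- and spans over $Q$ --- the semisimple $Q$-algebra $\bar{B}_{\mathrm{ss}}:=\bar{B}/\rad\bar{B}$.

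The remaining step, $\rad(R)=0$, is where the only genuine care is needed, and I expect it to be the crux. Because of the signs, the form $\langle x,y\rangle$ on $\bar{B}_{\mathrm{ss}}$ may well be degenerate: it degenerates along a simple block $B_{i}$ precisely when $\sum_{r}(-1)^{r}m_{i,r}=0$, where $m_{i,r}$ is the multiplicity of $B_{i}$ in the $\bar{B}$-module $H^{r}(V)$ --- this is the obstruction one would like to remove by knowing homological $=$ numerical equivalence, which one cannot. One nevertheless wins, because the radical $\mathfrak{m}$ of $\langle x,y\rangle$ on $\bar{B}_{\mathrm{ss}}$ is the span of those blocks, hence a \emph{direct factor} of $\bar{B}_{\mathrm{ss}}$, in particular a semisimple ideal. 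Now $\mathcal{N}':=\ker(C\twoheadrightarrow R)$ is the image of $\mathcal{N}$ in $\bar{B}_{\mathrm{ss}}$; since $\mathcal{N}$ is the $u$-preimage of the radical of $\langle x,y\rangle$ on $\bar{B}$, and the latter is the preimage of $\mathfrak{m}$ under $\bar{B}\to\bar{B}_{\mathrm{ss}}$, it follows that $\mathcal{N}'=\mathfrak{m}\cap C$. Finally, if $\mathcal{J}\subseteq C$ is the preimage of $\rad(R)$, then $\mathcal{J}^{m}\subseteq\mathcal{N}'\subseteq\mathfrak{m}$ for some $m$; passing to the ideal $Q\cdot\mathcal{J}$ of $\bar{B}_{\mathrm{ss}}=Q\cdot C$ and using that the factor complementary to $\mathfrak{m}$ is semisimple --- so has no nonzero nilpotent ideal --- we get $Q\cdot\mathcal{J}\subseteq\mathfrak{m}$, whence $\mathcal{J}\subseteq\mathfrak{m}\cap C=\mathcal{N}'$ and $\rad(R)=\mathcal{J}/\mathcal{N}'=0$. (Finite-dimensionality of $R$ from (\ref{r71}) enters twice: to make $\rad(R)$ nilpotent, and to descend from $R_{\mathrm{hom}}$ --- not known to be finite-dimensional --- down to $R$.)
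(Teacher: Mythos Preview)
Your argument is correct and is essentially the paper's proof: both rest on the Lefschetz-trace identity $(f\cdot g^{t})=\sum_{r}(-1)^{r}\Tr(f\circ g\mid H^{r}(V))$ together with the nilpotence of radical elements in $\bar B=A_{H}^{n}(V\times V,Q)$. The paper streamlines your last paragraph by invoking (\ref{r72}) to identify $Q\otimes R$ with the quotient $A_{\mathrm{num}}^{n}(V\times V,Q)$ of $\bar B$, so that any $f\in\rad(R)$ lifts directly to some $f'\in\rad(\bar B)$ (a surjection of finite-dimensional algebras carries radical onto radical) and the trace identity then gives $(f'\cdot g^{t})=0$ for all $g$ at once --- this bypasses your analysis of the trace-form radical $\mathfrak m$ on $\bar B_{\mathrm{ss}}$.
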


\begin{proof}
Let $d=\dim(V)$ and $B=A_{\mathrm{num}}^{d}(V\times V)$. Then $B$ is a
finite-dimensional $\mathbb{Q}{}$-algebra (\ref{r71}), and the pairing%
\[
f,g\mapsto(f\cdot g)\colon B\times B\rightarrow\mathbb{Q}{}%
\]
is nondegenerate. Let $f$ be an element of the radical $\mathrm{rad}{}(B)$ of
$B$. We have to show that $(f\cdot g)=0$ for all $g\in B$.

Let $A=A_{H}^{d}(V\times V,Q)$. Then $A$ is a finite-dimensional $Q$-algebra,
and there is a surjective homomorphism
\[
A\overset{\textup{{\tiny def}}}{=}A_{H}^{d}(V\times V,Q)\overset
{S}{\longrightarrow}A_{\mathrm{num}}^{d}(V\times V,Q)\overset{\text{\ref{r72}%
}}{\simeq}Q\otimes B.
\]
As the ring $A/\mathrm{rad}(A)$ is semisimple, so also is its quotient
$(Q\otimes B)/S(\mathrm{rad}(A))$. Therefore $S(\mathrm{rad}(A))\supset
\mathrm{rad}(Q\otimes B)$, and so there exists an $f^{\prime}\in\mathrm{rad}%
{}(A)$ mapping to $1\otimes f$. For all $g\in A$,%
\begin{equation}
(f^{\prime}\cdot g^{t})=\sum\nolimits_{i}(-1)^{i}\Tr(f^{\prime}\circ g\mid
H^{i}(V)) \label{eq6}%
\end{equation}
--- this can be proved exactly as (\ref{r23}). But $f^{\prime}\circ g^{t}%
\in\mathrm{rad}{}(A)$; therefore it is nilpotent, and so its trace on
$H^{i}(V)$ is zero. Hence $(f^{\prime}\cdot g^{t})=0$, and so $(1\otimes
f\cdot S(g^{t}))=0$ for all $g\in A$. It follows that $f=0$.
\end{proof}

Theorem \ref{r74} was extracted from Jannsen 1992.

\subsection{Etale cohomology}

\hfill\begin{minipage}{3.0in}
\textit{From all the work of Grothendieck, it is without doubt \'{e}tale cohomology
which has exercised the most profound influence on the development of
arithmetic geometry in the last fifty years.}\\
Illusie 2014.\footnotemark
\end{minipage}\footnotetext{De toute l'oeuvre de Grothendieck, c'est sans
doute la cohomologie \'{e}tale qui aura exerc\'{e} l'influence la plus
profonde sur l'\'{e}volution de la g\'{e}om\'{e}trie arithm\'{e}tique dans les
cinquante derni\`{e}res ann\'{e}es.}\bigskip

With his definition of fibre spaces on algebraic varieties, Weil began the
process of introducing into abstract algebraic geometry the powerful
topological methods used in the study of complex algebraic varieties. He
introduced fibre spaces in a 1949 conference talk, and then, in more detail,
in a course at the University of Chicago (Weil 1952). For the first time, he
made use of the Zariski topology in his definition of an abstract variety, and
he equipped his varieties with this topology. He required a fibre space to be
locally trival for the Zariski topology on the base variety. Weil's theory
works much as expected, but some fibre spaces that one expects (from topology)
to be locally trivial are not, because the Zariski topology has too few open sets.

In a seminar in April 1958, Serre enlarged the scope of Weil's theory by
admitting also fibre spaces that are only \textquotedblleft locally
isotrivial\textquotedblright\ in the following sense: there exists a covering
$V=\bigcup_{i}U_{i}$ of the base variety $V$ by open subvarieties $U_{i}$ and
finite \'{e}tale maps $U_{i}^{\prime}\rightarrow U_{i}$ such that the fibre
space becomes trivial when pulled back to each $U_{i}^{\prime}$. For an
algebraic group $G$ over $k$, Serre defined $H^{1}(V,G)$ to be the set of
isomorphism classes of principal fibre spaces on $V$ under $G$, which he
considered to be the \textquotedblleft good $H^{1}$\textquotedblright. At the
end of the seminar, Grothendieck said to Serre that this will give the Weil
cohomology in all dimensions!\footnote{D\`{e}s la fin de l'expos\'{e} oral,
Grothendieck m'a dit: cela va donner la cohomologie de Weil en toute
dimension!} (Serre 2001, p.125, p.255). By the time Serre wrote up his seminar
in September 1958, he was able to include a reference to Grothendieck's
announcement (1958b) of a \textquotedblleft Weil cohomology\textquotedblright.\ 

Grothendieck's claim raised two questions:

\begin{enumerate}
[label=(\Alph*)]

\item when $G$ is commutative, is it possible to define higher cohomology groups?

\item assuming (A) are they the \textquotedblleft true\textquotedblright%
\ cohomology groups when $G$ is finite?
\end{enumerate}

\noindent To answer (A), Grothendieck observed that to define a sheaf theory
and a sheaf-cohomology, much less is needed than a topological space. In
particular the \textquotedblleft open subsets\textquotedblright\ need not be subsets.

Specifically, let $\mathsf{C}$ be an essentially small category admitting
finite fibred products and a final object $V$, and suppose that for each
object $U$ of $\mathsf{C}$ there is given a family of \textquotedblleft
coverings\textquotedblright\ $(U_{i}\rightarrow U)_{i}$. The system of
coverings is said to be a \textit{Grothendieck topology} on $\mathsf{\newline
C}$ if it satisfies the following conditions:

\begin{enumerate}
\item (base change) if $(U_{i}\rightarrow U)$ is a covering and $U^{\prime
}\rightarrow U$ is a morphism in $\mathsf{C}$, then $(U_{i}\times_{U}%
U^{\prime}\rightarrow U^{\prime})$ is a covering;

\item (local nature) if $(U_{i}\rightarrow U)_{i}$ is a covering, and, for
each $i$, $(U_{i,j}\rightarrow U_{i})_{j}$ is a covering, then the family of
composites $(U_{i,j}\rightarrow U)_{i,j}$ is a covering;

\item a family consisting of a single isomorphism $\varphi\colon U^{\prime
}\rightarrow U$ is a covering.
\end{enumerate}

\noindent For example, let $V$ be a topological space, and consider the
category $\mathsf{C}$ whose objects are the open subsets of $V$ with the
inclusions as morphisms; then the coverings of open subsets in the usual sense
define a Grothendieck topology on $\mathsf{C}$.

Consider a category $\mathsf{C}$ equipped with a Grothendieck topology. A
\textit{presheaf} is simply a contravariant functor from $\mathsf{C}$ to the
category of abelian groups. A presheaf $P$ is a \textit{sheaf} if, for every
covering $(U_{i}\rightarrow U)_{i}$, the sequence
\[
P(U)\rightarrow\prod_{i}P(U_{i})\rightrightarrows\prod_{i,j}P(U_{i}\times
_{U}U_{j})
\]
is exact. With these definitions, the sheaf theory in Grothendieck 1957
carries over almost word-for-word. The category of sheaves is abelian,
satisfies Grothendieck's conditions (AB5) and (AB3*), and admits a family of
generators. Therefore, it has enough injectives, and the cohomology groups can
be defined to be the right derived functors of $F\rightsquigarrow F(V)$.

When Grothendieck defined the \'{e}tale topology on a variety (or scheme) $V$,
he took as coverings those in Serre's definition of \textquotedblleft locally
isotrivial\textquotedblright, but Mike Artin realized that it was better to
allow as coverings all surjective families of \'{e}tale morphisms. With his
definition the local rings satisfy Hensel's lemma.

A test for (B) is: \bquote(C) let $V$ be a nonsingular algebraic variety over
$\mathbb{C}{}$, and let $\Lambda$ be a finite abelian group; do the \'{e}tale
cohomology groups $H^{r}(V_{\mathrm{et}},\Lambda)$ coincide with the singular
cohomology groups $H^{r}(V^{\text{an}},\Lambda)$?\equote For $r=0$, this just
says that an algebraic variety over $\mathbb{C}{}$ is connected for the
complex topology if it is connected for the Zariski topology. For $r=1$, it is
the Riemann existence theorem,\footnote{Riemann used the Dirichlet principle
(unproven at the time)\ to show that on every compact Riemann surface $S$
there are enough meromorphic functions to realize $S$ as a projective
algebraic curve; this proves the Riemann existence theorem for nonsingular
projective curves.} which says that every finite covering of $V^{\text{an}}$
is algebraic. In particular, (C) is true for curves. It was probably this that
made Grothendieck optimistic that (C) is true in all degrees.\bquote
Grothendieck thought always in relative terms: one space over another. Once
the cohomology of curves (over an algebraically closed field) was understood,
we could expect similar results for the direct images for a \textit{relative}
curve \ldots, and, \textquotedblleft by unscrewing
[d\'{e}vissage]\textquotedblright\ldots\ for the higher $H^{r}$%
.\footnote{Grothendieck pensait toujours en termes relatifs: un espace
au-dessus d'un autre. Une fois la cohomologie des courbes (sur un corps
alg\'{e}briquement clos) tir\'{e}e au clair, on pouvait esp\'{e}rer des
r\'{e}sultats similaires pour les images directes pour une courbe
\textit{relative} (les th\'{e}or\`{e}mes de sp\'{e}cialisation du $\pi_{1}$
devaient le lui sugg\'{e}rer), et, \textquotedblleft par
d\'{e}vissage\textquotedblright\ (fibrations en courbes, suites spectrales de
Leray), atteindre les $H^{i}$ sup\'{e}rieurs.} (Illusie 2014, p177.)\equote
Initially (in 1958), Serre was less sure: \bquote Of course, the Zariski
topology gives a $\pi_{1}$ and $H^{1}$ that are too small, and I had fixed
that defect. But was that enough? My reflexes as a topologist told me that we
must also deal with the higher homotopy groups: $\pi_{2}$, $\pi_{3}$,
etc.\footnote{Bien s\^{u}r, la topologie de Zariski donne un $\pi_{1}$ et un
$H^{1}$ trop petits, et j'avais rem\'{e}di\'{e} \`{a} ce d\'{e}faut. Mais
\'{e}tait-ce suffisant? Mes r\'{e}flexes de topologue me disaient qu'il
fallait aussi s'occuper des groupes d'homotopie sup\'{e}rieurs: $\pi_{2}$,
$\pi_{3}$, etc.} (Serre 2001, p.255.)\equote Artin proved (C) by showing that,
in the relative dimension one case, the Zariski topology is sufficiently fine
to give coverings by $K(\pi,1)$'s (SGA 4, XI).\footnote{This is rather Serre's
way of viewing Artin's proof. As Serre wrote (email July 2015): \bquote[This]
was the way I saw it, and I liked it for two reasons : a) it is a kind of
explanation why higher homotopy groups don't matter: they don't occur in these
nice Artin neighbourhoods; b) the fundamental group of such a neighbourhood
has roughly the same structure (iterated extension of free groups) as the
braid groups which were so dear to Emil Artin; in particular, it is what I
called a "good group" : its cohomology is the same when it is viewed as a
discrete group or as a profinite group.\equote}

Although Grothendieck had the idea for \'{e}tale cohomology in 1958,

\begin{quote}
a few years passed before this idea really took shape: Grothendieck did not
see how to start. He also had other occupations.\footnote{quelques ann\'{e}es
s'\'{e}coul\`{e}rent avant que cette id\'{e}e ne prenne r\'{e}ellement forme:
Grothendieck ne voyait pas comment d\'{e}marrer. Il avait aussi d'autres
occupations.} (Illusie 2014, p.175.)
\end{quote}

\noindent When Grothendieck came to Harvard in 1961, Mike Artin asked him:

\begin{quote}
if it was all right if I thought about it, and so that was the beginning\ldots
\ [Grothendieck] didn't work on it until I proved the first theorem. \ldots\ I
thought about it that fall \ldots\ And then I gave a seminar\ldots\ (Segel
2009, p.358.)
\end{quote}

\noindent Serre writes (email July 2015):

\begin{quote}
Grothendieck, after the seminar lecture where I defined \textquotedblleft the
good $H^{1}$\textquotedblright, had the idea that the higher cohomology groups
would also be the good ones. But, as far as I know, he could not prove their
expected properties\ldots\ It was Mike Artin, in his Harvard seminar notes of
1962, who really started the game, by going beyond $H^{1}$. For instance, he
proved that a smooth space of dimension $2$ minus a point has (locally) the
same cohomology as a 3-sphere (Artin 1962, p.110). After that, he and
Grothendieck took up, with SGA 4: la locomotive de Bures \'{e}tait lanc\'{e}e.
\end{quote}

\noindent In 1963-64, Artin and Grothendieck organized their famous SGA 4 seminar.

The \'{e}tale topology gives good cohomology groups only for torsion groups.
To obtain a Weil cohomology theory, it is necessary to define%
\[
H^{r}(V_{\mathrm{et}},\mathbb{Z}{}_{\ell})=\varprojlim\nolimits_{n}%
H^{r}(V_{\mathrm{et}},\mathbb{Z}{}/\ell^{n}\mathbb{Z}{}),
\]
and then tensor with $\mathbb{Q}{}_{\ell}$ to get $H^{r}(V_{\mathrm{et}%
},\mathbb{Q}{}_{\ell})$. This does give a Weil cohomology theory, and so the
Weil conjectures (W1--W3) hold with%
\[
P_{r}(V,T)=\det(1-\pi T\mid H^{r}(V_{\mathrm{et}},\mathbb{Q}{}_{\ell})).
\]
More generally, Grothendieck (1964) proved that, for every algebraic variety
$V_{0}$ over a finite field $k_{0}$,%
\begin{equation}
Z(V_{0},T)=\prod\nolimits_{r}\det(1-\pi T\mid H_{c}^{r}(V_{\mathrm{et}%
},\mathbb{Q}{}_{\ell}))^{(-1)^{r+1}} \label{e37}%
\end{equation}
where $H_{c}$ denotes cohomology with compact support. In the situation of
(W5),
\[
H^{r}(V_{\mathrm{et}},\mathbb{Q}{}_{\ell})\simeq H^{r}(\tilde{V}(\mathbb{C}%
{}),\mathbb{Q}{})\otimes\mathbb{Q}{}_{\ell}%
\]
(proper and smooth base change theorem), and so the $\ell$-adic Betti numbers
of $V$ are independent of $\ell$. If the Riemann hypothesis holds, then they
equal Weil's Betti numbers.

\smallskip In the years since it was defined, \'{e}tale cohomology has become
such a fundamental tool that today's arithmetic geometers have trouble
imagining an age in which it didn't exist.

\subsection{de Rham cohomology (characteristic zero)}

Let $V$ be a nonsingular algebraic variety over a field $k$. Define
$H_{\mathrm{dR}}^{\ast}(V)$ to be the (hyper)cohomology of the complex%
\[
\Omega_{V/k}^{\bullet}=\quad\mathcal{O}{}_{X}\overset{d}{\longrightarrow
}\Omega_{V/k}^{1}\overset{d}{\longrightarrow}\cdots\overset{d}{\longrightarrow
}\Omega_{V/k}^{r}\overset{d}{\longrightarrow}\cdots
\]
of sheaves for the Zariski topology on $V$. When $k=\mathbb{C}{}$, we can also
define $H_{\mathrm{dR}}^{\ast}(V^{\text{an}})$ by replacing $\Omega
_{V/k}^{\bullet}$ with the complex of sheaves of holomorphic differentials on
$V^{\text{an}}$ for the complex topology. Then%
\[
H^{r}(V^{\text{an}},\mathbb{Q}{})\otimes_{\mathbb{Q}{}}\mathbb{C}{}\simeq
H_{\mathrm{dR}}^{r}(V^{\text{an}})
\]
for all $r$.

When $k=\mathbb{C}{}$, there is a canonical homomorphism%
\[
H_{\mathrm{dR}}^{\ast}(V)\rightarrow H_{\mathrm{dR}}^{\ast}(V^{\text{an}%
})\text{.}%
\]
In a letter to Atiyah in 1963, Grothendieck proved that this is an isomorphism
(Grothendieck 1966). Thus, for a nonsingular algebraic variety over a field
$k$ of characteristic zero, there are algebraically defined cohomology groups
$H_{\mathrm{dR}}^{\ast}(V)$ such that, for every embedding $\rho\colon
k\rightarrow\mathbb{C}{}$,%
\[
H_{\mathrm{dR}}^{\ast}(V)\otimes_{k,\rho}\mathbb{C}{}\simeq H_{\mathrm{dR}%
}^{\ast}(\rho V)\simeq H_{\mathrm{dR}}^{\ast}(\left(  \rho V\right)
^{\text{an}}).
\]
This gives a Weil cohomology theory with coefficients in $k$.

\subsection{$p$-adic cohomology}

The \'{e}tale topology gives Weil cohomologies with coefficients in
$\mathbb{Q}{}_{\ell}$ for all primes $\ell$ different from the characteristic
of $k$. Dwork's early result (see p.\pageref{Dwork}) suggested that there
should also be $p$-adic Weil cohomology theories, i.e., a cohomology theories
with coefficients in a field containing $\mathbb{Q}{}_{p}$.

Let $V$ be an algebraic variety over a field $k$ of characteristic $p\neq0$.
When Serre defined the cohomology groups of coherent sheaves on algebraic
varieties, he asked whether the formula%
\[
\beta_{r}(V)\overset{?}{=}\sum_{i+j=r}\dim_{k}H^{j}(V,\Omega_{V/k}^{i})
\]
gives the \textquotedblleft true\textquotedblright\ Betti numbers, namely,
those intervening in the Weil conjectures (Serre 1954, p.520). An example of
Igusa (1955) showed that this formula gives (at best) an upper bound for
Weil's Betti numbers. Of course, the groups $\bigoplus_{i+j=r}H^{j}%
(V,\Omega_{V/k}^{i})$ wouldn't give a Weil cohomology theory because the
coefficient field has characteristic $p$. Serre (1958) next considered the
Zariski cohomology groups $H^{r}(V,W\mathcal{O}{}_{V})$ where $W\mathcal{O}%
{}_{V}$ is the sheaf of Witt vectors over $\mathcal{O}{}_{V}$ (a ring of
characteristic zero), but found that they did not have good properties (they
would have given only the $H^{0,r}$ part of the cohomology).

In 1966, Grothendieck discovered how to obtain the de Rham cohomology groups
in characteristic zero without using differentials, and suggested that his
method could be modified to give a good $p$-adic cohomology theory in
characteristic $p$.

Let $V$ be a nonsingular variety over a field $k$ of characteristic zero.
Define $\inf(V/k)$ to be the category whose objects are open subsets $U$ of
$V$ together with thickening of $U$, i.e., an immersion $U\hookrightarrow~T$
defined by a nilpotent ideal in $\mathcal{O}{}_{T}$. Define a covering family
of an object $(U,U\hookrightarrow T)$ of $\inf(V/k)$ to be a family
$(U_{i},U_{i}\hookrightarrow T_{i})_{i}$ with $(T_{i})_{i}$ a Zariski open
covering of $T$ and $U_{i}=U\times_{T}T_{i}$. These coverings define the
\textquotedblleft infinitesimal\textquotedblright\ Grothendieck topology on
$\inf(V/k)$. There is a structure sheaf $\mathcal{O}{}_{V_{\inf}}$ on
$V_{\inf}$, and Grothendieck proves that%
\[
H^{\ast}(V_{\inf},\mathcal{O}{}_{V_{\inf}})\simeq H_{\mathrm{dR}}^{\ast}(V)
\]
(Grothendieck 1968, 4.1).

This doesn't work in characteristic $p$, but Grothendieck suggested that by
adding divided powers to the thickenings, one should obtain a good cohomology
in characteristic $p$. There were technical problems at the prime $2$, but
Berthelot resolved these in this thesis to give a good definition of the
\textquotedblleft crystalline\textquotedblright\ site, and he developed a
comprehensive treatment of crystalline cohomology (Berthelot 1974). This is a
cohomology theory with coefficients in the ring of Witt vectors over the base
field $k$. On tensoring it with the field of fractions, we obtain a Weil cohomology.

About 1975, Bloch extended Serre's sheaf $W\mathcal{O}{}_{V}$ to a
\textquotedblleft de Rham-Witt\ complex\textquotedblright\
\[
W\Omega_{V/k}^{\bullet}:\quad W\mathcal{O}{}_{V}\overset{d}{\longrightarrow
}W\Omega_{V/k}^{1}\overset{d}{\longrightarrow}\cdots
\]
and showed that (except for some small $p$) the Zariski cohomology of this
complex is canonically isomorphic to crystalline cohomology (Bloch 1977).
Bloch used $K$-theory to define $W\Omega_{V/k}^{\bullet}$ (he was interested
in relating $K$-theory to crystalline cohomology among other things). Deligne
suggested a simpler, more direct, definition of the de Rham-Witt complex and
this approach was developed in detail by Illusie and Raynaud (Illusie 1983).

Although $p$-adic cohomology is more difficult to define than $\ell$-adic
cohomology, it is often easier to compute with it. It is essential for
understanding $p$-phenomena, for example, $p$-torsion, in characteristic $p$.

\begin{note}
\label{r53}The above account of the origins of $p$-adic cohomology is too
brief --- there were other approaches and other contributors.
\end{note}

\section{The standard conjectures}

\hfill\begin{minipage}{3.2in}
\textit{Alongside the problem of resolution of singularities, the proof of the standard conjectures seems to me to be the most urgent task in algebraic geometry.}\\
Grothendieck 1969.
\end{minipage}\bigskip\bigskip

We have seen how to deduce the first three of the Weil conjectures from the
existence of a Weil cohomology. What more is needed to deduce the Riemann
hypothesis? About 1964, Bombieri and Grothendieck independently found the
answer: we need a K\"{u}nneth formula and a Hodge index theorem for algebraic
classes. Before explaining this, we look at the analogous question over
$\mathbb{C}{}$.

\subsection{A k\"{a}hlerian analogue}

In his 1954 ICM talk, Weil sketched a transcendental proof of the inequality
$\sigma(\xi\circ\xi^{\prime})>0$ for correspondences on a complex curve, and wrote:

\begin{quote}
\ldots\ this is precisely how I first persuaded myself of the truth of the
abstract theorem even before I had perceived the connection between the trace
$\sigma$ and Castelnuovo's equivalence defect.
\end{quote}

\noindent In a letter to Weil in 1959, Serre wrote:

\begin{quote}
In fact, a similar process, based on Hodge theory, applies to varieties of any
dimension, and one obtains both the positivity of certain traces, and the
determination of the absolute values of certain eigenvalues in perfect analogy
with your beloved conjectures on the zeta
functions.\footnote{\textquotedblleft En fait, un proc\'{e}d\'{e} analogue,
bas\'{e} sur la th\'{e}orie de Hodge, s'applique aux vari\'{e}t\'{e}s de
dimension quelconque, et l'on obtient \`{a} la fois la positivit\'{e} de
certaines traces, et la d\'{e}termination des valeurs absolues de certaines
valeurs propres, en parfaite analogie avec tes ch\`{e}res conjectures sur les
fonctions z\^{e}ta.\textquotedblright\ }
\end{quote}

\noindent We now explain this. More concretely, we consider the following
problem: Let $V$ be a connected nonsingular projective variety of dimension
$d$ over $\mathbb{C}{}$, and let $f\colon V\rightarrow V$ be an endomorphism
of degree $q^{d}$; find conditions on $f$ ensuring that the eigenvalues of $f$
acting on $H^{r}(V,\mathbb{Q}{})$ have absolute value $q^{r/2}$ for all $r$.

For a curve $V$, no conditions are needed. The action of $f$ on the cohomology
of $V$ preserves the Hodge decomposition%
\[
H^{1}(V,\mathbb{C}{})\simeq H^{1,0}(V)\oplus H^{0,1}(V),\quad H^{i,j}%
(V)\overset{\textup{{\tiny def}}}{=}H^{j}(V,\Omega^{i}),
\]
and the projection $H^{1}(V,\mathbb{C}{})\rightarrow H^{1,0}(V)$ realizes
$H^{1}(V,\mathbb{Z}{})\subset$ $H^{1}(V,\mathbb{C}{})$ as a lattice in
$H^{1,0}(V)$, stable under the action of $f$. Define a hermitian form on
$H^{1,0}(V)$ by%
\[
\langle\omega,\omega^{\prime}\rangle=\frac{1}{2\pi i}\int_{V}\omega\wedge
\bar{\omega}^{\prime}.
\]
This is positive definite. As $f^{\ast}$ acts on $H_{\mathrm{dR}}^{2}(V)$ as
multiplication by $\deg(f)=q$,%
\[
\langle f^{\ast}\omega,f^{\ast}\omega^{\prime}\rangle\overset
{\textup{{\tiny def}}}{=}\frac{1}{2\pi i}\int_{V}f^{\ast}(\omega\wedge
\bar{\omega}^{\prime})=\frac{q}{2\pi i}\int_{V}\omega\wedge\bar{\omega
}^{\prime}\overset{\textup{{\tiny def}}}{=}q\langle\omega,\omega^{\prime
}\rangle\text{.}%
\]
Hence, $q^{-1/2}f$ is a unitary operator on $H^{1,0}(V)$, and so its
eigenvalues $a_{1},\ldots,a_{g}$ have absolute value $1$. The eigenvalues of
$f^{\ast}$ on $H^{1}(V,\mathbb{Q}{})$ are $q^{1/2}a_{1},\ldots,q^{1/2}%
a_{g},q^{1/2}\bar{a}_{1},\ldots,q^{1/2}\bar{a}_{g}$, and so they have absolute
value $q^{1/2}$.

In higher dimensions, an extra condition is certainly needed (consider a
product). Serre realized that it was necessary to introduce a polarization.

\begin{theorem}
[Serre 1960, Thm 1]\label{r54}Let $V$ be a connected nonsingular projective
variety of dimension $d$ over $\mathbb{C}{}$, and let $f$ be an endomorphism
$V$. Suppose that there exists an integer $q>0$ and a hyperplane section $E$
of $V$ such that $f^{-1}(E)$ is algebraically equivalent to $qE$. Then, for
all $r\geq0$, the eigenvalues of $f$ acting on $H^{r}(V,\mathbb{Q}{})$ have
absolute value $q^{r/2}$.
\end{theorem}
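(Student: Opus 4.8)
The plan is to carry out, in all dimensions, the Hodge-theoretic argument Serre sketched for curves just above, now organised by the Lefschetz decomposition and the Hodge--Riemann bilinear relations. Write $\omega\in H^{1,1}(V)\cap H^{2}(V,\mathbb{Q})$ for the class of the hyperplane section $E$ and $L\colon H^{r}(V,\mathbb{C})\to H^{r+2}(V,\mathbb{C})$ for cup product with $\omega$. I would take as given the three non-elementary inputs from Hodge theory: the hard Lefschetz theorem; the resulting Lefschetz decomposition $H^{r}(V,\mathbb{C})=\bigoplus_{k\ge\max(0,\,r-d)}L^{k}P^{\,r-2k}$ into primitive pieces $P^{s}=\ker\!\bigl(L^{d-s+1}\colon H^{s}\to H^{2d-s+2}\bigr)$ (for $s\le d$), which restricts to a Hodge decomposition $P^{s}\otimes\mathbb{C}=\bigoplus_{p+q=s}P^{p,q}$; and the Hodge--Riemann relations, which assert that on $P^{p,q}$ the Hermitian form
\[
h(x,y)=(-1)^{\frac{(p+q)(p+q-1)}{2}}\,i^{\,p-q}\int_{V}\omega^{\,d-p-q}\cup x\cup\bar{y}
\]
is positive definite. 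These relations (equivalently, the signature of the cup-product form on primitive classes) are the transcendental core of the argument.

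Next I would record how $f$ interacts with this structure. Since $f^{\ast}E$ (that is, $f^{-1}(E)$) is algebraically, hence homologically, equivalent to $qE$, functoriality of the cycle class map gives $f^{\ast}\omega=q\omega$, so
\[
f^{\ast}\circ L=q\,L\circ f^{\ast}\qquad\text{on }H^{\ast}(V,\mathbb{C}).
\]
Intersecting the relation with itself $d$ times gives $\deg(f)\cdot(E^{d})=q^{d}(E^{d})$, so $\deg(f)=q^{d}$ and $\int_{V}f^{\ast}\eta=q^{d}\int_{V}\eta$ for a top-degree class $\eta$. Moreover $f^{\ast}$ is defined over $\mathbb{Q}$ (being induced by a continuous map) and is a morphism of Hodge structures (being induced by a holomorphic map); so it commutes with complex conjugation and preserves each $H^{p,q}$, and, since $f^{\ast}L^{d-s+1}=q^{d-s+1}L^{d-s+1}f^{\ast}$, it preserves each $P^{s}$ and hence each $P^{p,q}$.

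The heart of the matter is then the claim that, for $r=p+q$, the operator $g=q^{-r/2}f^{\ast}$ is an $h$-isometry of $P^{p,q}$. Using $\overline{f^{\ast}y}=f^{\ast}\bar{y}$, $f^{\ast}\omega^{\,d-r}=q^{\,d-r}\omega^{\,d-r}$, and $\int_{V}f^{\ast}(-)=q^{d}\int_{V}(-)$,
\[
\int_{V}\omega^{\,d-r}\cup gx\cup\overline{gy}=q^{-d}\int_{V}f^{\ast}\!\bigl(\omega^{\,d-r}\cup x\cup\bar{y}\bigr)=\int_{V}\omega^{\,d-r}\cup x\cup\bar{y},
\]
while the prefactor $(-1)^{\frac{r(r-1)}{2}}i^{\,p-q}$ is unchanged because $gx\in P^{p,q}$; hence $h(gx,gy)=h(x,y)$. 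An operator preserving a positive-definite Hermitian form on a finite-dimensional space is conjugate to a unitary matrix, so every eigenvalue of $g$ has absolute value $1$; thus the eigenvalues of $f^{\ast}$ on $P^{p,q}$, and so on $P^{r}=\bigoplus_{p+q=r}P^{p,q}$, have absolute value $q^{r/2}$.

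To finish I would propagate this through the Lefschetz decomposition. The map $z\mapsto L^{k}z$ is an isomorphism $P^{\,r-2k}\to L^{k}P^{\,r-2k}$ (hard Lefschetz), and $f^{\ast}(L^{k}z)=q^{k}L^{k}(f^{\ast}z)$, so the eigenvalues of $f^{\ast}$ on $L^{k}P^{\,r-2k}$ are $q^{k}$ times those on $P^{\,r-2k}$, of absolute value $q^{k}\cdot q^{(r-2k)/2}=q^{r/2}$; summing over the summands of $H^{r}$ gives the theorem. The only genuine obstacle is the appeal to Hodge--Riemann positivity, for which no algebraic proof is known; apart from that, the argument is the sign bookkeeping above together with the elementary fact that an operator preserving a definite Hermitian form is unitarisable. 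It is exactly this gap that the standard conjectures are designed to close.
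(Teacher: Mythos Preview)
Your proof is correct and follows essentially the same route as the paper: both arguments rest on the hard Lefschetz theorem, the Lefschetz decomposition, and Hodge--Riemann positivity to produce a positive-definite Hermitian form with respect to which $q^{-r/2}f^{\ast}$ is unitary. The only organisational difference is that the paper, following Weil 1958, packages the Hodge--Riemann relations into a single form $A(x,C\bar{y})$ on all of $H^{r}(V,\mathbb{C})$ via the Hodge $\ast$-operator, whereas you work on each primitive piece $P^{p,q}$ separately and then reassemble via $H^{r}=\bigoplus_{k}L^{k}P^{\,r-2k}$; the content is the same.
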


For a variety over a finite field and $f$ the Frobenius map, $f^{-1}(E)$ is
equivalent to $qE$, and so this is truly a k\"{a}hlerian analogue of the
Riemann hypothesis over finite fields. Note that the condition $f^{-1}(E)\sim
qE$ implies that $(f^{-1}E^{d})=q^{d}(E^{d})$, and hence that $f$ has degree
$q^{d}$.

The proof is an application of two famous theorems. Throughout, $V$ is as in
the statement of the theorem. Let $E$ be an ample divisor on $V$, let $u$ be
its class in $H^{2}(V,\mathbb{Q}{})$, and let $L$ be the \textquotedblleft
Lefschetz operator\textquotedblright\
\[
x\mapsto u\cup x\colon H^{\ast}(V,\mathbb{Q}{})\rightarrow H^{\ast
+2}(V,\mathbb{Q}{})(1)\text{.}%
\]

\begin{theorem}
[Hard Lefschetz]\label{r55}For $r\leq d$, the map%
\[
L^{d-r}\colon H^{r}(V,\mathbb{Q}{})\rightarrow H^{2d-r}(V,\mathbb{Q}{})(d-r)
\]
is an isomorphism.
\end{theorem}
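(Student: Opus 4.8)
The plan is to prove this transcendentally, via Hodge theory on the compact Kähler manifold $V(\mathbb{C})$. Since $E$ is ample, a suitable multiple of it embeds $V$ in a projective space, and the restriction of the Fubini--Study metric makes $V(\mathbb{C})$ into a compact Kähler manifold whose Kähler class is a positive rational multiple of $u$; as rescaling $u$ does not change the statement, I may assume $u$ is represented by the Kähler form $\omega$. On the space $A^{\bullet}$ of complex-valued $C^{\infty}$ forms on $V(\mathbb{C})$ I introduce three operators: the Lefschetz operator $L\colon\alpha\mapsto\omega\wedge\alpha$, which raises degree by $2$; its adjoint $\Lambda$ for the Hodge inner product, which lowers degree by $2$; and the operator $H$ acting on $A^r$ as multiplication by $r-d$. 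A pointwise linear-algebra computation shows that they satisfy the relations $[H,L]=2L$, $[H,\Lambda]=-2\Lambda$, $[L,\Lambda]=H$, i.e.\ they give a representation of $\mathfrak{sl}_2(\mathbb{C})$ with $L$ the raising operator.

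The crucial input is that $L$, $\Lambda$, and $H$ all commute with the Hodge Laplacian $\Delta$. For $H$ this is immediate because $\Delta$ preserves degree; for $L$ and $\Lambda$ it follows from the Kähler identities, e.g.\ $[\Lambda,\bar\partial]=-\sqrt{-1}\,\partial^{\ast}$ and its conjugate, which hold precisely because $d\omega=0$ and which also yield $\Delta=2\Delta_{\bar\partial}$. Hence the three operators preserve the finite-dimensional space $\mathcal{H}^{\bullet}$ of harmonic forms, and via the Hodge isomorphism $\mathcal{H}^r\cong H^r(V(\mathbb{C}),\mathbb{C})$ we get an action of $\mathfrak{sl}_2(\mathbb{C})$ on $H^{\bullet}(V,\mathbb{C})=\bigoplus_r H^r(V,\mathbb{C})$ in which $H^r$ is exactly the $H$-eigenspace for the eigenvalue $r-d$, and $L$ is the map in the statement.

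It remains to invoke the representation theory of $\mathfrak{sl}_2(\mathbb{C})$: decomposing a finite-dimensional representation into irreducibles shows that for any integer $m\geq0$ the $m$-th power of the raising operator restricts to an isomorphism from the weight-$(-m)$ eigenspace onto the weight-$(+m)$ eigenspace. Applying this with $m=d-r\geq0$ gives that $L^{d-r}\colon H^r(V,\mathbb{C})\to H^{2d-r}(V,\mathbb{C})$ is an isomorphism, since $H^{2d-r}$ has $H$-eigenvalue $(2d-r)-d=d-r$. Finally, because $u\in H^2(V,\mathbb{Q})$, the map $L^{d-r}$ is already defined on $\mathbb{Q}$-coefficients, and a $\mathbb{Q}$-linear map of finite-dimensional $\mathbb{Q}$-vector spaces that becomes bijective after $\otimes_{\mathbb{Q}}\mathbb{C}$ is itself bijective, which gives the theorem over $\mathbb{Q}$ (the Tate twist on the target being mere bookkeeping). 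The one genuinely hard step is the analytic one---checking that $L$ and $\Lambda$ commute with $\Delta$, that is, proving the Kähler identities; everything after that is formal Hodge theory and the representation theory of $\mathfrak{sl}_2$. One can also bypass transcendental methods and deduce Hard Lefschetz over an arbitrary algebraically closed field from Deligne's theory of weights via Lefschetz pencils, but over $\mathbb{C}$ the argument above is the most direct.
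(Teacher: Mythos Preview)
Your proof is correct and is essentially the same approach as the paper's: the paper simply notes that it suffices to tensor with $\mathbb{C}$ and then cites the analytic proof in Weil 1958, IV, n$^{\circ}$6, and what you have written is precisely that analytic proof (K\"ahler identities $\Rightarrow$ $\mathfrak{sl}_2$-action on harmonic forms $\Rightarrow$ hard Lefschetz via representation theory). You have supplied the details that the paper leaves to the reference.
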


\begin{proof}
It suffices to prove this after tensoring with $\mathbb{C}{}$. Lefschetz's
original \textquotedblleft topological proof\textquotedblright\ (1924) is
inadequate, but there are analytic proofs (e.g., Weil 1958, IV, n$^{\circ}$6,
Cor. to Thm 5).
\end{proof}

Now let $H^{r}(V)=H^{r}(V,\mathbb{C}{})$, and omit the Tate twists. Suppose
that $r\leq d$, and consider%
\[
\begin{tikzcd}
H^{r-2}(V)\arrow{r}{L}&H^{r}(V)\arrow{r}{L^{d-r}}[swap]{\simeq}%
&H^{2d-r}(V)\arrow{r}{L}&H^{2d-r+2}(V)\text{.}%
\end{tikzcd}
\]
The composite of the maps is an isomorphism (\ref{r55}), and so%
\[
H^{r}(V)=P^{r}(V)\oplus LH^{r-2}(V)
\]
with $P^{r}(V)=\Ker(H^{r}(V)\xrightarrow{L^{d-r+1}}H^{2d-r+2}(V)$. On
repeating this argument, we obtain the first of the following decompositions,
and the second is proved similarly:%
\[
\renewcommand{\arraystretch}{1.3}H^{r}(V)=\left\{
\begin{array}
[c]{ll}%
\dstyle\bigoplus\nolimits_{j\geq0}L^{j}P^{r-2j} & \text{if }r\leq d\\
\dstyle\bigoplus\nolimits_{j\geq r-d}L^{j}P^{r-2j} & \text{if }r\geq d.
\end{array}
\right.
\]
In other words, every element $x$ of $H^{r}(V)$ has a unique expression as a
sum%
\begin{equation}
x=\sum_{j\geq\max(r-d,0)}L^{j}x_{j},\quad x_{j}\in P^{r-2j}(V). \label{e16}%
\end{equation}
The cohomology classes in $P^{r}(V)$, $r\leq d$, are said to be
\textit{primitive}.

The \textit{Weil operator} $C\colon H^{\ast}(V{})\rightarrow H^{\ast}(V{})$ is
the linear map such that $Cx=i^{a-b}x$ if $x$ is of type $(a,b)$. It is an
automorphism of $H^{\ast}(V)$ as a $\mathbb{C}{}$-algebra, and $C^{2}$ acts on
$H^{r}(V)$ as multiplication by $(-1)^{r}$ (Weil 1958, n$^{\circ}$5, p.74).

Using the decomposition (\ref{e16}), we define an operator $\ast\colon
H^{r}(V)\rightarrow H^{2d-r}(V)$ by%
\[
\ast x=\sum_{j\geq\max(r-d,0)}(-1)^{\frac{r(r+1)}{2}}\cdot\tfrac{j!}%
{(d-r-j)!}\cdot C(L^{d-r-j}x_{j}).
\]
For $\omega\in H^{r}(V)$, let%
\[
\renewcommand{\arraystretch}{1.3}I(\omega)=\left\{
\begin{array}
[c]{cc}%
\int_{V}\omega & \text{if }r=2d\,\,\\
0 & \text{if }r<2d,
\end{array}
\right.
\]
and let%
\[
I(x,y)=I(x\cdot y).
\]

\begin{lemma}
\label{r68}For $x,y\in H^{\ast}(V)$,%
\[
I(x,\ast y)=I(y,\ast x)\text{ and }I(x,\ast x)>0\text{ if }x\neq0.
\]

\end{lemma}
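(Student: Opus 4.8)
The plan is to reduce the statement to the Hodge--Riemann bilinear relations by passing through the two decompositions already at hand: the Lefschetz decomposition coming from Hard Lefschetz (\ref{r55}) and the Hodge decomposition $H^{r}(V)=\bigoplus_{p+q=r}H^{p,q}(V)$, which is compatible with $L$ and hence induces a Hodge decomposition $P^{r}(V)=\bigoplus_{p+q=r}P^{p,q}(V)$ of the primitive cohomology. Both of the asserted identities are built additively out of the decomposition (\ref{e16}) and the $\mathbb{C}$-bilinear form $I(\cdot\,,\cdot)$, so it suffices to understand $I(L^{j}x_{j},\ast L^{k}y_{k})$ for homogeneous primitive classes $x_{j}\in P^{r-2j}(V)$, $y_{k}\in P^{s-2k}(V)$, and, decomposing each primitive class further into its $(p,q)$-pieces, for $x_{j}$ and $y_{k}$ of pure Hodge type.

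First I would record the relevant orthogonality. The class $L^{j}x_{j}$ has degree $r$ while, by the definition of $\ast$, the class $\ast L^{k}y_{k}$ has degree $2d-s$; hence $I(L^{j}x_{j},\ast L^{k}y_{k})=0$ unless $r=s$. Inserting the explicit formula for $\ast$ and using the standard orthogonality of distinct primitive levels --- a formal consequence of Hard Lefschetz and the resulting $\mathfrak{sl}_{2}$-action on $H^{\ast}(V)$, under which $\int_{V}L^{a}\alpha\cup L^{b}\beta$ is nonzero only when $\alpha,\beta$ are primitive of a common degree $m$ with $a+b=d-m$ --- one sees that only the terms with $k=j$, and with the Hodge type of $x_{j}$ conjugate to that of $y_{k}$, contribute. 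Everything thus comes down to evaluating, for a single primitive class $\xi\in P^{p,q}(V)$ with $m:=p+q\le d$, the integral
\[
\int_{V}\xi\cdot C\bar{\xi}\cdot u^{d-m},
\]
which, since $C$ is an algebra automorphism fixing $u$ and by a bookkeeping of Hodge types, telescopes into a sum of terms $\int_{V}\xi^{p,q}\wedge\overline{\xi^{p,q}}\wedge u^{d-m}$ weighted by the powers of $i$ introduced by $C$.

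At this point I would invoke the Hodge--Riemann bilinear relations (Weil 1958, the same source as Hard Lefschetz): for a nonzero primitive class $\xi$ of pure type $(p,q)$ with $p+q=m\le d$,
\[
i^{p-q}(-1)^{m(m-1)/2}\int_{V}\xi\wedge\bar{\xi}\wedge u^{d-m}>0 .
\]
Feeding this back, the factors $i^{p-q}$ produced by $C$ pair up with those in the Hodge--Riemann expression to give $i^{2(p-q)}=(-1)^{m}$, and, together with the combinatorial constants attached to $\ast$ --- positive rationals times the sign $(-1)^{m(m+1)/2}$ --- all the stray signs cancel, so that $I(x,\ast\bar{x})$ becomes a positive-rational combination of the positive quantities above; this is the positivity (which specialises to $I(x,\ast x)>0$ on real classes). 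The symmetry $I(x,\ast y)=I(y,\ast x)$ comes out of the very same reduction: after the reductions both sides are carried by conjugate Hodge types, and the identity $i^{q-p}=(-1)^{r}i^{p-q}$, valid because $q-p\equiv p+q=r\pmod 2$, matches the two expressions term by term, the factor $(-1)^{r}$ being precisely the sign produced by commuting the two degree-$r$ factors under $\int_{V}$.

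The one genuinely non-elementary ingredient is the Hodge--Riemann relations, and that is where I expect the real difficulty to lie; everything else is bookkeeping with signs and binomial coefficients. One could instead prove that the combinatorially defined operator $\ast$ agrees with the Hodge star operator $\ast_{\mathrm{Hodge}}$ of a K\"{a}hler metric on $V$ --- a pointwise identity on the exterior algebra of the cotangent space --- after which $I(x,\ast\bar{y})=\int_{V}x\wedge\ast_{\mathrm{Hodge}}\bar{y}$ is, on harmonic representatives, the $L^{2}$ inner product, visibly Hermitian and positive definite; but that identification rests on the same linear algebra, so it is not really a shortcut.
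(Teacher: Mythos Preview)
Your sketch is correct and is essentially the argument one finds in the reference the paper cites: the paper's own proof of this lemma is nothing but the citation ``Weil 1958, IV, n$^{\circ}$7, Thm~7,'' and the content of that theorem is precisely the reduction via the Lefschetz and Hodge decompositions to the Hodge--Riemann bilinear relations that you outline (your alternative route, identifying the combinatorial $\ast$ with the K\"{a}hler Hodge star so that $I(x,\ast\bar{y})$ becomes the $L^{2}$ inner product on harmonic forms, is in fact closer to Weil's actual presentation). You were also right to flag the real-versus-complex subtlety: the positivity $I(x,\ast x)>0$ as literally stated needs $x$ real (or one should read $I(x,\ast\bar{x})>0$), which is all that is used downstream in Theorem~\ref{r56}.
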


\begin{proof}
Weil 1958, IV, n$^{\circ}$7, Thm 7.
\end{proof}

For $x=\sum L^{j}x_{j}$ and $y=\sum L^{j}y_{j}$ with $x_{j}$, $y_{j}$
primitive of degree $r-2j,$ put%
\[
A(x,y)=\sum_{j\geq\max(r-d,0)}(-1)^{\frac{r(r+1)}{2}}\cdot\tfrac{j!}%
{(d-r-j)!}\cdot(-1)^{j}\cdot I(u^{d-r+2j}\cdot x_{j}\cdot y_{j}).
\]

\begin{theorem}
\label{r56}The map $A$ is a bilinear form on $H^{r}(V)$, and%
\begin{align*}
A(y,x)  &  =(-1)^{r}A(x,y), & A(Cx,Cy)  &  =A(x,y)\\
A(x,Cy)  &  =A(y,Cx), & A(x,C\bar{x})  &  >0\text{ if }x\neq0.
\end{align*}

\end{theorem}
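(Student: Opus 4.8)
The plan is to reduce the theorem to two facts already in hand: the Lefschetz decomposition furnished by Hard Lefschetz (Theorem~\ref{r55}), which makes $A$ well defined, and Lemma~\ref{r68}, which will supply the positivity. The three algebraic identities are formal consequences of graded-commutativity of the cup product together with the elementary properties of the Weil operator $C$; the only substantive point is $A(x,C\bar x)>0$, which I would obtain by identifying $A(x,C\bar y)$ with the pairing built from $I$ and $\ast$ that occurs in Lemma~\ref{r68}.

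First the formal part. Bilinearity of $A$ is immediate once the expansion $x=\sum_jL^jx_j$, $x_j\in P^{r-2j}(V)$, is known to be unique, which is exactly Hard Lefschetz. Each summand of $A(x,y)$ is a fixed scalar times $I(u^{d-r+2j}\cdot x_j\cdot y_j)$, and since $\deg x_j=\deg y_j=r-2j$, graded-commutativity gives $x_j\cdot y_j=(-1)^{(r-2j)^2}y_j\cdot x_j=(-1)^ry_j\cdot x_j$, so $A(y,x)=(-1)^rA(x,y)$. For $A(Cx,Cy)=A(x,y)$: $C$ is a $\mathbb{C}$-algebra automorphism of $H^\ast(V)$ with $Cu=u$ (the class $u$ is of type $(1,1)$), hence $C$ commutes with $L$ and $(Cx)_j=C(x_j)$; moreover $C$ is the identity on $H^{2d}(V)\simeq H^{d,d}(V)$, so $I\circ C=I$ in top degree; therefore $I\big(u^{d-r+2j}\cdot C(x_j)\cdot C(y_j)\big)=I\big(C(u^{d-r+2j}x_jy_j)\big)=I(u^{d-r+2j}x_jy_j)$ term by term. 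Finally, using the first two identities and $C^{-1}=(-1)^rC$ on $H^r(V)$ (from $C^2=(-1)^r$), one gets $A(x,Cy)=(-1)^rA(Cy,x)=(-1)^rA(y,C^{-1}x)=A(y,Cx)$.

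The heart of the matter is $A(x,C\bar x)>0$ for $x\neq0$. Since $u$ is a real class, complex conjugation commutes with $L$, and it commutes with $C$ as well, so $\bar x=\sum_jL^j\bar x_j$ with $\bar x_j$ primitive and $(C\bar x)_j=C\bar{x}_j$. I would establish the identity
\[
A(x,C\bar y)=I(x,\ast\bar y)\qquad\text{for all }x,y\in H^r(V)
\]
in three steps: (i) the Lefschetz components $L^jP^{r-2j}$ are mutually orthogonal for the pairing $(x,y)\mapsto I(x,\ast y)$, so only the diagonal terms $j=k$ contribute to $I(x,\ast\bar y)$; (ii) because $C$ commutes with $L$, the diagonal term $I\big(L^jx_j\cdot C(L^{d-r+j}\bar y_j)\big)$ rewrites as $I\big(u^{d-r+2j}\cdot x_j\cdot C\bar{y}_j\big)$; (iii) the factorials $j!/(d-r+j)!$ and the signs entering the definitions of $\ast$ and of $A$ match up. Granting this, $A(x,C\bar x)=I(x,\ast\bar x)>0$ by the positivity clause of Lemma~\ref{r68}, and in particular $A(x,C\bar x)$ is real.

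I expect step (iii), together with the orthogonality in (i), to be the delicate part: one must track the sign $(-1)^{r(r+1)/2}$, the factors $(-1)^j$, and the factorials through the commutation relations among $L$, its adjoint, and the degree operator. Since all of this is ultimately the content of Weil 1958, IV, an acceptable alternative is to observe that $A$ has been defined precisely so as to coincide with the bilinear form analysed there and to cite that reference for the verification.
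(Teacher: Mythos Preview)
Your proposal is correct and follows essentially the same route as the paper: the first two identities are treated as formal, and the last two are reduced to Lemma~\ref{r68} via the key identity $A(x,Cy)=\sum_{j}I(L^{j}x_{j},\ast L^{j}y_{j})$ (your $A(x,C\bar y)=I(x,\ast\bar y)$ is the same thing, once the orthogonality in your step~(i) is used). The only minor variation is that you derive $A(x,Cy)=A(y,Cx)$ algebraically from the first two identities and $C^{2}=(-1)^{r}$, whereas the paper obtains it directly from the displayed identity together with the symmetry $I(a,\ast b)=I(b,\ast a)$ of Lemma~\ref{r68}; both arguments are valid, and your acknowledgement that the sign/factorial bookkeeping in step~(iii) is ultimately the content of Weil 1958, IV, n$^{\circ}$7 is exactly the citation the paper makes.
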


\begin{proof}
The first two statements are obvious, and the second two follow from
(\ref{r68}) because%
\[
A(x,Cy)=\sum_{j\geq\max(r-d,0)}I(L^{j}x_{j},\ast L^{j}y_{j}).
\]
See Weil 1958, IV, n$^{\circ}$7, p.78.
\end{proof}

Note that the intersection form $I$ on $H^{d}(V)$ is symmetric or
skew-symmetric according as $d$ is even or odd.

\begin{theorem}
[Hodge Index]\label{r67}Assume that the dimension $d$ of $V$ is even. Then the
signature of the intersection form on $H^{d}(V)$ is $\sum_{a,b}(-1)^{a}%
h^{a,b}(V).$
\end{theorem}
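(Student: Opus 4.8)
The plan is to derive the signature formula from the Hodge--Riemann bilinear relations encapsulated in Theorem~\ref{r56}, specialized to the middle degree $r=d$ (with $d$ even). Since $d$ is even, the intersection form $I$ on $H^{d}(V)$ is symmetric, so the signature is well defined. The strategy is to diagonalize $I$ in a way adapted simultaneously to the Lefschetz decomposition into primitive pieces and to the Hodge decomposition, using the positive-definite form $A(x,C\bar x)$ from Theorem~\ref{r56} as the reference positive form, and then to keep track of the sign that relates $I$ to $A$ on each summand.

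First I would write the Lefschetz decomposition in degree $d$,
\[
H^{d}(V)=\bigoplus_{j\geq0}L^{j}P^{d-2j}(V),
\]
and observe that this decomposition is orthogonal for the intersection form $I$ (since $I(L^{j}x_j,L^{k}y_k)$ reduces, after moving powers of $L$ across, to an integral of a primitive class of the wrong degree against $u$-powers, which vanishes unless $j=k$). Likewise each $P^{d-2j}(V)$ carries its Hodge decomposition into types $(a,b)$ with $a+b=d-2j$, and $I$ pairs the $(a,b)$-piece with the $(b,a)$-piece. Thus it suffices to compute, on each primitive Hodge summand $P^{a,b}$ (sitting inside $H^{d}(V)$ via $L^{j}$ with $2j=d-(a+b)$), the sign of $I$ relative to the positive-definite Hermitian form $A(\,\cdot\,,C\overline{\,\cdot\,})$.

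The key computation is then purely bookkeeping with the definition of $A$: comparing $A(x,y)$ with $I(x,y)$ on the summand $L^{j}P^{d-2j}$, and comparing $A(x,C\bar y)$ with $A(x,y)$ via the action of the Weil operator $C$, which multiplies a class of type $(a,b)$ by $i^{a-b}$. Putting $r=d$ in the sign factors $(-1)^{r(r+1)/2}$, $(-1)^{j}$, and $j!/((d-r-j)!)=j!/(-2j)!$--- here I must be careful that with $r=d$ the naive formula has a negative factorial, so the correct reading is that for $r=d$ only the $j=0$ term survives in the defining sum for $A$, i.e.\ $A$ is supported on the \emph{primitive} part $P^{d}(V)$, and on $L^{j}P^{d-2j}$ with $j>0$ one instead uses the hard Lefschetz isomorphism $L^{j}\colon P^{d-2j}(V)\xrightarrow{\ \sim\ }L^{j}P^{d-2j}(V)\subset H^{d}$ to transport the positivity statement from $P^{d-2j}\subset H^{d-2j}$. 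On the type-$(a,b)$ component this yields $I = \varepsilon(a,b,j)\cdot(\text{positive definite})$ with $\varepsilon(a,b,j)=(-1)^{a}$ up to a global sign independent of $(a,b)$; summing the multiplicities $h^{a,b}$ over all primitive pieces and all $j$, and using $\sum_{j}\dim P^{a-j,b-j}=h^{a,b}$-type identities to repackage the primitive Betti numbers into the full Hodge numbers, gives exactly $\sum_{a,b}(-1)^{a}h^{a,b}(V)$.

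The main obstacle is precisely this sign-and-multiplicity accounting: one must check that the signs $(-1)^{r(r+1)/2}$, $(-1)^{j}$, the $i^{a-b}$ from $C$, and the factor $i^{a-b}$'s interaction with the symmetry $A(x,Cy)=A(y,Cx)$ conspire to give the clean answer $(-1)^{a}$ on each Hodge type, and that the reindexing from primitive Hodge numbers $\dim P^{a,b}$ back to the ordinary $h^{a,b}$ is compatible with the alternating sum (this uses that $L$ shifts type by $(1,1)$, so $L^{j}P^{a-j,b-j}$ contributes to $h^{a,b}$ with the \emph{same} sign $(-1)^{a}$, since $(-1)^{a-j}\cdot(-1)^{j}\cdot(-1)^{j}=(-1)^{a}\cdot(-1)^{j}$---wait, one must verify the parity works out, and indeed $L^{j}$ changes neither $(-1)^{a}$ in the way that matters because the extra $(-1)^{j}$ from the definition of $A$ cancels the shift). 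A clean way to avoid a morass is to cite Weil 1958, IV, as was done for Lemma~\ref{r68} and Theorem~\ref{r56}: the signature statement is the classical Hodge index theorem and follows formally from Theorem~\ref{r56} by the argument above; see Weil 1958, IV, n$^{\circ}$7, or Voisin, \emph{Hodge Theory and Complex Algebraic Geometry I}, Ch.~6.
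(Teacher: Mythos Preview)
Your approach is exactly what the paper intends: the paper's proof reads in its entirety ``Exercise, using (\ref{r56}). See Weil 1958, IV, n$^{\circ}$7, Thm 8,'' and you have correctly identified that the signature comes from the Hodge--Riemann positivity $A(x,C\bar x)>0$ together with sign bookkeeping on the Lefschetz--Hodge bigrading. Your hesitation about the factorial $(d-r-j)!$ at $r=d$, $j>0$ is well founded and your resolution (transport the positivity from $P^{d-2j}\subset H^{d-2j}$ via $L^{j}$) is the right one; tighten the sign accounting into a clean statement rather than the stream-of-consciousness ``wait'' passages, and the write-up will be fine.
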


\begin{proof}
Exercise, using (\ref{r56}). See Weil 1958, IV, n$^{\circ}$7, Thm 8.
\end{proof}

To deduce (\ref{r1}) from the theorem, it is necessary to show that the
nonalgebraic cycles contribute only positive terms. This is what Hodge did in
his 1937 paper.

We now prove Theorem \ref{r54}. It follows from (\ref{r56}) that the
sesquilinear form%
\begin{equation}
(x,y)\mapsto A(x,C\bar{y})\colon H^{r}(V)\times H^{r}(V)\rightarrow\mathbb{C}
\label{e33}%
\end{equation}
is hermitian and positive definite. Let $g_{r}=q^{-r/2}H^{r}(f)$. Then $g_{r}$
respects the structure of $H^{\ast}(V)$ as a bigraded $\mathbb{C}{}$-algebra,
the form $I$, and the operators $a\mapsto\bar{a}$ and $a\mapsto La$.
Therefore, it respects the form (\ref{e33}), i.e., it is a unitary operator,
and so its eigenvalues have absolute value $1$. This completes the proof of
the theorem.

This proof extends to correspondences. For curves, it then becomes the
argument in Weil's ICM talk; for higher dimensions, it becomes the proof of
Theorem 2 of Serre 1960.

\subsection{Weil forms}

As there is no Weil cohomology theory in nonzero characteristic with
coefficients in a real field, it is not possible to realize the Frobenius map
as a unitary operator. Instead, we go back to Weil's original idea.

Let $H$ be a Weil cohomology theory over $k$ with coefficient field $Q$. From
the K\"{u}nneth formula and Poincar\'{e} duality, we obtain isomorphisms%
\[
H^{2d}(V\times V)(d)\simeq\bigoplus\nolimits_{r=0}^{2d}\left(  H^{r}(V)\otimes
H^{2d-r}(V)(d)\right)  \simeq\bigoplus\nolimits_{r=0}^{2d}%
\End_{Q\text{-linear}}(H^{r}(V)).
\]
Let $\pi_{r}$ be the $r$th K\"{u}nneth projector. Under the isomorphism, the
subring%
\[
H^{2d}(V\times V)_{r}\overset{\textup{{\tiny def}}}{=}\pi_{r}\circ
H^{2d}(V\times V)\circ\pi_{r}%
\]
of $H^{2d}(V\times V)(d)$ corresponds to $\End_{Q\text{-linear}}(H^{r}(V))$.

Let $A_{H}^{r}(-)$ denote the $\mathbb{Q}{}$-subspace of $H^{2r}(-)(r)$
generated by the algebraic classes. Then $A_{H}^{d}(V\times V)$ is the
$\mathbb{Q}{}$-algebra of correspondences on $V$ for homological equivalence
(see \ref{r76}). Assume that the K\"{u}nneth projectors $\pi_{r}$ are
algebraic, and let%
\[
A_{H}^{d}(V\times V)_{r}=A_{H}^{d}(V\times V)\cap H^{2d}(V\times V)_{r}%
=\pi_{r}\circ A_{H}^{d}(V\times V)\circ\pi_{r}\text{.}%
\]
Then%
\[
A_{H}^{d}(V\times V)=\bigoplus\nolimits_{r=0}^{2d}A_{H}^{d}(V\times V)_{r}.
\]

Let%
\[
\phi\colon H^{r}(V)\times H^{r}(V)\rightarrow Q{}(-r)
\]
be a nondegenerate bilinear form. For $\alpha\in\End(H^{r}(V))$, we let
$\alpha^{\prime}$ denote the adjoint of $\alpha$ with respect to $\phi$:%
\[
\phi(\alpha x,y)=\phi(x,\alpha^{\prime}y).
\]

\begin{definition}
\label{r97}We call $\phi$ a \textit{Weil form} if it satisfies the following conditions:
\begin{enumerate}
\item $\phi$ is symmetric or skew-symmetric according as $r$ is even or odd;

\item for all $\alpha\in A_{H}^{d}(V\times V)_{r}$, the adjoint $\alpha
^{\prime}\in A_{H}^{d}(V\times V)_{r}$; moreover, $\Tr(\alpha\circ
\alpha^{\prime})\in\mathbb{Q}{}$, and $\Tr(\alpha\circ\alpha^{\prime})$ $>0$
if $\alpha\neq0$.
\end{enumerate}
\end{definition}

\begin{example}
\label{r94}Let $V$ be a nonsingular projective variety over $\mathbb{C}{}$.
For all $r\geq0$, the pairing%
\[
H^{r}(V)\times H^{r}(V)\rightarrow\mathbb{C}{},\quad x,y\mapsto(x\cdot\ast y)
\]
is a Weil form (Serre 1960, Thm 2).
\end{example}

\begin{example}
\label{r63}Let $C$ be a curve over $k$, and let $J$ be its jacobian. The Weil
pairing $\phi\colon T_{\ell}J\times T_{\ell}J\rightarrow T_{\ell}%
\mathbb{G}_{m}$ extends by linearity to a $\mathbb{Q}{}_{\ell}$-bilinear form%
\[
H_{1}(C_{\mathrm{et}},\mathbb{Q}{}_{\ell})\times H_{1}(C_{\mathrm{et}%
},\mathbb{Q}{}_{\ell})\rightarrow\mathbb{Q}_{\ell}{}(1)
\]
on $H_{1}(C_{\mathrm{et}},\mathbb{Q}{}_{\ell})=\mathbb{Q}{}_{\ell}\otimes
T_{\ell}J$. This is Weil form (Weil 1948b, VI, n$^{\circ}$48, Thm 25).
\end{example}

\begin{proposition}
\label{r69}If there exists a Weil form on $H^{r}(V)$, then the $\mathbb{Q}{}%
$-algebra $A_{H}^{d}(V\times V)_{r}$ is semisimple.
\end{proposition}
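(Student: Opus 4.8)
The plan is to run the by-now-familiar ``positivity of a trace forces semisimplicity'' argument, the same one used for Corollary~\ref{r13} and Theorem~\ref{r74}, with the Weil form in the role played there by the polarization. Write $A=A_{H}^{d}(V\times V)_{r}$, viewed as a $\mathbb{Q}{}$-algebra under composition of correspondences, and let $\alpha\mapsto\alpha^{\prime}$ be the adjoint with respect to the given Weil form $\phi$. Condition (b) of Definition~\ref{r97} says exactly that $\alpha^{\prime}\in A$ whenever $\alpha\in A$; since $\phi$ is symmetric or skew-symmetric (condition (a)), the adjoint is an involution, $\alpha^{\prime\prime}=\alpha$; and the general identity $(\alpha\circ\beta)^{\prime}=\beta^{\prime}\circ\alpha^{\prime}$ then makes $\alpha\mapsto\alpha^{\prime}$ a $\mathbb{Q}{}$-linear anti-automorphism of $A$.

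First I would check that $A$ is finite-dimensional over $\mathbb{Q}{}$. By condition (b) the pairing $\langle\alpha,\beta\rangle=\Tr(\alpha\circ\beta^{\prime})$ is $\mathbb{Q}{}$-valued (by polarisation from condition (b)) and positive definite, and since the adjoint preserves traces (immediate from a Gram-matrix computation) it is symmetric. Hence any $\mathbb{Q}{}$-linearly independent $e_{1},\ldots,e_{m}\in A$ have invertible rational Gram matrix $(\Tr(e_{i}\circ e_{j}^{\prime}))$, so they remain linearly independent in the $Q$-span of $A$ inside $\End_{Q\text{-linear}}(H^{r}(V))$; as that span is finite-dimensional over $Q$, this bounds $\dim_{\mathbb{Q}{}}A$. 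This is the same device used for~\ref{r71}. Since $\mathbb{Q}{}$ has characteristic zero, it now suffices, by the criterion recalled before~\ref{r13}, to prove $\mathrm{rad}(A)=0$.

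So suppose $\mathrm{rad}(A)$ contains a nonzero $f$. As $\mathrm{rad}(A)$ is a two-sided ideal, $\beta\overset{\textup{{\tiny def}}}{=}f\circ f^{\prime}$ lies in $\mathrm{rad}(A)$, and by condition (b) its trace $\Tr(\beta)=\langle f,f\rangle$ is a strictly positive rational number, so $\beta\neq0$; moreover $\beta^{\prime}=(f\circ f^{\prime})^{\prime}=f^{\prime\prime}\circ f^{\prime}=f\circ f^{\prime}=\beta$. Now I iterate: a nonzero symmetric element $\gamma$ of $A$ satisfies $\Tr(\gamma^{2})=\Tr(\gamma\circ\gamma^{\prime})=\langle\gamma,\gamma\rangle>0$ by condition (b), so $\gamma^{2}\neq0$, and $(\gamma^{2})^{\prime}=\gamma^{\prime}\circ\gamma^{\prime}=\gamma^{2}$ is again symmetric; applying this to $\gamma=\beta,\beta^{2},\beta^{4},\ldots$ gives $\beta^{2^{k}}\neq0$ for all $k\geq0$. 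Thus $\beta$ is not nilpotent, contradicting the nilpotence of $\mathrm{rad}(A)$ (valid because $A$ is artinian). Hence $\mathrm{rad}(A)=0$ and $A$ is semisimple.

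I do not expect a serious obstacle: once Definition~\ref{r97} is in hand the argument is essentially forced, being the nilpotent-element trick of Corollary~\ref{r13} applied to the anti-involution $\alpha\mapsto\alpha^{\prime}$. The one point that genuinely uses the Weil-form axioms beyond formal manipulation is the finite-dimensionality of $A$ over $\mathbb{Q}{}$ --- a $\mathbb{Q}{}$-subspace of a finite-dimensional $Q$-vector space need not be finite-dimensional --- and this is handled by the positive-definite rational trace form exactly as in~\ref{r71}.
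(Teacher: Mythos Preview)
Your proof is correct and follows essentially the same approach as the paper, which simply says ``It admits a positive involution $\alpha\mapsto\alpha^{\prime}$, and so we can argue as in the proof of (\ref{r13}).'' You have written out that argument in full, and in addition you have been more careful than the paper about one point: the finite-dimensionality of $A$ over $\mathbb{Q}{}$, which is needed for the radical to be nilpotent but which the paper leaves implicit (in Corollary~\ref{r13} the algebra $\End^{0}(A)$ was already known to be finite-dimensional from Theorem~\ref{r11}).
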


\begin{proof}
It admits a positive involution $\alpha\mapsto\alpha^{\prime}$, and so we can
argue as in the proof of (\ref{r13}).
\end{proof}

\begin{proposition}
\label{r57}Let $\phi$ be a Weil form on $H^{r}(V)$, and let $\alpha$ be an
element of $A^{d}(V\times V)_{r}$ such that $\alpha\circ\alpha^{\prime}$ is an
integer $q$. For every homomorphism $\rho\colon\mathbb{Q}{}[\alpha
]\rightarrow\mathbb{C}{}$,%
\[
\rho(\alpha^{\prime})=\overline{\rho(\alpha)}\text{ and }\left\vert \rho
\alpha\right\vert =q^{1/2}.
\]

\end{proposition}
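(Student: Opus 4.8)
The plan is to reduce the statement to Corollary \ref{r39}, whose proof applies almost verbatim once we have checked that $\mathbb{Q}{}[\alpha]$ is a commutative $\mathbb{Q}{}$-algebra stable under the adjoint involution $\alpha\mapsto\alpha^{\prime}$ of $\phi$, and that this involution restricts to a \emph{positive} involution on $\mathbb{Q}{}[\alpha]$. First I would dispose of the trivial case: the element $\alpha\circ\alpha^{\prime}=q$ acts on $H^{r}(V)$ as multiplication by $q$, so $\Tr(\alpha\circ\alpha^{\prime})=q\cdot\dim_{Q}H^{r}(V)$; by the defining positivity of a Weil form (Definition \ref{r97}) this is $>0$ when $\alpha\neq0$, so either $\alpha=0$ (and the assertion is vacuous) or $q>0$ and $\alpha$ is invertible in $A_{H}^{d}(V\times V)_{r}$. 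Assume henceforth $q>0$.

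Next I would check stability of $\mathbb{Q}{}[\alpha]$ under the involution. Since $\alpha$ is invertible in $A_{H}^{d}(V\times V)_{r}$, its minimal polynomial over $\mathbb{Q}{}$ has nonzero constant term, so $\alpha^{-1}\in\mathbb{Q}{}[\alpha]$; hence $\alpha^{\prime}=q\alpha^{-1}\in\mathbb{Q}{}[\alpha]$, and therefore $\beta^{\prime}\in\mathbb{Q}{}[\alpha]$ for every $\beta\in\mathbb{Q}{}[\alpha]$. Restricting the data of Definition \ref{r97} to $\mathbb{Q}{}[\alpha]$ then shows that $\alpha\mapsto\alpha^{\prime}$ is a positive involution there: $\Tr(\beta\circ\beta^{\prime})\in\mathbb{Q}{}$ and is $>0$ whenever $\beta\neq0$. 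Arguing as in the proof of Corollary \ref{r13} --- if the radical of $\mathbb{Q}{}[\alpha]$ contained a nonzero $\beta$, then $\beta\beta^{\prime}$ would be a nonzero symmetric element of the radical, hence not nilpotent, a contradiction --- I conclude that $\mathbb{Q}{}[\alpha]$ is semisimple, thus a product of fields, and consequently $\mathbb{R}{}\otimes_{\mathbb{Q}{}}\mathbb{Q}{}[\alpha]$ is a product of copies of $\mathbb{R}{}$ and $\mathbb{C}{}$.

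From here the argument of Corollary \ref{r39} goes through unchanged: the involution extends by continuity to $\mathbb{R}{}\otimes_{\mathbb{Q}{}}\mathbb{Q}{}[\alpha]$, still satisfying $\Tr(\beta\circ\beta^{\prime})\geq0$ with strict inequality on a dense subset, and the only involution of a product of copies of $\mathbb{R}{}$ and $\mathbb{C}{}$ with that property fixes each factor, acting as the identity on the real factors and as complex conjugation on the complex ones. Every homomorphism $\rho\colon\mathbb{Q}{}[\alpha]\rightarrow\mathbb{C}{}$ factors through one such factor, so $\rho(\alpha^{\prime})=\overline{\rho(\alpha)}$, and then
\[
q=\rho(q)=\rho(\alpha\circ\alpha^{\prime})=\rho(\alpha)\overline{\rho(\alpha)}=\left\vert\rho(\alpha)\right\vert^{2},
\]
whence $\left\vert\rho(\alpha)\right\vert=q^{1/2}$. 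The one step that is not purely formal --- and hence what I would regard as the main point --- is the continuity-and-density argument guaranteeing that positivity survives extension of scalars to $\mathbb{R}{}$ and thereby pins down the involution; but this is precisely the input already isolated in the proof of Corollary \ref{r39}, so nothing new is needed beyond the verifications above.
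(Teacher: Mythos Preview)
Your proof is correct and is essentially the same as the paper's, which simply says ``The proof is the same as that of (\ref{r39}).'' You have in fact been more careful than the paper, explicitly verifying the stability of $\mathbb{Q}[\alpha]$ under the involution via $\alpha^{\prime}=q\alpha^{-1}\in\mathbb{Q}[\alpha]$ and handling the degenerate case $\alpha=0$, both of which the paper leaves implicit.
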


\begin{proof}
The proof is the same as that of (\ref{r39}).
\end{proof}

\subsection{The standard conjectures}

Roughly speaking, the standard conjectures state that the groups of algebraic
cycles modulo homological equivalence behave like the cohomology groups of a
K\"{a}hler manifold. Our exposition in this subsection follows Grothendieck
1969 and Kleiman 1968.\footnote{According to Illusie (2010): Grothendieck gave
a series of lectures on motives at the IH\'{E}S. One part was about the
standard conjectures. He asked John Coates to write down notes. Coates did it,
but the same thing happened: they were returned to him with many corrections.
Coates was discouraged and quit. Eventually, it was Kleiman who wrote down the
notes in \textit{Dix expos\'{e}s sur la cohomologie des sch\'{e}mas}.}

Let $H$ be a Weil cohomology theory over $k$ with coefficient field $Q$. We
assume that the hard Lefschetz theorem holds for $H$.\footnote{For $\ell$-adic
\'{e}tale cohomology, this was proved by Deligne as a consequence of his proof
of the Weil conjectures, and it follows for the other standard Weil cohomology
theories.} This means the following: let $L$ be the Lefschetz operator
$x\mapsto u\cdot x$ defined by the class $u$ of an ample divisor in
$H^{2}(V)(1)$; then, for all $r\leq d$, the map%
\[
L^{d-r}\colon H^{r}(V)\rightarrow H^{2d-r}(V)(d-r)
\]
is an isomorphism. As before, this gives decompositions%
\[
H^{r}(V)=\bigoplus_{j\geq\max(r-d,0)}L^{j}P^{r-2j}(V)
\]
with $P^{r}(V)$ equal to the kernel of $L^{d-r+1}\colon H^{r}(V)\rightarrow
H^{2d-r+2}(V)$. Hence $x\in H^{r}(V)$ has a unique expression as a sum%
\begin{equation}
x=\sum_{j\geq\max(r-d,0)}L^{j}x_{j},\quad x_{j}\in P^{r-2j}(V). \label{e34}%
\end{equation}
Define an operator $\Lambda\colon H^{r}(V)\rightarrow H^{r-2}(V)$ by%
\[
\Lambda x=\sum_{j\geq\max(r-d,0)}L^{j-1}x_{j}.
\]

\subsubsection{\noindent The standard conjectures of Lefschetz type}

\begin{description}
\item[A($V,L$):] For all $2r\leq d$, the isomorphism $L^{d-2r}\colon
H^{2r}(V)(r)\rightarrow H^{2d-2r}(V)(d-r)$ restricts to an isomorphism%
\[
A_{H}^{r}(V)\rightarrow A_{H}^{d-r}(V).
\]
Equivalently, $x\in H^{2r}(V)(r)$ is algebraic if $L^{d-2r}x$ is algebraic.

\item[B($V$):] The operator $\Lambda$ is algebraic, i.e., it lies in the image
of
\[
A_{H}^{d-1}(V\times V)\rightarrow H^{2d-2}(V\times V)(d-1)\simeq
\bigoplus\nolimits_{r\geq0}\Hom(H^{r+2}(V),H^{r}(V)).
\]

\item[C($V$):] The K\"{u}nneth projectors $\pi_{r}$ are algebraic.
Equivalently, the K\"{u}nneth isomorphism%
\[
H^{\ast}(V\times V)\simeq H^{\ast}(V)\otimes H^{\ast}(V)
\]
induces an isomorphism%
\[
A_{H}^{\ast}(V\times V)\simeq A_{H}^{\ast}(V)\otimes A_{H}^{\ast}(V).
\]

\end{description}

\begin{proposition}
\label{r73}There are the following relations among the conjectures.

\begin{enumerate}
\item Conjecture $A(V\times V,L\otimes1+1\otimes L)$ implies $B(V).$

\item If $B(V)$ holds for one choice of $L$, then it holds for all.

\item Conjecture $B(V)$ implies $A(V,L)$ (all $L$) and $C(V)$.
\end{enumerate}
\end{proposition}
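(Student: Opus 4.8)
The plan is to run all three parts off one engine: the $\mathfrak{sl}_2$-action on $H^{\ast}(V)$ generated by the Lefschetz operator $L$ and the operator $\Lambda$ of (\ref{e34}), together with two elementary closure properties of algebraic correspondences. First, algebraic correspondences are stable under composition and $\mathbb{Q}{}$-linear combination. Second, since the minimal polynomial of an invertible endomorphism has nonzero constant term, a correspondence inducing an invertible endomorphism of $H^{\ast}(V)$ has an inverse that is itself an algebraic correspondence (Cayley--Hamilton). Throughout I would also use that the class $\delta$ of the diagonal on $V\times V$ is algebraic.

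For (a): I would first observe that the hard Lefschetz theorem for $V$ forces it for $V\times V$ with respect to the polarization whose Lefschetz operator is $L\otimes1+1\otimes L$ --- this is the Clebsch--Gordan decomposition of a tensor product of two $\mathfrak{sl}_2$-modules each of ``Lefschetz type''. Hence $L_{V\times V}$, $\Lambda_{V\times V}$, and the associated primitive and degree projectors are defined on $H^{\ast}(V\times V)=H^{\ast}(V)\otimes H^{\ast}(V)$. The crux is a formal $\mathfrak{sl}_2$-computation identifying the correspondence $\Lambda$ on $V$, regarded as a class in $H^{2d-2}(V\times V)(d-1)$, with $\Lambda_{V\times V}$ applied to $\delta$ --- equivalently, with a fixed universal expression in $L_{V\times V}$ and the inverses of its hard Lefschetz isomorphisms, evaluated at $\delta$. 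Assuming $A(V\times V,L\otimes1+1\otimes L)$, each map $L_{V\times V}^{2d-2m}\colon A_{H}^{m}(V\times V)\to A_{H}^{2d-m}(V\times V)$ is a bijection of spaces of algebraic classes, so its inverse preserves algebraicity; substituting into the expression for $\Lambda_{V\times V}$ shows that $\Lambda_{V\times V}$ carries $A_{H}^{\bullet}(V\times V)$ to itself. Applying it to the algebraic class $\delta$ gives that $\Lambda=\Lambda_{V\times V}(\delta)$ is algebraic, i.e.\ $B(V)$.

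For (c) with a fixed $L$: if $B(V)$ holds then $\Lambda$ is algebraic, and $L$ is algebraic unconditionally, so all primitive and degree projectors of the $\mathfrak{sl}_2$-package, being universal polynomials in $L$ and $\Lambda$, are algebraic; in particular the K\"{u}nneth projectors $\pi_{r}$ are algebraic, which is $C(V)$. The normalization of $\Lambda$ in (\ref{e34}) gives directly $\Lambda^{d-2r}\circ L^{d-2r}=\id$ on $H^{2r}(V)$ and $L^{d-2r}\circ\Lambda^{d-2r}=\id$ on $H^{2d-2r}(V)$ for $2r\le d$; since $L$ and $\Lambda$ are algebraic, $L^{d-2r}$ and $\Lambda^{d-2r}$ are mutually inverse bijections $A_{H}^{r}(V)\leftrightarrow A_{H}^{d-r}(V)$, which is $A(V,L)$. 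For (b): let $u,u'$ be ample with $B(V,u)$; I would show that $\Lambda_{u'}$ lies in the $\mathbb{Q}{}$-subalgebra of $\End_{\mathbb{Q}{}}(H^{\ast}(V))$ generated by $L_{u}$, $\Lambda_{u}$, $L_{u'}$ and closed under inverting invertible endomorphisms --- a statement purely about the two $\mathfrak{sl}_2$-structures attached to $u$ and $u'$, proved by the same bookkeeping as in (a). Since $L_{u},L_{u'}$ are algebraic unconditionally and $\Lambda_{u}$ is algebraic by $B(V,u)$, and the three generating operations preserve algebraicity (inversion by the Cayley--Hamilton remark), $\Lambda_{u'}$ is algebraic, i.e.\ $B(V,u')$. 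Combined with the fixed-$L$ case of (c), this yields $A(V,L)$ for all $L$.

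The decisive, and essentially only non-routine, step is the $\mathfrak{sl}_2$-theoretic identity used in (a) and (b): that $\Lambda$ (resp.\ $\Lambda_{u'}$) is a universal expression in $L_{V\times V}$ and the inverses of its hard Lefschetz isomorphisms applied to $\delta$ (resp.\ in $L_{u}$, $\Lambda_{u}$, $L_{u'}$). Everything else --- the closure properties of algebraic correspondences, the identity $\Lambda^{d-2r}L^{d-2r}=\id$, and the passage from $V$ to $V\times V$ --- is straightforward.
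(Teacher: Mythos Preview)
The paper's own proof is just a pointer to Kleiman 1994, Theorem 4-1, and your sketch is essentially Kleiman's argument: express $\Lambda$ and the projectors as universal $\mathbb{Q}$-expressions in $L$ together with the inverses of the hard Lefschetz isomorphisms, identify the correspondence $\Lambda_V\in H^{2d-2}(V\times V)(d-1)$ with such an expression evaluated at the diagonal $\delta$, and use $A(V\times V)$ to see that those inverses preserve algebraic classes.

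Two points are less routine than you indicate. First, your Cayley--Hamilton principle---``an algebraic correspondence acting invertibly on $H^{\ast}(V)$ has algebraic inverse''---is not automatic: the minimal polynomial of an endomorphism of the $Q$-vector space $H^{\ast}(V)$ lies \textit{a priori} only in $Q[T]$, so $\alpha^{-1}$ is only a $Q$-polynomial in $\alpha$, while $A_H^{d}(V\times V)$ is merely a $\mathbb{Q}$-subspace (not known to be finite-dimensional over $\mathbb{Q}$). What rescues your use of it in (b) is that $B(V,u)$ already gives $C(V)$ via (c), and then the trace formula (\ref{r23}) applied to the algebraic correspondence $\pi_r\alpha^n\pi_r$ shows $\Tr(\alpha^{n}\mid H^{r}(V))=\pm(\pi_r\alpha^n\pi_r\cdot\Delta)\in\mathbb{Q}$; Newton's identities then force the characteristic polynomial of $\alpha$ on $H^{r}(V)$ into $\mathbb{Q}[T]$, and Cayley--Hamilton applies as you want. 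Second, with the normalization of $\Lambda$ in (\ref{e34}) (coefficient $1$, not the $\mathfrak{sl}_2$ coefficient $j(n-j+1)$), the tensor-product identity $\Lambda_{V\times V}=\Lambda\otimes 1+1\otimes\Lambda$ fails, so ``$\Lambda=\Lambda_{V\times V}(\delta)$'' is not literally correct and the Clebsch--Gordan remark does not apply directly; one must instead use the explicit description of the primitive projectors as polynomials in $L$ and the inverses $(L^{d-s})^{-1}$ (e.g.\ the projector onto $P^{r}\subset H^{r}$ is $1-L\circ(L^{d-r+2})^{-1}\circ L^{d-r+1}$). This is genuine bookkeeping, but it is where the content lies.
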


\begin{proof}
Kleiman 1994, Theorem 4-1.
\end{proof}

Thus $A(V,L)$ holds for all $V$ and $L$ if and only if $B(V)$ holds for all
$V$; moreover, each conjecture implies Conjecture $C$.

\begin{example}
\label{r79}Let $k=\mathbb{F}{}$. A smooth projective variety $V$ over $k$
arises from a variety $V_{0}$ defined over a finite subfield $k_{0}$ of $k$.
Let $\pi$ be the corresponding Frobenius endomorphism of $V$, and let
$P_{r}(T)=\det(1-\pi T\mid H^{r}(V))$. According to the Cayley-Hamilton
theorem, $P_{r}(\pi)$ acts as zero on $H^{r}(V)$. Assume that the $P_{r}$ are
relatively prime (this is true, for example, if the Riemann hypothesis holds).
According to the Chinese remainder theorem, there are polynomials $P^{r}(T)\in
Q[T]$ such that%
\[
P^{r}(T)=\left\{
\begin{array}
[c]{ll}%
1 & \text{mod }P_{r}(T)\\
0 & \text{mod }P_{s}(T)\text{ for }s\neq r.
\end{array}
\right.
\]
Now $P^{r}(\pi)$ projects $H^{\ast}(V)$ onto $H^{r}(V)$, and so Conjecture
$C(V)$ is true.
\end{example}

\begin{example}
\label{r82}Conjecture $B(V)$ holds if $V$ is an abelian variety or a surface
with $\dim H^{1}(V)$\thinspace equal to twice the dimension of the Picard
variety of $V$ (Kleiman 1968, 2. Appendix).
\end{example}

These are essentially the only cases where the standard conjectures of
Lefschetz type are known (see Kleiman 1994).

\subsubsection{The standard conjecture of Hodge type}

For $r\leq d$, let $A_{H}^{r}(V)_{pr}$ denote the \textquotedblleft
primitive\textquotedblright\ part $A_{H}^{r}(V)\cap P^{2r}(V)$ of $A_{H}%
^{r}(V)$. Conjecture $A(V,L)$ implies that%
\[
A_{H}^{r}(V)=\bigoplus_{j\geq\max(2r-d,0)}L^{j}A^{r-j}(V)_{pr}\text{.}%
\]

\begin{description}
\item[I($V,L$):] For $r\leq d$, the symmetric bilinear form%
\[
x,y\mapsto(-1)^{r}(x\cdot y\cdot u^{d-2r})\colon A_{H}^{r}(V)_{pr}\times
A_{H}^{r}(V)_{pr}\rightarrow\mathbb{Q}{}%
\]
is positive definite.
\end{description}

In characteristic zero, the standard conjecture of Hodge type follows from
Hodge theory (see \ref{r68}). In nonzero characteristic, almost nothing is
known except for surfaces where it becomes Theorem \ref{r1}.

\subsubsection{Consequences of the standard conjectures}

\begin{proposition}
\label{r62}Assume the standard conjectures. For every $x\in A_{H}^{r}(V)$,
there exists a $y\in A_{H}^{d-r}(V)$ such that $x\cdot y\neq0$.
\end{proposition}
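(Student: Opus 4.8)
The plan is to prove that the intersection pairing $A_H^r(V) \times A_H^{d-r}(V) \to \mathbb{Q}$ is nondegenerate on the left, by exhibiting for each nonzero $x \in A_H^r(V)$ an explicit partner $y$. The construction is the algebraic analogue of the operator $\ast$ from the kählerian discussion, and the positivity that makes it work is precisely the standard conjecture of Hodge type. We may assume $0 \le r \le d$ (otherwise $A_H^r(V)=0$ and there is nothing to prove) and that $x \neq 0$.

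First I would invoke Conjecture $A(V,L)$ to get the primitive decomposition on algebraic classes, $A_H^r(V) = \bigoplus_{j \ge \max(2r-d,0)} L^j A_H^{r-j}(V)_{pr}$, and write $x = \sum_j L^j x_j$ with $x_j \in A_H^{r-j}(V)_{pr}$ not all zero (the sum is direct because hard Lefschetz makes each $L^j$ injective on the primitive piece appearing, namely when $j \ge 2r-d$). Then I set
\[
y = \sum_j (-1)^{r-j}\, L^{\,d-2r+j} x_j .
\]
Each exponent $d-2r+j$ is $\ge 0$ by the constraint $j \ge \max(2r-d,0)$; and since $L$ is cup product with the algebraic class $u$ and the cycle class map is multiplicative, $L$ preserves algebraic classes, so $L^{d-2r+j}x_j$ has codimension $(r-j)+(d-2r+j)=d-r$ and $y \in A_H^{d-r}(V)$. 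This last point, that $y$ genuinely lies in the algebraic part rather than merely in $H^{2d-2r}(V)(d-r)$, is where Conjecture $A$ is doing its work.

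Next I would compute
\[
x \cdot y = \sum_{j,k} (-1)^{r-k}\, x_j \cdot x_k \cdot u^{\,j + d - 2r + k},
\]
and show the off-diagonal terms vanish by the bilinear relations. If (say) $j < k$, rewrite the $(j,k)$-term as $\pm\, x_k \cdot \bigl(L^{\,j+d-2r+k} x_j\bigr)$; since $x_j$ is primitive of codimension $r-j$ one has $L^m x_j = 0$ for $m > d-2(r-j) = d-2r+2j$, and $j+d-2r+k > d-2r+2j$ exactly when $k>j$, so the term is zero; the case $j>k$ is symmetric. Only the diagonal survives:
\[
x \cdot y = \sum_j (-1)^{r-j}\bigl(x_j \cdot x_j \cdot u^{\,d-2(r-j)}\bigr).
\]
By the standard conjecture of Hodge type I($V,L$), applied in codimension $r-j \le r \le d$ (one checks $2(r-j)\le d$ from the range of $j$), the form $(a,b)\mapsto(-1)^{r-j}(a\cdot b\cdot u^{d-2(r-j)})$ is positive definite on $A_H^{r-j}(V)_{pr}$; hence each summand is $\ge 0$, strictly so whenever $x_j\neq 0$. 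Since some $x_j\neq 0$, we get $x\cdot y>0$, in particular $x\cdot y\neq 0$.

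The step I expect to be the real (if modest) obstacle is the off-diagonal vanishing — verifying that the algebraic Lefschetz components of different primitive degrees are mutually orthogonal under the pairing — together with keeping the ranges of indices and the signs straight so that Conjecture I applies with the correct positivity. All of the genuinely hard content is already packaged inside Conjectures $A$ and $I$ (and the hard Lefschetz theorem, which we have assumed for $H$), so the argument itself is essentially formal.
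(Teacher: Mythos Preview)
Your proof is correct and takes essentially the same approach as the paper: decompose $x$ into primitive algebraic components via Conjecture $A$, use the orthogonality of the Lefschetz pieces, and invoke Conjecture $I$ for the positivity on each piece. The paper's version is terser---it reduces to a single component $x=L^{j}x_{j}$ (leaving the orthogonality implicit) and takes $y=L^{d-2r+j}x_{j}$, whereas you assemble the full $\ast$-like sum with signs so as to obtain strict positivity rather than mere nonvanishing---but the underlying idea is identical.
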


\begin{proof}
We may suppose that $x=L^{j}x_{j}$ with $x_{j}\in A_{H}^{r-j}(V)_{pr}$. Now
\[
(L^{j}x_{j}\cdot L^{j}x_{j}\cdot u^{d-2r})=(x_{j}\cdot x_{j}\cdot
u^{d-2r+2j})>0.
\]

\end{proof}

Using the decomposition (\ref{e34}), we define an operator $\ast\colon
H^{r}(V)\rightarrow H^{2d-r}(V)$ by\footnote{This differs from the definition
in k\"{a}hlerian geometry by some scalar factors. Over $\mathbb{C}{}$, our
form $x,y\mapsto(x\cdot\ast y)$ is positive definite on some direct summands
of $H^{r}(V)$ and negative definite on others, but this suffices to imply that
the involution $\alpha\mapsto\alpha^{\prime}$ is positive.}%
\[
\ast x=\sum_{j\geq\max(r-d,0)}(-1)^{\frac{(r-2j)(r-2j+1)}{2}}L^{d-r+j}%
(x_{j}).
\]

\begin{theorem}
\label{r61}Assume the standard conjectures. Then%
\[
H^{r}(V)\times H^{r}(V)\rightarrow Q,\quad x,y\mapsto(x\cdot\ast y),
\]
is a Weil form.
\end{theorem}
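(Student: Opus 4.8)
The plan is to verify that $\phi(x,y)=(x\cdot\ast y)$ meets the two requirements of Definition \ref{r97}, following the route of the kählerian computation (Lemma \ref{r68}, Theorem \ref{r56}, Example \ref{r94}) but with the standard conjecture of Hodge type I($V,L$) in place of Hodge theory. The first thing I would record is that, granting the standard conjectures, the operator $\ast$ is \emph{algebraic}: $L$ is cup-product with the class of an ample divisor, $\Lambda$ is algebraic by B($V$), each primitive projector $x\mapsto x_j$ and each summand $L^{j}(-)$ of the Lefschetz decomposition (\ref{e34}) is a universal polynomial in $L$ and $\Lambda$ (the $\mathfrak{sl}_2$-theory attached to hard Lefschetz), and C($V$) allows one to move freely between $A_H^{\bullet}(V\times V)$ and $A_H^{\bullet}(V)\otimes A_H^{\bullet}(V)$. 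Hence $\ast\colon H^r(V)\to H^{2d-r}(V)$ and its inverse are induced by algebraic correspondences.

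For condition (a), I would show $(x\cdot\ast y)=(-1)^r(y\cdot\ast x)$. Writing $x=\sum_j L^jx_j$ and $y=\sum_j L^jy_j$ with $x_j,y_j$ primitive, and using that the pieces $L^aP^s$ and $L^bP^t$ of the Lefschetz decomposition are orthogonal under the cup pairing $H^r(V)\times H^{2d-r}(V)\to H^{2d}(V)\simeq Q(-d)$ unless $s=t$ and $a+b=d-s$, only the diagonal terms survive, so
\[
(x\cdot\ast y)=\sum_{j}(-1)^{\frac{(r-2j)(r-2j+1)}{2}}\,(x_j\cdot y_j\cdot u^{d-r+2j});
\]
graded-commutativity of the cup product (and the even degree of $u$) exchanges $x_j$ and $y_j$ at the cost of $(-1)^{r-2j}=(-1)^r$. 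Thus $\phi$ is symmetric when $r$ is even and skew when $r$ is odd — the sign $(-1)^{(r-2j)(r-2j+1)/2}$ in the definition of $\ast$ is exactly what is needed here (and for the sign bookkeeping in the positivity step).

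Half of condition (b) is then formal. For $\alpha\in A_H^d(V\times V)_r$, viewed as an endomorphism of $H^r(V)$, its $\phi$-adjoint is $\alpha'=\ast^{-1}\circ{}^{t}\alpha\circ\ast$, where ${}^{t}\alpha$ is the transpose (Poincaré-adjoint) correspondence; since $\ast$, $\ast^{-1}$ and ${}^{t}\alpha$ are all algebraic, $\alpha'$ again lies in $A_H^d(V\times V)_r$. The composite $\alpha\circ\alpha'$ is an algebraic correspondence annihilating $H^i(V)$ for $i\neq r$, so the trace formula (\ref{eq6}) shows $\Tr(\alpha\circ\alpha')=(-1)^{r}(\alpha\cdot{}^{t}(\alpha'))$, an intersection number of algebraic cycles, hence a rational (indeed integral) number.

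The substantive point — and the step I expect to be the main obstacle — is the positivity $\Tr(\alpha\circ\alpha')>0$ for $\alpha\neq0$; this is where I($V,L$) is unavoidable. I would use that $H^r(V)=\bigoplus_j L^jP^{r-2j}(V)$ is $\phi$-orthogonal, write $\alpha$ in block form for this splitting so that $\Tr(\alpha\circ\alpha')$ decomposes into contributions indexed by the blocks, transport each block back to the primitive spaces via the isomorphisms $L^j$, and invoke A($V,L$) (which supplies the primitive decomposition of the algebraic classes) together with C($V$) to see that the blocks of an algebraic $\alpha$ are controlled by algebraic primitive cycles. Up to the signs built into $\ast$, each contribution then becomes a sum of terms $(-1)^{s}(\xi\cdot\xi\cdot u^{d-2s})$ with $\xi$ a primitive algebraic cycle; conjecture I($V,L$) makes each term $\ge 0$, and $>0$ unless $\xi=0$, while $\alpha\neq0$ forces some such $\xi$ to be nonzero by the nondegeneracy of Proposition \ref{r62}. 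Carrying out this last reduction cleanly — matching the signs and confirming that the block of an algebraic correspondence really does land among algebraic primitive cycles — is the delicate part; it is precisely the computation behind Theorem \ref{r56}, performed here over $Q$, and I would organize it as in Kleiman 1968 and Grothendieck 1969.
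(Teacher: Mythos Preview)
Your proposal is correct and follows exactly the intended route: the paper's own proof is nothing more than the citation ``Kleiman 1968, 3.11,'' and your sketch --- algebraicity of $\ast$ via B($V$) and the $\mathfrak{sl}_2$-formalism, parity from the Lefschetz-orthogonality of the primitive pieces, algebraicity and rationality of the adjoint via the trace formula (\ref{eq6}), and positivity from I($V,L$) --- is precisely Kleiman's argument, which you explicitly acknowledge. The only place to be careful is the positivity step: the form $(x\cdot\ast y)$ is \emph{not} positive-definite on $H^r(V)$ (see the footnote before the theorem), so the reduction to I($V,L$) really does go through the class of $\alpha$ as an algebraic cycle on $V\times V$ rather than through a direct sign argument on $H^r(V)$; your phrase ``controlled by algebraic primitive cycles'' is pointing at the right thing, but when you write it up make sure the $\xi$'s live in $A_H^{\ast}(V\times V)_{pr}$ rather than in $A_H^{\ast}(V)_{pr}$.
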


\begin{proof}
Kleiman 1968, 3.11.
\end{proof}

\begin{corollary}
The $\mathbb{Q}{}$-algebra $A_{H}^{d}(V\times V)_{r}$ is semisimple.
\end{corollary}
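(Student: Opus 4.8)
The plan is to obtain the corollary as an immediate consequence of the two results just proved. Assuming the standard conjectures, Theorem~\ref{r61} supplies a Weil form on $H^{r}(V)$, namely the pairing $\phi(x,y)=(x\cdot\ast y)$ built from the Lefschetz decomposition (\ref{e34}). This is precisely the input required by Proposition~\ref{r69}, which then yields that the $\mathbb{Q}{}$-algebra $A_{H}^{d}(V\times V)_{r}$ is semisimple. So in one line: apply (\ref{r61}), then (\ref{r69}).

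For completeness I would spell out why Proposition~\ref{r69} applies in this situation. Write $B=A_{H}^{d}(V\times V)_{r}$; this is a finite-dimensional $\mathbb{Q}{}$-algebra, being a direct summand of $A_{H}^{d}(V\times V)$, which is itself finite-dimensional since it is spanned by algebraic classes inside the finite-dimensional space $H^{2d}(V\times V)(d)$. Under the identification $H^{2d}(V\times V)_{r}\simeq\End_{Q\text{-linear}}(H^{r}(V))$ coming from the K\"unneth formula and Poincar\'e duality, taking $\phi$-adjoints defines an involution $\alpha\mapsto\alpha^{\prime}$ of $B$ by condition~(b) of Definition~\ref{r97}, and the same condition makes it a positive involution: $\Tr(\alpha\circ\alpha^{\prime})\in\mathbb{Q}{}$ and $\Tr(\alpha\circ\alpha^{\prime})>0$ for $\alpha\neq0$. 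Then I would copy the argument of Corollary~\ref{r13}: if $\rad(B)\neq0$, pick a nonzero $\alpha\in\rad(B)$; then $\beta=\alpha\circ\alpha^{\prime}$ lies in the two-sided ideal $\rad(B)$ and is nonzero because its trace is positive; $\beta$ is symmetric, so $\beta^{2}=\beta\circ\beta^{\prime}\neq0$ (again by positivity of the trace), and inductively $\beta^{2^{n}}\neq0$ for all $n$, so $\beta$ is not nilpotent. This contradicts the nilpotence of the radical of a finite-dimensional $\mathbb{Q}{}$-algebra, so $\rad(B)=0$ and $B$ is semisimple.

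I do not expect any genuine obstacle here: everything needed is already in place. The only two points that deserve a moment's attention—and both are handled upstream—are that $B$ is finite-dimensional (it sits inside $H^{2d}(V\times V)(d)$ and is spanned by finitely many algebraic cycles) and that the trace pairing $\alpha,\beta\mapsto\Tr(\alpha\circ\beta^{\prime})$ takes rational values, which is part of the Weil-form package already verified in Theorem~\ref{r61}.
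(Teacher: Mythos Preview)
Your proposal is correct and matches the paper's approach exactly: the paper's one-line proof reads ``It admits a positive involution; see (\ref{r29})'' (the reference is evidently a misprint for (\ref{r69}) or (\ref{r13})), which is precisely your argument of combining Theorem~\ref{r61} with Proposition~\ref{r69}. Your elaboration of the positive-involution argument is faithful to the proof of (\ref{r13}) that (\ref{r69}) invokes.
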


\begin{proof}
It admits a positive involution; see (\ref{r29}).
\end{proof}

\begin{theorem}
\label{r58}Assume the standard conjectures. Let $V$ be a connected nonsingular
projective variety of dimension $d$ over $k{}$, and let $f$ be an endomorphism
$V$. Suppose that there exists an integer $q>0$ and a hyperplane section $E$
of $V$ such that $f^{-1}(E)$ is algebraically equivalent to $qE$. Then, for
all $r\geq0$, the eigenvalues of $f$ acting on $H^{r}(V,\mathbb{Q}{})$ have
absolute value $q^{r/2}$.
\end{theorem}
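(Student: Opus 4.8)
The plan is to run the proof of Theorem~\ref{r54} essentially verbatim, the inputs from Hodge theory (Hard Lefschetz and the Hodge index theorem, which over $\mathbb{C}$ produce the positivity used there) now being supplied by the standard conjectures. Concretely, the standard conjectures have been arranged precisely so that Theorem~\ref{r61} applies: for each $r$ the pairing $\phi_r(x,y)=(x\cdot\ast y)$ on $H^r(V)$ is a Weil form, with an associated adjoint $\alpha\mapsto\alpha'$ on the ring $A_H^d(V\times V)_r$, and the whole argument is then the formal reduction of the situation to Proposition~\ref{r57}.

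First I would reduce to a statement about a single K\"unneth component. By Conjecture $C(V)$ the K\"unneth projectors $\pi_r$ are algebraic, so the class $\gamma\in A_H^d(V\times V)$ of an algebraic correspondence on $V$ inducing $f^\ast=H^\ast(f)$ on cohomology (the transpose of the graph of $f$, say) decomposes as $\gamma=\sum_r\gamma_r$ with $\gamma_r=\pi_r\circ\gamma\circ\pi_r\in A_H^d(V\times V)_r$; under the identification $A_H^d(V\times V)_r\hookrightarrow\End_Q(H^r(V))$ the element $\gamma_r$ is simply $f^\ast$ restricted to $H^r(V)$. It therefore suffices to prove that $\gamma_r\circ\gamma_r'=q^r$ (the integer) in $A_H^d(V\times V)_r$. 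Granting this, Proposition~\ref{r57} applied with $\alpha=\gamma_r$ and the integer $q^r$ gives $\lvert\rho(\gamma_r)\rvert=q^{r/2}$ for every homomorphism $\rho\colon\mathbb{Q}[\gamma_r]\to\mathbb{C}$; since $\mathbb{Q}[\gamma_r]$ is then (by the argument of \ref{r13} reproduced in \ref{r57}) a product of number fields, every eigenvalue of $\gamma_r$ acting on $H^r(V)$ is of the form $\rho(\gamma_r)$ and so has absolute value $q^{r/2}$, which is the assertion of the theorem.

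It remains to establish $\gamma_r\circ\gamma_r'=q^r$, and here the hypothesis on $f$ enters. Let $u\in H^2(V)(1)$ be the class of $E$ and $L=u\cup(-)$. Since algebraic equivalence implies homological equivalence and the cycle map is functorial (\ref{e20}), the relation $f^{-1}(E)\sim qE$ gives $f^\ast u=qu$, hence $f^\ast\circ L=qL\circ f^\ast$; as $f^\ast$ is a homomorphism of graded $Q$-algebras it therefore preserves each primitive subspace $P^r(V)=\Ker(L^{d-r+1})$. Writing $x=\sum_j L^jx_j$ as in (\ref{e34}) one gets $f^\ast x=\sum_j q^jL^j(f^\ast x_j)$ with the $f^\ast x_j$ primitive, and substituting into the definition of $\ast$ yields $\ast(f^\ast x)=q^{\,r-d}f^\ast(\ast x)$ on $H^r(V)$. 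Moreover $f^{-1}(E)\sim qE$ forces $\deg f=q^d$ (as the text observes), so $f^\ast$ multiplies the class of a point by $q^d$ and hence $\eta_V\circ f^\ast=q^d\eta_V$ on $H^{2d}(V)(d)$. Combining these,
\[
\phi_r(f^\ast x,f^\ast y)=(f^\ast x\cdot\ast(f^\ast y))=q^{\,r-d}\,\eta_V\bigl(f^\ast(x\cdot\ast y)\bigr)=q^{\,r}\,(x\cdot\ast y)=q^{\,r}\phi_r(x,y),
\]
so by nondegeneracy of the Weil form $(f^\ast)'\circ f^\ast=q^r$ on $H^r(V)$; since $q^r\neq 0$ in characteristic $0$, $f^\ast$ is invertible on $H^r(V)$, whence also $f^\ast\circ(f^\ast)'=q^r$, i.e.\ $\gamma_r\circ\gamma_r'=q^r$, as needed.

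I expect the only point requiring care to be the identity $\ast\circ f^\ast=q^{\,r-d}f^\ast\circ\ast$ — that is, tracking how $f^\ast$ interacts with the Lefschetz (primitive) decomposition, which is exactly where the hypothesis $f^{-1}(E)\sim qE$ is really used, via $f^\ast u=qu$. Everything else is formal; the genuinely hard content, namely the positivity packaged in the standard conjecture of Hodge type (over $\mathbb{C}$ this is the Hodge index theorem of \ref{r67}/\ref{r1}), is assumed. As in Theorem~\ref{r54} and Weil's ICM argument, the same proof applies to correspondences, not merely to endomorphisms.
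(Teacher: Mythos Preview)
Your proposal is correct and follows exactly the route the paper intends: use $E$ to define the Lefschetz operator, invoke Theorem~\ref{r61} to get a Weil form on each $H^r(V)$, verify that $\gamma_r\circ\gamma_r'=q^r$, and apply Proposition~\ref{r57}. The paper's own proof is the one-line sketch ``Use $E$ to define the Lefschetz operator. Then $\alpha\circ\alpha'=q$, and we can apply (\ref{r57})''; your computation $\ast\circ f^\ast=q^{\,r-d}f^\ast\circ\ast$ and the resulting $\phi_r(f^\ast x,f^\ast y)=q^r\phi_r(x,y)$ are precisely the details behind that line (and your $q^r$ is the correct value --- the paper's bare ``$q$'' is an imprecision).
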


\begin{proof}
Use $E$ to define the Lefschetz operator. Then $\alpha\circ\alpha^{\prime}=q$,
and we can apply (\ref{r57}).
\end{proof}

\begin{aside}
\label{r92}In particular, the standard conjectures imply that the Frobenius
endomorphism acts semisimply on \'{e}tale cohomology over $\mathbb{F}{}$. For
abelian varieties, this was proved by Weil in the 1940s, but there has been
almost no progress since then.
\end{aside}

\subsection{The standard conjectures and equivalences on algebraic cycles}

Our statement of the standard conjectures is relative to a choice of a Weil
cohomology theory. Grothendieck initially stated the standard conjectures in a
letter to Serre\footnote{\textit{Grothendieck-Serre Correspondence}, p.232,
1965.} for algebraic cycles modulo algebraic equivalence, but an example of
Griffiths shows that they are false in that context.

\label{relation}Recall that two algebraic cycles $Z$ and $Z^{\prime}$ on a
variety $V$ are \textit{rationally equivalent} if there exists an algebraic
cycle $\mathcal{Z}{}$ on $V\times\mathbb{P}{}^{1}$ such that $\mathcal{Z}%
{}_{0}=Z$ and $\mathcal{Z}{}_{1}=Z^{\prime}$; that they are
\textit{algebraically equivalent} if there exists a curve $T$ and an algebraic
cycle $\mathcal{Z}{}$ on $V\times T$ such that $\mathcal{Z}{}_{t_{0}}=Z$ and
$\mathcal{Z}{}_{t_{1}}=Z^{\prime}$ for two points $t_{0},t_{1}\in T(k)$; that
they are \textit{homologically equivalent }relative to some fixed Weil
cohomology $H$ if they have the same class in $H^{2\ast}(V)(\ast)$; and that
they are \textit{numerically equivalent} if $(Z\cdot Y)=(Z^{\prime}\cdot Y)$
for all algebraic cycles $Y$ of complementary dimension. We have%

\[
\mathrm{rat}\implies\mathrm{alg}\implies\mathrm{hom}\implies\mathrm{num}.
\]

For divisors, rational equivalence coincides with linear equivalence. Rational
equivalence certainly differs from algebraic equivalence, except over the
algebraic closure of a finite fields, where all four equivalence relations are
conjectured to coincide (folklore).

For divisors, algebraic equivalence coincides with numerical equivalence
(Matsusaka 1957). For many decades, it was believed that algebraic equivalence
and numerical equivalence coincide --- one of Severi's \textquotedblleft
self-evident\textquotedblright\ postulates even has this as a consequence${}$
(Brigaglia et al. 2004, p.327).

Griffiths (1969) surprised everyone by showing that, even in the classical
situation${}$, algebraic equivalence differs from homological
equivalence.\footnote{In fact, they aren't even close. The first possible
counterexample is for $1$-cycles on a $3$-fold, and, indeed, for a complex
algebraic variety $V$ of dimension $3$, the vector space%
\[
\frac{\left\{  \text{one-cycles homologically equivalent to zero}\right\}
}{\left\{  \text{one-cycles algebraically equivalent to zero}\right\}
}\otimes\mathbb{Q}{}%
\]
may be infinite dimensional (Clemens 1983).} However, there being no
counterexample, it remains a folklore conjecture that numerical equivalence
coincides with homological equivalence for the standard Weil cohomologies. For
$\ell$-adic \'{e}tale cohomology, this conjecture is stated in Tate 1964.

The standard conjectures for a Weil cohomology theory $H$ imply that numerical
equivalence coincides with homological equivalence for $H$ (see \ref{r62}). It
would be useful to have a statement of the standard conjectures independent of
any Weil cohomology theory. One possibility is to state them for the
$\mathbb{Q}{}$-vector spaces of algebraic cycles modulo smash-nilpotent
equivalence in the sense of Voevodsky 1995 (which implies homological
equivalence, and is conjectured to equal numerical equivalence).

\subsection{The standard conjectures and the conjectures of Hodge and Tate}

Grothendieck hoped that his standard conjectures would be more accessible than
the conjectures of Hodge and Tate, but these conjectures appear to be closely
intertwined. Before explaining this, I recall the statements of the Hodge and
Tate conjectures.

\begin{conjecture}
[Hodge]\label{r65}Let $V$ be a smooth projective variety over $\mathbb{C}{}$.
For all $r\geq0$, the $\mathbb{Q}{}$-subspace of $H^{2r}(V,\mathbb{Q}{})$
spanned by the algebraic classes is $H^{2r}(V,\mathbb{Q}{})\cap H^{r,r}(V)$.
\end{conjecture}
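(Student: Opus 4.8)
\textit{Remark on the proof.} Conjecture \ref{r65} is the Hodge conjecture, which is open; what follows is the one case that is completely understood, together with the obstruction in general.

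The case $r=1$ is the Lefschetz $(1,1)$-theorem, and I would prove it using the exponential sequence of sheaves for the complex topology on $V$,
\[
0\rightarrow\mathbb{Z}{}\rightarrow\mathcal{O}_{V}\xrightarrow{\,z\mapsto e^{2\pi iz}\,}\mathcal{O}_{V}^{\times}\rightarrow0.
\]
Passing to cohomology identifies the connecting map $H^{1}(V,\mathcal{O}_{V}^{\times})\rightarrow H^{2}(V,\mathbb{Z}{})$ with the map sending a line bundle to its first Chern class; since $\Pic(V)$ is generated by divisor classes, the $\mathbb{Q}{}$-span of the algebraic classes in $H^{2}(V,\mathbb{Q}{})$ is precisely the kernel of $H^{2}(V,\mathbb{Q}{})\rightarrow H^{2}(V,\mathcal{O}_{V})$. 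On the other hand $H^{2}(V,\mathcal{O}_{V})=H^{0,2}(V)$, and a rational class is real, so its $(2,0)$- and $(0,2)$-components are complex conjugates; hence a class in $H^{2}(V,\mathbb{Q}{})$ dies in $H^{2}(V,\mathcal{O}_{V})$ exactly when it lies in $H^{1,1}(V)$. Combining the two descriptions gives Conjecture \ref{r65} for $r=1$.

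For $r>1$ the plausible strategies are: (i) Lefschetz's inductive approach through a Lefschetz pencil, using weak and hard Lefschetz to transfer the statement for $H^{2r}(V)$ to hyperplane sections and vanishing cycles; (ii) specialization and deformation arguments (together with the algebraicity of Hodge loci, Cattani--Deligne--Kaplan) reducing to varieties carrying extra structure, such as abelian varieties of CM type or products, where the conjecture can be verified directly; (iii) constructing the cycle from the standard conjectures and the theory of motives developed in the rest of this paper. Each of these succeeds only in isolated families (abelian varieties of CM type, some uniruled fourfolds, $K3$ surfaces via the $r=1$ case, \ldots), and none of them yields the general statement.

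The main obstacle — and the reason none of (i)--(iii) closes the gap — is the absence of any construction producing an algebraic cycle from cohomological data: being of type $(r,r)$ is a condition on the Hodge filtration, transcendental in nature, whereas the cycle class map $C_{\mathrm{rat}}^{r}(V)\rightarrow H^{2r}(V,\mathbb{Q}{})(r)$ only records the image of an essentially geometric object. For $r=1$ the exponential sequence is exactly the bridge that is missing in higher codimension: there is no analogous exact sequence, and no cohomological description of the image of the cycle class map. The relations with the Tate and standard conjectures discussed below reorganize this difficulty but do not remove it.
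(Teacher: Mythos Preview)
Your recognition that Conjecture~\ref{r65} is the Hodge conjecture and is open is exactly right, and matches the paper's treatment: the paper states it as a conjecture and offers no proof, because none exists. There is nothing to compare your argument against.

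That said, your supplementary material is sound. The proof you give of the $r=1$ case via the exponential sequence is the standard one and is correct; the paper does not include it, so this is a genuine addition. Your survey of approaches for $r>1$ and your diagnosis of the obstruction --- the absence of any construction producing a cycle from Hodge-theoretic data --- are accurate and well put. One small point: the paper does go on (in the paragraphs following \ref{r65} and \ref{r66}) to record the logical relations between the Hodge conjecture and the standard conjectures, in particular that Hodge implies Lefschetz-type over $\mathbb{C}$, and that conversely Lefschetz-type over $\mathbb{C}$ implies Hodge for abelian varieties (Abdulali, Andr\'{e}). Your item (iii) gestures at this, but if you want to align with the paper's emphasis you might state those implications explicitly rather than leaving them implicit.
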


\begin{conjecture}
[Tate]\label{r66}Let $V_{0}$ be a smooth projective variety over a field
$k_{0}$ finitely generated over its prime field, and let $\ell$ be a prime
$\neq\mathrm{char}(k)$. For all $r\geq0$, the $\mathbb{Q}{}_{\ell}$-subspace
of $H^{2r}(V_{\mathrm{et}},\mathbb{Q}{}_{\ell}(r))$ spanned by the algebraic
classes consists exactly of those fixed by the action of $\Gal(k/k_{0})$. Here
$k$ is a separable algebraic closure of $k_{0}$ and $V=(V_{0})_{k}$.
\end{conjecture}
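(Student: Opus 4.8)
The plan is to attack the only nontrivial inclusion --- that every class in $H^{2r}(V_{\mathrm{et}},\mathbb{Q}{}_{\ell}(r))$ fixed by $\Gal(k/k_{0})$ is algebraic --- by a three-stage reduction modelled on the cases that are actually known. \textbf{First}, I would reduce to the case where $k_{0}$ is a finite field: spread $V_{0}$ out to a smooth projective scheme over an integral domain $R$ of finite type over the prime field with $\operatorname{Frac}(R)=k_{0}$, use that the cycle class map and the (isomorphic) specialization maps on $\ell$-adic cohomology are compatible, and observe that a $\Gal$-invariant class restricts to a $\Gal$-invariant class over the finite residue fields $\kappa(s)$; granting the conjecture over all such $\kappa(s)$, the specialized classes are algebraic, and a spreading-out / Hilbert-irreducibility argument (essentially the variational Tate conjecture) would return algebraicity over $k_{0}$.

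\textbf{Second}, over a finite field I would reduce codimension $r$ to codimension $1$. Here the Lefschetz-type standard conjecture $B(V)$ --- equivalently $A(V,L)$ for all $L$ --- lets one move algebraicity up and down the hard-Lefschetz decomposition of $H^{\ast}(V)$ (hard Lefschetz itself being available for $\ell$-adic cohomology by Deligne), and applied to the K\"{u}nneth projectors and to products $V\times V$ it transports a codimension-$r$ Tate class to a divisor class on a suitable auxiliary variety. Together with Poincar\'{e} duality this leaves only the Tate conjecture for divisors.

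\textbf{Third}, I would prove the Tate conjecture for divisors on smooth projective varieties (reducing further to surfaces) over finite fields. This is equivalent to the finiteness of the $\ell$-part of the Brauer group, and is known for abelian varieties (see \ref{r46} and \ref{r33}) and for $K3$ surfaces; the method is to use the endomorphism-algebra and semisimplicity argument of \ref{r33}, or the Kuga--Satake construction, to tie the variety to an abelian variety and import the known case.

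\textbf{The main obstacle} --- and the reason this is still a conjecture --- is that none of the three stages is available in general: there is no known method for \emph{producing} algebraic cycles out of purely cohomological data (Galois-invariant classes, Hodge classes) outside the range of divisors, abelian varieties, and a short list of other special cases. Stage two rests on the standard conjectures, which are themselves open (see \ref{r82}), and stage three rests on the Tate conjecture for divisors in full generality, so the reduction merely redistributes the difficulty. Absent a new principle for constructing cycles --- the same gap that blocks the Hodge conjecture --- I do not expect this plan to go through.
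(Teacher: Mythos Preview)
The statement is labelled \textbf{Conjecture} in the paper, and the paper offers no proof --- it simply records the Tate conjecture alongside the Hodge conjecture and then discusses their relation to the standard conjectures (see \ref{r83}, \ref{r84}). There is therefore no ``paper's own proof'' to compare your proposal against.

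You have correctly identified this: your final paragraph acknowledges that the reductions you outline (specialization to finite fields, standard conjectures of Lefschetz type, Tate for divisors) each rest on open problems and that the plan does not go through. That assessment is accurate and matches the state of affairs the paper describes. A minor comment on your reductions: stage one as written assumes the \emph{variational} Tate conjecture (lifting algebraicity from closed fibres to the generic fibre), which is itself open in general and not obviously implied by the Tate conjecture over finite fields alone; and stage two invokes the Lefschetz standard conjecture, which the paper notes (\ref{r82}) is known essentially only for abelian varieties and certain surfaces. So your honest conclusion --- that the proposal redistributes rather than resolves the difficulty --- is the right one.
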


By the full Tate conjecture, I mean the Tate conjecture plus $\mathrm{num}%
=\mathrm{hom}(\ell)$ (cf. Tate 1994, 2.9).

\begin{E}
\label{r83}The Hodge conjecture implies the standard conjecture of Lefschetz
type over $\mathbb{C}{}$ (obviously). Conversely, the standard conjecture of
Lefschetz type over $\mathbb{C}{}$ implies the Hodge conjecture for abelian
varieties (Abdulali 1994, Andr\'{e} 1996), and hence the standard conjecture
of Hodge type for abelian varieties in all characteristics (Milne 2002).
\end{E}

\begin{E}
\label{r84}Tate's conjecture implies the standard conjecture of Lefschetz type
(obviously). If the full Tate conjecture is true over finite fields of
characteristic $p$, then the standard conjecture of Hodge type holds in
characteristic $p$.
\end{E}

Here is a sketch of the proof of the last statement. Let $k$ denote an
algebraic closure of $\mathbb{F}{}_{p}$. The author showed that the Hodge
conjecture for CM abelian varieties over $\mathbb{C}{}$ implies the standard
conjecture of Hodge type for abelian varieties over $k$. The proof uses only
that Hodge classes on CM abelian varieties are \textit{almost}-algebraic at
$p$ (see (\ref{r90}) below for this notion). The Tate conjecture for finite
subfields of $k$ implies the standard conjecture of Lefschetz type over $k$
(obviously), and, using ideas of Abdulali and Andr\'{e}, one can deduce that
Hodge classes on CM abelian varieties are almost-algebraic at $p$; therefore
the standard conjecture of Hodge type holds for abelian varieties over $k$.
The full Tate conjecture implies that the category of motives over $k$ is
generated by the motives of abelian varieties, and so the Hodge standard
conjecture holds for all nonsingular projective varieties over $k$. A
specialization argument now proves it for all varieties in characteristic $p$.

\section{Motives}

For Grothendieck, the theory of \textquotedblleft motifs\textquotedblright%
\ took on an almost mystical meaning as the reality that lay beneath the
\textquotedblleft shimmering ambiguous surface of things\textquotedblright.
But in their simplest form, as a universal Weil cohomology theory, the idea is
easy to explain. Having already written a \textquotedblleft
popular\textquotedblright\ article on motives (Milne 2009b), I shall be brief.

Let $\sim$ be an adequate equivalence relation on algebraic cycles, for
example, one of those listed on p.\pageref{relation}. We want to define a Weil
cohomology theory that is universal among those for which $\sim\Rightarrow
\hom$. We also want to have $\mathbb{Q}{}$ as the field of coefficients, but
we know that this is impossible if we require the target category to be that
of $\mathbb{Q}{}$-vector spaces, and so we only ask that the target category
be $\mathbb{Q}{}$-linear and have certain other good properties.

Fix a field $k$ and an adequate equivalence relation $\sim$. The category
$\Corr_{\sim}(k)$ of correspondences over $k$ has one object $hV$ for each
nonsingular projective variety over $k$; its morphisms are defined by%
\[
\Hom(hV,hW)=\bigoplus\nolimits_{r}\Hom^{r}(hV,hW)=\bigoplus\nolimits_{r}%
A_{\sim}^{\dim(V)+r}(V\times W).
\]
The bilinear map in (\ref{r76}) can now be written%
\[
\Hom^{r}(V_{1},V_{2})\times\Hom^{s}(V_{2},V_{3})\rightarrow\Hom^{r+s}%
(V_{1},V_{3}),
\]
and its associativity means that $\Corr_{\sim}(k)$ is a category. Let $H$ be a
Weil cohomology theory. An element of $\Hom^{r}(hV,hW)$ defines a homomorphism
$H^{\ast}(V)\rightarrow H^{\ast+r}(W)$ of degree $r$. In order to have the
gradations preserved, we consider
the category $\Corr_{\sim}^{0}(k)$ of correspondences of degree $0$, i.e., now%
\[
\Hom(hV,hW)=\Hom^{0}(hV,hW)=A_{\sim}^{\dim V}(V\times W).
\]
Every Weil cohomology theory such that $\sim\Rightarrow\hom$ factors uniquely
through $\Corr_{\sim}^{0}(k)$ as a \textit{functor to graded vector spaces}.

However, $\Corr_{\sim}^{0}(k)$ is not an abelian category. There being no good
notion of an abelian envelope of a category, we define $\mathcal{M}{}_{\sim
}^{\mathrm{eff}}(k)$ to be the pseudo-abelian envelope of $\Corr_{\sim}%
^{0}(k)$. This has objects $h(V,e)$ with $V$ as before and $e$ an idempotent
in the ring $A_{\sim}^{\dim(V)}(V\times V)$; its morphisms are defined by%
\[
\Hom(h(V,e),h(W,f))=f\circ\Hom(hV,hW)\circ e.
\]
This is a pseudo-abelian category, i.e., idempotent endomorphisms have kernels
and cokernels. The functor
\[
hV\rightsquigarrow h(V,\id)\colon\Corr_{\sim}^{0}(k)\rightarrow\mathcal{M}%
_{\sim}^{\mathrm{eff}}(k)
\]
fully faithful and universal among functors from $\Corr_{\sim}^{0}(k)$ to
pseudo-abelian categories. Therefore, every Weil cohomology theory such that
$\sim\Rightarrow\hom$ factors uniquely through $\mathcal{M}_{\sim
}^{\mathrm{eff}}(k)$.

The category $\mathcal{M}{}_{\sim}^{\mathrm{eff}}(k)$ is the \textit{category
of effective motives} over $k$. It is a useful category, but we need to
enlarge it in order to have duals. One of the axioms for a Weil cohomology
theory requires $H^{2}(\mathbb{P}{}^{1})$ to have dimension $1$. This means
that the object $H^{2}(\mathbb{P}{}^{1})$ is invertible in the category of
vector spaces, i.e., the functor $W\rightsquigarrow H^{2}(\mathbb{P}{}%
^{1})\otimes_{Q}W$ is an equivalence of categories. In $\mathcal{M}{}_{\sim
}^{\mathrm{eff}}(k)$, the object $h\mathbb{P}{}^{1}$ decomposes into a direct
sum%
\[
h\mathbb{P}{}^{1}=h^{0}\mathbb{P}{}^{1}\oplus h^{2}\mathbb{P}{}^{1},
\]
and we formally invert $h^{2}\mathbb{P}{}^{1}$. When we do this, we obtain a
category $\mathcal{M}{}_{\sim}(k)$ whose objects are triples $h(V,e,m)$ with
$h(V,e)$ as before and $m\in\mathbb{Z}{}$. Morphisms are defined by%
\[
\Hom(h(V,e,m),h(W,f,n))=f\circ A_{\sim}^{\dim(V)+n-m}(V,W)\circ e.
\]
This is the category of \textit{motives over }$k$. It contains the category of
effective motives as a full subcategory.

The category $\Corr_{\sim}^{0}(k)$ has direct sums and tensor products:%
\begin{align*}
hV\oplus hW  &  =h(V\sqcup W)\\
hV\otimes hW  &  =h(V\times W).
\end{align*}
Both of these structures extend to the category of motives. The functor
$-\otimes h^{2}(\mathbb{P}{}^{1})$ is $h(V,e,m)\mapsto h(V,e,m+1)$, which is
an equivalence of categories (because we allow negative $m$).

\subsection{Properties of the category of motives}

For the language of tensor categories, we refer the reader to Deligne and
Milne 1982.

\begin{E}
\label{r85}The category of motives is a $\mathbb{Q}{}{}$-linear pseudo-abelian
category. If $\sim=$\textrm{num}, it is semisimple abelian. If $\sim
=\mathrm{rat}$ and $k$ is not algebraic over a finite field, then it is not abelian.
\end{E}

To say that $\mathcal{M}{}_{\sim}(k)$ is $\mathbb{Q}{}$-linear just means that
the $\Hom$ sets are $\mathbb{Q}{}$-vector spaces and composition is
$\mathbb{Q}{}$-bilinear. Thus $\mathcal{M}{}_{\sim}(k)$ is $\mathbb{Q}{}%
$-linear and pseudo-abelian by definition. If $\sim=$\textrm{num}, the $\Hom$
sets are finite-dimensional $\mathbb{Q}{}$-vector spaces, and the
$\mathbb{Q}{}$-algebras $\End(M)$ are semisimple (\ref{r74}). A pseudo-abelian
category with these properties is a semisimple abelian category (i.e., an
abelian category such that every object is a direct sum of simple objects).
For the last statement, see Scholl 1994, 3.1.

\begin{E}
\label{r86}The category of motives is a rigid tensor category. If $V$ is
nonsingular projective variety of dimension $d$, then there is a
(Poincar\'{e}) duality%
\[
h(V)^{\vee}\simeq h(V)(d).
\]

\end{E}

A tensor category is a symmetric monoidal category. This means that every
finite (unordered) family of objects has a tensor product, well defined up to
a given isomorphism. It is rigid if every object $X$ admits a dual object
$X^{\vee}$. The proof of (\ref{r86}) can be found in Saavedra Rivano 1972, VI, 4.1.3.5.

\begin{E}
\label{r87}Assume that the standard conjecture $C$ holds for some Weil
cohomology theory.

\begin{enumerate}
\item The category $\mathcal{M}{}_{\sim}(k)$ is abelian if and only if $\sim
=$\textrm{nu}$\mathrm{m}$.

\item The category $\mathcal{\mathcal{M}{}}_{\text{\textrm{num}}}{}(k)$ is a
graded tannakian category.

\item The Weil cohomology theories such that hom$=$num correspond to fibre
functors on the Tannakian category $\mathcal{\mathcal{M}{}}%
_{\text{\textrm{num}}}{}(k)$.
\end{enumerate}
\end{E}

(a) We have seen (\ref{r74}) that $\mathcal{M}{}_{\sim}(k)$ is abelian if
$\sim=$\textrm{nu}$\mathrm{m}$; the converse is proved in Andr\'{e} 1996, Appendice.

(b) Let $V$ be a nonsingular projective variety over $k$. Let $\pi_{0}%
,\ldots,\pi_{2d}$ be the images of the K\"{u}nneth projectors in
$A_{\text{num}}^{d}(V\times V)$. We define a gradation on $\mathcal{\not M
}_{\text{\textrm{num}}}{}(k)$ by setting%
\[
h(V,e,m)^{r}=h(V,e\pi_{r+2m},m).
\]
Now the commutativity constraint can be modified so that%
\[
\dim(h(V,e,m))=\sum\nolimits_{r\geq0}\dim_{Q}(eH^{r}(V)).
\]
(rather than the alternating sum). Thus $\dim$($M)\geq0$ for all motives $M$,
which implies that $\mathcal{M}{}_{\text{\textrm{num}}}{}(k)$ is tannakian
(Deligne 1990, 7.1).

(c) Let $\omega$ be a fibre functor on $\mathcal{M}{}_{\text{\textrm{num}}}%
{}(k)$. Then $V\rightsquigarrow\omega(hV)$ is a Weil cohomology theory such
that hom$=$num, and every such cohomology theory arises in this way.

\begin{E}
\label{r89}The standard conjectures imply that $\mathcal{M}{}_{\text{num}}(k)$
has a canonical polarization.
\end{E}

The notion of a Weil form can be defined in any tannakian category over
$\mathbb{Q}{}$. For example, a Weil form on a motive $M$ of weight $r$ is a
map%
\[
\phi\colon M\otimes M\rightarrow\1(-r)
\]
with the correct parity such that the map sending an endomorphism of $M$ to
its adjoint is a positive involution on the $\mathbb{Q}{}$-algebra $\End(M)$.
To give a \textit{polarization} on $\mathcal{M}{}_{\text{num}}(k)$ is to give
a set of Weil forms (said to be \textit{positive} for the polarization) for
each motive satisfying certain compatibility conditions; for example, if
$\phi$ and $\phi^{\prime}$ are positive, then so also are $\phi\oplus
\phi^{\prime}$ and $\phi\otimes\phi^{\prime}$. The standard conjectures
(especially of Hodge type) imply that $\mathcal{M}{}_{\text{num}}(k)$ admits a
polarization for which the Weil forms defined by ample divisors are positive.

\medskip Let $\1$ denote the identity object $h^{0}(\mathbb{P}{}^{0})$ in
$\mathcal{M}{}_{\sim}(k)$. Then, almost by definition,%
\[
A_{H}^{r}(V)\simeq\Hom(\1,h^{2r}(V)(r)).
\]
Therefore $V\rightsquigarrow h^{r}(V)$ has the properties expected of an
\textquotedblleft abstract\textquotedblright\ Weil cohomology theory.

\subsection{Alternatives to the Hodge, Tate, and standard conjectures}

In view of the absence of progress on the Hodge, Tate, or standard conjectures
since they were stated more than fifty years ago, Deligne has suggested that,
rather than attempting to proving these conjectures, we should look for a good
theory of motives, based on \textquotedblleft
algebraically-defined\textquotedblright, but not necessarily algebraic,
cycles. I discuss two possibilities for such algebraically-defined of cycles.

\subsubsection{Absolute Hodge classes and rational Tate classes}

Consider an algebraic variety $V$ over an algebraically closed field $k$ of
characteristic zero. If $k$ is not too big, then we can choose an embedding
$\sigma\colon k\rightarrow\mathbb{C}{}$ of $k$ in $\mathbb{C}{}$ and define a
cohomology class on $k$ to be \textit{Hodge relative to} $\sigma$ if it
becomes Hodge on $\sigma V$. The problem with this definition is that it
depends on the choice of $\sigma$. To remedy this, Deligne defines a
cohomology class on $V$ to be \textit{absolutely Hodge} if it is Hodge
relative to every embedding $\sigma$. The problem with this definition is
that, a priori, we know little more about absolute Hodge classes than we do
about algebraic classes. To give substance to his theory, Deligne proved that
all relative Hodge classes on abelian varieties are absolute. Hence they
satisfy the standard conjectures and the Hodge conjecture, and so we have a
theory of abelian motives over fields of characteristic zero that is much as
Grothendieck envisaged it (Deligne 1982).

However, as Deligne points out, his theory works only in characteristic zero,
which limits its usefulness for arithmetic questions. Let $A$ be an abelian
variety over the algebraic closure $k$ of a finite field, and lift $A$ in two
different ways to abelian varieties $A_{1}$ and $A_{2}$ in characteristic
zero; let $\gamma_{1}$ and $\gamma_{2}$ be absolute Hodge classes on $A_{1}$
and $A_{2}$ of complementary dimension; then $\gamma_{1}$ and $\gamma_{2}$
define $l$-adic cohomology classes on $A$ for all primes $l$, and hence
intersection numbers $\left(  \gamma_{1}\cdot\gamma_{2}\right)  _{l}%
\in\mathbb{Q}{}_{l}$. The Hodge conjecture implies that $\left(  \gamma
_{1}\cdot\gamma_{2}\right)  _{l}$ lies in $\mathbb{Q}{}$ and is independent of
$l$, but this is not known.

However, the author has defined the notion of a \textquotedblleft good theory
of rational Tate classes\textquotedblright\ on abelian varieties over finite
fields, which would extend Deligne's theory to mixed characteristic. It is
known that there exists at most one such theory and, if it exists, the
rational Tate classes it gives satisfy the standard conjectures and the Tate
conjecture; thus, if it exists, we would have a theory of abelian motives in
mixed characteristic that is much as Grothendieck envisaged it (Milne 2009a).

\subsubsection{Almost-algebraic classes}

\begin{plain}
\label{r90}Let $V$ be an algebraic variety over an algebraically closed field
$k$ of characteristic zero. An \textit{almost-algebraic class} on $V$ of
codimension $r$ is an absolute Hodge class $\gamma$ on $V$ of codimension $r$
such that there exists a cartesian square%
\[
\begin{tikzcd}
\mathcal{V}\arrow{d}{f}& V\arrow{l}\arrow{d}\\
S & \Spec k\arrow{l}
\end{tikzcd}
\]
and a global section $\tilde{\gamma}$ of $R^{2r}f_{\ast}\mathbb{A}{}(r)$
satisfying the following conditions (cf. Tate 1994, p.76),

\begin{itemize}
\item $S$ is the spectrum of a regular integral domain of finite type over
$\mathbb{Z}{};$

\item $f$ is smooth and projective;

\item the fibre of $\tilde{\gamma}$ over $\Spec(k)$ is $\gamma$, and the
reduction of $\tilde{\gamma}$ at $s$ is algebraic for all closed points $s$ in
a dense open subset $U$ of $S$.
\end{itemize}

\noindent\ If the above data can be chosen so that $(p)$ is in the image of
the natural map $U\rightarrow\Spec\mathbb{Z}{}$, then we say that $\gamma$ is
\textit{almost-algebraic at }$p$ --- in particular, this means that $V$ has
good reduction at $p$.
\end{plain}

\begin{plain}
\label{r91}Note that the residue field $\kappa(s)$ at a closed point $s$ of
$S$ is finite, and so the K\"{u}nneth components of the diagonal are
almost-algebraic (\ref{r79}). Therefore the space of almost-algebraic classes
on $X$ is a graded $\mathbb{Q}{}$-subalgebra%
\[
AA^{\ast}(X)=\bigoplus\nolimits_{r\geq0}AA^{r}(X)
\]
of the $\mathbb{Q}{}$-algebra of $AH^{\ast}(X)$ of absolute Hodge classes. For
any regular map $f\colon Y\rightarrow X$ of complete smooth varieties, the
maps $f_{\ast}$ and $f^{\ast}$ send almost-algebraic classes to
almost-algebraic classes. Similar statements hold for the $\mathbb{Q}{}%
$-algebra of classes almost-algebraic at $p$.
\end{plain}

\subsection{Beyond pure motives}

In the above I considered only pure motives with rational coefficients. This
theory should be generalized in (at least) three different directions. Let $k$
be a field.

\begin{itemize}
\item There should be a category of mixed motives over $k$. This should be an
abelian category whose whose semisimple objects form the category of pure
motives over $k$. Every mixed motive should be equipped with a weight
filtration whose quotients are pure motives. Every algebraic variety over $k$
(not necessarily nonsingular or projective) should define a mixed motive.

\item There should be a category of complexes of motives over $k$. This should
be a triangulated category with a $t$-structure whose heart is the category of
mixed motives. Each of the standard Weil cohomology theories lifts in a
natural way to a functor from all algebraic varieties over $k$ to a
triangulated category; these functors should factor through the category of
complexes of motives.

\item Everything should work mutatis mutandis over $\mathbb{Z}{}$.
\end{itemize}

\noindent Much of this was envisaged by Grothendieck.\footnote{Certainly,
Grothendieck envisaged mixed motives and motives with coefficients in
$\mathbb{Z}{}$ (see the footnote on p.5 of Kleiman 1994 and Grothendieck's
letters to Serre and Illusie).} There has been much progress on these
questions, which I shall not attempt to summarize (see Andr\'{e} 2004, Mazza
et al. 2006, Murre et al. 2013).

\section{Deligne's proof of the Riemann hypothesis over finite fields}

Grothendieck attempted to deduce the Riemann hypothesis in arbitrary
dimensions from the curves case,\footnote{Serre writes (email July 2015):
\bquote He was not the first one. So did Weil, around 1965; he looked at
surfaces, made some computations, and tried (vainly) to prove a positivity
result. He talked to me about it, but he did not publish anything.\equote} but
no \textquotedblleft d\'{e}vissage\textquotedblright\ worked for
him.\footnote{\textquotedblleft I have no comments on your attempts to
generalize the Weil-Castelnuovo inequality\ldots; as you know, I have a sketch
of a proof of the Weil conjectures starting from the curves
case.\textquotedblright\ (Grothendieck, \textit{Grothendieck-Serre
Correspondence}, p.88, 1959). Grothendieck hoped to prove the Weil conjectures
by showing that every variety is birationally a quotient of a product of
curves, but Serre constructed a counterexample. See \textit{Grothendieck-Serre
Correspondence}, pp.145--148, 1964, for a discussion of this and
Grothendieck's \textquotedblleft second attack\textquotedblright\ on the Weil
conjectures.} After he announced the standard conjectures, the conventional
wisdom became that, to prove the Riemann hypothesis in dimension $>1$, one
should prove the standard conjectures.\footnote{\textquotedblleft Pour aller
plus loin et g\'{e}n\'{e}raliser compl\`{e}tement les r\'{e}sultats obtenus
par Weil pour le cas $n=1$, il faudrait prouver les propri\'{e}t\'{e}s
suivantes de la cohomologie $\ell$-adique \ldots\lbrack the standard
conjectures]\textquotedblright\ Dieudonn\'{e} 1974, IX n$^{\circ}$19, p. 224.}
However, Deligne recognized the intractability of the standard conjectures and
looked for other approaches. In 1973 he startled the mathematical world by
announcing a proof of the Riemann hypothesis for all smooth projective
varieties over finite fields.\footnote{A little earlier, he had proved the
Riemann hypothesis for varieties whose motive, roughly speaking, lies in the
category generated by abelian varieties (Deligne 1972).} How this came about
is best described in the following conversation.\footnote{This is my
transcription (slightly edited) of part of a telephone conversation which took
place at the ceremony announcing the award of the 2013 Abel Prize to
Deligne.}\medskip

\textsc{Gowers:} Another question I had. Given the clearly absolutely
remarkable nature of your proof of the last remaining Weil conjecture, it does
make one very curious to know what gave you the idea that you had a chance of
proving it at all. Given that the proof was very unexpected, it's hard to
understand how you could have known that it was worth working on.\smallskip

\textsc{Deligne:}\textbf{ }That's a nice story. In part because of Serre, and
also from listening to lectures of Godement, I had some interest in
automorphic forms. Serre understood that the $p^{11/2}$ in the Ramanujan
conjecture should have a relation with the Weil conjecture itself. A lot of
work had been done by Eichler and Shimura, and by Verdier, and so I understood
the connection between the two. Then I read about some work of Rankin, which
proved, not the estimate one wanted, but something which was a $1/4$ off ---
the easy results were $1/2$ off from what one wanted to have. As soon as I saw
something like that I knew one had to understand what he was doing to see if
one could do something similar in other situations. And so I looked at Rankin,
and there I saw that he was using a result of Landau --- the idea was that
when you had a product defining a zeta function you could get information on
the local factors out of information on the pole of the zeta function itself.
The poles were given in various cases quite easily by Grothendieck's theory.
So then it was quite natural to see what one could do with this method of
Rankin and Landau using that we had information on the pole. I did not know at
first how far I could go. The first case I could handle was a hypersurface of
odd dimension in projective space. But that was a completely new case already,
so then I had confidence that one could go all the way. It was just a matter
of technique.\smallskip

\textsc{Gowers:} It is always nice to hear that kind of thing. Certainly, that
conveys the idea that there was a certain natural sequence of steps that
eventually led to this amazing proof.\smallskip

\textsc{Deligne:} Yes, but in order to be able to see those steps it was
crucial that I was not only following lectures in algebraic geometry but some
things that looked quite different (it would be less different now) the theory
of automorphic forms. It was the discrepancy in what one could do in the two
areas that gave the solution to what had to be done.\smallskip

\textsc{Gowers:} Was that just a piece of good luck that you happened to know
about both things.\smallskip

\textsc{Deligne }(emphatically): Yes.

\subsection{Landau, Rankin, and the Ramanujan conjecture}

\subsubsection{Landau's theorem}

Consider a Dirichlet series $D(s)=\sum_{n=1}^{\infty}c_{n}n^{-s}$. Recall that
there is a unique real number (possibly $\infty$ or $-\infty$) such that
$D(s)$ converges absolutely for $\Re(s)>\sigma_{0}$ and does not converge
absolutely for $\Re(s)<\sigma_{0}$. Landau's theorem states that, if the
$c_{n}$ are real and nonnegative, then $D(s)$ has a singularity at
$s=\sigma_{0}$. For example, suppose that%
\[
D(s)=\sum_{n\geq1}c_{n}n^{-s}=\prod_{p\text{ prime}}\frac{1}{(1-a_{p,1}%
p^{-s})\cdots(1-a_{p,m}p^{-s})};
\]
if the $c_{n}$ are real and nonnegative, and $\sum c_{n}n^{-s}$ converges
absolutely for $\Re(s)>\sigma_{0}$, then%
\[
|a_{p,j}|\leq p^{\sigma_{0}},\quad\text{all }p\text{, }j.
\]

\subsubsection{Ramanujan's conjecture}

Let $f=\sum_{n\geq1}c(n)q^{n}$, $c(1)=1$, be a cusp form of weight $2k$ which
is a normalized eigenfunction of the Hecke operators $T(n)$. For example,
Ramanujan's function%
\[
\Delta(\tau)=\sum_{n=1}^{\infty}\tau(n)q^{n}\overset{\textup{{\tiny def}}}%
{=}q\prod_{n=1}^{\infty}(1-q^{n})^{24}%
\]
is such function of weight $12$. Let%
\[
\Phi_{f}(s)=\sum_{n=1}^{\infty}c(n)n^{-s}%
\]
be the Dirichlet series associated with $f$. Because $f$ is an eigenfunction%
\[
\Phi_{f}(s)=\prod_{p\text{ prime}}\frac{1}{1-c(p)p^{-s}+p^{2k-1-2s}}\text{.}%
\]
Write%
\[
(1-c(p)T+p^{2k-1}T^{2}=(1-a_{p}T)(1-a_{p}^{\prime}T).
\]
The Ramanujan-Petersson conjecture says that $a_{p}$ and $a_{p}^{\prime}$ are
complex conjugate. As $a_{p}a_{p}^{\prime}=p^{2k-1}$, this says that%
\[
\left\vert a_{p}\right\vert =p^{k-1/2};
\]
equivalently,%
\[
\left\vert c(n)\right\vert \leq n^{k-1/2}\sigma_{0}(n).
\]
For $f=\Delta$, this becomes Ramanujan's original conjecture, $|\tau
(p)|\leq2p^{\frac{11}{2}}$. Hecke showed that, for a cusp form $f$ of weight
$2k$,%
\[
\left\vert c(n)\right\vert =O(n^{k}).
\]

There is much to be said about geometric interpretations of Ramanujan's
function --- see Serre 1968 and the letter from Weil to Serre at the end of
the article.

\subsubsection{Rankin's theorem}

Let $f=\sum c(n)q^{n}$ be a cusp form of weight $2k$, and let $\mathcal{F}%
{}(s)=\sum_{n\geq1}\left\vert c(n)\right\vert ^{2}n^{-s}$. Rankin (1939) shows
that $\mathcal{F}{}(s)$ can be continued to a meromorphic function on the
whole plane with singularities only at $s=k$ and $s=k-1$.\footnote{Selberg did
something similar about the same time. Their approach to proving the analytic
continuation of convolution $L$-functions has become known as the
Rankin-Selberg method.} Using Landau's theorem, he deduced that%
\[
|c(n)|=O(n^{k-1/5}).
\]

\subsubsection{A remark of Langlands}

Langlands remarked (1970, pp.21--22) that Rankin's idea could be used to
\textit{prove} a generalized Ramanujan conjecture provided one knew enough
about the poles of a certain family of Dirichlet
series.\footnote{\textquotedblleft The remark about the consequences of
functoriality for Frobenius-Hecke conjugacy classes of an automorphic
representation, namely that their eigenvalues are frequently all of absolute
value of 1, occurred to me on a train platform in Philadelphia, as I thought
about the famous Selberg-Rankin estimate. \textquotedblright\ Langlands 2015,
p.200}

Given an automorphic representation $\pi=\bigotimes\pi_{p}$ of an algebraic
group $G$ and a representation $\sigma$ of the corresponding $L$-group,
Langlands defines an $L$-function%
\[
L(s,\sigma,\pi)=\prod_{\lambda}\left\{  \left(  \text{power of }\pi\right)
\cdot\left(  \Gamma\text{-factor}\right)  \cdot\prod_{p\text{ prime}}\frac
{1}{1-\lambda(t_{p})p^{-s}}\right\}  ^{m(\lambda)}%
\]
Here $\lambda$ runs over certain characters, $m(\lambda)$ is the multiplicity
of $\lambda$ in $\sigma$, and $\left\{  t_{p}\right\}  $ is the (Frobenius)
conjugacy class attached to $\pi_{p}$. Under certain hypotheses, the
generalized Ramanujan conjecture states that, for all $\lambda$ and $p$,%
\[
\left\vert \lambda(t_{p})\right\vert =1\text{.}%
\]

Assume that, for all $\sigma$, the $L$-series $L(s,\sigma,\pi)$ is analytic
for $\Re(s)>1$. The same is then true of $D(s,\sigma)\overset
{\textup{{\tiny def}}}{=}\prod_{\lambda}\left\{  \prod\nolimits_{p\text{
prime}}\frac{1}{1-\lambda(t_{p})p^{-s}}\right\}  ^{m(\lambda)}$ because the
$\Gamma$-function has no zeros. Let $\sigma=\rho\otimes\bar{\rho}$. The
logarithm of $D(s,\sigma)$ is $\sum_{p}\sum_{n=1}^{\infty}\frac{\text{trace
}\sigma^{n}(t_{p})}{n}p^{-ns}$. As%
\[
\text{trace }\sigma^{n}(t_{p})=\text{trace }\rho^{n}(t_{p})\cdot\text{trace
}\bar{\rho}^{n}(t_{p})=|\text{trace }\rho^{n}(t_{p})|^{2},
\]
the series for $\log D(s,\sigma)$ has positive coefficients. The same is
therefore true of the series for $D(s,\sigma)$. We can now apply Landau's
theorem to deduce that $|\lambda(t_{p})|\leq p$. If $\lambda$ occurs in some
$\sigma$, we can choose $\rho$ so that $m\lambda$ occurs in $\rho$. Then
$(m\lambda)(t_{p})=\lambda(t_{p})^{m}$ is an eigenvalue of $\rho(t_{p})$ and
$\overline{\lambda(t_{p})}^{m}$ is an eigenvalue of $\bar{\rho}$; hence
$\left\vert \lambda(t_{p})\right\vert ^{2m}$ is an eigenvalue of $\sigma$, and
$|\lambda(t_{p})|\leq p^{1/2m}$ for all $m$. On letting $m\rightarrow\infty$,
we see that $|\lambda(t_{p})|\leq1$ for all $\lambda$. Replacing $\lambda$
with $-\lambda$, we deduce that $|\lambda(t_{p})|=1$.

Note that it is the flexibility of Langlands's construction that makes this
kind of argument possible. According to Katz (1976, p.288), Deligne studied
Rankin's original paper in an effort to understand this remark of Langlands.

\subsection{Grothendieck's theorem}

Let $U{}$ be a connected topological space. Recall that a \textit{local system
of }$\mathbb{Q}{}$\textit{-vector spaces} on $U{}$ is a sheaf $\mathcal{E}{}$
on $U{}$ that is locally isomorphic to the constant sheaf $\mathbb{Q}{}^{n}$
for some $n$. For example, let $f\colon V\rightarrow U{}$ be a smooth
projective map of algebraic varieties over $\mathbb{C}{}$; for $r\geq0$,
$R^{r}f_{\ast}\mathbb{Q}{}$ is a locally constant sheaves of $\mathbb{Q}{}%
$-vector spaces with fibre%
\[
(R^{r}f_{\ast}\mathbb{Q}{})_{u}\simeq H^{r}(V_{u},\mathbb{Q}{})
\]
for all $u\in U{}(\mathbb{C}{})$. Fix a point $o\in U{}$. There is a canonical
(monodromy) action $\rho$ of $\pi_{1}(U{},o)$ on the finite-dimensional
$\mathbb{Q}{}$-vector space $\mathcal{E}{}_{o}$, and the functor
$\mathcal{E}{}\rightsquigarrow(\mathcal{E}{}_{o},\rho)$ is an equivalence of categories.

Now let $U{}$ be a connected nonsingular algebraic variety over a field $k$.
In this case, there is a notion of a \textit{local system of }$\mathbb{Q}%
{}_{\ell}$\textit{-vector spaces} on $U{}$ (often called a smooth or lisse
sheaf). For a smooth projective map of algebraic varieties $f\colon
V\rightarrow U{}$ and integer $r$, the direct image sheaf $R^{r}f_{\ast
}\mathbb{Q}{}_{\ell}$ on $U{}$ is a locally constant sheaf of $\mathbb{Q}%
{}_{\ell}$-vector spaces with fibre%
\[
(R^{r}f_{\ast}\mathbb{Q}{}_{\ell})_{u}\simeq H^{r}(V_{u},\mathbb{Q}{}_{\ell})
\]
for all $u\in U{}$.

The \'{e}tale fundamental group $\pi_{1}(U{})$ of $U{}$ classifies the finite
\'{e}tale coverings of $U{}$.\footnote{When the base field is $\mathbb{C}{}$,
the topological fundamental group of $V^{\text{an}}$ classifies the covering
spaces of $V^{\text{an}}$ (not necessarily finite). The Riemann existence
theorem implies that the two groups have the same finite quotients, and so the
\'{e}tale fundamental group is the profinite completion of the topological
fundamental group.} Let $\mathcal{E}{}$ be a local system on $U{}$, and let
$E=\mathcal{E}{}_{\eta}$ be its fibre over the generic point $\eta$ of $U{}$.
Again, there is a \textquotedblleft monodromy\textquotedblright\ action $\rho$
of $\pi_{1}(U{})$ on the finite-dimensional $\mathbb{Q}{}_{\ell}$-vector space
$E$, and the functor $\mathcal{E}{}\rightsquigarrow(E,\rho)$ is an equivalence
of categories.

Now let $U{}_{0}$ be a nonsingular geometrically-connected variety over finite
field $k_{0}$; let $k$ be an algebraic closure of $k_{0}$, and let $U{}%
=(U{}_{0})_{k}$. Then there is an exact sequence%
\[
0\rightarrow\pi_{1}(U{})\rightarrow\pi_{1}(U{}_{0})\rightarrow\Gal(k/k_{0}%
)\rightarrow0.
\]
The maps reflect the fact that a finite extension $k^{\prime}/k_{0}$ pulls
back to a finite covering $U{}^{\prime}\rightarrow U{}_{0}$ of $U_{0}$ and a
finite \'{e}tale covering of $U{}_{0}$ pulls back to a finite \'{e}tale
covering of $U{}$.

Let $\mathcal{E}{}$ be a local system of $\mathbb{Q}{}_{\ell}$-vector spaces
on $U{}_{0}$. Grothendieck (1964)\footnote{In fact, Grothendieck only sketched
the proof of his theorem in this Bourbaki talk, but there are detailed proofs
in the literature, for example, in the author's book on \'{e}tale cohomology.}
proved the following trace formula%
\begin{equation}
\sum\nolimits_{u\in|U{}|^{\pi}}\Tr(\pi_{u}\mid\mathcal{E}{}_{u})=\sum
\nolimits_{r}(-1)^{r}\Tr(\pi\mid H_{c}^{r}(U{},\mathcal{E}{})). \label{e35}%
\end{equation}
Here $|U|$ denotes the set of closed points of $U$, $\pi_{u}$ denotes the
local Frobenius element acting on the fibre $\mathcal{E}{}_{u}$ of
$\mathcal{E}{}$ at $u$, and $\pi$ denotes the reciprocal of the usual
Frobenius element of $\Gal(k/k_{0})$. Let
\[
Z(U{}_{0},\mathcal{E}{}_{0},T)=\prod_{u\in\left\vert U_{0}\right\vert }%
\frac{1}{\det(1-\pi_{u}T^{\deg(u)}\mid\mathcal{E}{}_{u})};
\]
written multiplicatively, (\ref{e35}) becomes%
\begin{equation}
Z(U{}_{0},\mathcal{E}{}_{0},T)=\prod\nolimits_{r}\det(1-\pi T\mid H_{c}%
^{r}(U{},E))^{(-1)^{r+1}} \label{e36}%
\end{equation}
(cf. (\ref{e40}), p.\pageref{e40}). For the constant sheaf $\mathbb{Q}{}%
_{\ell}$, (\ref{e36}) becomes ((\ref{e37}), p.\pageref{e37}). Grothendieck
proves (\ref{e35}) by a d\'{e}vissage to the case that $U{}$ is of dimension
$1$.\footnote{This proof of the rationality of the zeta function of an
arbitrary variety is typical of Grothendieck: find a generalization that
allows the statement to be proved by a \textquotedblleft
devissage\textquotedblright\ to the (relative) dimension one case.}

We shall need (\ref{e36}) in the case that $U_{0}$ is an affine curve, for
example, $\mathbb{A}{}^{1}$. In this case it becomes%
\begin{equation}
\prod_{u\in|U_{0}|}\frac{1}{\det(1-\pi_{u}T^{\deg(u)}\mid\mathcal{E}{}_{u}%
)}=\frac{\det(1-\pi T\mid H_{c}^{1}(U{},E))}{\det(1-\pi T\mid E_{\pi_{1}(U{}%
)}(-1))}\text{.} \label{e38}%
\end{equation}

\noindent Here $E_{\pi_{1}(U{})}$ is the largest quotient of $E$ on which
$\pi_{1}(U{})$ acts trivially (so the action of $\pi_{1}(U{}_{0})$ on it
factors through $\Gal(k/k_{0}))$.

\subsection{Proof of the Riemann hypothesis for hypersurfaces of odd degree}

Following Deligne, we first consider a hypersurface $V_{0}$ in $\mathbb{P}%
{}_{k_{0}}^{d+1}$ of odd degree $\delta$. For a hypersurface, the cohomology
groups coincide with those of the ambient space except in the middle degree,
and so%
\[
Z(V,T)=\frac{P_{d}(V,T)}{(1-T)(1-qT)\cdots(1-q^{d}T)},\quad
\]
with $P_{d}(V,T)=\det(1-\pi T\mid H^{d}(V,\mathbb{Q}{}_{\ell}))$. Note that
\[
P_{d}(V,T)=Z(V,T)(1-T)\cdots(1-q^{d}T)\in\mathbb{Q}{}[T].
\]

We embed our hypersurface in a one-dimensional family. The homogeneous
polynomials of degree $\delta$ in $d+2$ variables, considered up to
multiplication by a nonzero scalar, form a projective space $\mathbb{P}{}^{N}$
with $N=\left(
\begin{smallmatrix}
d+\delta+1\\
\delta
\end{smallmatrix}
\right)  $. There is a diagram%
\[
\begin{tikzcd}
H\arrow[hook]{r}\arrow{d}{f}
&\mathbb{P}^{N}\times\mathbb{P}^{d+1}\arrow{ld}\\
\mathbb{P}^{N}%
\end{tikzcd}
\]
in which the fibre $H_{P}$ over a point $P$ of $\mathbb{P}{}^{N}(k)$ is the
hypersurface (possibly reducible) in $\mathbb{P}{}^{d+1}$ defined by $P$. Let
$P_{0}$ be the polynomial defining $V_{0}$. We choose a \textquotedblleft
general\textquotedblright\ line through $P_{0}$ in $\mathbb{P}{}^{N}$ and
discard the finitely many points where the hypersurface is singular of not
connected. In this way, we obtain a smooth projective map $f\colon
\mathcal{V}{}_{0}\rightarrow U_{0}\subset\mathbb{P}{}^{1}$ whose fibres are
hypersurfaces of degree $\delta$ in $\mathbb{P}^{d+1}$; let $o\in U_{0}$ be
such that the fibre over $o$ is our given hypersurface $V_{0}$.

Let $\mathcal{E}{}_{0}=R^{d}f_{\ast}\mathbb{Q}{}_{\ell}$ and let $E$ denote
the corresponding $\pi_{1}(U_{0})$-module. Then%
\begin{equation}
Z(U_{0},\mathcal{E}{}_{0},T)=\prod_{u\in|U_{0}|}\frac{1}{P_{d}(V_{u}%
,T^{\deg(u)})}=\frac{\det(1-\pi T\mid H_{c}^{1}(U,E))}{\det(1-\pi T\mid
E_{\pi_{1}(U)}(-1))}. \label{e41}%
\end{equation}

There is a canonical pairing of sheaves%
\[
R^{d}f_{\ast}\mathbb{Q}_{\ell}\times R^{d}f_{\ast}\mathbb{Q}{}_{\ell
}\rightarrow R^{2d}f_{\ast}\mathbb{Q}{}_{\ell}\simeq\mathbb{Q}{}_{\ell}(-d)
\]
which, on each fibre becomes cup product%
\[
H^{d}(\mathcal{V}{}_{u},\mathbb{Q}{}_{\ell})\times H^{d}(\mathcal{V}{}%
_{u},\mathbb{Q}{}_{\ell})\rightarrow H^{2d}(\mathcal{V}{}_{u},\mathbb{Q}%
{}_{\ell})\simeq\mathbb{Q}{}_{\ell}(-d).
\]
This gives a pairing%
\[
\psi\colon E_{0}\times E_{0}\rightarrow\mathbb{Q}{}_{\ell}(-d)
\]
which is skew-symmetric (because $d$ is odd), non-degenerate (by Poincar\'{e}
duality for the geometric generic fibre of $f$), and $\pi_{1}(U_{0}%
)$-invariant (because it arises from a map of sheaves on $U_{0}$). Deligne
proves that, if the line through $P_{0}$ in $\mathbb{P}{}^{N}$ is chosen to be
sufficiently general,\footnote{This may require a finite extension of the base
field $k_{0}$.} then the image of $\pi_{1}(U)$ in $\SP(E,\psi)$ is dense for
the Zariski topology (i.e., there is \textquotedblleft big geometric
monodromy\textquotedblright). Thus the coinvariants of $\pi_{1}(U)$ in $E$ are
equal to the coinvariants of $\SP(E,\psi)$, and the invariant theory of the
symplectic group is well understood.

Note that%
\[
\log Z(U_{0},\mathcal{E}{}_{0},T)=\sum_{u\in|U_{0}|}\sum_{n}\Tr(\pi_{u}%
^{n}\mid E_{u})\frac{(T^{\deg u})^{n}}{n}.
\]
The coefficients $\Tr(\pi_{u}^{n}\mid E_{u})$ are rational. When we replace
$\mathcal{E}_{0}$ with $\mathcal{E}{}_{0}^{\otimes2m}$, we replace
$\Tr(\pi_{u}^{n}\mid E_{u})$ with%
\[
\Tr(\pi_{u}^{n}\mid E_{u}^{\otimes2m})=\Tr(\pi_{u}^{n}\mid E_{u})^{2m}%
\]
which is $\geq0$.

To apply Landau's theorem, we need to find the poles of $Z(U_{0}%
,\mathcal{E}_{0}^{\otimes2m},q^{-s})$, which, according to Grothendieck's
theorem (see (\ref{e41})) are equal to the zeros of $\det(1-\pi T\mid
E_{\pi_{1}(U)}(-1))$. Note that%
\[
\Hom(E^{\otimes2m},\mathbb{Q}{}_{\ell})^{\SP(\psi)}=\Hom((E^{\otimes
2m})_{\SP(\psi)},\mathbb{Q}{}_{\ell}).
\]
The map%
\[
x_{1}\otimes\cdots\otimes x_{2m}\mapsto\psi(x_{1},x_{2})\cdots\psi
(x_{2m-1},x_{2m})\colon E^{\otimes2m}\rightarrow\mathbb{Q}{}_{\ell}(-dm)
\]
is invariant under $\SP(\psi)$, and $\Hom(E^{\otimes2m},\mathbb{Q}{}_{\ell
})^{\SP(\psi)}$ has a basis of maps $\{f_{1},\ldots,f_{M}\}$obtained from this
one by permutations. The map%
\begin{equation}
a\mapsto(f_{1}(a),\ldots,f_{M}(a))\colon E^{\otimes2m}\rightarrow\mathbb{Q}%
{}_{\ell}(-dm)^{M} \label{e39}%
\end{equation}
induces an isomorphism%
\[
(E^{\otimes2m})_{\SP(\psi)}\rightarrow\mathbb{Q}{}_{\ell}(-dm)^{M}.
\]
The Frobenius element $\pi\in\Gal(k/k_{0})$ acts on $\mathbb{Q}{}_{\ell}(-md)$
as $q^{md}$, and so it acts on $E_{\pi_{1}(U_{0})}^{\otimes2m}(-1)$ as
$q^{md+1}.$ We can now apply Landau's theorem to deduce that%
\[
\left\vert \alpha^{2m}\right\vert \leq q^{md+1}%
\]
for every eigenvalue $\alpha$ of $\pi_{o}$ acting on $\mathcal{E}{}_{o}%
=H^{d}(V_{0},\mathbb{Q}{}_{\ell}).$ On taking the $2m$th root and letting
$m\rightarrow\infty$, we find that $|\alpha|\leq q^{d/2}$. As $q^{d}/\alpha$
is also an eigenvalue of $\pi_{o}$ on $H^{d}(V_{0},\mathbb{Q}{}_{\ell})$, we
deduce that $|\alpha|=q^{d/2}$.

\subsection{Proof of the Riemann hypothesis for nonsingular projective
varieties}

We shall prove the following statement by induction on the dimension of
$V_{0}$. According to the discussion in (\ref{r26}), it completes the proof of
Conjectures W1--W4.

\begin{theorem}
\label{r81}Let $V_{0}$ be an nonsingular projective variety over $\mathbb{F}%
{}_{q}$. Then the eigenvalues of $\pi$ acting on $H^{r}(V,\mathbb{Q}{}_{\ell
})$ are algebraic numbers, all of whose complex conjugates have absolute value
$q^{r/2}$.
\end{theorem}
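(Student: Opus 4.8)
The plan is to carry out Deligne's argument: reduce the general statement, by standard devices, to the case of the middle-dimensional cohomology of an even-dimensional variety, treat that case — as in the hypersurface argument above — via a Lefschetz pencil and a Rankin--Landau estimate, and then bootstrap. I argue by induction on $d=\dim V_0$, the cases $d\le 1$ being the Riemann hypothesis for curves proved above. First, the reductions. The eigenvalues of $\pi$ on $\bigoplus_r H^r(V,\mathbb{Q}{}_\ell)$ are exactly the zeros and poles of $Z(V,T)$, which lies in $\mathbb{Q}{}(T)$ by (\ref{r24})--(\ref{r30}); hence each is algebraic over $\mathbb{Q}{}$, so ``all complex conjugates have absolute value $q^{r/2}$'' is meaningful, and since the numbers entering the positivity argument below are rational it suffices to bound one conjugate. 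For $r<d$ the weak Lefschetz theorem embeds $H^r(V,\mathbb{Q}{}_\ell)$ into $H^r$ of a smooth hyperplane section; iterating reduces us to the middle cohomology of a variety of dimension $r<d$, covered by the induction hypothesis, and $r>d$ follows by Poincar\'e duality on $V$. If $d$ is odd, pick an auxiliary curve $E_0$ over $\mathbb{F}{}_q$: then $H^d(V,\mathbb{Q}{}_\ell)\otimes H^1(E,\mathbb{Q}{}_\ell)$ is a K\"unneth summand of $H^{d+1}(V\times E,\mathbb{Q}{}_\ell)$, and since the Riemann hypothesis holds for $E$ the even-dimensional case for $V\times E$ gives the odd-dimensional case for $V$. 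So we may assume $d$ even and treat $H^d(V,\mathbb{Q}{}_\ell)$.

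\emph{The Lefschetz pencil.} After re-embedding $V_0$ by a sufficiently positive linear system, choose a Lefschetz pencil of hyperplane sections and blow up its axis $A_0$ (of dimension $d-2$) to obtain a smooth projective $\tilde V_0$ with a flat map $f\colon\tilde V_0\to\mathbb{P}{}^1$ whose fibres are the hyperplane sections, smooth of dimension $d-1$ over a dense open $U_0\subset\mathbb{P}{}^1$ and with a single ordinary double point elsewhere. The blow-up alters $H^d$ only by Tate twists of the cohomology of $A_0$ (handled by induction), and the Leray spectral sequence for $f$ reduces the analysis of the remaining part of $H^d(\tilde V,\mathbb{Q}{}_\ell)$ to that of $H^1(\mathbb{P}{}^1,j_*\mathcal{E}{})$, where $j\colon U_0\hookrightarrow\mathbb{P}{}^1$ and $\mathcal{E}{}\subset R^{d-1}f_*\mathbb{Q}{}_\ell|_{U_0}$ is the lisse sheaf of vanishing cohomology (its complement in $R^{d-1}f_*\mathbb{Q}{}_\ell$ is of Tate type and is harmless). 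Applying the induction hypothesis to the $(d-1)$-dimensional fibres shows $\mathcal{E}{}$ is pointwise pure of weight $d-1$; cup-product gives a $\pi_1(U_0)$-invariant pairing $\mathcal{E}{}\otimes\mathcal{E}{}\to\mathbb{Q}{}_\ell(-(d-1))$, perfect by Poincar\'e duality on the geometric generic fibre and alternating because $d-1$ is odd. The geometric theory of Lefschetz pencils (the vanishing cycles span $\mathcal{E}{}$, the Picard--Lefschetz formulas, an irreducibility argument) then shows that, after a finite extension of $\mathbb{F}{}_q$ if need be, the image of $\pi_1(U)$ in $\SP(\mathcal{E}{}_{\bar\eta})$ is Zariski dense.

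\emph{The estimate and its sharpening.} Exactly as in the hypersurface case, for each $k\ge 1$ form $Z(U_0,\mathcal{E}{}_0^{\otimes 2k},T)=\prod_{u\in|U_0|}\det(1-\pi_uT^{\deg u}\mid\mathcal{E}{}_u^{\otimes 2k})^{-1}$. Because the local traces $\Tr(\pi_u^m\mid\mathcal{E}{}_u)$ are rational numbers (expressible through point counts on the fibres), the coefficients of $\log Z(U_0,\mathcal{E}{}_0^{\otimes 2k},T)$ are $2k$-th powers, hence $\ge 0$, so this power series has non-negative coefficients. By Grothendieck's theorem (\ref{e38}) it equals $\det(1-\pi T\mid H_c^1(U,\mathcal{E}{}^{\otimes 2k}))\big/\det(1-\pi T\mid(\mathcal{E}{}^{\otimes 2k})_{\pi_1(U)}(-1))$; big monodromy and the invariant theory of the symplectic group identify $(\mathcal{E}{}^{\otimes 2k})_{\pi_1(U)}$ with a sum of copies of $\mathbb{Q}{}_\ell(-k(d-1))$, so the denominator is a power of $1-q^{k(d-1)+1}T$. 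Landau's theorem (via Pringsheim: a series with non-negative coefficients is singular at its radius of convergence) then forces the rightmost pole to sit at $T=q^{-k(d-1)-1}$, and comparison with the Euler product pins down the Frobenius eigenvalues on $H^1(\mathbb{P}{}^1,j_*\mathcal{E}{})$, hence on $H^d(V,\mathbb{Q}{}_\ell)$, to within a factor $q^{1/2}$ of the Weil bound; combined with Poincar\'e duality on $V$ this gives $q^{(d-1)/2}\le|\iota(\alpha)|\le q^{(d+1)/2}$ for every eigenvalue $\alpha$ and every embedding $\iota$. To remove the factor $q^{1/2}$, apply this to the self-products $V_0^{\times N}$: by the K\"unneth formula $\alpha^N$ occurs among the eigenvalues of $\pi$ on $H^{dN}(V^N,\mathbb{Q}{}_\ell)$, so $|\iota(\alpha)|^N\le q^{(dN+1)/2}$, and letting $N\to\infty$ gives $|\iota(\alpha)|\le q^{d/2}$; applying the same to $q^d/\alpha$ (again an eigenvalue, by Poincar\'e duality) gives the reverse inequality, so $|\iota(\alpha)|=q^{d/2}$, completing the induction.

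The main obstacle is two-fold. The geometric heart is the big-monodromy theorem for Lefschetz pencils — that $\pi_1(U)$ acts on the vanishing cohomology with Zariski-dense image in the full symplectic group — which rests on the Picard--Lefschetz formula and a delicate irreducibility argument for the set of vanishing cycles. The arithmetic heart is Deligne's adaptation of the Rankin--Landau method: arranging the positivity (rationality of the local traces and passage to the even tensor powers $\mathcal{E}{}^{\otimes 2k}$), locating the pole of the auxiliary $L$-function through symplectic invariant theory, and — the decisive new idea, parallel to Langlands's remark on the Ramanujan conjecture — exploiting the flexibility of replacing $\mathcal{E}{}$ by $\mathcal{E}{}^{\otimes 2k}$ and $V$ by $V^{\times N}$ to pass from the ``half-off'' estimate to the exact one.
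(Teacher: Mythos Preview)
Your outline is essentially Deligne's proof as sketched in the paper: Lefschetz pencils, the vanishing-cycle sheaf with its alternating form and big symplectic monodromy (Picard--Lefschetz plus the Kazhdan--Margulis density theorem), the Rankin--Landau positivity argument on the tensor powers $\mathcal{E}^{\otimes 2k}$, and the self-product trick $V\mapsto V^{\times N}$ to sharpen the half-off estimate to the exact one. You have also correctly identified the two hard cores of the argument.

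The differences are purely organizational. For $r<d$ you invoke weak Lefschetz to descend to a lower-dimensional variety, whereas the paper packages all reductions (both $r\neq d$ and the sharpening step) into Proposition~\ref{r95}, using K\"unneth with $H^0$ and $H^{2d}$ and the product $V\mapsto V^m$. For odd $d$ you cross with a curve to pass to even dimension $d+1$; the paper instead uses Proposition~\ref{r95} uniformly, replacing $V$ by $V^m$ with $m$ even so that $dm$ is always even. One caution about your variant: the curve-crossing step goes \emph{up} in dimension, so it does not sit inside an induction on $d$ as you have stated it (the even case for $V\times E$ would in turn require the odd case for its $(d{+}1{-}1)$-dimensional hyperplane sections). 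The paper avoids this by applying Proposition~\ref{r95} as a single global reduction \emph{before} the Lefschetz-pencil argument, so that what one actually proves by induction is only the weak bound $q^{d/2-1/2}<|\alpha|<q^{d/2+1/2}$ for the middle cohomology of even-dimensional varieties; your weak-Lefschetz reduction for $r<d$ and your final $V^{\times N}$ step are then both subsumed in that proposition.
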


\subsubsection{The main lemma (restricted form)}

The following is abstracted from the proof of Riemann hypothesis for
hypersurfaces of odd degree.

\begin{mlemma}
\label{r96}Let $\mathcal{E}_{0}$ be a local system of $\mathbb{Q}_{\ell}%
$-vector spaces on $U_{0}$, and let $E$ be the corresponding $\pi_{1}(U_{0}%
)$-module. Let $d$ be an integer. Assume:

\renewcommand{\theenumi}{{\roman{enumi}}}

\begin{enumerate}
\item \ (Rationality.) For all closed points $u\in U_{0}$, the characteristic
polynomial of $\pi_{u}$ acting on $\mathcal{E}{}_{u}$ has rational coefficients.

\item \ There exists a nondegenerate $\pi_{1}(U_{0})$-invariant skew-symmetric
form
\[
\psi\colon E\times E\rightarrow\mathbb{Q}_{\ell}(-d).
\]

\item \ (Big geometric monodromy.) The image of $\pi_{1}(U)$ in $Sp(E,\psi)$
is open.
\end{enumerate}

\renewcommand{\theenumi}{{\alph{enumi}}}

Then:

\begin{enumerate}
\item \ For all closed points $u\in U_{_{0}}$, the eigenvalues of $\pi_{u}$
acting on $\mathcal{E}_{u}$ have absolute value $(q^{\deg u})^{d/2}$.

\item \ The characteristic polynomial of $\pi$ acting on $H_{c}^{1}%
(U,\mathcal{E})$ is rational, and its eigenvalues all have absolute value
$\leq q^{d/2+1}$.
\end{enumerate}
\end{mlemma}

\begin{proof}
As before, $E_{\pi_{1}(U)}^{\otimes2m}$ is isomorphic to a direct sum of
copies of $\mathbb{Q}{}_{\ell}(-md)$, from which (a) follows. Now (b) follows
from (\ref{e38}).
\end{proof}

\subsubsection{A reduction}

\begin{proposition}
\label{r95}Assume that for all nonsingular varieties $V_{0}$ of even dimension
over $\mathbb{F}{}_{q}$, every eigenvalue $\alpha$ of $\pi$ on $H^{\dim
V}(V,\mathbb{Q}{}_{\ell})$ is an algebraic number such that%
\[
q^{\frac{\dim V}{2}-\frac{1}{2}}<|\alpha^{\prime}|<q^{\frac{\dim V}{2}%
+\frac{1}{2}}%
\]
for all complex conjugates $\alpha^{\prime}$ of $\alpha$. Then Theorem
\ref{r81} holds for all nonsingular projective varieties over $\mathbb{F}%
{}_{q}$.
\end{proposition}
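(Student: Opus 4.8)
The plan is to derive Theorem~\ref{r81} from the hypothesis with no further induction on the dimension: the descent to middle degree is carried out in one step, by a generic linear section together with the weak Lefschetz theorem, and the passage from the strict inequalities in the hypothesis to the exact value $q^{r/2}$ is carried out by a limit over tensor powers. First I would reduce to $V_{0}$ geometrically connected, since $H^{\ast}$ of a disjoint union is a direct sum and a connected but geometrically disconnected $V_{0}$ becomes geometrically connected after a finite extension (under which the Frobenius is iterated and the conclusion descends, as explained in Step~2).

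Step~1 (middle degree). Let $V_{0}$ be nonsingular projective over $\mathbb{F}_{q}$ of dimension $m$ and let $\alpha$ be an eigenvalue of $\pi$ on $H^{m}(V,\mathbb{Q}_{\ell})$. For any $k\geq1$ with $km$ even, $V_{0}^{k}$ is nonsingular projective over $\mathbb{F}_{q}$ of even dimension $km$, and by the K\"{u}nneth formula $H^{m}(V)^{\otimes k}$ is a $\pi$-stable summand of $H^{km}(V^{k})$ on which $\pi$ acts as $(\pi|_{H^{m}(V)})^{\otimes k}$; hence $\alpha^{k}$ is an eigenvalue of $\pi$ on $H^{km}(V^{k})$. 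By the hypothesis $\alpha^{k}$ is algebraic, so $\alpha$ is algebraic, and every complex conjugate $\gamma$ of $\alpha^{k}$ satisfies $q^{km/2-1/2}<|\gamma|<q^{km/2+1/2}$. Applying this to $\gamma=\alpha'^{k}$ for an arbitrary complex conjugate $\alpha'$ of $\alpha$ gives $q^{m/2-1/(2k)}<|\alpha'|<q^{m/2+1/(2k)}$; letting $k\to\infty$ through the infinitely many values with $km$ even (every $k$ if $m$ is even, every even $k$ if $m$ is odd) forces $|\alpha'|=q^{m/2}$. This is Theorem~\ref{r81} in the case $r=\dim V$.

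Step~2 (arbitrary degree). Poincar\'{e} duality gives a perfect Frobenius-equivariant pairing $H^{r}(V)\times H^{2d-r}(V)(d)\to H^{2d}(V)(d)\simeq\mathbb{Q}_{\ell}$, and $\pi$ acts on $H^{2d}(V)$ as $q^{d}$ by~(\ref{e22}); since $\pi$ acts invertibly on cohomology, the eigenvalues of $\pi$ on $H^{r}(V)$ are precisely the numbers $q^{d}/\beta$ with $\beta$ an eigenvalue on $H^{2d-r}(V)$, and as $q^{d}\in\mathbb{Q}$ the assertion of Theorem~\ref{r81} for $H^{r}$ is equivalent to that for $H^{2d-r}$. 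We may therefore assume $r\leq d$; the case $r=d$ is Step~1. If $r<d$, pass to a finite extension over which there is a generic linear flag $V=V_{d}\supset V_{d-1}\supset\cdots\supset V_{r}$ of smooth sections with $\dim V_{i}=i$. Weak Lefschetz gives Frobenius-equivariant maps
\[
H^{r}(V_{d})\to H^{r}(V_{d-1})\to\cdots\to H^{r}(V_{r}),
\]
each injective because at the step $H^{r}(V_{i})\to H^{r}(V_{i-1})$ one has $r\leq i-1=\dim V_{i-1}$. Now $H^{r}(V_{r})$ is the middle cohomology of the nonsingular projective variety $V_{r}$ of dimension $r$, to which Step~1 applies; the injections transport the conclusion back to $H^{r}(V_{d})$, and a final descent from the finite extension (replacing $\pi$ by $\pi^{n}$ multiplies all eigenvalues and weights by $n$, leaving the property ``algebraic with all conjugates of absolute value $q^{r/2}$'' intact) completes the proof.

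Nothing in this argument is genuinely hard; the real content of Deligne's theorem is the verification of the hypothesis (the Main Lemma, big geometric monodromy, and the Rankin--Landau positivity argument), which lies outside this proposition. The only points that demand a little care are the existence of a smooth linear section over a finite field --- obtainable over a finite extension by Bertini, or even over $\mathbb{F}_{q}$ itself by Poonen's Bertini theorem, since weak Lefschetz holds for any smooth ample divisor --- and the bookkeeping of Tate twists in Poincar\'{e} duality; the mechanism that actually upgrades the hypothesis is the elementary observation that strict bounds $q^{m/2-1/(2k)}<|\alpha'|<q^{m/2+1/(2k)}$ holding for all large $k$ force equality.
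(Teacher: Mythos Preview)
Your proof is correct. Step~1 is identical to the paper's argument. Step~2 differs: the paper avoids hyperplane sections entirely and stays with products. For $r>d$ it observes that if $\alpha$ is an eigenvalue of $\pi$ on $H^{r}(V)$ then, since $\pi$ acts as $1$ on $H^{0}(V)$, the number $\alpha^{d}$ is an eigenvalue of $\pi$ on the K\"{u}nneth summand
\[
H^{r}(V)^{\otimes d}\otimes H^{0}(V)^{\otimes(r-d)}\subset H^{rd}(V^{r}),
\]
which is the middle cohomology of the $rd$-dimensional variety $V^{r}$; Step~1 then gives $|\alpha'^{d}|=q^{rd/2}$, hence $|\alpha'|=q^{r/2}$. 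The case $r<d$ is handled the same way using that $\pi$ acts as $q^{d}$ on $H^{2d}(V)$, or by Poincar\'{e} duality. This route uses only K\"{u}nneth and the known action of $\pi$ in degrees $0$ and $2d$, so it sidesteps both Bertini over finite fields and the weak Lefschetz theorem that your argument invokes. Your approach is perfectly valid---weak Lefschetz for \'{e}tale cohomology is available independently of the Riemann hypothesis, and the passage to a finite extension to secure smooth sections is harmless as you note---but the paper's device of padding with copies of $H^{0}$ or $H^{2d}$ is more economical.
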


\begin{proof}
Let $V_{0}$ be a smooth projective variety of dimension $d$ (not necessarily
even) over $\mathbb{F}{}_{q}$, and let $\alpha$ be an eigenvalue of $\pi$ on
$H^{d}(V,\mathbb{Q}{}_{\ell})$. The K\"{u}nneth formula shows that $\alpha
^{m}$ occurs among the eigenvalues of $\pi$ acting on $H^{dm}(V^{m}%
,\mathbb{Q}{}_{\ell})$ for all $m\in\mathbb{N}{}$. The hypothesis in the
proposition applied to an even $m$ shows that%
\[
q^{\frac{md}{2}-\frac{1}{2}}<|\alpha^{\prime}|^{m}<q^{\frac{md}{2}+\frac{1}%
{2}}%
\]
for all conjugates $\alpha^{\prime}$ of $\alpha$. On taking the $m$th root and
letting $m$ tend to infinity over the even integers, we find that%
\[
|\alpha^{\prime}|=q^{\frac{d}{2}}\text{.}%
\]

Now let $\alpha$ be an eigenvalue of $\pi$ acting on $H^{r}(V,\mathbb{Q}%
{}_{\ell})$. If $r>d$, then $\alpha^{d}$ is an eigenvalue of $\pi$ acting on
\[
H^{r}(V,\mathbb{Q}{}_{\ell})^{\otimes d}\otimes H^{0}(V,\mathbb{Q}{}_{\ell
})^{\otimes r-d}\subset H^{rd}(V^{r},\mathbb{Q}{}_{\ell})
\]
because $\pi$ acts as $1$ on $H^{0}(V,\mathbb{Q}{}_{\ell})$. Therefore
$\alpha^{d}$ is algebraic and $|\alpha^{d}|=q^{\frac{rd}{2}}$, and similarly
for its conjugates. The case $r<d$ can be treated similarly using that $\pi$
acts as $q^{d}$ on $H^{2d}(V,\mathbb{Q}{}_{\ell})$, or by using Poincar\'{e} duality.
\end{proof}

\subsubsection{Completion of the proof}

To deduce Theorem \ref{r81} from (\ref{r96}), Deligne uses the theory of
Lefschetz pencils and their cohomology (specifically vanishing cycle theory
and the Picard-Lefschetz formula). In the complex setting, this theory was
introduced by Picard for surfaces and by Lefschetz (1924) for higher
dimensional varieties. It was transferred to the abstract setting by Deligne
and Katz in SGA 7 (1967--1969). Deligne had earlier used these techniques to
prove the following weaker result:

\begin{quote}
let $V_{0}$ be a smooth projective variety over a finite field $k_{0}$ of odd
characeristic that lifts, together with a polarization, to characteristic
zero. Then the polynomials $\det(1-\pi T\mid H^{r}(V,\mathbb{Q}{}_{\ell}))$
have integer coefficients independent of $\ell$ (Verdier 1972).
\end{quote}

\noindent Thus, Deligne was already an expert in the application of these
methods to zeta functions.

Let $V$ be a nonsingular projective variety $V$ of dimension $d\geq2$, and
embed $V$ into a projective space $\mathbb{P}{}^{m}$. The hyperplanes
$H\colon\sum a_{i}T_{i}=0$ in $\mathbb{P}{}^{m}$ form a projective space,
called the \textit{dual projective space }$\check{\mathbb{P}}^{m}$. A
\textit{pencil} of hyperplanes is a line $D=\{\alpha H_{0}+\beta H_{\infty
}\mid(\alpha\colon\beta)\in\mathbb{P}{}^{1}(k)\}$ in $\check{\mathbb{P}}^{m}$.
The \textit{axis} of the pencil is%
\[
A=H_{0}\cap H_{\infty}=\bigcap\nolimits_{t\in D}H_{t}.
\]
Such a pencil is said to be a \textit{Lefschetz pencil} for $V$ if (a) the
axis $A$ of the pencil cuts $V$ transversally; (b) the hyperplane sections
$V_{t}\overset{\textup{{\tiny def}}}{=}V\cap D_{t}$ are nonsingular for all
$t$ in some open dense subset $U$ of $D$; (c) for all $t\notin U$, the
hyperplane section $V_{t}$ has only a single singularity and that singularity
is an ordinary double point. Given such a Lefschetz pencil, we can blow $V$ up
along the $A\cap V$ to obtain a proper flat map%
\[
f\colon V^{\ast}\rightarrow D
\]
such that the fibre over $t$ in $D$ is $V_{t}=V\cap H_{t}$.

We now prove Theorem \ref{r81} (and hence the Riemann hypothesis). Let $V_{0}$
be a nonsingular projective variety of even dimension $d\geq2$ over a finite
field $k_{0}$. Embed $V_{0}$ in a projective space. After possibly replacing
the embedding with its square and $k_{0}$ with a finite extension, we may
suppose that there exists a Lefschetz pencil and hence a proper map $f\colon
V_{0}^{\ast}\rightarrow D=\mathbb{P}{}^{1}$. Recall (\ref{r95}) that we have
to prove that%
\[
q^{d/2-1/2}<\left\vert \alpha\right\vert <q^{d/2+1/2}%
\]
for all eigenvalues $\alpha$ on $H^{d}(V,\mathbb{Q}{}_{\ell})$. It suffices to
do this with $V_{0}$ replaced by $V_{0}^{\ast}$. From the Leray spectral
sequence,
\[
H^{r}(\mathbb{P}{}^{1},R^{s}f_{\ast}\mathbb{Q}{}_{\ell})\implies
H^{r+s}(V,\mathbb{Q}{}_{\ell}),
\]
we see that it suffices to prove a similar statement for each of the groups%
\[
H^{2}(\mathbb{P}{}^{1},R^{d-2}f_{\ast}\mathbb{Q}{}_{\ell}),\quad
H^{1}(\mathbb{P}{}^{1},R^{d-1}f_{\ast}\mathbb{Q}{}_{\ell}),\quad
H^{0}(\mathbb{P}{}^{1},R^{d}f_{\ast}\mathbb{Q}{}_{\ell}).
\]
The most difficult group is the middle one. Here Deligne applies the Main
Lemma to a certain quotient subquotient $\mathcal{E}{}$ of $R^{d-1}f_{\ast
}\mathbb{Q}{}_{\ell}$. The hypothesis (i) of the Main Lemma is true from the
induction hypothesis; the skew-symmetric form required for (ii) is provided by
an intersection form; finally, for (iii), Deligne was able to appeal to a
theorem of Kazhdan and Margulis. Now the Main Lemma shows that $|\alpha|\leq
q^{d/2+1/2}$ for an eigenvalue of $\pi$ on $H^{1}(\mathbb{P}{}^{1}%
,\mathcal{E}){}$, and a duality argument shows that $q^{d-1/2}\leq|\alpha|$.

\begin{note}
Deligne's proof of the Riemann hypothesis over finite fields is well explained
in his original paper (Deligne 1974) and elsewhere. There is also a purely
$p$-adic proof (Kedlaya 2006).
\end{note}

\subsection{Beyond the Riemann hypothesis over finite fields}

In a seminar at IHES, Nov. 1973 to Feb. 1974, Deligne improved his results on
zeta functions. In particular he proved stronger forms of the Main Lemma.
These results were published in Deligne 1980, which has become known as Weil
II. Even beyond his earlier results on the Riemann hypothesis, these results
have found a vast array of applications, which I shall not attempt to
summarize (see the various writings of Katz, especially Katz 2001, and the
book Kiehl and Weissauer 2001).

\section{The Hasse-Weil zeta function}

\subsection{The Hasse-Weil conjecture}

Let $V$ be a nonsingular projective variety over a number field $K$. For
almost all prime ideals $\mathfrak{p}{}$ in $\mathcal{O}{}_{K}$ (called good),
$V$ defines a nonsingular projective variety $V(\mathfrak{p}{})$ over the
residue field $\kappa(\mathfrak{p}{})=\mathcal{O}{}_{K}/\mathfrak{p}{}$.
Define the zeta function of $V$ to be%
\[
\zeta(V,s)=\prod\nolimits_{\mathfrak{p}{}\text{ good}}\zeta(V(\mathfrak{p}%
),s).
\]
For example, if $V$ is a point over $\mathbb{Q}{}$, then $\zeta(X,s)$ is the
original Riemann zeta function $\prod\nolimits_{p}\frac{1}{1-p^{-s}}$. Using
the Riemann hypothesis for the varieties $V(\mathfrak{p}{})$, one sees that
the that the product converges for $\Re(s)>\dim(V)+1$. It is possible to
define factors corresponding to the bad primes and the infinite primes. The
completed zeta function is then conjectured to extend analytically to a
meromorphic function on the whole complex plane and satisfy a functional
equation. The function $\zeta(V,s)$ is usually called the \textit{Hasse-Weil
zeta function} of $V$, and the conjecture the \textit{Hasse-Weil conjecture}.

More precisely, let $r$ be a natural number. Serre (1970) defined:

\begin{enumerate}
\item polynomials $P_{r}(V(\mathfrak{p}{}),T)\in\mathbb{Z}{}[T]$ for each good
prime (namely, those occurring in the zeta function for $V(\mathfrak{p}{})$);

\item polynomials $Q_{\mathfrak{p}{}}(T)\in\mathbb{Z}{}[T]$ for each bad prime
$\mathfrak{p;}$

\item gamma factors $\Gamma_{v}$ for each infinite prime $v$ (depending on the
Hodge numbers of $V\otimes_{k}k_{v}$);

\item ${}$ a rational number $A>0$.
\end{enumerate}

\noindent Set%
\[
\xi(s)=A^{s/2}\cdot\prod_{\mathfrak{p}{}\text{ good}}\frac{1}{P_{r}%
(V(\mathfrak{p}{}),N\mathfrak{p}{}^{-1})}\cdot\prod_{\mathfrak{p}{}\text{
bad}}\frac{1}{Q_{\mathfrak{p}{}}(N\mathfrak{p}{}^{-1})}\cdot\prod_{v\text{
infinite}}\Gamma_{v}(s).
\]
The Hasse-Weil conjecture says that $\zeta_{r}(s)$ extends to a meromorphic
function on the whole complex plane and satisfies a functional equation%
\[
\xi(s)=w\xi(r+1-s),\quad w=\pm1.
\]

\subsection{History}

According to Weil's recollections (\OE uvres, II, p.529),\footnote{Peu avant
la guerre, si mes souvenirs sont exacts, G. de Rham me raconta qu'un de ses
\'{e}tudiants de Gen\`{e}ve, Pierre Humbert, \'{e}tait all\'{e} \`{a}
G\"{o}ttingen avec l'intention d'y travailler sous la direction de Hasse, et
que celui-ci lui avait propos\'{e} un probl\'{e}me sur lequel de Rham
d\'{e}sirait mon avis. Une courbe elliptique $C$ \'{e}tant donn\'{e}e sur le
corps des rationnels, il s'agissait principalement, il me semble,
d'\'{e}tudier le produit infini des fonctions z\^{e}ta des courbes $C_{p}$
obtenues en r\'{e}duisant $C$ modulo $p$ pour tout nombre premier $p$ pour
lequel $C_{p}$ est de genre $1$; plus pr\'{e}cis\'{e}ment, il fallait
rechercher si ce produit poss\`{e}de un prolongement analytique et une
\'{e}quation fonctionnelle. J'ignore si Pierre Humbert, ou bien Hasse, avaient
examin\'{e} aucun cas particulier. En tout cas, d'apr\`{e}s de Rham, Pierre
Humbert se sentait d\'{e}courag\'{e} et craignait de perdre son temps et sa
peine.} Hasse defined the Hasse-Weil zeta function for an elliptic curve
$\mathbb{Q}{}$, and set the Hasse-Weil conjecture in this case as a thesis
problem! Initially, Weil was sceptical of the conjecture, but he proved it for
curves of the form $Y^{m}=aX^{n}+b$ over number fields by expressing their
zeta functions in terms of Hecke $L$-functions.\footnote{Weil also saw that
the analogous conjecture over global function fields can sometimes be deduced
from the Weil conjectures.} In particular, Weil showed that the zeta functions
of the elliptic curves $Y^{2}=aX^{3}+b$ and $Y^{2}=aX^{4}+b$ can be expressed
in terms of Hecke $L$-functions, and he suggested that the same should be true
for all elliptic curves with complex multiplication. This was proved by
Deuring in a \textquotedblleft beautiful series\textquotedblright\ of papers.

Deuring's result was extended to all abelian varieties with complex
multiplication by Shimura and Taniyama and Weil.

In a different direction, Eichler and Shimura proved the Hasse-Weil conjecture
for elliptic modular curves by identifying their zeta functions with the
Mellin transforms of modular forms.

Wiles et al.\ proved Hasse's \textquotedblleft thesis
problem\textquotedblright\ as part of their work on Fermat's Last Theorem (for
a popular account of this work, see Darmon 1999).

\subsection{Automorphic $L$-functions}

The only hope one has of proving the Hasse-Weil conjecture for a variety is by
identifying its zeta function with a known function, which has (or is
conjectured to have) a meromorphic continuation and functional equation. In a
letter to Weil in 1967, Langlands defined a vast collection of $L$-functions,
now called automorphic (Langlands 1970). He conjectures that \textit{all}
$L$-functions arising from algebraic varieties over number fields (i.e., all
motivic $L$-functions) can be expressed as products of automorphic
$L$-functions.\footnote{Or perhaps there should even be an identification of
the tannakian category defined by motives with a subcategory of a category
defined by automorphic representations. (\textquotedblleft Wenn man sich die
Langlands-Korrespondenz als eine Identifikation einer durch Motive definierten
Tannaka-Kategorie mit einer Unterkategorie einer durch automorphe
Darstellungen definierten Kategorie vorstellt\ldots\textquotedblright%
\ Langlands 2015, p.201.)} The above examples and results on the zeta
functions of Shimura varieties have made Langlands very optimistic:

\begin{quote}
With the help of Shimura varieties mathematicians certainly have, for me,
answered one main question: is it possible to express all motivic
$L$-functions as products of automorphic $L$-functions? The answer is now
beyond any doubt, \textquotedblleft Yes!\textquotedblright. Although no
general proof is available, this response to the available examples and
partial proofs is fully justified.\footnote{Mit Hilfe der
Shimuravariet\"{a}ten haben die Mathematiker gewi\ss \ eine, f\"{u}r mich,
Hauptfrage beantwortet: wird es m\"{o}glich sein, alle motivischen
$L$-Funktionen als Produkte von automorphen $L$-Funktionen auszudr\"{u}cken?
Die Antwort ist jetzt zweifellos, \textquotedblleft Ja!\textquotedblright.
Obwohl kein allgemeiner Beweis vorhanden ist, ist diese Antwort von den
vorhanden Beispielen und Belegst\"{u}cken her v\"{o}llig berechtigt.}
(Langlands 2015, p.205.)
\end{quote}

\subsection{The functoriality principle and conjectures on algebraic cycles}

As noted earlier, Langlands attaches an $L$-function $L(s,\sigma,\pi)$ to an
automorphic representation $\pi$ of a reductive algebraic group $G$ over
$\mathbb{Q}{}$ and a representation $\sigma$ of the corresponding $L$-group
$^{L}G$. The functoriality principle asserts that a homomorphism of $L$-groups
$^{L}H\rightarrow{}^{L}G$ entails a strong relationship between the
automorphic representations of $H$ and $G$. This remarkable conjecture
includes the Artin conjecture, the existence of nonabelian class field
theories, and many other fundamental statements as special cases. And perhaps
even more.

Langlands doesn't share the pessimism noted earlier concerning the standard
conjectures and the conjectures of Hodge and Tate. He has suggested that the
outstanding conjectures in the Langlands program are more closely entwined
with the outstanding conjectures on algebraic cycles than is usually
recognized, and that a proof of the first may lead to a proof of the second.

\begin{quote}
I believe that it is necessary first to prove functoriality and then
afterwards, with the help of the knowledge and tools obtained, to develop a
theory of correspondences and simultaneously of motives over $\mathbb{Q}{}$
and other global fields, as well as $\mathbb{C}{}$.\footnote{Ich habe schon an
verschiedenen Stellen behauptet, da\ss es meines Erachtens n\"{o}tig ist, erst
die Funktorialit\"{a}t zu beweisen und dann nachher, mit Hilfe der damit
erschlossenen Kenntnisse und zur Verf\"{u}gung gestellten Hilfsmittel, eine
Theorie der Korrespondenz und, gleichzeitig, der Motive, \"{u}ber
$\mathbb{Q}{}$ und anderen globalen K\"{o}rper, sowie \"{u}ber $\mathbb{C}{}$
zu begr\"{u}nden.} (Langlands 2015, p.205).
\end{quote}

In support of this statement, he mentions two examples (Langlands 2007, 2015).
The first, Deligne's proof of the Riemann hypothesis over finite fields, has
already been discussed in some detail. I now discuss the second example.

\subsubsection{The problem}

Let $A$ be a polarized abelian variety with complex multiplication over
$\mathbb{Q}{}^{\mathrm{al}}$. The theory of Shimura and Taniyama describes the
action on $A$ and its torsion points of the Galois group of $\mathbb{Q}%
{}^{\mathrm{al}}$ over the reflex field of $A$. In the case that $A$ is an
elliptic curve, the reflex field is a quadratic imaginary field; since one
knows how complex conjugation acts, in this case we have a description of how
the full Galois group $\Gal(\mathbb{Q}{}^{\mathrm{al}}/\mathbb{Q}{})$ acts. Is
it possible to find such a description in higher dimensions?

Shimura studied this question, but concluded rather pessimistically that
\textquotedblleft In the higher-dimensional case, however, no such general
answer seems possible.\textquotedblright\ However, Grothendieck's theory of
motives suggests the framework for an answer. The Hodge conjecture implies the
existence of tannakian category of CM-motives over $\mathbb{Q}$, whose motivic
Galois group is an extension%
\[
1\rightarrow S\rightarrow T\rightarrow\Gal(\mathbb{Q}{}^{\mathrm{al}%
}/\mathbb{Q}{})\rightarrow1
\]
of $\Gal(\mathbb{Q}{}^{\mathrm{al}}/\mathbb{Q}{})$ (regarded as a pro-constant
group scheme) by the Serre group $S$ (a certain pro-torus); \'{e}tale
cohomology defines a section to $T\rightarrow\Gal(\mathbb{Q}{}^{\mathrm{al}%
}/\mathbb{Q}{})$ over the finite ad\`{e}les. Let us call this entire system
the \textit{motivic Taniyama group}. This system describes how
$\Gal(\mathbb{Q}{}^{\mathrm{al}}/\mathbb{Q}{})$ acts on CM abelian varieties
over $\mathbb{Q}{}$ and their torsion points, and so the problem now becomes
that of giving an explicit description of the motivic Taniyama group.

\subsubsection{The solution}

Shimura varieties play an important role in the work of Langlands, both as a
test of his ideas and for their applications. For example, Langlands writes (2007):

\begin{quote}
Endoscopy, a feature of nonabelian harmonic analysis on reductive groups over
local or global fields, arose implicitly in a number of contexts\ldots\ It
arose for me in the context of the trace formula and Shimura varieties.
\end{quote}

\noindent Langlands formulated a number of conjectures concerning the zeta
functions of Shimura varieties.

One problem that arose in his work is the following. Let $S(G,X)$ denote the
Shimura variety over $\mathbb{C}{}$ defined by a Shimura datum $(G,X)$.
According to the Shimura conjecture $S(G,X)$ has a canonical model
$S(G,X)_{E}$ over a certain algebraic subfield $E$ of $\mathbb{C}{}$. The
$\Gamma$-factor of the zeta function of $S(G,X)_{E}$ at a complex prime
$v\colon E\hookrightarrow\mathbb{C}{}$ depends on the Hodge theory of the
complex variety $v(S(G,X)_{E}).$ For the given embedding of $E$ in
$\mathbb{C}{}$, there is no problem because $v(S(G,X)_{E})=S(G,X)$, but for a
different embedding, what is $v(S(G,X)_{E})$? This question leads to the
following problem:

\begin{quote}
Let $\sigma$ be an automorphism of $\mathbb{C}{}$ (as an abstract field); how
can we realize $\sigma S(G,X)$ as the Shimura variety attached to another
(explicitly defined) Shimura datum $(G^{\prime},X^{\prime})$?
\end{quote}

In his Corvallis talk (1979), Langlands states a conjectural solution to this
problem. In particular, he constructs a \textquotedblleft
one-cocycle\textquotedblright\ which explains how to twist $(G,X)$ to obtain
$(G^{\prime},X^{\prime})$. When he explained his construction to Deligne, the
latter recognized that the one-cocycle also gives a construction of a
conjectural Taniyama group. The descriptions of the action of $\Gal(\mathbb{Q}%
^{\mathrm{al}}/\mathbb{Q}{})$ on CM abelian varieties and their torsion points
given by the motivic Taniyama group and by Langlands's conjectural Taniyama
group are both consistent with the results of Shimura and Taniyama. Deligne
proved that this property characterizes the groups, and so the two are equal.
This solves the problem posed above.

Concerning Langlands's conjugacy conjecture itself, this was proved in the
following way. For those Shimura varieties with the property that each
connected component can be described by the moduli of abelian varieties,
Shimura's conjecture was proved in many cases by Shimura and his students and
in general by Deligne. To obtain a proof for a general Shimura variety,
Piatetski-Shapiro suggested embedding the Shimura variety in a larger Shimura
variety that contains many Shimura subvarieties of type $A_{1}$. After Borovoi
had unsuccessfully tried to use Piatetski-Shapiro's idea to prove Shimura's
conjecture directly, the author used it to prove Langlands's conjugation
conjecture, which has Shimura's conjecture as a consequence. No direct proof
of Shimura's conjecture is known.

\subsection{Epilogue}

Stepanov (1969) introduced a new elementary approach for proving the Riemann
hypothesis for curves, which requires only the Riemann-Roch theorem for
curves. This approach was completed and simplified by Bombieri (1974) (see
also Schmidt 1973).

How would our story have been changed if this proof had been found in the 1930s?

\subsubsection{Acknowledgement}

I thank F. Oort and J-P. Serre for their comments on the first draft of this
article, and Serre for contributing the letter of Weil. It would not have been
possible to write this article without the University of Michigan Science
Library, which provided me with a copy of the 1953 thesis of Frank Quigley,
and where I was able to find such works as Castelnuovo's \textit{Memoire
Scelte \ldots} and Severi's \textit{Trattato ...} available on open shelves.

\begin{small}

\section*{Bibliography}

``Colmez and Serre 2001'' is cited as ``\textit{Grothendieck-Serre Correspondence}''.

\parindent 0pt \everypar{\hangindent1.5em\hangafter1}

Abdulali, S. 1994. Algebraic cycles in families of abelian varieties. Canad.
J. Math. 46 (1994), no. 6, 1121--1134.

Andr\'{e}, Y. 1996. Pour une th\'{e}orie inconditionnelle des motifs. Inst.
Hautes \'{E}tudes Sci. Publ. Math. No. 83 (1996), 5--49.

Andr\'{e}, Y. 2004. Une introduction aux motifs (motifs purs, motifs mixtes,
p\'{e}riodes). Panoramas et Synth\`{e}ses, 17. Soci\'{e}t\'{e}
Math\'{e}matique de France, Paris, 2004.

Artin, M. 1962. Grothendieck topologies, Notes of a seminar, Spring 1962
(Harvard University, Department of Mathematics).

Artin, M. 1964. Th\'{e}or\`{e}me de Weil sur la construction d'un groupe \`{a}
partir d'une loi rationelle. Sch\'{e}mas en Groupes (S\'{e}m.
G\'{e}om\'{e}trie Alg\'{e}brique, Inst. Hautes \'{E}tudes Sci., 1963/64) Fasc.
5b, Expose 18, 22 pp. Inst. Hautes \'{E}tudes Sci., Paris.

Artin, M. 1986. N\'{e}ron models. Arithmetic geometry (Storrs, Conn., 1984),
213--230, Springer, New York, 1986.

Berthelot, P. 1974. Cohomologie cristalline des sch\'{e}mas de
caract\'{e}ristique $p>0$. Lecture Notes in Mathematics, Vol. 407.
Springer-Verlag, Berlin-New York, 1974.

Bloch, S. 1977. Algebraic K-theory and crystalline cohomology. Inst. Hautes
\'{E}tudes Sci. Publ. Math. No. 47, (1977), 187--268.

Bombieri, E. 1974. Counting points on curves over finite fields (d'apr\`{e}s
S. A. Stepanov). S\'{e}minaire Bourbaki, 25\`{e}me ann\'{e}e (1972/1973), Exp.
No. 430, pp. 234--241. Lecture Notes in Math., Vol. 383, Springer, Berlin, 1974.

Bosch, S.; L\"{u}tkebohmert, W.; Raynaud, M. 1990. N\'{e}ron models.
Ergebnisse der Mathematik und ihrer Grenzgebiete (3) 21. Springer-Verlag,
Berlin, 1990.

Brigaglia, A; Ciliberto, C; Pedrini, C. 2004. The Italian school of algebraic
geometry and Abel's legacy. The legacy of Niels Henrik Abel, 295--347,
Springer, Berlin, 2004.

Bronowski, J. 1938. Curves whose grade is not positive in the theory of the
base. J. Lond. Math. Soc. 13, 86-90 (1938).

Castelnuovo, G. 1906. Sulle serie algebriche di gruppi di punti appartenenti
ad una curva algebrica. Rom. Acc. L. Rend. (5) 15, No.1, 337-344 (1906).

Chow, W-L. 1954. The Jacobian variety of an algebraic curve. Amer. J. Math.
76, (1954). 453--476.

Clemens, H. 1983. Homological equivalence, modulo algebraic equivalence, is
not finitely generated. Inst. Hautes \'{E}tudes Sci. Publ. Math. No. 58
(1983), 19--38 (1984).

Colmez, P.; Serre, J-P. (Eds) 2001. Correspondance Grothendieck-Serre. Edited
by Pierre Colmez and Jean-Pierre Serre. Documents Math\'{e}matiques (Paris),
2. Soci\'{e}t\'{e} Math\'{e}matique de France, Paris, 2001. (A bilingual
edition is available.)

Darmon, H. 1999. A proof of the full Shimura-Taniyama-Weil conjecture is
announced. Notices Amer. Math. Soc. 46 (1999), no. 11, 1397--1401.

Davenport, H.; Hasse, H. 1935. Die Nullstellen der Kongruenzzetafunktionen in
gewissen zyklischen F\"{a}llen. J. Reine Angew. Math. 172 (1935), 151--182.

Deligne, P. 1972. La conjecture de Weil pour les surfaces $K3$. Invent. Math.
15 (1972), 206--226.

Deligne, P. 1974. La conjecture de Weil. I. Inst. Hautes \'{E}tudes Sci. Publ.
Math. No. 43 (1974), 273--307.

Deligne, P. 1980. La conjecture de Weil. II. Inst. Hautes \'{E}tudes Sci.
Publ. Math. No. 52 (1980), 137--252.

Deligne, P. 1982. Hodge cycles on abelian varieties (notes by J.S. Milne). In:
Hodge Cycles, Motives, and Shimura Varieties, Lecture Notes in Math. 900,
Springer-Verlag, 1982, pp. 9--100.

Deligne, P. 1990. Cat\'{e}gories tannakiennes. The Grothendieck Festschrift,
Vol. II, 111--195, Progr. Math., 87, Birkh\"{a}user Boston, Boston, MA, 1990

Deligne, P.; Milne, J., 1982. Tannakian categories. In: Hodge Cycles, Motives,
and Shimura Varieties, Lecture Notes in Math. 900, Springer-Verlag, 1982, pp. 101--228.

Deuring, M. 1941. Die Typen der Multiplikatorenringe elliptischer
Funktionenk\"orper. Abh. Math. Sem. Hansischen Univ. 14, (1941). 197--272.

Dieudonn\'{e}, J. 1974. Cours de g\'{e}ometrie alg\'{e}briques, 1, Presses
Universitaires de France, 1974.

Dwork, B. 1960. On the rationality of the zeta function of an algebraic
variety. Amer. J. Math. 82 (1960) 631--648.

Fulton, W. 1984. Intersection theory. Ergebnisse der Mathematik und ihrer
Grenzgebiete (3), 2. Springer-Verlag, Berlin, 1984.

Griffiths, P. 1969. On the periods of certain rational integrals. II. Ann. of
Math. (2) 90 (1969) 496--541.

Grothendieck, A. 1957. Sur quelques points d'alg\`{e}bre homologique.
T\^{o}hoku Math. J. (2) 9 (1957) 119--221.

Grothendieck, A. 1958a. Sur une note de Mattuck-Tate. J. Reine Angew. Math.
200 (1958) 208--215.

Grothendieck, A. 1958b. The cohomology theory of abstract algebraic varieties.
1960 Proc. Internat. Congress Math. (Edinburgh, 1958) pp. 103--118 Cambridge
Univ. Press, New York.

Grothendieck, A. 1964. Formule de Lefschetz et rationalit\'{e} des fonctions
$L$. S\'{e}minaire Bourbaki, Vol. 9, Exp. No. 279, 41--55, D\'{e}cembre 1964.

Grothendieck, A. 1966. On the de Rham cohomology of algebraic varieties. Inst.
Hautes \'{E}tudes Sci. Publ. Math. No. 29 (1966) 95--103.

Grothendieck, A. 1968. Crystals and the de Rham cohomology of schemes. Dix
Expos\'{e}s sur la Cohomologie des Sch\'{e}mas pp. 306--358 North-Holland,
Amsterdam; Masson, Paris, 1968.

Grothendieck, A. 1969. Standard conjectures on algebraic cycles. Algebraic
Geometry (Internat. Colloq., Tata Inst. Fund. Res., Bombay, 1968) pp. 193--199
Oxford Univ. Press, London, 1969.

Hasse, H. 1936. Zur Theorie der abstrakten elliptischen Funktionenk\"{o}rper.
I, II, III, J. Reine Angew. Math. 175, (1936), 55--62; ibid. 69--88; ibid. pp.193--208.

Hodge, W. 1937. Note on the theory of the base for curves on an algebraic
surface. J. Lond. Math. Soc. 12, (1937), 58-63.

Honda, T. 1968. Isogeny classes of abelian varieties over finite fields. J.
Math. Soc. Japan 20 1968 83--95.

Hurwitz, A. 1887. Ueber algebraische Correspondenzen und das verallgemeinerte
Correspondenzprincip. Math. Ann. 28 (1887), no. 4, 561--585.

Igusa, Jun-ichi. 1949. On the theory of algebraic correspondances and its
application to the Riemann hypothesis in function-fields. J. Math. Soc. Japan
1, (1949). 147--197.

Igusa, Jun-ichi. 1955. On some problems in abstract algebraic geometry. Proc.
Nat. Acad. Sci. U. S. A. 41 (1955), 964--967.

Illusie, L. 1983. Finiteness, duality, and K\"{u}nneth theorems in the
cohomology of the de Rham-Witt complex. Algebraic geometry (Tokyo/Kyoto,
1982), 20--72, Lecture Notes in Math., 1016, Springer, Berlin, 1983.

Illusie, L. 2010. Reminiscences of Grothendieck and his school. Notices Amer.
Math. Soc. 57 (2010), no. 9, 1106--1115.

Illusie, L. 2014. Grothendieck et la cohomologie \'{e}tale. Alexandre
Grothendieck: a mathematical portrait, 175--192, Int. Press, Somerville, MA, 2014.

Jannsen, U. 1992. Motives, numerical equivalence, and semi-simplicity. Invent.
Math. 107 (1992), no. 3, 447--452.

Jannsen, U. 2007. On finite-dimensional motives and Murre's conjecture.
Algebraic cycles and motives. Vol. 2, 112--142, London Math. Soc. Lecture Note
Ser., 344, Cambridge Univ. Press, Cambridge, 2007.

Kani, E. 1984. On Castelnuovo's equivalence defect. J. Reine Angew. Math. 352
(1984), 24--70.

Katz, N. 1976. An overview of Deligne's proof of the Riemann hypothesis for
varieties over finite fields. Mathematical developments arising from Hilbert
problems (Proc. Sympos. Pure Math., Vol. XXVIII, Northern Illinois Univ., De
Kalb, Ill., 1974), pp. 275--305. Amer. Math. Soc., Providence, R.I., 1976.

Katz, N. 2001. $L$-functions and monodromy: four lectures on Weil II. Adv.
Math. 160 (2001), no. 1, 81--132.

Kedlaya, K. 2006. Fourier transforms and $p$-adic `Weil II'. Compos. Math. 142
(2006), no. 6, 1426--1450.

Kiehl, R.; Weissauer, R, 2001. Weil conjectures, perverse sheaves and l'adic
(sic) Fourier transform. Springer-Verlag, Berlin, 2001.

Kleiman, S. 1968. Algebraic cycles and the Weil conjectures. Dix expos\'{e}s
sur la cohomologie des sch\'{e}mas, pp. 359--386. North-Holland, Amsterdam;
Masson, Paris, 1968.

Kleiman, S. 1994. The standard conjectures. Motives (Seattle, WA, 1991),
3--20, Proc. Sympos. Pure Math., 55, Part 1, Amer. Math. Soc., Providence, RI, 1994.

Kleiman, S. 2005. The Picard scheme. Fundamental algebraic geometry, 235--321,
Math. Surveys Monogr., 123, Amer. Math. Soc., Providence, RI, 2005.

Lang, S. 1957. Divisors and endomorphisms on an abelian variety. Amer. J.
Math. 79 (1957) 761--777.

Lang, S. 1959. Abelian varieties. Interscience Tracts in Pure and Applied
Mathematics. No. 7 Interscience Publishers, Inc., New York; Interscience
Publishers Ltd., London 1959

Langlands, R. 1970. Problems in the theory of automorphic forms. Lectures in
modern analysis and applications, III, pp. 18--61. Lecture Notes in Math.,
Vol. 170, Springer, Berlin, 1970.

Langlands, R. 1979. Automorphic representations, Shimura varieties, and
motives. Ein M\"{a}rchen. Automorphic forms, representations and $L$-functions
(Proc. Sympos. Pure Math., Oregon State Univ., Corvallis, Ore., 1977), Part 2,
pp. 205--246, Proc. Sympos. Pure Math., XXXIII, Amer. Math. Soc., Providence,
R.I., 1979.

Langlands, R. 2007. Reflexions on receiving the Shaw Prize, September 2007.

Langlands, R. 2015, Funktorialit\"{a}t in der Theorie der automorphen Formen:
Ihre Endeckung und ihre Ziele. In: Emil Artin and Beyond --- Class Field
Theory and $L$-Functions (Dumbaugh, D., and Schwermer, J.) E.M.S., 2015.

Lefschetz, S. 1924. L'Analysis situs et la g\'{e}om\'{e}trie alg\'{e}brique,
Gauthier-Villars, Paris, 1924.

Matsusaka, T. 1957. The criteria for algebraic equivalence and the torsion
group. Amer. J. Math. 79 (1957), 53--66.

Mattuck, A.; Tate, J. 1958. On the inequality of Castelnuovo-Severi. Abh.
Math. Sem. Univ. Hamburg 22 (1958), 295--299.

Mazza, C.; Voevodsky, V.; Weibel, C. 2006. Lecture notes on motivic
cohomology. Clay Mathematics Monographs, 2. American Mathematical Society,
Providence, RI; Clay Mathematics Institute, Cambridge, MA, 2006.

Milne, J. 1986. Jacobian varieties. Arithmetic geometry (Storrs, Conn., 1984),
167--212, Springer, New York, 1986.

Milne, J. 2002. Polarizations and Grothendieck's standard conjectures. Ann. of
Math. (2) 155 (2002), no. 2, 599--610.

Milne, J. 2009a. Rational Tate classes. Mosc. Math. J. 9 (2009), no. 1, 111--141,

Milne, J. 2009b. Motives --- Grothendieck's Dream. Mathematical Advances in
Translation, Vol.28, No.3, 193-206, 2009 (Chinese). Reprinted (in English) in
Open problems and surveys of contemporary mathematics. Surveys of Modern
Mathematics, 6. International Press, Somerville, MA; 2013.

Murre, J.; Nagel, J.; Peters, C. 2013. Lectures on the theory of pure motives.
University Lecture Series, 61. American Mathematical Society, Providence, RI, 2013.

N\'{e}ron, A. 1964. Mod\`{e}les minimaux des vari\'{e}t\'{e}s ab\'{e}liennes
sur les corps locaux et globaux. Inst. Hautes \'{E}tudes Sci. Publ.Math. No.
21 1964 128 pp.

Oort, F. and Schappacher, N., Early history of the Riemann Hypothesis in
positive characteristic. This volume.

Quigley, F.D. 1953. On Schubert's Formula, thesis, University of Chicago.

Rankin, R. 1939. Contributions to the theory of Ramanujan's function $\tau(n)$
and similar arithmetical functions. I. The zeros of the function $\sum
_{n=1}^{\infty}\tau(n)/n^{s}$ on the line ${\Re}s=\frac{13}{2}$. II. The order
of the Fourier coefficients of integral modular forms. Proc. Cambridge Philos.
Soc. 35, (1939). 351--372.

Roquette, P. 1953. Arithmetischer Beweis der Riemannschen Vermutung in
Kongruenzfunktionenk\"{o}rpern beliebigen Geschlechts. J. Reine Angew. Math.
191, (1953). 199--252.

Rosen, M. 1986. Abelian varieties over $\mathbb{C}$. Arithmetic geometry
(Storrs, Conn., 1984), 79--101, Springer, New York, 1986.

Saavedra Rivano, N. 1972. Cat\'{e}gories Tannakiennes. Lecture Notes in
Mathematics, Vol. 265. Springer-Verlag, Berlin-New York, 1972.

Schappacher, N. 2006. Seventy years ago: The Bourbaki Congress at El Escorial
and other mathematical (non)events of 1936. The Mathematical Intelligencer,
Special issue International Congress of Mathematicians Madrid August 2006, 8-15.

Scholl, A. 1994. Classical motives. Motives (Seattle, WA, 1991), 163--187,
Proc. Sympos. Pure Math., 55, Part 1, Amer. Math. Soc., Providence, RI, 1994.

Schmidt, F.\ K. 1931. Analytische Zahlentheorie in K\"orpern der
Charakteristik $p$. Math.\ Zeitschr.\ 33 (1931), 1--32. (Habilitationsschrift)

Schmidt, W. 1973. Zur Methode von Stepanov. Acta Arith. 24 (1973), 347--367.

Segel, J. 2009. Recountings. Conversations with MIT mathematicians. Edited by
Joel Segel. A K Peters, Ltd., Wellesley, MA, 2009.

Segre, B. 1937. Intorno ad un teorema di Hodge sulla teoria della base per le
curve di una superficie algebrica. Ann. Mat. pura appl., Bologna, (4) 16,
157-163 (1937).

Serre, J-P. 1954. Cohomologie et g\'{e}om\'{e}trie alg\'{e}brique. Proceedings
of the International Congress of Mathematicians, 1954, Amsterdam, vol. III,
pp. 515--520. Erven P. Noordhoff N.V., Groningen; North-Holland Publishing
Co., Amsterdam, 1956.

Serre, J-P. 1958. Quelques propri\'{e}t\'{e}s des vari\'{e}t\'{e}s
ab\'{e}liennes en caract\'{e}ristique $p$. Amer. J. Math. 80 (1958) 715--739.

Serre, J-P. 1960. Analogues k\"{a}hl\'{e}riens de certaines conjectures de
Weil. Ann. of Math. (2) 71 (1960) 392--394.

Serre, J-P. 1968. Une interpr\'{e}tation des congruences relatives \`{a} la
fonction $\tau$ de Ramanujan. S\'{e}minaire Delange-Pisot-Poitou: 1967/68,
Th\'{e}orie des Nombres, Fasc. 1, Exp. 14 17 pp. Secr\'{e}tariat
math\'{e}matique, Paris

Serre, J-P. 1970. Facteurs locaux des fonctions z\^{e}ta des vari\'{e}t\'{e}s
alg\'{e}briques (d\'{e}finitions et conjectures). S\'{e}minaire
Delange-Pisot-Poitou, 1969/70, n$^{\circ}$19.

Serre, J-P. 2001. Expos\'{e}s de s\'{e}minaires (1950-1999). Documents
Math\'{e}matiques, 1. Soci\'{e}t\'{e} Math\'{e}matique de France, Paris, 2001.

Severi, F. 1903. Sulle corrispondence fra i punti di una curva algebrica e
sopra certi classi di superficie, Mem. R. Accad. Sci. Torino (2) 54 (1903).

Severi, F. 1926. Trattato di geometria algebrica. Vol. 1. N. Zanichelli.

Shafarevich, I. 1994. Basic algebraic geometry. 1. Varieties in projective
space. Second edition. Translated from the 1988 Russian edition and with notes
by Miles Reid. Springer-Verlag, Berlin, 1994.

Steenrod, N. 1957. The work and influence of Professor S. Lefschetz in
algebraic topology. Algebraic geometry and topology. A symposium in honor of
S. Lefschetz, pp. 24--43. Princeton University Press, Princeton, N. J., 1957.

Stepanov, S. 1969. The number of points of a hyperelliptic curve over a finite
prime field. (Russian) Izv. Akad. Nauk SSSR Ser. Mat. 33 (1969), 1171--1181.

Tate, J. 1964. Algebraic cohomology classes, 25 pages. In: Lecture notes
prepared in connection with seminars held at the Summer Institute on Algebraic
Geometry, Whitney Estate, Woods Hole, MA, July 6--July 31, 1964.

Tate, J. 1966a. On the conjectures of Birch and Swinnerton-Dyer and a
geometric analog. S\'{e}minaire Bourbaki, Vol. 9, Exp. No. 306, 415--440.

Tate, J. 1966b. Endomorphisms of abelian varieties over finite fields. Invent.
Math. 2 1966 134--144.

Tate, J. 1968. Classes d'isog\'{e}nie des vari\'{e}t\'{e}s ab\'{e}liennes sur
un corps fini (d'apr\`{e}s T. Honda) S\'{e}minaire Bourbaki 352, (1968/69).

Tate, J. 1994. Conjectures on algebraic cycles in $\ell$-adic cohomology.
Motives (Seattle, WA, 1991), 71--83, Proc. Sympos. Pure Math., 55, Part 1,
Amer. Math. Soc., Providence, RI.

Verdier, J-L. 1972. Ind\'{e}pendance par rapport \`{a} $\ell$ des
polyn\^{o}mes caract\'{e}ristiques des endomorphismes de Frobenius de la
cohomologie $\ell$-adique (d'apr\`{e}s P. Deligne). S\'{e}minaire Bourbaki,
25\`{e}me ann\'{e}e (1972/1973), Exp. No. 423, pp. 98--115.

Voevodsky, V. 1995. A nilpotence theorem for cycles algebraically equivalent
to zero. Internat. Math. Res. Notices 1995, no. 4, 187--198

Weil, A. 1940. Sur les fonctions alg\'{e}briques \`{a} corps de constantes
fini. C. R. Acad. Sci. Paris 210, (1940). 592--594.

Weil, A. 1941. On the Riemann hypothesis in function-fields. Proc. Nat. Acad.
Sci. U. S. A. 27, (1941). 345--347.

Weil, A. 1946. Foundations of Algebraic Geometry. American Mathematical
Society Colloquium Publications, vol. 29. American Mathematical Society, New
York, 1946.

Weil, A. 1948a. Sur les courbes alg\'{e}briques et les vari\'{e}t\'{e}s qui
s'en d\'{e}duisent. Actualit\'{e}s Sci. Ind., no. 1041 = Publ. Inst. Math.
Univ. Strasbourg 7 (1945). Hermann et Cie., Paris, 1948.

Weil, A. 1948b. Vari\'{e}t\'{e}s ab\'{e}liennes et courbes alg\'{e}briques.
Actualit\'{e}s Sci. Ind., no. 1064 = Publ. Inst. Math. Univ. Strasbourg 8
(1946). Hermann \& Cie., Paris, 1948.

Weil, A. 1948c. On some exponential sums. Proc. Nat. Acad. Sci. U. S. A. 34,
(1948). 204--207.

Weil, A. 1949. Numbers of solutions of equations in finite fields. Bull. Amer.
Math. Soc. 55, (1949). 497--508.

Weil, A. 1950. Vari\'{e}t\'{e}s ab\'{e}liennes. Alg\`{e}bre et th\'{e}orie des
nombres. Colloques Internationaux du Centre National de la Recherche
Scientifique, no. 24, pp. 125--127. Centre National de la Recherche
Scientifique, Paris, 1950.

Weil, A. 1952. Fibre spaces in algebraic geometry (Notes by A. Wallace).
University of Chicago, (mimeographed) 48 pp., 1952.

Weil, A. 1955. On algebraic groups of transformations. Amer. J. Math. 77
(1955), 355--391.

Weil, A. 1958. Introduction \`{a} l'\'{e}tude des vari\'{e}t\'{e}s
k\"{a}hl\'{e}riennes. Publications de l'Institut de Math\'{e}matique de
l'Universit\'{e} de Nancago, VI. Actualit\'{e}s Sci. Ind. no. 1267 Hermann,
Paris 1958.

Weil, A., \OE uvres Scientifiques, Collected papers. Springer-Verlag, New
York-Heidelberg, Corrected second printing 1980. (For the first printing
(1979), it may be necessary to subtract 4 from some page numbers.)

Zariski, O. 1952. Complete linear systems on normal varieties and a
generalization of a lemma of Enriques-Severi. Ann. of Math. (2) 55, (1952). 552--592.

\end{small}

\clearpage \pagestyle{empty} \vspace*{-1.8in}\hspace*{-1.3in}\noindent\includegraphics[scale=0.9]{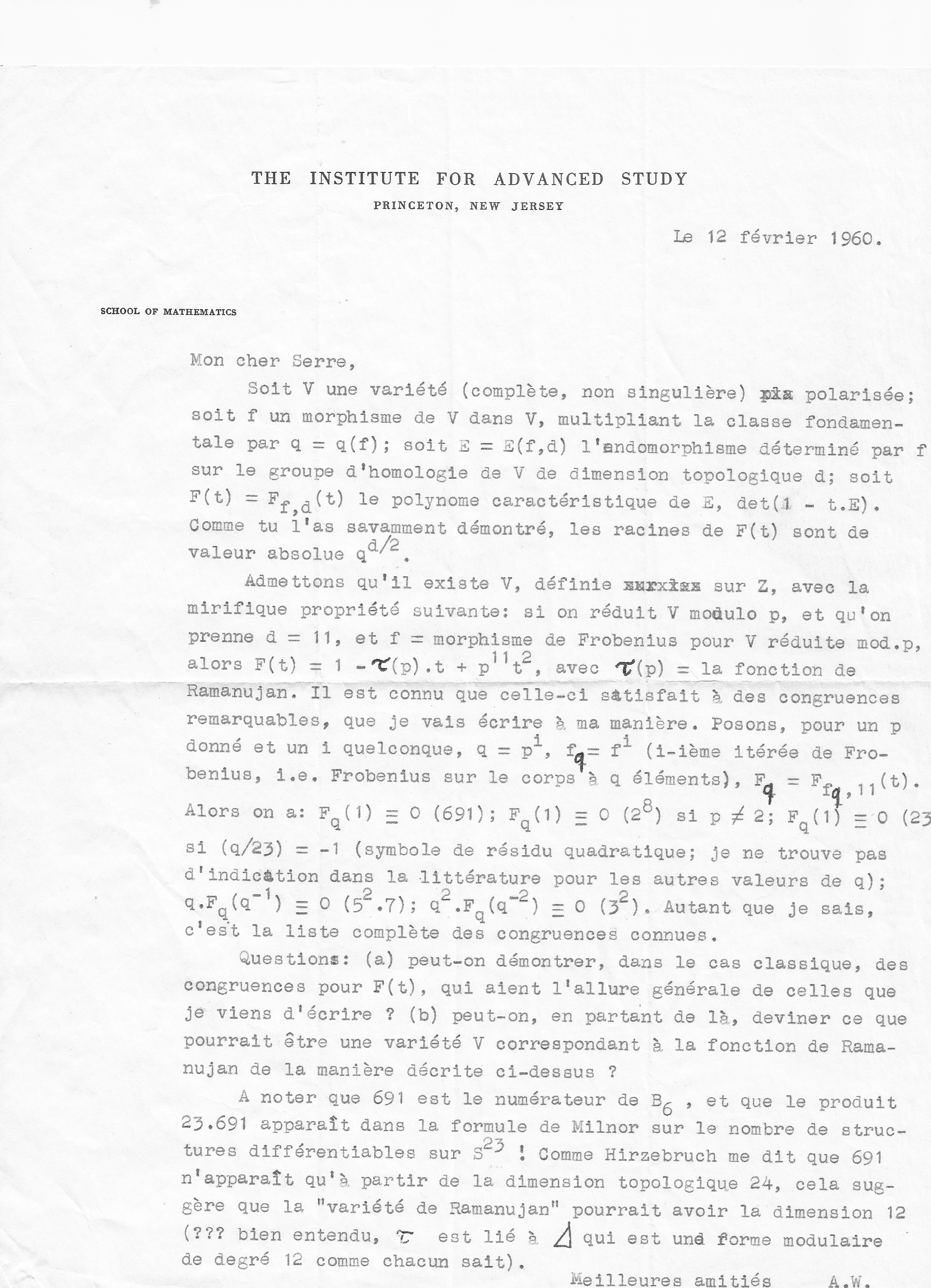}

Letter from Weil to Serre concerning the \textquotedblleft vari\'{e}t\'{e} de
Ramanujan\textquotedblright.

\end{document}